\documentclass[letterpaper]{article}


\usepackage{fullpage}
\usepackage{graphicx}
\usepackage{bbm}
\usepackage{dsfont}
\usepackage{amsthm}
\usepackage{amsmath}
\usepackage{amssymb}
\usepackage{url}
\usepackage{algorithm}
\usepackage{algorithmic} 
\usepackage{array}
\usepackage{color}
\usepackage[justification=centering,format=hang,singlelinecheck=false]{subfig}
\usepackage{multirow}
\usepackage[normalem]{ulem}
\usepackage{environ} 

\usepackage{hyperref}



\newcommand{\R}{\mathbb{R}}
\newcommand{\N}{\mathbb{N}}
\renewcommand{\d}[1]{\mathrm{D}#1}
\newcommand{\D}[1]{\mathrm{D}#1}

\newcommand{\card}[1]{{\lvert#1\rvert}}
\renewcommand{\epsilon}{\varepsilon}

\newtheorem{theorem}{Theorem}

\newtheorem{corollary}{Corollary}
\newtheorem{lemma}{Lemma}
\newtheorem{assumption}{Assumption}
\newtheorem{definition}{Definition}

\DeclareMathOperator*{\argmin}{arg\,min}
\DeclareMathOperator*{\argmax}{arg\,max}

\theoremstyle{definition}
\newtheorem*{SSOCP}{Switched System Optimal Control Problem}
\newtheorem*{RSSOCP}{Relaxed Switched System Optimal Control Problem}

\newtheorem*{DRSSOCP}{Discretized Relaxed Switched System Optimal Control Problem}

\NewEnviron{enumerate_parentesis}{%
  
  \begin{enumerate}
    \BODY
  \end{enumerate}
  
}

\NewEnviron{optimization_problem}[1]{%
  \begin{flalign}
    & {(#1)} & &
    \BODY
    & & 
  \end{flalign}
}

\title{Consistent Approximations for the Optimal Control of Constrained Switched Systems}
\author{Ramanaryan Vasudevan, Humberto Gonzalez, Ruzena Bajcsy, and S. Shankar Sastry
\thanks{R. Vasudevan, H. Gonzalez,  R. Bajcsy, and S. S. Sastry are with the Department of Electrical Engineering and Computer Sciences, University of California at Berkeley, Berkeley, CA, 94720, \{ramv,hgonzale,bajcsy,sastry,\}@eecs.berkeley.edu}%
}

\begin{document}

\maketitle

\begin{abstract}
  Though switched dynamical systems have shown great utility in modeling a variety of physical phenomena, the construction of an optimal control of such systems has proven difficult since it demands some type of optimal mode scheduling. 
  In this paper, we devise an algorithm for the computation of an optimal control of constrained nonlinear switched dynamical systems. 
  The control parameter for such systems include a continuous-valued input and discrete-valued input, where the latter corresponds to the mode of the switched system that is active at a particular instance in time. 
  Our approach, which we prove converges to local minimizers of the constrained optimal control problem, first relaxes the discrete-valued input, then performs traditional optimal control, and then projects the constructed relaxed discrete-valued input back to a pure discrete-valued input by employing an extension to the classical Chattering Lemma that we prove. 
  We extend this algorithm by formulating a computationally implementable algorithm which works by discretizing the time interval over which the switched dynamical system is defined. 
  Importantly, we prove that this implementable algorithm constructs a sequence of points by recursive application that converge to the local minimizers of the original constrained optimal control problem. 
  Four simulation experiments are included to validate the theoretical developments.
\end{abstract}

\section{Introduction}
\label{sec:introduction}

Hybrid dynamical models arise naturally in systems in which discrete modes of operation interact with continuous state evolution. Such systems have been used in a variety of modeling applications including automobiles and locomotives employing different gears \cite{rantzer_switch, rinehart2008suboptimal}, biological systems \cite{ghosh2001hybrid}, situations where a control module has to switch its attention among a number of subsystems \cite{lincoln2001optimizing,rehbinder2004scheduling,walsh2002stability}, manufacturing systems \cite{cassandras2001} and situations where a control module has to collect data sequentially from a number of sensory sources \cite{brockett1995stabilization,egerstedt2002multi}. In addition, many complex nonlinear dynamical systems can be decomposed into simpler linear modes of operation that are more amenable to analysis and controller design \cite{frazzoli2000robust,gillula2011applications}. 

Given their utility, there has been considerable interest in devising algorithms to perform optimal control of such systems. In fact, even Branicky et al.'s seminal work which presented many of the theoretical underpinnings of hybrid systems included a set of sufficient conditions for the optimal control of such systems using quasi variational inequalities \cite{Branicky1998}. Though compelling from a theoretical perspective, the application of this set of conditions to the construction of a numerical optimal control algorithm for hybrid dynamical systems requires the application of value iterations which is particularly difficult in the context of switched systems, wherein the switching between different discrete modes is specified by a discrete-valued input signal. The control parameter for such systems has both a discrete component corresponding to the schedule of discrete modes visited and two continuous components corresponding to the duration of time spent in each mode in the mode schedule and the continuous input. The determination of an optimal control for this class of hybrid systems is particularly challenging due to the combinatorial nature of calculating an optimal mode schedule. 

\subsection{Related Work}

The algorithms to solve this switched system optimal control problem can be divided into two distinct groups according to whether they do or do not rely on the Maximum Principle \cite{piccoli1998hybrid,pontryagin1962mathematical,Sussmann1999}. Given the difficulty of the problem, both groups of approaches sometimes employ similar tactics during algorithm construction. A popular such tactic is one formalized by Xu et al. who proposed a bi-level optimization scheme that at a low level optimized the continuous components of the problem while keeping the mode schedule fixed and at a high level modified the mode schedule \cite{xu2002}.

We begin by describing the algorithms for switched system optimal control that rely on the Maximum Principle. One of the first such algorithms, presented by Alamir et al., applied the Maximum Principle directly to a discrete time switched dynamical system \cite{Alamir2004}. In order to construct such an algorithm for a continuous time switched dynamical system, Shaikh et al. employed the bi-level optimization scheme proposed by Xu et al.  and applied the Maximum Principle to perform optimization at the lower level and applied the Hamming distance to compare different possible nearby mode schedules \cite{shaikh2003}. 

Given the algorithm that we construct in this paper, the most relevant of the approaches that rely on the Maximum Principle is the one proposed by Bengea et al. who relax the discrete-valued input and treat it as a continuous-valued input over which they can apply the Maximum Principle to perform optimal control \cite{Bengea2005}. A search through all possible discrete valued inputs is required in order to find one that approximates the trajectory of the switched system due to the application of the constructed relaxed discrete-valued input. Though such a search is expensive, the existence of a discrete-valued input that approximates the behavior of the constructed relaxed discrete-valued input is proven by the Chattering Lemma \cite{berkovitz1974optimal}. Unfortunately this combinatorial search is unavoidable by employing the Chattering Lemma since it provides no means to construct a discrete-valued input that approximates a relaxed discrete-valued input with respect the trajectory of the switched system. Summarizing, those algorithms that rely on the Maximum Principle construct powerful necessary conditions for optimality. Unfortunately their numerical implementation for nonlinear switched systems is fundamentally restricted due to their reliance on approximating strong or needle variations with arbitrary precision as explained in \cite{mayne1975}.

Next, we describe the algorithms that do not rely on the Maximum Principle but rather employ weak variations. Several have focused on the optimization of autonomous switched dynamical systems (i.e. systems without a continuous input) by fixing the mode sequence and working on devising first \cite{Egerstedt2006} and second order \cite{johnson2011second} numerical optimal control algorithms to optimize the amount of time spent in each mode. In order to extend these optimization techniques, Axelsson et al. employed the bi-level optimization strategy proposed by Xu et al., and after performing optimization at the lower-level by employing a first order numerical optimal control algorithm to optimize the amount of time spent in each mode while keeping the mode schedule fixed, they modified the mode sequence by employing a single mode insertion technique \cite{Axelsson2008}. 

There have been two major extensions to Axelsson et al.'s algorithm. First, Wardi et al., extend the approach by performing several single mode insertions at each iteration \cite{Wardi2012}. Second, Gonzalez et al., extend the approach to make it applicable to constrained switched dynamical systems with a continuous-valued input \cite{Gonzalez2010,Gonzalez2010a}. Though these single mode insertion techniques avoid the computational expense of considering all possible mode schedules during the high-level optimization, this improvement comes at the expense of restricting the possible modifications of the existing mode schedule, which may introduce undue local minimizers, and at the expense of requiring a separate optimization for each of the potential mode schedule modifications, which is time consuming.

\subsection{Our Contribution and Organization}

Inspired by the potential of the Chattering Lemma, in this paper, we devise and implement a first order numerical optimal control algorithm for the optimal control of constrained nonlinear switched systems. In Section \ref{sec:preliminaries}, we introduce the notation and assumptions used throughout the paper and formalize the the optimal control for constrained nonlinear switched systems. Our approach to solve this problem, which is formulated in Section \ref{sec:optimization_algorithm}, first relaxes the optimal control problem by treating the discrete-valued input to be continuous-valued. Next, a first order numerical optimal control algorithm is devised for this relaxed problem. After this optimization is complete, an extension of the Chattering Lemma that we construct, allows us to design a projection that takes the computed relaxed discrete-valued input back to a ``pure'' discrete-valued input while controlling the quality of approximation of the trajectory of the switched dynamical system generated by applying the projected discrete-valued input rather than the relaxed discrete-valued input. In Section \ref{sec:algo_analysis}, we prove that the sequence of points generated by recursive application of our first order numerical optimal control algorithm converge to a point that satisfies a necessary condition for optimality of the constrained nonlinear switched system optimal control problem.

We then describe in Section \ref{sec:implementation} how our algorithm can be formulated in order to make numerical implementation feasible. In fact, in Section \ref{sec:imp_algo_analysis}, we prove that the this computationally implementable algorithm is a \emph{consistent approximation} of our original algorithm. This ensures that the sequence of points generated by the recursive application of this numerically implementable algorithm converge to a point that satisfies a necessary condition for optimality of the constrained nonlinear switched system optimal control problem. In Section \ref{sec:examples}, we implement this algorithm and compare its performance to a commercial mixed integer optimization algorithm on $4$ separate problems to illustrate its superior performance with respect to speed and quality of constructed minimizer.

\section{Preliminaries}
\label{sec:preliminaries}

In this section, we formalize the problem we solve in this paper. Before describing this problem, we define the function spaces and norms used throughout this paper.

\subsection{Norms and Functional Spaces}

This paper focuses on the optimization of functions with finite $L^2$-norm and finite bounded variation. To formalize this notion, we require a norm. For each $x \in \R^n$, $p \in \N$, and $p > 0$, we let $\left\|x \right\|_p$ denote the $p$--norm of $x$. For each $A \in \R^{n \times m}$, $p \in \N$, and $p > 0$, we let $\left\| A \right\|_{i,p}$ denote the induced $p$--norm of $A$. 

Given these definitions, we say a function, $f: [0,1] \to {\cal Y}$, where ${\cal Y} \subset \R^n$, belongs to $L^2([0,1],{\cal Y})$ with respect to the Lebesgue measure on $[0,1]$ if:
\begin{equation}
  \label{eq:L2_norm}
  \left\lVert f \right\rVert_{L^2} = \left(\int_0^1 \left\lVert f(t) \right\rVert_2^2 dt\right)^{\frac{1}{2}} < \infty.
\end{equation}

We say a function, $f: [0,1] \to {\cal Y}$, where ${\cal Y} \subset \R^n$, belongs to $L^{\infty}([0,1],{\cal Y})$ with respect to the Lebesgue measure on $[0,1]$ if:
\begin{equation}
	\label{eq:Linf_norm}
	\left\lVert f \right\rVert_{L^{\infty}} 
  = \inf \bigl\{ \alpha \geq 0 \mid \left\lVert f(x) \right\rVert_2 \leq \alpha\ \textrm{for almost every}\ x \in [0,1] \bigr\} < \infty.
\end{equation}

In order to define the space of functions of finite bounded variation, we first define the total variation of a function. Given $P$, the set of all finite partitions of $[0,1]$, we define the \emph{total variation} of $f: [0,1] \to {\cal Y}$ by:
\begin{equation}
  \label{eq:BV_norm}
  \left\lVert f \right\rVert_{BV}
  = \sup \left\{ \sum_{j=0}^{m-1} \left\lVert f(t_{j+1}) - f(t_{j}) \right\rVert_1 \mid \{ t_k \}_{k=0}^m \in P \right\}.
\end{equation}
Note that the total variation of $f$ is not a norm but rather a seminorm, i.e. it does not separate points. Regardless, we use the norm symbol for the total variation throughout this paper. We say that $f$ is of \emph{bounded variation} if $\| f \|_{BV} < \infty$, and we define $BV([0,1],{\cal Y})$ to be the set of all functions of bounded variation from $[0,1]$ to ${\cal Y}$.

There is an important connection between the functions of bounded variation and weak derivatives, which we rely on throughout this paper. Given $f: [0,1] \to {\cal Y}$, we say that $f$ has a \emph{weak derivative} if there exists a Radon signed measure $\mu$ over $[0,1]$ such that, for each smooth bounded function $v$ with $v(0) = v(1) = 0$,
\begin{equation}
  \int_0^1 f(t) \dot{v}(t) dt = - \int_0^1 v(t) d\mu(t).
\end{equation}
Moreover, we say that $\dot{f} = \frac{d\mu(t)}{dt}$, where the derivative is taken in the Radon--Nikodym sense, is the weak derivative of $f$.
Note that $\dot{f}$ is in general a distribution.
Perhaps the most common example of weak derivative is the Dirac Delta, which is the weak derivative of the Step Function.
The following result is fundamental in our analysis of functions of bounded variation:
\begin{theorem}[Exercise 5.1 in \cite{Ziemer1989}]
  \label{thm:bounded_variation}
  If $f \in BV([0,1],{\cal Y})$, then $f$ has a weak derivative, denoted $\dot{f}$.
  Moreover,
  \begin{equation}
    \| f \|_{BV} = \int_0^1 \bigl\lVert \dot{f}(t) \bigr\rVert_1 dt.
  \end{equation}
\end{theorem}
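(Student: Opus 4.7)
The plan is to realize the weak derivative as a vector-valued Radon measure obtained by applying Jordan's decomposition to each component of $f$, and then to identify the $BV$ seminorm with the total variation of that measure. The starting point is the classical one-dimensional fact that any scalar function of bounded variation on $[0,1]$ is the difference of two non-decreasing functions, each of which induces a non-negative Lebesgue-Stieltjes measure on $[0,1]$.

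First I would decompose $f = (f_1, \ldots, f_n)$ into scalar components. Since the $1$-norm on $\R^n$ dominates each coordinate, every $f_i$ lies in scalar $BV([0,1],\R)$. Applying Jordan's decomposition, write $f_i = g_i - h_i$ with $g_i, h_i$ non-decreasing, inducing non-negative Lebesgue-Stieltjes measures $\nu_i^+$ and $\nu_i^-$; their signed difference $\mu_i := \nu_i^+ - \nu_i^-$ is a finite signed Radon measure on $[0,1]$. Assemble $\mu = (\mu_1, \ldots, \mu_n)$ into a vector-valued Radon measure.

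Second I would verify the weak-derivative identity. For any smooth $v$ with $v(0) = v(1) = 0$, the classical Stieltjes integration-by-parts formula, applied componentwise, yields $\int_0^1 f_i(t) \dot v(t) \, dt = - \int_0^1 v(t) \, d\mu_i(t)$ for every $i$, which is exactly the identity defining $\mu$ as the distributional derivative $\dot f$. The Radon-Nikodym theorem then lets one represent $\dot f$ as a measurable density against its own total-variation measure, legitimizing the pointwise notation $\dot f(t)$ used in the statement.

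Third I would equate the two quantities. From classical scalar theory, $V_i := \sup_P \sum_{j=0}^{m-1} \lvert f_i(t_{j+1}) - f_i(t_j) \rvert = \lvert \mu_i \rvert([0,1])$ for each $i$, so one still has to show $\|f\|_{BV} = \sum_{i=1}^n V_i$, and separately that the right-hand side of the theorem, when interpreted as the total variation of the vector measure $\mu$ under the $\ell^1$ norm, equals $\sum_i \lvert \mu_i \rvert([0,1])$. The first equality follows by a common-refinement argument: the trivial bound $\|f\|_{BV} \leq \sum_i V_i$ is immediate from summing componentwise, while given $\epsilon > 0$ one picks partitions $P_i$ nearly realizing each $V_i$ and takes their common refinement $P$, on which every componentwise sum stays within $\epsilon/n$ of $V_i$.

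The main obstacle is this final norm-matching step, because the vector-valued total variation of a Radon measure depends on the choice of norm on $\R^n$, and the identity in question is in fact specific to the $\ell^1$ norm used in \eqref{eq:BV_norm}. The simultaneous-refinement trick handles that subtlety cleanly, after which the theorem follows by combining the three previous steps.
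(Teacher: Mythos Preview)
The paper does not actually prove this theorem: immediately after the statement it says ``We omit the proof of this result since it is beyond the scope of this paper'' and points to Folland and Ziemer for details. So there is no proof in the paper to compare against.

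Your outline is a correct and standard argument. The Jordan decomposition gives the signed Lebesgue--Stieltjes measures componentwise, Stieltjes integration by parts verifies the distributional-derivative identity, and the $\ell^1$ norm is precisely what makes the vector total variation split as $\sum_i \lvert \mu_i \rvert([0,1])$. Your common-refinement step for the reverse inequality $\|f\|_{BV} \geq \sum_i V_i - \epsilon$ works because refining a partition can only increase each scalar variation sum, so on the common refinement each component contributes at least $V_i - \epsilon/n$; since the $\ell^1$ norm lets you interchange the sum over partition points with the sum over coordinates, you get the bound. This is exactly the kind of argument the cited references contain, and it goes beyond what the paper itself provides.
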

We omit the proof of this result since it is beyond the scope of this paper. More details about the functions of bounded variation and weak derivatives can be found in Sections 3.5 and 9 in \cite{Folland1999} and Section 5 in \cite{Ziemer1989}.

\subsection{Optimization Spaces}

We are interested in the control of systems whose trajectory is governed by a set of vector fields $f: \R \times \R^n \times \R^m \times {\cal Q} \rightarrow \R^n$, indexed by their last argument where ${\cal Q}=\{1, 2, \dots, q\}$. Each of these distinct vector fields is called a \emph{mode} of the switched system. To formalize the optimal control problem, we define three spaces: the \emph{pure discrete input space}, ${\cal D}_p$, the \emph{relaxed discrete input space}, ${\cal D}_r$, and the \emph{continuous input space}, ${\cal U}$. Throughout the document, we employ the following convention: given the pure or relaxed discrete input $d$, we denote its $i$--th coordinate by $d_i$.

Before formally defining each space, we require some notation. Let the $q$--simplex, $\Sigma^q_r$, be defined as:
\begin{equation}
  \label{eq:simplex}
  \Sigma^q_r = \left\{ (d_1,\ldots,d_q) \in [0,1]^q \mid \sum_{i=1}^q d_i = 1 \right\},
\end{equation}
and let the corners of the $q$-simplex, $\Sigma^q_p$, be defined as:
\begin{equation}
  \label{eq:corners_simplex}
  \Sigma^q_p = \left\{ (d_1,\ldots,d_q) \in \{0,1\}^q \mid \sum_{i=1}^q d_i = 1 \right\}.
\end{equation}
Note that $\Sigma_p^q \subset \Sigma_r^q$. 
Also, there are exactly as many corners, denoted $e_i$ for $i \in {\cal Q}$, of the $q$--simplex as there are distinct vector fields. 
Thus, $\Sigma_p^q = \{ e_1, \ldots, e_q \}$.

Using this notation, we define the pure discrete input space, ${\cal D}_p$, as:
\begin{equation}
  \label{eq:pure_discrete_input_space}
  {\cal D}_p = L^2( [0,1], \Sigma^q_p ) \cap BV([0,1],\Sigma^q_p).
\end{equation}
Next, we define the relaxed discrete input space, ${\cal D}_r$:
\begin{equation}
  \label{eq:relaxed_discrete_input_space}
  {\cal D}_r = L^2( [0,1], \Sigma^q_r ) \cap BV( [0,1], \Sigma^q_r ).
\end{equation}
Notice that the discrete input at each instance in time can be written as the linear combination of the corners of the simplex. Given this observation, we employ these corners to index the vector fields (i.e. for each $i \in {\cal Q}$ we write $f(\cdot,\cdot,\cdot,e_i)$ for $f(\cdot,\cdot,\cdot,i)$). Finally, we define the continuous input space, ${\cal U}$:
\begin{equation}
  \label{eq:input_space}
  {\cal U} = L^2( [0,1], U ) \cap BV( [0,1], U ),
\end{equation}
where $U \subset \R^m$ is a bounded, convex set.

Let ${\cal X} = L^\infty( [0,1], \R^m ) \times L^\infty( [0,1], \R^q )$ be endowed with the following norm for each $\xi = (u,d) \in {\cal X}$:
\begin{equation}
  \label{eq:defn_metric}
  \| \xi \|_{\cal X} = \| u \|_{L^2}  + \| d \|_{L^2},
\end{equation}
where the $L^2$--norm is as defined in Equation \eqref{eq:L2_norm}.
We combine ${\cal U}$ and ${\cal D}_p$ to define our \emph{pure optimization space}, ${\cal X}_p = {\cal U} \times {\cal D}_p$, and we endow it with the same norm as ${\cal X}$. 
Similarly, we combine ${\cal U}$ and ${\cal D}_r$ to define our \emph{relaxed optimization space}, ${\cal X}_r = {\cal U} \times {\cal D}_r$, and endow it with the ${\cal X}$--norm too. Note that ${\cal X}_p \subset {\cal X}_r \subset {\cal X}$.

\subsection{Trajectories, Cost, Constraint, and the Optimal Control Problem}
\label{sec:trajs_cost_cons_ocp}

Given $\xi = (u,d) \in {\cal X}_r$, for convenience throughout the paper we let:
\begin{equation}
	\label{eq:compact_traj_xi}
	f\bigl( t, x(t), u(t), d(t) \bigr)	= \sum_{i=1}^{q} d_i(t) f\bigl( t, x(t), u(t), e_i \bigr),
\end{equation}
where $d(t) = \sum_{i=1}^q d_i(t) e_i$. We employ the same convention when we consider the partial derivatives of $f$. Given $x_0 \in \R^n$, we say that a \emph{trajectory of the system} corresponding to $\xi \in {\cal X}_r$ is the solution to:
\begin{equation}
  \label{eq:traj_xi}
  \dot{x}(t) = f\big(t,x(t),u(t),d(t)), \quad \forall t \in [0, 1], \quad x(0) = x_0,
\end{equation}
and denote it by $x^{(\xi)}: [0,1] \to \R^n$, where we suppress the dependence on $x_0$ in $x^{(\xi)}$ since it is assumed given. To ensure the clarity of the ensuing analysis, it is useful to sometimes emphasize the dependence of $x^{(\xi)}(t)$ on $\xi$. Therefore, we define the \emph{flow of the system}, $\phi_t: {\cal X}_r \to \R^n$ for each $t \in [0,1]$ as:
	\begin{equation}
		\label{eq:flow_xi}
		\phi_t(\xi) = x^{(\xi)}(t).
	\end{equation}
	
To define the cost function, we assume that we are given a \emph{terminal cost}, $h_0: \R^n \to \R$. The \emph{cost function}, $J: {\cal X}_r \to \R$, for the optimal control problem is then defined as:
\begin{equation}
  \label{eq:cost}
  J(\xi) = h_0\big( \, x^{(\xi)} (1) \, \big).
\end{equation}
Notice that if the problem formulation includes a running cost, then one can extend the existing state vector by introducing a new state, and modifying the cost function to evaluate this new state at the final time, as shown in Section 4.1.2 in \cite{Polak1997}. By performing this type of modification, observe that each mode of the switched system can have a different running cost associated with it.

Next, we define a family of functions, $h_j:\R^n \to \R$ for $j \in {\cal J} = \{1,\ldots,N_c\}$. 
Given a $\xi \in {\cal X}_r$, the state $x^{(\xi)}$ is said to satisfy the constraint if $h_j(x^{(\xi)}(t)) \leq 0$ for each $t \in [0,1]$ and for each $j \in {\cal J}$.  
We compactly describe all the constraints by defining the \emph{constraint function} $\Psi: {\cal X}_r \to \R$, by: 
\begin{equation}
  \label{eq:super_const}
  \Psi(\xi) = \max_{j \in {\cal J},\; t \in [ 0, 1 ]} h_j\big( x^{(\xi)}(t) \big),
\end{equation}
since $h_j\big( x^{(\xi)}(t) \big) \leq 0$ for each $t$ and $j$ if and only if $\psi(\xi) \leq 0$. To ensure the clarity of the ensuing analysis, it is useful to sometimes emphasize the dependence of $h_j\big(x^{(\xi)}(t)\big)$ on $\xi$. Therefore, we define \emph{component constraint functions}, $\psi_{j,t}: {\cal X}_r \to \R$ for each $t \in [0,1]$ and $j \in {\cal J}$ as:
	\begin{equation}
		\label{eq:component_constraints}
		\psi_{j,t}(\xi) = h_j\left(\phi_t(\xi)\right).
	\end{equation}


With these definitions, we can state the Switched System Optimal Control Problem:
\begin{SSOCP}
  \begin{equation}
	\label{eq:ssocp}
    \min_{\xi \in {\cal X}_p} \left\{ J(\xi) \mid \Psi(\xi) \leq 0 \right\}.
  \end{equation}
\end{SSOCP}

\subsection{Assumptions and Uniqueness}
In order to devise an algorithm to solve Switched System Optimal Control Problem, we make the following assumptions about the dynamics, cost, and constraints: 
\begin{assumption}
  \label{assump:fns_continuity}
  For each $i \in {\cal Q}$, $f(\cdot,\cdot,\cdot,e_i)$ is differentiable in both $x$ and $u$. 
  Also, each $f(\cdot,\cdot,\cdot,e_i)$ and its partial derivatives are Lipschitz continuous with constant $L > 0$, i.e. given $t_1,t_2 \in [0,1]$, $x_1,x_2 \in \R^n$, and $u_1,u_2 \in U$:
	\begin{enumerate_parentesis}
  \item \label{assump:f_lipschitz} $\left\| f(t_1,x_1,u_1,e_i) - f(t_2,x_2,u_2,e_i) \right\|_2 \leq L \left( |t_1 - t_2 |+ \| x_1 - x_2 \|_2 + \| u_1 - u_2 \|_2 \right)$,
  \item \label{assump:dfdx_lipschitz} $\left\| \frac{\partial f}{\partial x}(t_1,x_1,u_1,e_i) - \frac{\partial f}{\partial x}(t_2,x_2,u_2,e_i) \right\|_{i,2} \leq L \left( | t_1 - t_2 | + \| x_1 - x_2 \|_2 + \| u_1 - u_2 \|_2 \right)$,
  \item \label{assump:dfdu_lipschitz} $\left\| \frac{\partial f}{\partial u}(t_1,x_1,u_1,e_i) - \frac{\partial f}{\partial u}(t_2,x_2,u_2,e_i) \right\|_{i,2} \leq L \left( | t_1 - t_2 | + \| x_1 - x_2 \|_2 + \| u_1 - u_2 \|_2 \right)$.
	\end{enumerate_parentesis}
\end{assumption}

\begin{assumption}
  \label{assump:constraint_fns}
  The functions $h_0$ and $h_j$ are Lipschitz continuous and differentiable in $x$ for all $j \in {\cal J}$. In addition, the derivatives of these functions with respect to $x$ are also Lipschitz continuous with constant $L > 0$, i.e. given $x_1, x_2 \in \R^n$, for each $j \in {\cal J}$:
	\begin{enumerate_parentesis}
  \item \label{assump:phi_lipschitz} $\left| h_0(x_1) - h_0(x_2) \right| \leq L \left\| x_1 - x_2 \right\|_2$,
  \item \label{assump:dphidx_lipschitz} $\left\| \frac{\partial h_0}{\partial x}(x_1) - \frac{\partial h_0}{\partial x}(x_2) \right\|_2 \leq L \left\| x_1 - x_2 \right\|_2$,
  \item \label{assump:hj_lipschitz} $\left| h_j(x_1) - h_j(x_2) \right| \leq L \left\| x_1 - x_2 \right\|_2$,
  \item \label{assump:dhjdx_lipschitz} $\left\| \frac{\partial h_j}{\partial x}(x_1) - \frac{\partial h_j}{\partial x}(x_2) \right\|_2 \leq L \left\| x_1 - x_2 \right\|_2$.
	\end{enumerate_parentesis}
\end{assumption}

If a running cost is included in the problem statement (i.e. if the cost also depends on the integral of a function), then this function must also satisfy Assumption \ref{assump:fns_continuity}. Assumption \ref{assump:constraint_fns} is a standard assumption on the objectives and constraints and is used to prove the convergence properties of the algorithm defined in the next section. These assumptions lead to the following result:
\begin{lemma}
	\label{lemma:x_bounded}
	There exists a constant $C > 0$ such that, for each $\xi \in {\cal X}_r$ and $t \in [0,1]$,
	\begin{equation}
		\bigl\| x^{(\xi)}(t) \bigr\|_2 \leq C,
	\end{equation}	
	where $x^{(\xi)}$ is a solution of Differential Equation \eqref{eq:traj_xi}.
\end{lemma}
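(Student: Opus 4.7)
The plan is to apply Gr\"onwall's inequality to the integral form of the state equation after establishing a linear growth bound on $f$ in $x$, uniformly over $u$ and $d$.

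First, I would show that $\| f(t,x,u,e_i) \|_2$ grows at most linearly in $\|x\|_2$, uniformly in $t \in [0,1]$, $u \in U$, and $i \in \mathcal{Q}$. Fix any reference point $u_0 \in U$; since $U$ is bounded, there exists $M_u > 0$ with $\| u - u_0 \|_2 \le M_u$ for all $u \in U$. Using Assumption \ref{assump:fns_continuity}\ref{assump:f_lipschitz} with base point $(0,0,u_0,e_i)$, we get
\begin{equation}
  \| f(t,x,u,e_i) \|_2 \le \| f(0,0,u_0,e_i) \|_2 + L\bigl( 1 + \|x\|_2 + M_u \bigr),
\end{equation}
so there exist constants $A,B > 0$, independent of $i$, such that $\| f(t,x,u,e_i) \|_2 \le A + B \|x\|_2$ for every $i \in \mathcal{Q}$, $t \in [0,1]$, $u \in U$, and $x \in \R^n$.

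Next I would lift this to the relaxed vector field. For $\xi = (u,d) \in \mathcal{X}_r$, the definition in Equation \eqref{eq:compact_traj_xi} gives
\begin{equation}
  \bigl\| f(t,x,u(t),d(t)) \bigr\|_2 \le \sum_{i=1}^{q} d_i(t) \, \bigl\| f(t,x,u(t),e_i) \bigr\|_2 \le A + B \|x\|_2,
\end{equation}
since $d(t) \in \Sigma^q_r$ means $d_i(t) \ge 0$ and $\sum_i d_i(t) = 1$. The key observation is that this linear growth bound is completely independent of the choice of $\xi \in \mathcal{X}_r$.

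Then I would integrate Equation \eqref{eq:traj_xi} to obtain
\begin{equation}
  \bigl\| x^{(\xi)}(t) \bigr\|_2 \le \|x_0\|_2 + \int_0^t \bigl( A + B \, \bigl\| x^{(\xi)}(s) \bigr\|_2 \bigr)\, ds \le \bigl(\|x_0\|_2 + A\bigr) + B \int_0^t \bigl\| x^{(\xi)}(s) \bigr\|_2 \, ds,
\end{equation}
and apply Gr\"onwall's inequality on $[0,1]$ to conclude $\bigl\| x^{(\xi)}(t) \bigr\|_2 \le \bigl(\|x_0\|_2 + A\bigr) e^{B} =: C$ for all $t \in [0,1]$. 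Since neither $A$, $B$, nor $x_0$ depend on $\xi$, the constant $C$ is uniform over $\mathcal{X}_r$.

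The only subtlety is that Equation \eqref{eq:traj_xi} must admit a unique absolutely continuous solution for every $\xi \in \mathcal{X}_r$ in the first place; this follows from Assumption \ref{assump:fns_continuity}\ref{assump:f_lipschitz} via the standard Carath\'eodory existence-uniqueness theorem, since the relaxed vector field is measurable in $t$ (as $u,d$ are measurable) and Lipschitz in $x$ with constant $L$ uniformly in $(t,u,d)$. I do not expect any real obstacle in this proof; the only mild care needed is in checking that the Lipschitz bound in Assumption \ref{assump:fns_continuity} transfers through the convex combination defining the relaxed vector field, but this is immediate from $d(t) \in \Sigma^q_r$.
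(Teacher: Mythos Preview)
Your proposal is correct and follows essentially the same route as the paper: derive a linear growth bound on $f$ from the Lipschitz assumption and the boundedness of $U$, then apply Gr\"onwall. The only minor difference is that you exploit the convex-combination structure $d_i(t)\ge 0$, $\sum_i d_i(t)=1$ to pass the linear growth bound through the relaxed vector field with no extra factor, whereas the paper uses the cruder estimate $|d_i(t)|\le 1$ and picks up a factor of $q$ in the exponent; your bound is slightly sharper but the argument is the same.
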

\begin{proof}
	Given $\xi = (u,d) \in {\cal X}_r$ and noticing that $\left| d_i(t) \right| \leq 1$ for all $i \in {\cal Q}$ and $t \in [0,1]$, we have:
	\begin{equation}
		\label{eq:x_bounded_DEQ}
		\bigl\| x^{(\xi)}(t) \bigr\|_2 
    \leq \| x_0 \|_2 + \sum_{i=1}^q \int_0^t \bigl\| f\bigl( s, x^{(\xi)}(s), u(s), e_i \bigr) \bigr\|_2 ds.
	\end{equation}
  
	Next, observe that $\| f( 0, x_0, 0, e_i ) \|_2$ is bounded for all $i \in {\cal Q}$ and $u(s)$ is bounded for each $s \in [0,1]$ since $U$ is bounded. 
  Then by Assumption \ref{assump:fns_continuity}, we know there exists a $K > 0$ such that for each $s \in [0,1]$, $i \in {\cal Q}$, and $\xi \in {\cal X}_r$,
  \begin{equation}
    \bigl\| f\bigl( s, x^{(\xi)}(s), u(s), e_i \bigr) \bigr\|_2 \leq K \bigl( \bigl\| x^{(\xi)}(s) \bigr\|_2 + 1 \bigr).    
  \end{equation}
  Applying the Bellman-Gronwall Inequality (Lemma 5.6.4 in \cite{Polak1997}) to Equation \eqref{eq:x_bounded_DEQ}, we have $\bigl\| x^{(\xi)}(t) \bigr\|_2 \leq e^{qK} \bigl( 1 + \| x_0 \|_2 \bigr)$ for each $t \in [0,1]$. 
  Since $x_0$ is assumed given and bounded, we have our result.
\end{proof}

In fact, this implies that the dynamics, cost, constraints, and their derivatives are all bounded:
\begin{corollary}
	\label{corollary:fns_bounded}
	There exists a constant $C > 0$ such that for each $\xi = (u,d) \in {\cal X}_r$, $t \in [0,1]$, and $j \in {\cal J}$:
	\begin{enumerate_parentesis}
		\item \label{corollary:f_bounded} $\bigl\| f\bigl( t, x^{(\xi)}(t), u(t), d(t) \bigr) \bigr\|_2 \leq C$,\;
      $\left\| \frac{\partial f}{\partial x}\bigl( t, x^{(\xi)}(t), u(t), d(t) \bigr) \right\|_{i,2} \leq C$,\; 
      $\left\| \frac{\partial f}{\partial u}\bigl( t, x^{(\xi)}(t), u(t), d(t) \bigr) \right\|_{i,2} \leq C$,
		\item \label{corollary:phi_bounded} $\bigl| h_0\bigl( x^{(\xi)}(t) \bigr) \bigr| \leq C$,\; 
      $\Bigl\| \frac{\partial h_0}{\partial x}\bigl( x^{(\xi)}(t) \bigr) \Bigr\|_2 \leq C$,
		\item \label{corollary:hj_bounded} $\bigl| h_j\bigl( x^{(\xi)}(t) \bigr) \bigr| \leq C$,\; 
      $\left\| \frac{\partial h_j}{\partial x}\bigl( x^{(\xi)}(t) \bigr) \right\|_2 \leq C$,
	\end{enumerate_parentesis}
	where $x^{(\xi)}$ is a solution of Differential Equation \eqref{eq:traj_xi}.
\end{corollary}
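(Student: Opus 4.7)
The plan is to combine Lemma \ref{lemma:x_bounded} with Assumptions \ref{assump:fns_continuity} and \ref{assump:constraint_fns} and exploit the fact that Lipschitz continuous functions are bounded on bounded domains. Since $t \in [0,1]$, $x^{(\xi)}(t)$ is bounded by Lemma \ref{lemma:x_bounded}, and $u(t) \in U$ is bounded by assumption, each argument of $f$, $\partial f/\partial x$, $\partial f/\partial u$, $h_j$ and their derivatives takes values in a bounded set independent of $\xi$, so the Lipschitz inequalities yield uniform bounds.

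First I would prove the bounds for the pure modes $e_i$. For each $i \in {\cal Q}$, Assumption \ref{assump:fns_continuity}\ref{assump:f_lipschitz} applied about the reference point $(0,x_0,0)$ yields
\begin{equation}
  \bigl\| f\bigl(t,x^{(\xi)}(t),u(t),e_i\bigr) \bigr\|_2
  \leq \| f(0,x_0,0,e_i) \|_2 + L \bigl( 1 + \bigl\| x^{(\xi)}(t) - x_0 \bigr\|_2 + \| u(t) \|_2 \bigr),
\end{equation}
and the right-hand side is bounded by a constant independent of $\xi$ and $t$ by Lemma \ref{lemma:x_bounded} and the boundedness of $U$. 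The same argument applied with Assumption \ref{assump:fns_continuity}\ref{assump:dfdx_lipschitz} and \ref{assump:fns_continuity}\ref{assump:dfdu_lipschitz} gives analogous bounds for $\partial f/\partial x$ and $\partial f/\partial u$ evaluated at $e_i$.

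Next I would pass from the pure to the relaxed case using the convention in Equation \eqref{eq:compact_traj_xi}. Since $d(t) \in \Sigma^q_r$, i.e. $d_i(t) \in [0,1]$ and $\sum_i d_i(t) = 1$, the triangle inequality gives
\begin{equation}
  \bigl\| f\bigl(t,x^{(\xi)}(t),u(t),d(t)\bigr) \bigr\|_2
  \leq \sum_{i=1}^q d_i(t) \bigl\| f\bigl(t,x^{(\xi)}(t),u(t),e_i\bigr) \bigr\|_2
  \leq \max_{i \in {\cal Q}} \bigl\| f\bigl(t,x^{(\xi)}(t),u(t),e_i\bigr) \bigr\|_2,
\end{equation}
and the previous step already bounds the right-hand side uniformly. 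The same convex combination argument handles $\partial f/\partial x$ and $\partial f/\partial u$ with the induced $2$-norm, since the operator $\sum_i d_i(t)(\cdot)$ is a convex combination. Finally, for the cost and the constraints, Assumption \ref{assump:constraint_fns} applied about the reference point $x_0$ together with Lemma \ref{lemma:x_bounded} yields $|h_0(x^{(\xi)}(t))| \leq |h_0(x_0)| + LC$ and $\|\tfrac{\partial h_0}{\partial x}(x^{(\xi)}(t))\|_2 \leq \|\tfrac{\partial h_0}{\partial x}(x_0)\|_2 + LC$, and likewise for each $h_j$, $j \in {\cal J}$.

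There is no genuine obstacle; the only care required is to pick a single common constant $C$ as the maximum of the finitely many constants produced above (one per mode $i \in {\cal Q}$ and one per constraint index $j \in {\cal J}$, together with the reference-point values and the Lipschitz constant $L$). Since ${\cal Q}$ and ${\cal J}$ are both finite, such a maximum exists, which yields the uniform bound claimed.
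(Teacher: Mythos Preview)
Your proof is correct and follows essentially the same approach as the paper: bound the arguments via Lemma~\ref{lemma:x_bounded} and the boundedness of $U$ and $\Sigma_r^q$, then invoke the regularity of $f$, $h_0$, $h_j$ and their derivatives from Assumptions~\ref{assump:fns_continuity} and~\ref{assump:constraint_fns}. The only cosmetic difference is that the paper phrases the last step as ``continuous functions on a compact domain are bounded,'' whereas you make the bound explicit via the Lipschitz inequality about a reference point and then pass to the relaxed input by convex combination; both arguments are equivalent here and yield the same uniform constant.
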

\begin{proof}
	The result follows immediately from the continuity of $f$, $\frac{\partial f}{\partial x}$, $\frac{\partial f}{\partial u}$, $h_0$, $\frac{\partial h_0}{\partial x}$, $h_j$, and $\frac{\partial h_j}{\partial x}$ for each $j \in {\cal J}$, as stated in Assumptions \ref{assump:fns_continuity} and \ref{assump:constraint_fns}, and the fact that each of the arguments to these functions can be constrained to a compact domain, which follows from Lemma \ref{lemma:x_bounded} and the compactness of $U$ and $\Sigma_r^q$.
\end{proof}

An application of this corollary leads to a fundamental result:
\begin{theorem}
  \label{thm:existence_and_uniqueness}
	For each $\xi \in {\cal X}_r$ Differential Equation \eqref{eq:traj_xi} has a unique solution.
\end{theorem}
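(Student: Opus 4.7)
The plan is to invoke a classical Carathéodory existence-and-uniqueness theorem for ODEs with measurable, bounded drivers, after verifying its hypotheses for the driven vector field $F_\xi(t,x) = f\bigl(t,x,u(t),d(t)\bigr)$. Since $\xi = (u,d) \in {\cal X}_r$, both $u$ and $d$ are measurable and essentially bounded (taking values in the bounded sets $U$ and $\Sigma_r^q$, respectively). Combined with the joint continuity of $f$ guaranteed by Assumption \ref{assump:fns_continuity}, this makes $F_\xi(\cdot,x)$ measurable for each fixed $x$ and $F_\xi(t,\cdot)$ continuous for almost every $t$.

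The key step is to establish a uniform-in-$t$ Lipschitz bound in $x$. Using the convex-combination representation in Equation \eqref{eq:compact_traj_xi} together with Assumption \ref{assump:fns_continuity}\eqref{assump:f_lipschitz}, one has for almost every $t \in [0,1]$ and all $x_1,x_2 \in \R^n$,
\begin{equation*}
\bigl\|F_\xi(t,x_1) - F_\xi(t,x_2)\bigr\|_2 \leq \sum_{i=1}^q d_i(t) \, L \, \|x_1 - x_2\|_2 = L \, \|x_1 - x_2\|_2,
\end{equation*}
since $d(t) \in \Sigma_r^q$ implies $\sum_i d_i(t) = 1$ with $d_i(t) \geq 0$. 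Together with the linear-growth estimate $\|F_\xi(t,x)\|_2 \leq K(1 + \|x\|_2)$ derived in the proof of Lemma \ref{lemma:x_bounded}, the hypotheses of the classical Carathéodory theorem are met, which yields existence of an absolutely continuous solution on some interval $[0,\tau]$.

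Global existence on $[0,1]$ is then immediate: either one appeals to the global version of the Carathéodory theorem using the linear growth bound above, or one notes that the a priori bound provided by Lemma \ref{lemma:x_bounded} precludes finite-time escape, so the local solution extends to all of $[0,1]$. For uniqueness, suppose $x_1, x_2$ are two solutions with $x_1(0) = x_2(0) = x_0$; the integral form of Equation \eqref{eq:traj_xi} combined with the Lipschitz bound above gives
\begin{equation*}
\bigl\|x_1(t) - x_2(t)\bigr\|_2 \leq L \int_0^t \bigl\|x_1(s) - x_2(s)\bigr\|_2 \, ds,
\end{equation*}
and the Bellman-Gronwall inequality (Lemma 5.6.4 in \cite{Polak1997}) forces $x_1 \equiv x_2$.

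The main subtlety, rather than an obstacle, is verifying the measurability of $t \mapsto F_\xi(t,x)$ for each fixed $x$; this follows from the joint continuity of $f$ and the measurability of $(u,d)$ via the standard composition rule for Carathéodory functions. Everything else is a routine invocation of classical ODE theory given the estimates already assembled in Lemma \ref{lemma:x_bounded} and Corollary \ref{corollary:fns_bounded}.
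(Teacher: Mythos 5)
Your proof is correct, and it reaches the same destination as the paper's by the same high-level strategy---verify Lipschitz-type estimates and invoke classical ODE theory---but the two arguments emphasize different hypotheses, and yours emphasizes the ones the ODE theorem actually needs. The paper's proof devotes its effort to showing that $f$ in Equation \eqref{eq:compact_traj_xi} is Lipschitz in its fourth argument $d$ (with constant $Cq$ from Corollary \ref{corollary:fns_bounded}) and then cites the standard theorem of Section 2.4.1 in \cite{Vidyasagar2002} ``as a direct extension.'' That cited theorem assumes continuity in $t$, whereas here the time dependence enters through $u(t)$ and $d(t)$, which are merely measurable and essentially bounded; and Lipschitz continuity in $d$ is not the hypothesis driving existence or uniqueness (it matters later, e.g.\ in Lemma \ref{lemma:x_sequential_continuity}, for continuity of $\xi \mapsto x^{(\xi)}$). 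Your proof instead verifies exactly the Carath\'eodory hypotheses: measurability of $t \mapsto f\bigl(t,x,u(t),d(t)\bigr)$, the uniform-in-$t$ Lipschitz bound in $x$ obtained from the convexity of the weights $d_i(t)$, and the linear-growth bound, closing uniqueness explicitly with Bellman--Gronwall. This is the rigorous packaging of what the paper's phrase ``direct extension'' implicitly relies on, at the cost of a slightly longer argument. One remark: since Lemma \ref{lemma:x_bounded} is stated for ``a solution,'' invoking it as an a priori bound to rule out finite-time escape is legitimate and not circular, though the global linear-growth route you also mention sidesteps the question entirely.
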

\begin{proof}
  First let us note that $f$, as defined in Equation \eqref{eq:compact_traj_xi}, is also Lipschitz with respect to its fourth argument. Indeed, given $t \in [0,1]$, $x \in \R^n$, $u \in U$, and $d_1,d_2 \in \Sigma_r^q$,
  \begin{equation}
    \begin{aligned}
      \bigl\| f( t, x, u, d_1 ) - f( t, x, u, d_2 ) \bigr\|_2
      &= \left\| \sum_{i=1}^q \bigl( d_{1,i} - d_{2,i} \bigr) f( t, x, u, e_i ) \right\|_2 \\
      &\leq C q \| d_1 - d_2 \|_2,
    \end{aligned}
  \end{equation}
  where $C > 0$ is as in Corollary \ref{corollary:fns_bounded}.
  
  Given that $f$ is Lipschitz with respect to all its arguments, the result follows as a direct extension of the classical existence and uniqueness theorem for nonlinear differential equations (see Section 2.4.1 in \cite{Vidyasagar2002} for a standard version of this theorem).
\end{proof}

Therefore, since $x^{(\xi)}$ is unique, it is not an abuse of notation to denote the solution of Differential Equation \eqref{eq:traj_xi} by $x^{(\xi)}$. Next, we develop an algorithm to solve the Switched System Optimal Control Problem.

\section{Optimization Algorithm}
\label{sec:optimization_algorithm}

In this section, we describe our optimization algorithm. 
Our approach proceeds as follows: first, we treat a given pure discrete input as a relaxed discrete input by allowing it to belong ${\cal D}_r$; second, we perform optimal control over the relaxed optimization space; and finally, we project the computed relaxed input into a pure input. 
Before describing our algorithm in detail, we begin with a brief digression to motivate why such a roundabout construction is required in order to devise a first order numerical optimal control scheme for the Switched System Optimal Control Problem defined in Equation \eqref{eq:ssocp}.

\subsection{Directional Derivatives}
\label{subsec:directional_derivatives}

To appreciate why the construction of a numerical scheme to find the local minima of the Switched System Optimal Control Problem defined in Equation \eqref{eq:ssocp} is difficult, suppose that the optimization in the problem took place over the relaxed optimization space rather than the pure optimization space. The Relaxed Switched System Optimal Control Problem is then defined as:
\begin{RSSOCP}
  \begin{equation}
	\label{eq:rssocp}
    \min_{\xi \in {\cal X}_r} \left\{ J(\xi) \mid \Psi(\xi) \leq 0 \right\}.
  \end{equation}
\end{RSSOCP}

The local minimizers of this problem are then defined as follows:
\begin{definition}
  \label{def:local_minimizer_Xr}
  Let us denote an $\epsilon$--ball in the ${\cal X}$--norm centered at $\xi$ by:
  \begin{equation}
    \label{eq:nbhd_X}
    {\cal N}_{\cal X}(\xi,\epsilon) = \left\{ \bar{\xi} \in {\cal X}_r \mid \bigl\lVert \xi - \bar{\xi} \bigr\rVert_{\cal X} < \epsilon \right\}.
  \end{equation}
	We say that a point $\xi \in {\cal X}_r$ is a \emph{local minimizer of the Relaxed Switched System Optimal Control Problem} defined in Equation \eqref{eq:rssocp} if $\Psi(\xi) \leq 0$ and there exists $\epsilon > 0$ such that $J( \hat{\xi} ) \geq J( \xi )$ for each $\hat{\xi} \in {\cal N}_{\cal X}(\xi,\epsilon) \cap \left\{ \bar{\xi} \in {\cal X}_r \mid \Psi( \bar{\xi} ) \leq 0 \right\}$.
\end{definition}
Given this definition, a first order numerical optimal control scheme can exploit the vector space structure of the relaxed optimization space in order to define directional derivatives that find local minimizers for this Relaxed Switched System Optimal Control Problem.

To concretize how such an algorithm would work, we introduce some additional notation. Given $\xi \in {\cal X}_r$, ${\cal Y}$ a Euclidean space, and any function $G: {\cal X}_r \to {\cal Y}$, the directional derivative of $G$ at $\xi$, denoted $\D{G}(\xi;\cdot): {\cal X} \to {\cal Y}$, is computed as:
\begin{equation}
  \label{eq:operator_D_definition}
  \D{G}(\xi;\xi') = \lim_{\lambda \downarrow 0} \frac{G( \xi + \lambda \xi' ) - G(\xi)}{\lambda}.
\end{equation}

To understand the connection between directional derivatives and local minimizers, suppose the Relaxed Switched System Optimal Control Problem is unconstrained and consider the first order approximation of the cost $J$ at a point $\xi \in {\cal X}_r$ in the $\xi' \in {\cal X}$ direction by employing the directional derivative $\D{J}(\xi;\xi')$:
\begin{equation}
  \label{eq:J_first_order_expansion}
  J( \xi + \lambda \xi' ) \approx J(\xi) + \lambda \d{J}(\xi;\xi'),
\end{equation}
where $0 \leq \lambda \ll 1$. 
It follows that if $\D{J}(\xi;\xi')$, whose existence is proven in Lemma \ref{lemma:DJ_definition}, is negative, then it is possible to decrease the cost by moving in the $\xi'$ direction. 
That is if the directional derivative of the cost at a point $\xi$ is negative along a certain direction, then for each $\epsilon > 0$ there exists a $\hat{\xi} \in {\cal N}_{\cal X}(\xi,\epsilon)$ such that $J(\hat{\xi}) <  J(\xi)$. 
Therefore if $\D{J}(\xi;\xi')$ is negative, then $\xi$ is not a local minimizer of the unconstrained Relaxed Switched System Optimal Control Problem.

Similarly, for the general Relaxed Switched System Optimal Control Problem, consider the first order approximation of each of the component constraint functions, $\psi_{j,t}$ for each $j \in {\cal J}$ and $t \in [0,1]$ at a point $\xi \in {\cal X}_r$ in the $\xi \in {\cal X}$ direction by employing the directional derivative $\D{\psi_{j,t}}(\xi;\xi')$:
\begin{equation}
  \label{eq:hj_first_order_expansion}
  \psi_{j,t}( \xi + \lambda \xi' ) \approx \psi_{j,t}(\xi) + \lambda \d{\psi_{j,t}}(\xi;\xi'),
\end{equation}
where $0 \leq \lambda \ll 1$. It follows that if $\D{\psi_{j,t}}(\xi;\xi')$, whose existence is proven in Lemma \ref{lemma:Dhj_definition}, is negative, then it is possible to decrease the infeasibility of $\phi_t(\xi)$ with respect to $h_j$ by moving in the $\xi'$ direction. That is if the directional derivatives of the cost and all of the component constraints for all $t \in [0,1]$ at a point $\xi$ are negative along a certain direction and $\Psi(\xi) = 0$, then for each $\epsilon > 0$ there exists a $\hat{\xi} \in \{\bar{\xi} \in {\cal X}_r \mid \Psi(\bar{\xi}) \leq 0\} \cap {\cal N}_{\cal X}(\xi,\epsilon)$ such that $J(\hat{\xi}) <  J(\xi)$. Therefore, if $\Psi(\xi) = 0$ and $\D{J}(\xi;\xi')$ and $\D{\psi_{j,t}}(\xi;\xi')$ are negative for all $j \in {\cal J}$ and $t \in [0,1]$, then $\xi$ is not a local minimizer of the Relaxed Hybrid Optimal Control Problem. Similarly, if $\Psi(\xi) < 0$ and $\D{J}(\xi;\xi')$ is negative, then $\xi$ is not a local minimizer of the Relaxed Hybrid Optimal Control Problem, even if $\D{\psi_{j,t}}(\xi;\xi')$ is greater than zero for all $j \in {\cal J}$ and $t \in [0,1]$. 


Returning to the Switched System Optimal Control Problem, it is unclear how to define a directional derivative for the pure discrete input space since it is not a vector space. Therefore, in contrast to the relaxed discrete and continuous input spaces, the construction of a first order numerical scheme for the optimization of the pure discrete input is non-trivial. One could imagine trying to exploit the directional derivatives in the relaxed optimization space in order to construct a first order numerical optimal control algorithm for the Switched System Optimal Control Problem, but this would require devising some type of connection between points belonging to the pure and relaxed optimization spaces. 

%

\subsection{The Weak Topology on the Optimization Space and Local Minimizers}

To motivate the type of relationship required between the pure and relaxed optimization space in order to construct a first order numerical optimal control scheme, we begin by describing the Chattering Lemma:
\begin{theorem}[Theorem 1 in \cite{Bengea2005}]
	\label{thm:bangbang_result}
	For each $\xi_r \in {\cal X}_r$ and $\epsilon > 0$ there exists a $\xi_p \in {\cal X}_p$ such that for each $t \in [0,1]$:
	\begin{equation}
		\left\| \phi_t(\xi_r) - \phi_t(\xi_p) \right\|_2 \leq \epsilon,
	\end{equation}
	where $\phi_t(\xi_r)$ and $\phi_t(\xi_p)$ are solutions to Differential Equation \eqref{eq:traj_xi} corresponding to $\xi_r$ and $\xi_p$, respectively.
\end{theorem}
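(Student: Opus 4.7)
The plan is to set $u_p = u_r$ and build $d_p \in {\cal D}_p$ by partitioning $[0,1]$ into $N$ equal subintervals and, on each one, sweeping through the pure modes $e_1,\ldots,e_q$ for durations proportional to the average value of each coordinate of $d_r$ over that subinterval. As $N \to \infty$ the resulting switched trajectory should converge uniformly to the relaxed one, and so choosing $N$ large enough yields the desired $\epsilon$ bound. The whole argument rests on the exact cancellation $\int_{I_k} d_p(s)\,ds = \int_{I_k} d_r(s)\,ds$ that this construction enforces on each piece $I_k$ of the partition.

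Concretely, for $N \in \N$, set $I_k = [k/N, (k+1)/N]$ for $k = 0,\ldots,N-1$ and $\alpha_{k,i} = N \int_{I_k} d_{r,i}(s)\,ds$. Since $d_r(s) \in \Sigma^q_r$ almost everywhere, $\alpha_{k,i} \in [0,1]$ and $\sum_{i=1}^q \alpha_{k,i} = 1$. Define $d_p(t) = e_i$ for $t \in [k/N + N^{-1}\sum_{j<i}\alpha_{k,j},\; k/N + N^{-1}\sum_{j\le i}\alpha_{k,j})$. Then $d_p$ is piecewise constant with at most $qN$ switches, so $d_p \in L^2([0,1],\Sigma^q_p)\cap BV([0,1],\Sigma^q_p) = {\cal D}_p$ by Theorem \ref{thm:bounded_variation}.

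For the trajectory estimate, let $g_i(s) = f(s,\phi_s(\xi_r),u_r(s),e_i)$ and write
\begin{equation}
  \phi_t(\xi_p) - \phi_t(\xi_r) = \int_0^t \bigl[ f(s, \phi_s(\xi_p), u_r(s), d_p(s)) - f(s, \phi_s(\xi_r), u_r(s), d_p(s)) \bigr]\, ds + R_N(t),
\end{equation}
where $R_N(t) = \int_0^t \sum_{i=1}^q g_i(s)\bigl(d_{p,i}(s) - d_{r,i}(s)\bigr)\, ds$. The first bracket has norm bounded by $L\|\phi_s(\xi_p) - \phi_s(\xi_r)\|_2$ via Assumption \ref{assump:fns_continuity}. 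For $R_N$, on each $I_k$ I subtract the subinterval mean $\bar g_i^k = N\int_{I_k} g_i\,ds$, which is free by construction of $\alpha_{k,i}$, and bound the residual by $\sum_i \int_{I_k} \|g_i(s) - \bar g_i^k\|_2\, ds$. Summing over $k$, the sum of subinterval oscillations of $g_i$ is dominated by $\|g_i\|_{BV}/N$; and $g_i$ lies in $BV$ by Assumption \ref{assump:fns_continuity} together with the fact that $s \mapsto \phi_s(\xi_r)$ is Lipschitz in $s$ (via Corollary \ref{corollary:fns_bounded}) and $u_r \in BV$. Hence $\|R_N\|_{L^\infty} \le C/N$ for a constant $C$ independent of $t$. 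Feeding this into the Bellman--Gronwall inequality gives $\|\phi_t(\xi_p) - \phi_t(\xi_r)\|_2 \le (C/N)\, e^L$, and picking $N > Ce^L/\epsilon$ finishes the proof.

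The main obstacle is the oscillation estimate on $g_i$, since $u_r$ enters pointwise with only $BV$ regularity, not continuity. A naive Lipschitz-style bound like $\|g_i(s)-\bar g_i^k\|_2 \le L/N$ fails at the jumps of $u_r$. The fix is to replace the uniform modulus of continuity by the standard $BV$ fact that the sum of the oscillations of $g_i$ over any finite partition of $[0,1]$ is dominated by $\|g_i\|_{BV}$, which in turn is controlled by the $BV$ seminorms of $u_r$ and $\phi_{(\cdot)}(\xi_r)$. If one is unwilling to invoke the $BV$ theory at this level, a coarser argument combining the Lebesgue differentiation theorem with dominated convergence still gives $\|R_N\|_{L^\infty} \to 0$ as $N \to \infty$, which is enough to conclude.
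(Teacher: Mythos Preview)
Your argument is correct. The paper does not actually prove this theorem---it is quoted from \cite{Bengea2005} and \cite{berkovitz1974optimal}---so there is no in-paper proof to match against directly. That said, your construction is exactly the pulse-width-modulation map $\mathcal{P}_N$ of Equation~\eqref{eq:pwm} applied to box averages, which is precisely what the paper uses (with the Haar partial sum $\mathcal{F}_N$ playing the role of your averaging step) to prove its constructive strengthening, Theorem~\ref{thm:quality_of_approximation}. The main difference is in how the chattering remainder $R_N$ is estimated: the paper, in Lemma~\ref{lemma:weak_convergence}, writes the antiderivative $w_{ik}(t)=\int_{t_k}^t(p_{ik}-\mathds{1}_{A_{ik}})\,ds$, observes $\|w_{ik}\|_{L^\infty}\le p_{ik}/2^N$, and integrates by parts against $\dot g_i$ to get the $O(1/N)\|g\|_{BV}$ bound; you instead subtract the subinterval mean $\bar g_i^k$ and bound by the oscillation, then use additivity of variation over the partition. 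Both routes give the same $O(1/N)$ rate and both need $g_i\in BV$, which you correctly trace back to $u_r\in BV$ and the Lipschitz dependence of $f$ on its arguments. One small point you glossed over: when $t$ falls strictly inside some $I_{k_0}$, the cancellation $\int_{I_{k_0}}(d_{p,i}-d_{r,i})=0$ is not available for the partial piece $[k_0/N,t]$, but the crude bound $\int_{k_0/N}^t\|g_i\|\cdot|d_{p,i}-d_{r,i}|\,ds\le \|g_i\|_{L^\infty}/N$ handles it.
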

\noindent The theorem as is proven in \cite{berkovitz1974optimal} is not immediately applicable to switched systems, but a straightforward extension as is proven in Theorem 1 in \cite{Bengea2005} makes that feasible. Note that the theorem as stated in \cite{Bengea2005}, considers only two vector fields (i.e. $q = 2$), but as the author's of the theorem remark, their proof can be generalized to an arbitrary number of vector fields. A particular version of this existence theorem can also be found in Lemma 1 \cite{Sussmann1972}.

Theorem \ref{thm:bangbang_result} says that the behavior of any element of the relaxed optimization space with respect to the trajectory of switched system can be approximated arbitrarily well by a point in the pure optimization space. Unfortunately, the relaxed and pure point as in Theorem \ref{thm:bangbang_result} need not be near one another in the metric induced by the ${\cal X}$-norm. Therefore, though there exists a relationship between the pure and relaxed optimization spaces, this connection is not reflected in the topology induced by the ${\cal X}$-norm; however, in a particular topology over the relaxed optimization space, a relaxed point and the pure point that approximates it as in Theorem \ref{thm:bangbang_result} can be made arbitrarily close:
\begin{definition}
	\label{definition:weak_topology}
  We say that the \emph{weak topology on ${\cal X}_r$ induced by Differential Equation \eqref{eq:traj_xi}} is the smallest topology on ${\cal X}_r$ such that the map $\xi \mapsto x^{(\xi)}$ is continuous.
  Moreover, an $\epsilon$--ball in the weak topology centered at $\xi$ is denoted by:
  \begin{equation}
		\label{eq:nbhd_weak_topology}
    {\cal N}_w(\xi,\epsilon) = \left\{ \bar{\xi} \in {\cal X}_r \mid \bigl\| x^{(\xi)} - x^{(\bar{\xi})} \bigr\|_{L^2} < \epsilon \right\}.
  \end{equation}
\end{definition}

A longer introduction to weak topology can be found in Section 3.8 in \cite{Rudin1991} or Section 2.3 in \cite{kurdila2005convex}, but before continuing we make an important observation that aids in motivating the ensuing analysis. In order to understand the relationship between the topology generated by the ${\cal X}$-norm on ${\cal X}_r$ and the weak topology on ${\cal X}_r$, observe that $\phi_t$ is Lipschitz continuous for all $t \in [0,1]$ (this is proven in Corollary \ref{corollary:x_lipschitz}). Therefore, for any $\epsilon > 0$ there exists a $\delta > 0$ such that if a pair of points of the relaxed optimization space belong to the same $\delta$--ball in the ${\cal X}$--norm, then the pair of points belong to the same $\epsilon$--ball in the weak topology on ${\cal X}_r$. 

Notice, however, that it is not possible to show that for every $\epsilon > 0$ that there exists a $\delta > 0$ such that if a pair of points of the relaxed optimization space belong to the same $\delta$--ball in the weak topology on ${\cal X}_r$, then the pair of points belong to the same $\epsilon$--ball in the ${\cal X}$--norm. More informally, a pair of points may generate trajectories that are near one another in the $L^2$--norm while not being near one another in the ${\cal X}$--norm. 
Since the weak topology, in contrast to the ${\cal X}$--norm induced topology, naturally places points that generate nearby trajectories next to one another, we extend Definition \ref{definition:weak_topology} in order to define a weak topology on ${\cal X}_p$ which we then use to define a notion of local minimizer for the Switched System Optimal Control Problem:
\begin{definition}
	\label{definition:SSOCP_minimizers}
  We say that a point $\xi \in {\cal X}_p$ is a \emph{local minimizers of the Switched System Optimal Control Problem} defined in Equation \eqref{eq:ssocp} if $\Psi(\xi) \leq 0$ and there exists $\epsilon > 0$ such that $J( \hat{\xi} ) \geq J(\xi)$ for each $\hat{\xi} \in {\cal N}_w(\xi,\epsilon) \cap \left\{ \bar{\xi} \in {\cal X}_p \mid \Psi( \bar{\xi} ) \leq 0 \right\}$, where ${\cal N}_w$ is as defined in Equation \eqref{eq:nbhd_weak_topology}.
\end{definition}

With this definition of local minimizer, we can exploit Theorem \ref{thm:bangbang_result}, even just as an existence result, along with the notion of directional derivative over the relaxed optimization space to construct a necessary condition for optimality for the Switched System Optimal Control Problem.


\subsection{An Optimality Condition}

Motivated by the approach undertaken in \cite{Polak1997}, we define an \emph{optimality function}, $\theta: {\cal X}_p \to (-\infty,0]$ that determines whether a given point is a local minimizer of the Switched System Optimal Control Problem and a corresponding \emph{descent direction}, $g: {\cal X}_p \to {\cal X}_r$: 
\begin{equation}
  \label{eq:theta_definition}
  \theta(\xi) = \min_{\xi' \in {\cal X}_r} \zeta( \xi, \xi' ), \qquad g(\xi) = \argmin_{\xi' \in {\cal X}_r} \zeta( \xi, \xi' ),
\end{equation}
where 
\begin{equation}
  \label{eq:zeta_definition}
  \zeta(\xi,\xi') =
  \begin{cases}
    \max \left\{ 
      \d{J}(\xi; \xi' - \xi), \max_{ j \in {\cal J},\; t\in[0,1] }\limits \D{\psi_{j,t}}(\xi;\xi'-\xi) + \gamma \Psi(\xi) 
    \right\} + \| \xi' - \xi \|_{\cal X}
    & \text{if}\ \Psi(\xi) \leq 0, \\
    \max \left\{ 
      \d{J}(\xi; \xi' - \xi) - \Psi(\xi), \max_{ j \in {\cal J},\; t\in[0,1] }\limits \D{\psi_{j,t}}(\xi;\xi'-\xi) 
    \right\} + \| \xi' - \xi \|_{\cal X}
    & \text{if}\ \Psi(\xi) > 0, \\
  \end{cases}
\end{equation}
where $\gamma > 0$ is a design parameters. For notational convenience in the previous equation we have left out the natural inclusion of $\xi$ from ${\cal X}_p$ to ${\cal X}_r$. Before proceeding, we make two observations. First, note that $\theta(\xi) \leq 0$ for each $\xi \in {\cal X}_p$, since we can always choose $\xi' = \xi$ which leaves the trajectory unmodified. Second, note that at a point $\xi \in {\cal X}_p$ the directional derivatives in the optimality function consider directions $\xi' - \xi$ with $\xi' \in {\cal X}_r$ in order to ensure that first order approximations constructed as in Equations \eqref{eq:J_first_order_expansion} and \eqref{eq:hj_first_order_expansion} belong to the relaxed optimization space ${\cal X}_r$ which is convex (e.g. for $0 < \lambda \ll 1$, $J(\xi) + \lambda \d{J}(\xi;\xi' - \xi) \approx J( (1 - \lambda) \xi + \lambda \xi' )$  where $(1 - \lambda) \xi + \lambda \xi' \in  {\cal X}_r$). 

To understand how the optimality function behaves, consider several cases. First, if $\theta(\xi) < 0$ and $\Psi(\xi) = 0$, then there exists a $\xi' \in {\cal X}_r$ such that both $\D{J}(\xi;\xi'-\xi)$ and $\D{\psi_{j,t}}(\xi;\xi'-\xi)$ are negative for all $j \in {\cal J}$ and $t \in [0,1]$. By employing the aforementioned first order approximation, we can show that for each $\epsilon > 0$ there exists an $\epsilon$--ball in the ${\cal X}$-norm centered at $\xi$ such that $J(\hat{\xi}) < J(\xi)$ for some $\hat{\xi} \in \{\bar{\xi} \in {\cal X}_r \mid \Psi(\bar{\xi}) \leq 0\} \cap {\cal N}_{\cal X}(\xi,\epsilon)$. As a result and because the cost and each of the component constraint functions are assumed Lipschitz continuous and $\phi_t$ for all $t \in [0,1]$ is Lipschitz continuous as is proven in Corollary \ref{corollary:x_lipschitz}, an application of Theorem \ref{thm:bangbang_result} allows us to show that for each $\epsilon > 0$ there exists an $\epsilon$--ball in the weak topology on ${\cal X}_p$ centered at $\xi$ such that $J(\xi_p) < J(\xi)$ for some $\xi_p \in \{\bar{\xi} \in {\cal X}_p \mid \Psi(\bar{\xi}) \leq 0\} \cap {\cal N}_w(\xi,\epsilon)$. Therefore, it follows that if $\theta(\xi) < 0$ and $\Psi(\xi) = 0$, then $\xi$ is not a local minimizer of the Switched System Optimal Control Problem.

Second, if $\theta(\xi) < 0$ and $\Psi(\xi) < 0$, then there exists a $\xi' \in {\cal X}_r$ such that $\D{J}(\xi;\xi'-\xi)$ is negative. Though $\D{\psi_{j,t}}(\xi;\xi'-\xi)$ maybe positive for some $j \in {\cal J}$ and $t \in [0,1]$, by employing the aforementioned first order approximation, we can show that for each $\epsilon > 0$ there exists an $\epsilon$--ball in the ${\cal X}$-norm centered at $\xi$ such that $J(\hat{\xi}) < J(\xi)$ for some $\hat{\xi} \in \{\bar{\xi} \in {\cal X}_r \mid \Psi(\bar{\xi}) \leq 0\} \cap {\cal N}_{\cal X}(\xi,\epsilon)$. As a result and because the cost and each of the constraint functions are assumed Lipschitz continuous and $\phi_t$ for all $t \in [0,1]$ is Lipschitz continuous as is proven in Corollary \ref{corollary:x_lipschitz}, an application of Theorem \ref{thm:bangbang_result} allows us to show that for each $\epsilon > 0$ there exists an $\epsilon$--ball in the weak topology on ${\cal X}_p$ centered at $\xi$ such that $J(\xi_p) < J(\xi)$ for some $\xi_p \in \{\bar{\xi} \in {\cal X}_p \mid \Psi(\bar{\xi}) \leq 0\} \cap {\cal N}_w(\xi,\epsilon)$. Therefore, it follows that if $\theta(\xi) < 0$ and $\Psi(\xi) < 0$, then $\xi$ is not a local minimizer of the Switched System Optimal Control Problem. In this case, the addition of the $\Psi$ term in $\zeta$ ensures that a direction that reduces the cost does not simultaneously require a decrease in the infeasibility in order to be considered as a potential descent direction.

Third, if $\theta(\xi) < 0$ and $\Psi(\xi) > 0$, then there exists a $\xi' \in {\cal X}_r$ such that $\D{\psi_{j,t}}(\xi;\xi'-\xi)$ is negative for all $j \in {\cal J}$ and $t \in [0,1]$. By employing the aforementioned first order approximation, we can show for each $\epsilon > 0$ there exists an $\epsilon$--ball in the ${\cal X}$-norm centered at $\xi$ such that $\Psi(\hat{\xi}) < \Psi(\xi)$ for some $\hat{\xi} \in {\cal N}_{\cal X}(\xi,\epsilon)$. As a result and because each of the constraint functions are assumed Lipschitz continuous and $\phi_t$ for all $t \in [0,1]$ is Lipschitz continuous as is proven in Corollary \ref{corollary:x_lipschitz}, an application of Theorem \ref{thm:bangbang_result} allows us to show that for each $\epsilon > 0$ there exists an $\epsilon$--ball in the weak topology on ${\cal X}_p$ centered at $\xi$ such that $\Psi(\xi_p) < \Psi(\xi)$ for some $\xi_p \in {\cal N}_w(\xi,\epsilon)$. Therefore, though it is clear that $\xi$ is not a local minimizer of the Switched System Optimal Control Problem since $\Psi(\xi) > 0$, it follows that if $\theta(\xi) < 0$ and $\Psi(\xi) > 0$, then it is possible to locally reduce the infeasibility of $\xi$. In this case, the addition of the $\D{J}$ term in $\zeta$ serves as a heuristic to ensure that the reduction in infeasibility does not come at the price of an undue increase in the cost. 

These observations are formalized in Theorem \ref{thm:theta_optimality_function} where we prove that if $\xi$ is a local minimizer of the Switched System Optimal Control Problem, then $\theta(\xi) = 0$, or that $\theta(\xi) = 0$ is a necessary condition for the optimality of $\xi$. To illustrate the importance of $\theta$ satisfying this property, recall how the directional derivative of a cost function is employed during unconstrained finite dimensional optimization. Since the directional derivative of the cost function at a point being equal to zero in all directions is a necessary condition for optimality for an unconstrained finite dimensional optimization problem, it is used as a stopping criterion by first order numerical algorithms (Corollary 1.1.3 and Algorithm Model 1.2.23 in \cite{Polak1997}). Similarly, by satisfying Theorem \ref{thm:theta_optimality_function}, $\theta$ is a necessary condition for optimality for the Switched System Optimal Control Problem and can therefore be used as a stopping criterion for a first order numerical optimal control algorithm trying to solve the Switched System Optimal Control Problem. Given $\theta$'s importance, we say \emph{a point, $\xi \in {\cal X}_p$, satisfies the optimality condition} if $\theta(\xi) = 0$.


\subsection{Choosing a Step Size and Projecting the Relaxed Discrete Input}
\label{subsec:project_relaxed_discrete_input}

Impressively, Theorem \ref{thm:bangbang_result} just as an existence result is sufficient to allow for the construction of an optimality function that encapsulates a necessary condition for optimality for the Switched System Optimal Control Problem. Unfortunately, Theorem \ref{thm:bangbang_result} is unable to describe how to exploit the descent direction, $g(\xi)$, since its proof provides no means to construct a pure input that approximates the behavior of a relaxed input while controlling the quality of the approximation. In this paper, we extend Theorem \ref{thm:bangbang_result} by devising a scheme that remedies this shortcoming. This allows for the development of a numerical optimal control algorithm for the Switched System Optimal Control Problem that first, performs optimal control over the relaxed optimization space and then projects the computed relaxed control into a pure control.

Before describing the construction of this projection, we describe how the descent direction, $g(\xi)$, can be exploited to construct a point in the relaxed optimization space that either reduces the cost (if the $\xi$ is feasible) or the infeasibility (if $\xi$ is infeasibile). Comparing our approach to finite dimensional optimization, the argument that minimizes $\zeta$ is a ``direction'' along which to move the inputs in order to reduce the cost in the relaxed optimization space, but we require an algorithm to choose a step size. We employ a line search algorithm similar to the traditional Armijo algorithm used during finite dimensional optimization in order to choose a step size (Algorithm Model 1.2.23 in \cite{Polak1997}). Fixing $\alpha \in (0,1)$ and $\beta \in ( 0, 1 )$, a step size for a point $\xi \in {\cal X}_p$ is chosen by solving the following optimization problem:
\begin{equation}
	\label{eq:armijo_line_search}
	\mu(\xi) = 
	\begin{cases}
    \min \Bigl\{ k \in \N \mid J\big( \xi + \beta^k ( g(\xi) - \xi ) \big) - J(\xi) \leq \alpha \beta^k \theta(\xi), \\ 
    \phantom{\min \Bigl\{ k \in \N \mid {}} \Psi\big( \xi + \beta^k( g(\xi) - \xi ) \big) \leq \alpha \beta^k \theta(\xi) \Bigr\}
    & \text{if}\ \Psi(\xi) \leq 0, \\
    \min \Bigl\{ k \in \N \mid \Psi\big( \xi + \beta^k ( g(\xi) - \xi ) \big) - \Psi(\xi) \leq \alpha \beta^k \theta(\xi) \Bigr\}
    & \text{if}\ \Psi(\xi) > 0.
	\end{cases}
\end{equation}
In Lemma \ref{lemma:mu_upper_bound}, we prove that for $\xi \in {\cal X}_p$, if $\theta(\xi) < 0$, then $\mu(\xi) < \infty$. Therefore, if $\theta(\xi) < 0$ for some $\xi \in {\cal X}_p$, then we can construct a descent direction, $g(\xi)$, and a step size, $\mu(\xi)$, and a new point $\left(\xi + \beta^{\mu(\xi)}( g(\xi) - \xi )\right) \in {\cal X}_r $ that produces a reduction in the cost (if $\xi$ is feasible) or a reduction in the infeasibility (if $\xi$ is infeasible).

We define the projection that takes this constructed point to a point belonging the pure optimization space while controlling the quality of approximation in two steps. First, we approximate the relaxed input by its $N$--th partial sum approximation via the Haar wavelet basis. To define this operation, ${\cal F}_N: L^2( [0,1], \R ) \cap BV( [0,1], \R ) \to L^2( [0,1], \R ) \cap BV( [0,1], \R )$, 
we employ the Haar wavelet (Section 7.2.2 in \cite{Mallat1999}): 
\begin{equation}
  \lambda(t) =
  \begin{cases}
    1 & \text{if}\ t \in \left[0,\frac{1}{2}\right), \\
    -1 & \text{if}\ t \in \left[ \frac{1}{2}, 1 \right), \\
    0 & \text{otherwise}. \\
  \end{cases}
\end{equation}
Letting $\mathds{1}: \R \to \R$ be the constant function equal to one and $b_{kj}: [0,1] \to \R$ for $k \in \N$ and $j \in \{ 0, \ldots, 2^k - 1\}$, be defined as $b_{kj}(t) = \lambda\big( 2^kt - j \big)$, the projection ${\cal F}_N$  for some $c \in L^2( [0,1], \R ) \cap BV( [0,1], \R ) \to L^2( [0,1], \R ) \cap BV( [0,1], \R )$ is defined as:
\begin{equation}
  \label{eq:wavelet_approx}
	[ {\cal F}_N(c) ](t) = \langle c, \mathds{1} \rangle + \sum_{k=0}^N \sum_{j=0}^{2^k-1} \langle c, b_{kj} \rangle \frac{ b_{kj}(t) }{\| b_{kj} \|_{L^2}^2}.
\end{equation}
Note that the inner product here is the traditional Hilbert space inner product. 

This projection is then applied to each of the coordinates of an element in the relaxed optimization space. To avoid introducing additional notation, we let the coordinate-wise application of ${\cal F}_N$ to some relaxed discrete input $d \in {\cal D}_r$ be denoted as ${\cal F}_N(d)$ and similarly for some continuous input $u \in {\cal U}$. Lemma \ref{lemma:proj_reasonable} proves that for each $N \in \N$, each $t \in [0,1]$, and each $i \in \{1,\ldots,q\}$, $ \left[{\cal F}_N(d)\right]_i(t) \in [0,1]$ and $\sum_{i=1}^q \left[{\cal F}_N(d)\right]_i(t) = 1$ for the projection ${\cal F}_N(d)$. Therefore it follows that for each $d \in {\cal D}_r$, ${\cal F}_N(d) \in {\cal D}_r$.

Second, we project the output of ${\cal F}_N(d)$ to a pure discrete input by employing the function ${\cal P}_N: {\cal D}_r \to {\cal D}_p$ which computes the pulse width modulation of its argument with frequency $2^{-N}$:
\begin{equation}
	\label{eq:pwm}
	[ {\cal P}_N(d) ]_i(t) = 
	\begin{cases}
    1 & \text{if}\ t \in \left[ 2^{-N} \left( k + \sum_{j=1}^{i-1} d_j\left(\frac{k}{2^N}\right) \right), 2^{-N} \left( k + \sum_{j=1}^i d_j\left(\frac{k}{2^N}\right) \right) \right),\ k \in \left\{ 0, 1, \ldots, 2^N - 1 \right\}, \\
		0 & \text{otherwise}. \\
	\end{cases}
\end{equation}
Lemma \ref{lemma:proj_reasonable} proves that for each $N \in \N$, each $t \in [0,1]$, and each $i \in \{1,\ldots,q\}$, $\big[ {\cal P}_N \big( {\cal F}_N(d) \big) \big]_i(t) \in \{0,1\}$ and $\sum_{i=1}^q \big[ {\cal P}_N \big( {\cal F}_N(d) \big) \big]_i(t) = 1$. This proves that ${\cal P}_N \big( {\cal F}_N(d) \big) \in {\cal D}_p$ for each $d \in {\cal D}_r$. 

Fixing $N \in \N$, we compose the two projections and define $\rho_N:{\cal X}_r \to {\cal X}_p$ as:
\begin{equation}
	\label{eq:rho}
	\rho_N(u,d) = \Big( {\cal F}_N(u), {\cal P}_N\big( {\cal F}_N(d) \big) \Big).
\end{equation}
Critically, as shown in Theorem \ref{thm:quality_of_approximation}, this projection allows us to extend Theorem \ref{thm:bangbang_result} by constructing an upper bound that goes to zero as $N$ goes infinity between the error of employing the relaxed control rather than its projection in the solution of Differential Equation \eqref{eq:traj_xi}. Therefore in a fashion similar to applying the Armijo algorithm, we choose an $N \in \N$ at which to perform pulse width modulation by performing a line search. Fixing $\bar{\alpha} \in (0,\infty)$, $\bar{\beta} \in \left( \frac{1}{\sqrt{2}}, 1 \right)$, and $\omega \in (0,1)$, a frequency at which to perform pulse width modulation for a point $\xi \in {\cal X}_p$ is computed by solving the following optimization problem:
\begin{equation}
	\label{eq:armijo_pwm}
	\nu(\xi) = 
	\begin{cases}
		\min \Bigl\{k \in \N \mid J\big( \rho_k( \xi + \beta^{\mu(\xi)}( g(\xi) - \xi ) ) \big) - J(\xi) \leq \big( \alpha \beta^{\mu(\xi)} - \bar{\alpha} \bar{\beta}^k \big) \theta(\xi), \\
    \phantom{\min \big\{k \in \N \mid {}} \Psi\big( \rho_k( \xi + \beta^{\mu(\xi)}( g(\xi) - \xi ) ) \big) \leq 0,\ 
    \bar{\alpha} \bar{\beta}^k \leq ( 1 - \omega ) \alpha \beta^{\mu(\xi)} \Bigr\} 
    & \text{if}\ \Psi(\xi) \leq 0, \\
		\min \Bigl\{k \in \N \mid \Psi\big( \rho_k( \xi + \beta^{\mu(\xi)}( g(\xi) - \xi ) ) \big) - \Psi(\xi) \leq \big( \alpha \beta^{\mu(\xi)} - \bar{\alpha} \bar{\beta}^k \big) \theta(\xi), \\
    \phantom{\min \big\{k \in \N \mid {}} \bar{\alpha} \bar{\beta}^k \leq ( 1 - \omega ) \alpha \beta^{\mu(\xi)} \Bigr\} 
    & \text{if}\ \Psi(\xi) > 0.
	\end{cases}
\end{equation}
In Lemma \ref{lemma:nu_upper_bound}, we prove that for $\xi \in {\cal X}_p$, if $\theta(\xi) < 0$, then $\nu(\xi) < \infty$. Therefore, if $\theta(\xi) < 0$ for some $\xi \in {\cal X}_p$, then we can construct a descent direction, $g(\xi)$, a step size, $\mu(\xi)$, a frequency at which to perform pulse width modulation, $\nu\left(\xi\right)$, and a new point $\rho_{\nu(\xi)}\bigl( \xi + \beta^{\mu(\xi)} ( g(\xi) - \xi ) \bigr) \in {\cal X}_p$ that produces a reduction in the cost (if $\xi$ is feasible) or a reduction in the infeasibility (if $\xi$ is infeasible).

\subsection{Switched System Optimal Control Algorithm}

Consolidating our definitions, Algorithm \ref{algo:main_algo} describes our numerical method to solve the Switched System Optimal Control Problem. For analysis purposes, we define $\Gamma: {\cal X}_p \to {\cal X}_p$ by 
\begin{equation}
  \label{eq:gamma_def}
  \Gamma(\xi) = \rho_{\nu(\xi)} \big( \xi + \beta^{\mu(\xi)} ( g(\xi) - \xi ) \big).  
\end{equation}
We say $\{\xi_j\}_{j \in \N}$ is \emph{a sequence generated by Algorithm \ref{algo:main_algo}} if $\xi_{j+1} = \Gamma(\xi_j)$ for each $j \in \N$. We can prove several important properties about the sequence generated by Algorithm \ref{algo:main_algo}. First, in Lemma \ref{lemma:phase12}, we prove that if there exists $i_0 \in \N$ such that $\Psi(\xi_{i_0}) \leq 0$, then $\Psi(\xi_i) \leq 0$ for each $i \geq i_0$. That is, if the Algorithm constructs a feasible point, then the sequence of points generated after this feasible point are always feasible. Second, in Theorem \ref{thm:convergence}, we prove $\lim_{j\to\infty} \theta(\xi_j) = 0$ or that Algorithm \ref{algo:main_algo} converges to a point that satisfies the optimality condition.


\begin{algorithm}[!ht]
  \begin{algorithmic}[1]
    \REQUIRE $\xi_0 \in {\cal X}_p$, 
    $\alpha \in (0,1)$, 
    $\bar{\alpha} \in (0,\infty)$, 
    $\beta \in ( 0, 1 )$, 
    $\bar{\beta} \in \left( \frac{1}{\sqrt{2}}, 1 \right)$, 
    $\gamma \in (0,\infty)$,
    $\omega \in (0,1)$.
    \STATE Set $j = 0$.
    \STATE \label{algo:step1} Compute $\theta(\xi_j)$ as defined in Equation \eqref{eq:theta_definition}. 
    \IF{\label{algo:step3} $\theta(\xi_j) = 0$}
    \RETURN $\xi_j$.
    \ENDIF
    \STATE Compute $g(\xi_j)$ as defined in Equation \eqref{eq:theta_definition}.
    \STATE Compute $\mu(\xi_j)$ as defined in Equation \eqref{eq:armijo_line_search}.
    \STATE Compute $\nu(\xi_j)$ as defined in Equation \eqref{eq:armijo_pwm}.
    \STATE Set $\xi_{j+1} = \rho_{\nu(\xi_j)}\bigl( \xi_j + \beta^{\mu(\xi_j)} ( g(\xi_j) - \xi_j ) \bigr)$, as defined in Equation \eqref{eq:rho}.
    \STATE Replace $j$ by $j+1$ and go to Line \ref{algo:step1}.
  \end{algorithmic}
  \caption{Optimization Algorithm for the Switched System Optimal Control Problem}
  \label{algo:main_algo}
\end{algorithm}

\section{Algorithm Analysis}
\label{sec:algo_analysis}

In this section, we derive the various components of Algorithm \ref{algo:main_algo} and prove that Algorithm \ref{algo:main_algo} converges to a point that satisfies our optimality condition. Our argument proceeds as follows: first, we prove the continuity of the state, cost, and constraint, which we employ in latter arguments; second, we construct the components of the optimality function and prove that these components satisfy various properties that ensure that the well-posedness of the optimality function; third, we prove that we can control the quality of approximation between the trajectories generated by a relaxed discrete input and its projection by $\rho_N$ as a function of $N$; finally, we prove the convergence of our algorithm.

\subsection{Continuity}

In this subsection, we prove the continuity of the state, cost, and constraint. We begin by proving the continuity of the solution to Differential Equation \eqref{eq:traj_xi} with respect to $\xi$ by proving that this mapping is sequentially continuous:
\begin{lemma}
	\label{lemma:x_sequential_continuity}
	Let $\{\xi_j\}_{j=1}^{\infty} \subset {\cal X}_r$ be a convergent sequence with limit $\xi \in {\cal X}_r$.
  Then the corresponding sequence of trajectories $\{ x^{(\xi_j)} \}_{j=1}^{\infty}$, as defined in Equation \eqref{eq:traj_xi}, converges uniformly to $x^{(\xi)}$.
\end{lemma}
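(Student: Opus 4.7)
The plan is to establish the uniform convergence via a Bellman-Gronwall estimate applied to the standard integral equation representation of the trajectories. Since $\|\xi_j - \xi\|_{\cal X} = \|u_j - u\|_{L^2} + \|d_j - d\|_{L^2} \to 0$, I want to bound the pointwise discrepancy $\|x^{(\xi_j)}(t) - x^{(\xi)}(t)\|_2$ by a quantity that vanishes uniformly in $t$.

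First, I would write
\begin{equation*}
  x^{(\xi_j)}(t) - x^{(\xi)}(t)
  = \int_0^t \Bigl[ f\bigl(s, x^{(\xi_j)}(s), u_j(s), d_j(s)\bigr) - f\bigl(s, x^{(\xi)}(s), u(s), d(s)\bigr) \Bigr] ds,
\end{equation*}
and split the integrand into three terms by inserting $f(s, x^{(\xi)}(s), u_j(s), d_j(s))$ and $f(s, x^{(\xi)}(s), u(s), d_j(s))$. Using the convex combination representation in Equation \eqref{eq:compact_traj_xi} together with $\sum_i d_{j,i}(s) = 1$, the Lipschitz bounds in Assumption \ref{assump:fns_continuity} give that the first two terms are bounded by $L\|x^{(\xi_j)}(s) - x^{(\xi)}(s)\|_2$ and $L\|u_j(s) - u(s)\|_2$, respectively. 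For the third term, I would write
\begin{equation*}
  f(s, x^{(\xi)}(s), u(s), d_j(s)) - f(s, x^{(\xi)}(s), u(s), d(s)) = \sum_{i=1}^q (d_{j,i}(s) - d_i(s)) f(s, x^{(\xi)}(s), u(s), e_i),
\end{equation*}
which, using Corollary \ref{corollary:fns_bounded} and a norm equivalence on $\R^q$, is bounded by a constant multiple of $\|d_j(s) - d(s)\|_2$.

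Next, I would integrate in time and apply Cauchy--Schwarz on $[0,t] \subset [0,1]$ to convert the $L^1$-type integrals of $\|u_j - u\|_2$ and $\|d_j - d\|_2$ into the $L^2$-norms that control the ${\cal X}$-norm. This yields an inequality of the form
\begin{equation*}
  \bigl\| x^{(\xi_j)}(t) - x^{(\xi)}(t) \bigr\|_2
  \leq L \int_0^t \bigl\| x^{(\xi_j)}(s) - x^{(\xi)}(s) \bigr\|_2\, ds + K \| \xi_j - \xi \|_{\cal X},
\end{equation*}
for a constant $K$ independent of $j$ and $t$. Applying the Bellman--Gronwall inequality (Lemma 5.6.4 in \cite{Polak1997}), I conclude that $\| x^{(\xi_j)}(t) - x^{(\xi)}(t) \|_2 \leq K e^L \| \xi_j - \xi \|_{\cal X}$ for every $t \in [0,1]$, so the right-hand side is a uniform-in-$t$ bound tending to zero, which is exactly uniform convergence on $[0,1]$.

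There is no serious obstacle here; the proof is a standard Gronwall-type argument. The only care required is in the middle step where $d$ enters nonlinearly through $f$: one must exploit the simplex structure ($\sum_i d_i = 1$, $d_i \geq 0$) so that the Lipschitz constant in $u$ carries through the convex combination cleanly, and use the boundedness of $f$ from Corollary \ref{corollary:fns_bounded} to absorb the $d$-difference into an $L^2$ estimate rather than an $L^\infty$ one.
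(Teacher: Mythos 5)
Your proposal is correct and follows essentially the same route as the paper's proof: the same three-way splitting of the integrand (state difference, continuous-input difference, discrete-input difference), the same use of the Lipschitz bounds from Assumption \ref{assump:fns_continuity} and the boundedness from Corollary \ref{corollary:fns_bounded}, H\"older/Cauchy--Schwarz to pass from $L^1$ to $L^2$ norms, and Bellman--Gronwall to obtain a $t$-independent bound proportional to $\| \xi_j - \xi \|_{\cal X}$. The only cosmetic difference is the order in which the intermediate terms are inserted, which does not affect the argument.
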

\begin{proof}
  For notational convenience, let $\xi_j = (u_j,d_j)$, $\xi = (u,d)$, and $\phi_t$ as defined in Equation \eqref{eq:flow_xi}.
  We begin by proving the convergence of $\{ \phi_t(\xi) \}_{j=1}^{\infty}$ to $\phi_t(\xi)$ for each $t \in [0,1]$. 
  Consider
	\begin{equation}
		\left\| \phi_t(\xi_j) - \phi_t(\xi) \right\|_2 
    = \left\| \int_0^t \sum_{i=1}^q [d_j]_i(\tau) f\bigl( \tau, \phi_\tau(\xi_j), u_j(\tau), e_i \bigr) - d_i(\tau) f\bigl( \tau, \phi_\tau(\xi), u(\tau), e_i \bigr) d\tau \right\|_2.
	\end{equation}
	Therefore,
	\begin{multline}
		\left\| \phi_t(\xi_j) - \phi_t(\xi) \right\|_2 
    = \Biggl\| \int_0^t \sum_{i=1}^q \bigl( [d_j]_i(\tau) - d_i(\tau) \bigr) f\bigl( \tau, \phi_\tau(\xi_j), u_j(\tau), e_i \bigr) + \\
    + d_i(\tau) \bigl( f\bigl( \tau, \phi_\tau(\xi_j), u_j(\tau), e_i \bigr) - f\bigl( \tau, \phi_{\tau}(\xi), u_j(\tau), e_i \bigr) \bigr) + \\
    + d_i(\tau) \bigl( f\bigl( \tau, \phi_{\tau}(\xi), u_j(\tau), e_i \bigr) - f\bigl( \tau, \phi_{\tau}(\xi), u(\tau), e_i \bigr) \bigr)  d\tau \Biggr\|_2.
	\end{multline}
	Applying the triangle inequality, Assumption \ref{assump:fns_continuity}, Condition \ref{corollary:f_bounded} in Corollary \ref{corollary:fns_bounded}, and the boundedness of $d$, we have that there exists a $C > 0$ such that
	\begin{equation}
		\left\| \phi_t(\xi_j) - \phi_t(\xi) \right\|_2 
    \leq \int_0^1 \sum_{i=1}^q C \bigl| [d_j]_i(\tau) - d_i(\tau) \bigr| + 
    L \left\| \phi_\tau(\xi_j) - \phi_{\tau}(\xi) \right\|_2 + L \left\| u_j(\tau) - u(\tau) \right\|_2 d\tau.
	\end{equation}
	Applying the Bellman-Gronwall Inequality (Lemma 5.6.4 in \cite{Polak1997}), we have that
	\begin{equation}
		\left\| \phi_t(\xi_j) - \phi_t(\xi) \right\|_2 
    \leq e^{L} \left( \int_0^1 C \left\| d_{j}(\tau) - d(\tau) \right\|_1 + L \left\|u_j(\tau) - u(\tau) \right\|_2 d\tau \right).
	\end{equation}
	Note that $\| u \|_2 \leq \| u \|_1$ for each $u \in \R^m$. Then applying Holder's inequality (Proposition 6.2 in \cite{Folland1999}) to the vector valued function, we have:
	\begin{equation}
    \label{eq:1to2_norm_holder_trick}
    \int_0^1 \left\| d_j(\tau) - d(\tau) \right\|_1 d\tau \leq \left\| d_j - d \right\|_{L^2},\quad \text{and}\quad
    \int_0^1 \left\| u_j(\tau) - u(\tau) \right\|_1 d\tau \leq \left\| u_j - u \right\|_{L^2}.
	\end{equation}
	Since the sequence $\xi_j$ converges to $\xi$, for every $\epsilon > 0$ we know there exists some $j_0$ such that for all $j$ greater than $j_0$, $\| \xi_j - \xi \|_{\cal X} \leq \epsilon$. 
  Therefore $\| \phi_t(\xi_j) - \phi_t(\xi) \|_2 \leq e^{L} ( L + C ) \epsilon$, which proves the convergence of $\{ \phi_t(\xi_j) \}_{j=1}^{\infty}$ to $\phi_t(\xi)$ for each $t \in [0,1]$ as $j \to \infty$. 
  Since this bound does not depend on $t$, we in fact have the uniform convergence of $\{ x^{(\xi_j)} \}_{j=1}^{\infty}$ to $x^{(\xi)}$ as $j \to \infty$, hence obtaining our desired result.
\end{proof}

Notice that since ${\cal X}_r$ is a metric space, the previous result proves that the function $\phi_t$ which assigns $\xi \in {\cal X}_r$ to $\phi_t(\xi)$ as the solution of Differential Equation \eqref{eq:traj_xi} employing the notation defined in Equation \eqref{eq:flow_xi} is continuous.
\begin{corollary}
	\label{corollary:x_continuous}
	The function $\phi_t$ that maps $\xi \in {\cal X}_r$ to $\phi_t(\xi)$ as the solution of Differential Equation \eqref{eq:traj_xi} where we employ the notation defined in Equation \eqref{eq:flow_xi} is continuous for all $t \in [0,1]$.
\end{corollary}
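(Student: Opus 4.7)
The plan is to simply invoke the standard fact that sequential continuity and continuity coincide on metric spaces, so that Lemma \ref{lemma:x_sequential_continuity} delivers the corollary essentially for free. First I would observe that the domain ${\cal X}_r$ is a metric space: the ${\cal X}$--norm defined in Equation \eqref{eq:defn_metric} induces a metric on the ambient space ${\cal X}$, and ${\cal X}_r \subset {\cal X}$ inherits this metric. The codomain $\R^n$ is of course a metric space under the Euclidean norm.

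Next I would recall the elementary topological fact (see, e.g., any standard reference on metric spaces) that a function between metric spaces is continuous if and only if it is sequentially continuous, i.e. it maps convergent sequences to convergent sequences with matching limits. Lemma \ref{lemma:x_sequential_continuity} establishes exactly this sequential continuity: if $\{\xi_j\}_{j=1}^\infty \subset {\cal X}_r$ converges to $\xi$ in the ${\cal X}$--norm, then $x^{(\xi_j)} \to x^{(\xi)}$ uniformly on $[0,1]$, and uniform convergence implies pointwise convergence, so $\phi_t(\xi_j) \to \phi_t(\xi)$ in $\R^n$ for every fixed $t \in [0,1]$.

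Combining these two observations, the map $\phi_t: {\cal X}_r \to \R^n$ is continuous for each $t \in [0,1]$, which is the desired conclusion. There is no real obstacle here, since the previous lemma does all of the analytic work; the corollary is purely a repackaging of sequential continuity as continuity via the metric structure of ${\cal X}_r$.
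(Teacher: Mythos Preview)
Your proposal is correct and matches the paper's approach exactly: the paper notes immediately before the corollary that since ${\cal X}_r$ is a metric space, the sequential continuity established in Lemma~\ref{lemma:x_sequential_continuity} yields continuity of $\phi_t$, and states the corollary without further proof.
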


In fact, our arguments have shown that this mapping is Lipschitz continuous:
\begin{corollary}
	\label{corollary:x_lipschitz}
	There exists a constant $L > 0$ such that for each $\xi_1,\xi_2 \in {\cal X}_r$ and $t \in [0,1]$:
	\begin{equation}
		\label{eq:x_lipschitz}
		\| \phi_t(\xi_1) - \phi_t(\xi_2) \|_2 \leq L \| \xi_1 - \xi_2 \|_{\cal X},
	\end{equation}
	where $\phi_t(\xi)$ is as defined in Equation \eqref{eq:flow_xi}.
\end{corollary}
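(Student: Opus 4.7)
The plan is to observe that the proof of Lemma \ref{lemma:x_sequential_continuity} in fact established a Lipschitz-type estimate as an intermediate step, and simply to extract the explicit constant from the chain of inequalities rather than re-deriving it. Writing $\xi_1 = (u_1,d_1)$ and $\xi_2 = (u_2,d_2)$, I would start from the integral form of Differential Equation \eqref{eq:traj_xi}, add and subtract the cross terms $d_{2,i}(\tau) f(\tau,\phi_\tau(\xi_1),u_1(\tau),e_i)$ and $d_{2,i}(\tau) f(\tau,\phi_\tau(\xi_2),u_1(\tau),e_i)$ inside the integrand, and apply the triangle inequality to split the difference $\phi_t(\xi_1) - \phi_t(\xi_2)$ into three pieces: one controlling the change in $d$, one controlling the change in the state, and one controlling the change in $u$.

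Next, I would bound the first piece using Condition \ref{corollary:f_bounded} of Corollary \ref{corollary:fns_bounded} to obtain a factor of $C$ in front of $\|d_1(\tau) - d_2(\tau)\|_1$, and bound the second and third pieces using Assumption \ref{assump:fns_continuity}\ref{assump:f_lipschitz} together with the fact that $\|d_2(\tau)\|_1 \leq 1$ (since $d_2(\tau) \in \Sigma_r^q$). This yields
\begin{equation}
  \|\phi_t(\xi_1) - \phi_t(\xi_2)\|_2 \leq \int_0^t \Bigl( C \|d_1(\tau) - d_2(\tau)\|_1 + L \|\phi_\tau(\xi_1) - \phi_\tau(\xi_2)\|_2 + L \|u_1(\tau) - u_2(\tau)\|_2 \Bigr) d\tau.
\end{equation}
Applying the Bellman--Gronwall inequality (Lemma 5.6.4 in \cite{Polak1997}) to absorb the state-difference term gives a factor $e^L$ multiplying the remaining integral.

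To convert the remaining $L^1$-type integrals into ${\cal X}$-norm quantities, I would use Holder's inequality exactly as in Equation \eqref{eq:1to2_norm_holder_trick}: $\int_0^1 \|d_1(\tau) - d_2(\tau)\|_1\, d\tau \leq \|d_1 - d_2\|_{L^2}$, and similarly for $u_1 - u_2$ (after noting $\|u\|_2 \leq \|u\|_1$). Combining these estimates produces
\begin{equation}
  \|\phi_t(\xi_1) - \phi_t(\xi_2)\|_2 \leq e^L \bigl( C \|d_1 - d_2\|_{L^2} + L \|u_1 - u_2\|_{L^2} \bigr) \leq e^L (L + C) \|\xi_1 - \xi_2\|_{\cal X},
\end{equation}
which is the desired bound with Lipschitz constant $e^L(L+C)$ independent of $t$.

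There is no real obstacle here since every ingredient was already used in the proof of Lemma \ref{lemma:x_sequential_continuity}; the only subtlety is the minor bookkeeping of tracking the explicit constant through Bellman--Gronwall and Holder rather than merely concluding $\epsilon$-convergence. One should just be careful when splitting the integrand that the cross terms involve $d_2$ (bounded by $1$ coordinate-wise) multiplying a Lipschitz difference, while the $d$-difference term is multiplied by a bounded vector field, so that all the constants are uniform in $\xi_1, \xi_2 \in {\cal X}_r$ and in $t \in [0,1]$.
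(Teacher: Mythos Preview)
Your proposal is correct and is exactly what the paper intends: the corollary is stated without proof, with the remark that ``our arguments have shown that this mapping is Lipschitz continuous,'' referring to the chain of inequalities in the proof of Lemma~\ref{lemma:x_sequential_continuity} that you have faithfully reproduced. The explicit constant $e^L(L+C)$ you extract is precisely the one implicit in that proof.
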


As a result of this corollary, we immediately have the following results:
\begin{corollary}
	\label{corollary:vf_lipschitz}
	There exists a constant $L > 0$ such that for each $\xi_1 = (u_1,d_1) \in {\cal X}_r$, $\xi_2 = (u_2,d_2) \in {\cal X}_r$, and $t \in [0,1]$:
	\begin{enumerate_parentesis}
		\item \label{corollary:fxi_lipschitz} 
      \begin{math}
        \begin{aligned}[t]
          \bigl\| f\bigl( t, \phi_t(\xi_1), u_1(t), d_1(t) \bigr) - f\bigl( t, \phi_t(\xi_2), &u_2(t), d_2(t) \bigr) \bigr\|_2 \leq \\
          &\leq L\bigl( \left\| \xi_1 - \xi_2 \right\|_{\cal X} + \left\| u_1(t) - u_2(t)\right\|_2 + \left\| d_1(t) - d_2(t) \right\|_2 \bigr),
        \end{aligned}
      \end{math}
		\item \label{corollary:dfdxxi_lipschitz} 
      \begin{math}
        \begin{aligned}[t]
          \biggl\| \frac{\partial f}{\partial x} \bigl( t, \phi_t(\xi_1), u_1(t), d_1(t) \bigr) 
            - \frac{\partial f}{\partial x}\bigl( t, \phi_t(\xi_2), &u_2(t), d_2(t) \bigr) \biggr\|_{i,2} \leq \\
          &\leq L\bigl( \left\| \xi_1 - \xi_2 \right\|_{\cal X} + \left\| u_1(t) - u_2(t) \right\|_2 + \left\|d_1(t) - d_2(t) \right\|_2 \bigr),
        \end{aligned}
      \end{math}
		\item \label{corollary:dfduxi_lipschitz} 
      \begin{math}
        \begin{aligned}[t]
          \biggl\| \frac{\partial f}{\partial u}\bigl( t, \phi_t(\xi_1), u_1(t), d_1(t) \bigr) 
            - \frac{\partial f}{\partial u}\bigl( t, \phi_t(\xi_2), &u_2(t), d_2(t) \bigr) \biggr\|_{i,2} \leq \\
          &\leq L\bigl( \left\| \xi_1 - \xi_2 \right\|_{\cal X} + \left\| u_1(t) - u_2(t) \right\|_2 + \left\|d_1(t) - d_2(t) \right\|_2 \bigr),
        \end{aligned}
      \end{math}
	\end{enumerate_parentesis}
	where $\phi_t(\xi)$ is as defined in Equation \eqref{eq:flow_xi}.
\end{corollary}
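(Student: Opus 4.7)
The plan is to prove all three Lipschitz bounds by the same telescoping argument, splitting each difference into three intermediate pieces that isolate the change in $x$, the change in $u$, and the change in $d$. For clarity I will describe the argument for Item (1); Items (2) and (3) follow by literally replacing $f$ with $\frac{\partial f}{\partial x}$ or $\frac{\partial f}{\partial u}$ throughout, since Assumption \ref{assump:fns_continuity} provides identical Lipschitz hypotheses for those partial derivatives.

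First, I would insert the intermediate points $f(t,\phi_t(\xi_2),u_1(t),d_1(t))$ and $f(t,\phi_t(\xi_2),u_2(t),d_1(t))$ and apply the triangle inequality, obtaining a bound of the form
\begin{equation*}
  \bigl\| f(t,\phi_t(\xi_1),u_1(t),d_1(t)) - f(t,\phi_t(\xi_2),u_2(t),d_2(t)) \bigr\|_2 \leq A_1 + A_2 + A_3,
\end{equation*}
where $A_1$ varies only in the state argument, $A_2$ varies only in the continuous-input argument, and $A_3$ varies only in the discrete-input argument. The first two pieces are handled directly by Assumption \ref{assump:fns_continuity}\ref{assump:f_lipschitz} (applied to each mode $e_i$ and then summed against the convex weights $d_1(t)\in\Sigma_r^q$, which are bounded by $1$ in each coordinate), giving $A_1 \leq L \| \phi_t(\xi_1) - \phi_t(\xi_2) \|_2$ and $A_2 \leq L \| u_1(t) - u_2(t) \|_2$, possibly up to a multiplicative constant absorbed into $L$. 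Now apply Corollary \ref{corollary:x_lipschitz} to convert $A_1$ into a bound in terms of $\| \xi_1 - \xi_2 \|_{\cal X}$.

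The remaining piece $A_3$ is where the linear structure of $f$ in its fourth argument (Equation \eqref{eq:compact_traj_xi}) does the work: exactly as in the proof of Theorem \ref{thm:existence_and_uniqueness}, we can write
\begin{equation*}
  f(t,\phi_t(\xi_2),u_2(t),d_1(t)) - f(t,\phi_t(\xi_2),u_2(t),d_2(t))
  = \sum_{i=1}^q \bigl( [d_1]_i(t) - [d_2]_i(t) \bigr) f(t,\phi_t(\xi_2),u_2(t),e_i),
\end{equation*}
and then invoke Corollary \ref{corollary:fns_bounded}\ref{corollary:f_bounded} on each summand together with the norm equivalence $\|\cdot\|_1 \leq \sqrt{q}\,\|\cdot\|_2$ to conclude $A_3 \leq C' \| d_1(t) - d_2(t) \|_2$ for some constant $C'$. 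Combining $A_1$, $A_2$, $A_3$ and redefining $L$ to be the largest constant that appears yields Item (1).

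There is no genuine obstacle; the only subtlety is to remember that the derivatives $\frac{\partial f}{\partial x}$ and $\frac{\partial f}{\partial u}$ inherit linearity in $d$ from Equation \eqref{eq:compact_traj_xi} (the paper explicitly states this convention just after \eqref{eq:compact_traj_xi}), so the same telescoping identity and the same boundedness statements of Corollary \ref{corollary:fns_bounded} apply verbatim when reproducing Items (2) and (3). Accordingly, a single constant $L$ can be chosen so that all three bounds hold uniformly in $\xi_1,\xi_2,t$.
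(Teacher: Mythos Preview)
Your proposal is correct and follows essentially the same approach as the paper. The paper's proof is a two-line sketch that invokes the Lipschitz property of $f$ in all its arguments (including $d$, as established in the proof of Theorem~\ref{thm:existence_and_uniqueness}) together with Corollary~\ref{corollary:x_lipschitz}; your telescoping into $A_1+A_2+A_3$ simply makes this explicit, and your treatment of $A_3$ via linearity in $d$ and Corollary~\ref{corollary:fns_bounded} is exactly the argument the paper alludes to.
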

\begin{proof}
	The proof of Condition \ref{corollary:fxi_lipschitz} follows by the fact that the vector field $f$ is Lipschitz in all its arguments, as shown in the proof of Theorem \ref{thm:existence_and_uniqueness}, and applying Corollary \ref{corollary:x_lipschitz}.
  The remaining conditions follow in a similar fashion.
\end{proof}

\begin{corollary}
  \label{corollary:constraints_lipschitz}
  There exists a constant $L > 0$ such that for each $\xi_1,\xi_2 \in {\cal X}_r$, $j \in {\cal J}$, and $t \in [0,1]$:
  \begin{enumerate_parentesis}
	\item \label{corollary:phixi_lipschitz} 
    $\left| h_0\bigl( \phi_1(\xi_1) \bigr) - h_0\bigl( \phi_1(\xi_2) \bigr) \right| 
    \leq L \left\| \xi_1 - \xi_2 \right\|_{\cal X}$,
	\item \label{corollary:dphidxxi_lipschitz} 
    $\Bigl\| \frac{\partial h_0}{\partial x}\bigl( \phi_1(\xi_1) \bigr) - \frac{\partial h_0}{\partial x}\bigl( \phi_1(\xi_2) \bigr) \Bigr\|_2 
    \leq L \left\| \xi_1 - \xi_2 \right\|_{\cal X}$,
	\item \label{corollary:hjxi_lipschitz} 
    $\left| h_j\bigl( \phi_t(\xi_1) \bigr) - h_j\bigl( \phi_t(\xi_2) \bigr) \right| 
    \leq L \left\| \xi_1 - \xi_2 \right\|_{\cal X}$,
	\item \label{corollary:dhjdxxi_lipschitz} 
    $\left\| \frac{\partial h_j}{\partial x}\bigl( \phi_t(\xi_1) \bigr) - \frac{\partial h_j}{\partial x}\bigl( \phi_t(\xi_2) \bigr) \right\|_2 
    \leq L \left\| \xi_1 - \xi_2 \right\|_{\cal X}$,
  \end{enumerate_parentesis}
  where $\phi_t(\xi)$ is as defined in Equation \eqref{eq:flow_xi}.
\end{corollary}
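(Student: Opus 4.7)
The plan is to observe that each of the four claims expresses Lipschitz continuity of a composition of the form $\eta \circ \phi_t$, where $\eta$ is one of $h_0$, $\partial h_0 / \partial x$, $h_j$, or $\partial h_j / \partial x$. By Assumption \ref{assump:constraint_fns}, each of these $\eta$ is Lipschitz continuous in $x$ with some constant $L_h > 0$, and by Corollary \ref{corollary:x_lipschitz}, the flow $\phi_t: {\cal X}_r \to \R^n$ is Lipschitz continuous in the ${\cal X}$--norm, uniformly in $t \in [0,1]$, with some constant $L_\phi > 0$. The entire proof is then a one-line composition estimate for each case.

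Concretely, I would fix $\xi_1, \xi_2 \in {\cal X}_r$, $j \in {\cal J}$, and $t \in [0,1]$, and write
\begin{equation}
\bigl| h_j\bigl(\phi_t(\xi_1)\bigr) - h_j\bigl(\phi_t(\xi_2)\bigr) \bigr|
\leq L_h \bigl\| \phi_t(\xi_1) - \phi_t(\xi_2) \bigr\|_2
\leq L_h L_\phi \bigl\| \xi_1 - \xi_2 \bigr\|_{\cal X},
\end{equation}
which gives Condition (3) with $L = L_h L_\phi$. Condition (1) is the special case $t = 1$ and $h_0$ in place of $h_j$. Conditions (2) and (4) follow by replacing $h_0$ and $h_j$ with their $x$--derivatives, which by Assumption \ref{assump:constraint_fns} are likewise Lipschitz in $x$ (in the induced $2$--norm for the derivative, which coincides with $\|\cdot\|_2$ for gradients). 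Taking the maximum of the four resulting constants yields a single $L > 0$ that works simultaneously for all four conditions.

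There is essentially no obstacle here: everything reduces to composing two Lipschitz maps, and the constants appearing in Assumption \ref{assump:constraint_fns} and Corollary \ref{corollary:x_lipschitz} are uniform in $t$ and $\xi$, so the resulting constant $L$ is uniform in $t \in [0,1]$, $j \in {\cal J}$, and $\xi_1, \xi_2 \in {\cal X}_r$. The only minor bookkeeping point is that Corollary \ref{corollary:x_lipschitz} is stated for arbitrary $t \in [0,1]$, so specializing to $t = 1$ for the terminal cost is legitimate, and the four Lipschitz constants from Assumption \ref{assump:constraint_fns} can be absorbed into one by taking their maximum. I would therefore present the proof very briefly, stating that the result is immediate from Assumption \ref{assump:constraint_fns} and Corollary \ref{corollary:x_lipschitz} via composition of Lipschitz maps.
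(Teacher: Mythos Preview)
Your proposal is correct and takes essentially the same approach as the paper: the paper's proof is a single line stating that the result follows from Assumption~\ref{assump:constraint_fns} and Corollary~\ref{corollary:x_lipschitz}, which is exactly the composition-of-Lipschitz-maps argument you describe. Your version simply makes the composition explicit and tracks the constants, but there is no substantive difference.
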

\begin{proof}
	This result follows by Assumption \ref{assump:constraint_fns} and Corollary \ref{corollary:x_lipschitz}.
\end{proof}

Even though it is a straightforward consequence of Condition \ref{corollary:phixi_lipschitz} in Corollary \ref{corollary:constraints_lipschitz}, we write the following result to stress its importance.
\begin{corollary}
	\label{corollary:J_continuous}
  There exists a constant $L > 0$ such that, for each $\xi_1, \xi_2 \in {\cal X}_r$:
  \begin{equation}
    \left\lvert J( \xi_1 ) - J( \xi_2 ) \right\rvert \leq L \left\lVert \xi_1 - \xi_2 \right\rVert_{\cal X}
  \end{equation}
  where $J$ is as defined in Equation \eqref{eq:cost}.
\end{corollary}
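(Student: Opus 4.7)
The plan is to observe that this corollary is essentially a repackaging of Condition~\ref{corollary:phixi_lipschitz} in Corollary~\ref{corollary:constraints_lipschitz}, as the text preceding the statement already hints. By the definition of the cost function in Equation~\eqref{eq:cost} together with the flow notation in Equation~\eqref{eq:flow_xi}, we have $J(\xi) = h_0\bigl(x^{(\xi)}(1)\bigr) = h_0\bigl(\phi_1(\xi)\bigr)$ for every $\xi \in {\cal X}_r$. Therefore, for any $\xi_1,\xi_2 \in {\cal X}_r$,
\begin{equation*}
  \bigl\lvert J(\xi_1) - J(\xi_2) \bigr\rvert = \bigl\lvert h_0\bigl(\phi_1(\xi_1)\bigr) - h_0\bigl(\phi_1(\xi_2)\bigr) \bigr\rvert,
\end{equation*}
and Condition~\ref{corollary:phixi_lipschitz} in Corollary~\ref{corollary:constraints_lipschitz} immediately yields the desired Lipschitz bound with the same constant $L$.

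If one preferred not to cite Corollary~\ref{corollary:constraints_lipschitz} directly, the plan would be to chain two ingredients. First, Assumption~\ref{assump:constraint_fns}\ref{assump:phi_lipschitz} gives that $h_0$ is Lipschitz in its argument. Second, Corollary~\ref{corollary:x_lipschitz} gives that $\phi_1$ is Lipschitz from $({\cal X}_r, \|\cdot\|_{\cal X})$ into $(\R^n,\|\cdot\|_2)$. Composing the two Lipschitz maps and taking the product of their constants as the new $L$ produces the bound. There is no obstacle here: everything has already been assembled in the preceding continuity results, and the only role of this corollary is to record the Lipschitz continuity of $J$ in an easily citable form for later use in the algorithm analysis (for instance, in the Armijo step size and pulse-width-modulation line-search arguments).
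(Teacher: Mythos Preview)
Your proposal is correct and matches the paper's approach exactly: the paper provides no separate proof, simply noting that the result is a straightforward consequence of Condition~\ref{corollary:phixi_lipschitz} in Corollary~\ref{corollary:constraints_lipschitz}, which is precisely your primary argument. Your alternative route via Assumption~\ref{assump:constraint_fns}\ref{assump:phi_lipschitz} and Corollary~\ref{corollary:x_lipschitz} is also fine and is in fact how Corollary~\ref{corollary:constraints_lipschitz} itself is proved.
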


In fact, the $\Psi$ is also Lipschitz continuous:
\begin{lemma}
	\label{lemma:psi_continuous}
  There exists a constant $L > 0$ such that, for each $\xi_1, \xi_2 \in {\cal X}_r$:
  \begin{equation}
    \left\lvert \Psi( \xi_1 ) - \Psi( \xi_2 ) \right\rvert \leq L \left\lVert \xi_1 - \xi_2 \right\rVert_{\cal X}
  \end{equation}
  where $\Psi$ is as defined in Equation \eqref{eq:super_const}.
\end{lemma}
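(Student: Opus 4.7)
The plan is to reduce this to the already-established Lipschitz continuity of the component constraints $\psi_{j,t}$ via the standard inequality for suprema. Specifically, I would first recall the elementary fact that for any two bounded real-valued functions $F, G$ on a common index set $A$, one has
\begin{equation}
\left| \sup_{a \in A} F(a) - \sup_{a \in A} G(a) \right| \leq \sup_{a \in A} \left| F(a) - G(a) \right|.
\end{equation}
This holds because $F(a) \leq G(a) + \sup_{a'} |F(a') - G(a')|$ for each $a$, so taking the supremum over $a$ on the left and exchanging the roles of $F$ and $G$ yields the claim; here both suprema are attained (or at least finite) because the indexing set ${\cal J} \times [0,1]$ is compact and, by Corollary \ref{corollary:fns_bounded}, $h_j(\phi_t(\xi))$ is bounded and, by Assumption \ref{assump:constraint_fns} together with Corollary \ref{corollary:x_continuous}, continuous in $t$.

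Next, I would apply this inequality with $A = {\cal J} \times [0,1]$, $F(j,t) = h_j(\phi_t(\xi_1)) = \psi_{j,t}(\xi_1)$ and $G(j,t) = h_j(\phi_t(\xi_2)) = \psi_{j,t}(\xi_2)$, obtaining
\begin{equation}
\bigl| \Psi(\xi_1) - \Psi(\xi_2) \bigr| \leq \max_{j \in {\cal J},\, t \in [0,1]} \bigl| h_j( \phi_t(\xi_1) ) - h_j( \phi_t(\xi_2) ) \bigr|.
\end{equation}

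Finally, Condition (3) in Corollary \ref{corollary:constraints_lipschitz} provides a constant $L > 0$, independent of $j$ and $t$, such that $|h_j(\phi_t(\xi_1)) - h_j(\phi_t(\xi_2))| \leq L \|\xi_1 - \xi_2\|_{\cal X}$ for every $j \in {\cal J}$ and $t \in [0,1]$. Taking the maximum over $(j,t)$ on the right-hand side does nothing since the bound is uniform, and we conclude $|\Psi(\xi_1) - \Psi(\xi_2)| \leq L \|\xi_1 - \xi_2\|_{\cal X}$, as desired.

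I do not anticipate a genuine obstacle: the proof is a one-step reduction via the $\sup$-difference inequality followed by invocation of the uniform Lipschitz bound from Corollary \ref{corollary:constraints_lipschitz}. The only minor care needed is to justify that the maximum in the definition of $\Psi$ is well-defined (i.e., attained as a max rather than merely a sup), which follows from the continuity of $t \mapsto h_j(\phi_t(\xi))$ on the compact interval $[0,1]$ and the finiteness of ${\cal J}$.
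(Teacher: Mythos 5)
Your proposal is correct and follows essentially the same route as the paper: bound the difference of maxima by the maximum of the differences (the paper does this by the one-sided inequality plus symmetry in $\xi_1,\xi_2$, which is equivalent to your $\sup$-difference inequality) and then invoke the uniform Lipschitz bound from Condition (3) of Corollary \ref{corollary:constraints_lipschitz}. Your additional remark about attainment of the maximum matches the compactness/continuity observation the paper also makes.
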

\begin{proof}
  Since the maximum in $\Psi$ is taken over ${\cal J} \times [0,1]$, which is compact, and the maps $(j,t) \mapsto \psi_{j,t}(\xi)$ are continuous for each $\xi \in {\cal X}$, we know from Condition \ref{corollary:hjxi_lipschitz} in Corollary \ref{corollary:constraints_lipschitz} that there exists $L > 0$ such that,
  \begin{equation}
    \begin{aligned}
      \Psi(\xi_1) - \Psi(\xi_2) &= \max_{(j,t) \in {\cal J} \times [0,1]} \psi_{j,t}(\xi_1) - \max_{(j,t) \in {\cal J} \times [0,1]} \psi_{j,t}(\xi_2) \\
      &\leq \max_{(j,t) \in {\cal J} \times [0,1]} \psi_{j,t}(\xi_1) - \psi_{j,t}(\xi_2) \\
      &\leq L \left\lVert \xi_1 - \xi_2 \right\rVert_{\cal X}.
    \end{aligned}
  \end{equation}
  By reversing $\xi_1$ and $\xi_2$, and applying the same argument we get the desired result.
\end{proof}
\subsection{Derivation of Algorithm Terms}

Next, we formally derive the components of the optimality function and prove the well-posedness of the optimality function. We begin by deriving the formal expression for the directional derivative of the trajectory of the switched system.

\begin{lemma}
  \label{lemma:dxt_definition}
  Let $\xi = (u,d) \in {\cal X}_r$, $\xi' = (u',d') \in {\cal X}$, and let $\phi_t: {\cal X}_r \to \R^n$ be as defined in Equation \eqref{eq:flow_xi}.
  Then the directional derivative of $\phi_t$, as defined in Equation \eqref{eq:operator_D_definition}, is given by
  \begin{equation}
    \label{eq:dxt_definition}
    \D{\phi_t}(\xi;\xi') = \int_0^t \Phi^{(\xi)}(t,\tau) \left( 
      \frac{\partial f}{\partial u}\big( \tau, \phi_\tau(\xi), u(\tau), d(\tau) \big) u'(\tau) 
    + \sum_{i=1}^q f\big( \tau, \phi_\tau(\xi), u(\tau), e_i \big) d_i'(\tau) \right) d\tau,
  \end{equation}
  where $\Phi^{(\xi)}(t,\tau)$ is the unique solution of the following matrix differential equation:
  \begin{equation}
    \label{eq:stm}
    \frac{\partial \Phi}{\partial t}(t,\tau) = \frac{\partial f}{\partial x}\bigl( t, \phi_t(\xi), u(t), d(t) \bigr) \Phi(t,\tau), \quad t \in [0,1], \quad \Phi(\tau,\tau) = I.
  \end{equation}

\end{lemma}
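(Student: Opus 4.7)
The plan is to use the standard variational calculus argument for ODEs, adapted to handle the relaxed discrete input appearing in \eqref{eq:compact_traj_xi}. For $\lambda > 0$, I would set $\xi_\lambda = \xi + \lambda \xi' = (u + \lambda u', d + \lambda d')$. Although $\xi_\lambda$ may leave ${\cal X}_r$, the extended vector field defined by \eqref{eq:compact_traj_xi} is Lipschitz in all its arguments for any $d \in \R^q$ (as established in the proof of Theorem \ref{thm:existence_and_uniqueness}), so the flow $\phi_t(\xi_\lambda)$ is well-defined as the unique solution of \eqref{eq:traj_xi} with relaxed discrete component $d + \lambda d'$. Let $z(t)$ denote the right-hand side of \eqref{eq:dxt_definition}. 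Using \eqref{eq:stm} together with the variation of constants formula, the first step is to verify that $z(t)$ is the unique solution of the linear time-varying initial value problem
\begin{equation}
\dot{z}(t) = \frac{\partial f}{\partial x}\bigl( t, \phi_t(\xi), u(t), d(t) \bigr) z(t) + \frac{\partial f}{\partial u}\bigl( t, \phi_t(\xi), u(t), d(t) \bigr) u'(t) + \sum_{i=1}^q f\bigl( t, \phi_t(\xi), u(t), e_i \bigr) d'_i(t),
\end{equation}
with $z(0) = 0$. The goal is then to show that $\bigl\| \phi_t(\xi_\lambda) - \phi_t(\xi) - \lambda z(t) \bigr\|_2 = o(\lambda)$ uniformly in $t \in [0,1]$.

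Next I would define $e_\lambda(t) = \phi_t(\xi_\lambda) - \phi_t(\xi) - \lambda z(t)$, so that $e_\lambda(0) = 0$, and derive a linear integral equation for $e_\lambda$. Applying the fundamental theorem of calculus to the $C^1$ map $s \mapsto f\bigl(\tau, \phi_\tau(\xi) + s(\phi_\tau(\xi_\lambda) - \phi_\tau(\xi)), u(\tau) + s\lambda u'(\tau), d(\tau)\bigr)$, and using the affineness of $f$ in $d$ from \eqref{eq:compact_traj_xi} to treat the $d$-perturbation exactly, one obtains
\begin{equation}
\dot{e}_\lambda(\tau) = \frac{\partial f}{\partial x}\bigl(\tau, \phi_\tau(\xi), u(\tau), d(\tau)\bigr) e_\lambda(\tau) + R_\lambda(\tau),
\end{equation}
where $R_\lambda(\tau)$ collects the higher order Taylor corrections. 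Combining the Lipschitz continuity of $\frac{\partial f}{\partial x}$ and $\frac{\partial f}{\partial u}$ from Assumption \ref{assump:fns_continuity} with the Lipschitz estimate $\bigl\| \phi_\tau(\xi_\lambda) - \phi_\tau(\xi) \bigr\|_2 \leq L \lambda \| \xi' \|_{\cal X}$ from Corollary \ref{corollary:x_lipschitz}, I would bound $\| R_\lambda(\tau) \|_2 \leq \lambda \epsilon(\lambda)$ with $\epsilon(\lambda) \to 0$ uniformly in $\tau$. Integrating and invoking the Bellman-Gronwall inequality (Lemma 5.6.4 in \cite{Polak1997}) then yields $\| e_\lambda(t) \|_2 \leq \lambda \epsilon(\lambda) e^C$ for a constant $C$ independent of $\lambda$, and dividing by $\lambda$ and sending $\lambda \downarrow 0$ delivers the claim.

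The main obstacle is the uniform-in-$\tau$ control of the Taylor remainder $R_\lambda$: pointwise it is visibly $o(\lambda)$, but the bound must be integrable in $\tau$ with a $\lambda$-independent envelope so the estimate survives Gronwall amplification. The uniform bounds supplied by Corollary \ref{corollary:fns_bounded}, together with the Lipschitz estimates above, supply exactly this envelope. The affine structure of $f$ in $d$ from \eqref{eq:compact_traj_xi} is decisive, since otherwise the $d$-variation would contribute its own quadratic term requiring second order smoothness in $d$ that Assumption \ref{assump:fns_continuity} does not provide; here the $d$-contribution is captured exactly by the linear term $\sum_i f(\tau,\phi_\tau(\xi),u(\tau),e_i) d'_i(\tau)$ in $\dot{z}$.
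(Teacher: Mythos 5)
Your proposal is correct and follows essentially the same route as the paper's proof: linearize the flow around $\xi$, identify the right-hand side of \eqref{eq:dxt_definition} with the solution of the variational equation via the state transition matrix, exploit the affine dependence on $d$ so that the discrete perturbation enters exactly, and control the Taylor remainder with the Lipschitz bounds of Assumption \ref{assump:fns_continuity} before closing with Bellman--Gronwall. The only cosmetic differences are that you phrase the remainder estimate through a uniform envelope on $e_\lambda$ where the paper invokes the Mean Value Theorem and the Dominated Convergence Theorem; both are valid.
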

\begin{proof}
  For notational convenience, let $x^{(\lambda)} = x^{( \xi + \lambda \xi' )}$, $u^{(\lambda)} = u + \lambda u'$, and $d^{(\lambda)} = d + \lambda d'$. 
  Then, if we define $\Delta x^{(\lambda)} = x^{(\lambda)} - x^{(\xi)}$, 
  \begin{equation}
    \Delta x^{(\lambda)}(t) = \int_0^t f\big( \tau, x^{(\lambda)}(\tau), u^{(\lambda)}(\tau), d^{(\lambda)}(\tau) \big) - 
      f\big( \tau, x^{(\xi)}(t), u(\tau), d(\tau) \big) d\tau,
  \end{equation}
  thus, 
  \begin{multline}
    \Delta x^{(\lambda)}(t) =
    \int_0^t f\big( \tau, x^{(\lambda)}(\tau), u^{(\lambda)}(\tau), d^{(\lambda)}(\tau) \big) - 
      f\big( \tau, x^{(\lambda)}(t), u^{(\lambda)}(\tau), d(\tau) \big) d\tau + \\
    + \int_0^t f\big( \tau, x^{(\lambda)}(\tau), u^{(\lambda)}(\tau), d(\tau) \big) - 
      f\big( \tau, x^{(\xi)}(t), u^{(\lambda)}(\tau), d(\tau) \big) d\tau + \\
    + \int_0^t f\big( \tau, x^{(\xi)}(\tau), u^{(\lambda)}(\tau), d(\tau) \big) - 
      f\big( \tau, x^{(\xi)}(t), u(\tau), d(\tau) \big) d\tau,
  \end{multline}
  and applying the Mean Value Theorem,
  \begin{multline}
    \Delta x^{(\lambda)}(t) =
    \int_0^t \lambda \sum_{i=1}^q d'_i(\tau) f\big( \tau, x^{(\lambda)}(\tau), u^{(\lambda)}(\tau), e_i \big) + \\
    + \int_0^t \frac{\partial f}{\partial x}\big( \tau, x^{(\xi)}(\tau) + \nu_x(\tau) \Delta x^{(\lambda)}(\tau), u^{(\lambda)}(\tau), d(\tau) \big) \Delta x^{(\lambda)}(\tau) + \\
    + \int_0^t \lambda \frac{\partial f}{\partial u}\big( \tau, x^{(\xi)}(\tau), u(\tau) + \nu_u(\tau) \lambda u'(\tau), d(\tau) \big) u'(\tau) dt,
  \end{multline}
  where $\nu_u,\nu_x:[0,t] \to [0,1]$.

  Let $z(t)$ be the unique solution of the following differential equation:
  \begin{multline}
    \label{eq:dxt_definition_ode}
    \dot{z}(\tau) = \frac{\partial f}{\partial x}\big( \tau, x^{(\xi)}(\tau), u(\tau), d(\tau) \big) z(\tau) + \frac{\partial f}{\partial u}\big( \tau, x^{(\xi)}(\tau), u(\tau), d(\tau) \big) u'(\tau) + \\
    + \sum_{i=1}^q d'_i(\tau) f\big( \tau, x^{(\xi)}(\tau), u(\tau), e_i \big), \quad \tau \in [0,t], \quad z(0) = 0.
  \end{multline}
  We want to show that $\lim_{\lambda \downarrow 0} \left\| \frac{\Delta x^{(\lambda)}(t)}{\lambda} - z(t) \right\|_2 = 0$.
To prove this, consider the following inequalities that follow from Condition \ref{assump:dfdx_lipschitz} in Assumption \ref{assump:fns_continuity}:
  \begin{equation}
    \begin{aligned}
      \biggl\| \int_0^t \frac{\partial f}{\partial x}\big( \tau, &x^{(\xi)}(\tau), u(\tau), d(\tau) \big) z(\tau) - 
        \frac{\partial f}{\partial x}\big( \tau, x^{(\xi)}(\tau) + \nu_x(\tau) \Delta x^{(\lambda)}(\tau), u^{(\lambda)}(\tau), d(\tau) \big) \frac{\Delta x^{(\lambda)}(\tau)}{\lambda} d\tau \biggr\|_2 \leq \\
      &\leq \int_0^t \left\| \frac{\partial f}{\partial x} \right\|_{L^\infty} \left\| z(\tau) - \frac{\Delta x^{(\lambda)}(\tau)}{\lambda} \right\|_2 d\tau + 
      \int_0^t L \left( \bigl\| \nu_x(\tau) \Delta x^{(\lambda)}(\tau) \bigr\|_2 + \lambda \left\| u'(\tau) \right\|_2 \right) \left\| z(t) \right\|_2 d\tau \\
      &\leq L \int_0^t \left\| z(\tau) - \frac{\Delta x^{(\lambda)}(\tau)}{\lambda} \right\|_2 d\tau + 
      L \int_0^t \left( \bigl\| \Delta x^{(\lambda)}(\tau) \bigr\|_2 + \lambda \left\| u'(\tau) \right\|_2 \right) \left\| z(t) \right\|_2 d\tau,
    \end{aligned}
  \end{equation}
  also from Condition \ref{assump:dfdu_lipschitz} in Assumption \ref{assump:fns_continuity}:
  \begin{multline}
    \left\| \int_0^t \left( \frac{\partial f}{\partial u}\big( \tau, x^{(\xi)}(\tau), u(\tau), d(\tau) \big) - 
        \frac{\partial f}{\partial u}\big( \tau, x^{(\xi)}(\tau), u(\tau) + \nu_u(\tau) \lambda u'(\tau), d(\tau) \big) \right) u'(\tau) \right\|_2 \leq \\
    \leq L \int_0^t \lambda \left\| \nu_u(\tau) u'(\tau) \right\|_2 \left\| u'(\tau) \right\|_2 d\tau \leq L \int_0^t \lambda \left\| u'(\tau) \right\|_2^2 d\tau,
  \end{multline}
  and from Condition \ref{assump:f_lipschitz} in Assumption \ref{assump:fns_continuity}:
  \begin{multline}
    \left\| \int_0^t \sum_{i=1}^q d_i'(\tau) \left( f\big( \tau, x^{(\xi)}(\tau), u(\tau), e_i \big) - 
      f\big( \tau, x^{(\lambda)}(\tau), u^{(\lambda)}(\tau), e_i \big) \right) d\tau \right\|_2 \leq \\
    \leq L \int_0^t \sum_{i=1}^q d_i'(\tau) \left( \bigl\| \Delta x^{(\lambda)}(\tau) \bigr\|_2 + \lambda \left\| u'(\tau) \right\|_2 \right) d\tau.
  \end{multline}
  Now, using the Bellman-Gronwall Inequality (Lemma 5.6.4 in \cite{Polak1997}) and the inequalities above,
  \begin{multline}
    \left\| \frac{\Delta x^{(\lambda)}(t)}{\lambda} - z(t) \right\|_2 \leq 
      e^{Lt} L \Bigg( \int_0^t \left( \bigl\| \Delta x^{(\lambda)}(\tau) \bigr\|_2 + \lambda \left\| u'(\tau) \right\|_2 \right) \left\| z(t) \right\|_2 + \lambda\left\| u'(\tau) \right\|_2^2 + \\ 
      + \sum_{i=1}^q d_i'(\tau) \left( \bigl\| \Delta x^{(\lambda)}(\tau) \bigr\|_2 + \lambda \left\| u'(\tau) \right\|_2 \right) d\tau \Bigg),
  \end{multline}
  but note that every term in the integral above is bounded, and $\Delta x^{(\lambda)}(\tau) \to 0$ for each $\tau \in [0,t]$ since $x^{(\lambda)} \to x^{(\xi)}$ uniformly as shown in Lemma \ref{lemma:x_sequential_continuity}, thus by the Dominated Convergence Theorem (Theorem 2.24 in \cite{Folland1999}) and by noting that $\D{\phi_t}(\xi;\xi')$, as defined in Equation \eqref{eq:dxt_definition}, is exactly the solution of Differential Equation \eqref{eq:dxt_definition_ode} we get:
  \begin{equation}
    \lim_{\lambda \downarrow 0} \left\| \frac{\Delta x^{(\lambda)}(t)}{\lambda} - z(t) \right\|_2 
    = \lim_{\lambda \downarrow 0} \frac{1}{\lambda} \bigl\| x^{(\xi + \lambda \xi')}(t) - x^{(\xi)}(t) - D{\phi_t}(\xi;\lambda\xi') \bigr\|_2 = 0.
  \end{equation}
  The result of the Lemma then follows.

\end{proof}

Next, we prove that $\d{\phi_t}$ is bounded by proving that $\Phi^{(\xi)}$ is bounded:
 \begin{corollary}
  \label{corollary:stm_bounded}
  There exists a constant $C > 0$ such that for each $t,\tau \in [0,1]$ and $\xi \in {\cal X}_r$:
  \begin{equation}
    \bigl\| \Phi^{(\xi)}(t,\tau) \bigr\|_{i,2} \leq C,
  \end{equation}
  where $\Phi^{(\xi)}(t,\tau)$ is the solution to Differential Equation \eqref{eq:stm}.
\end{corollary}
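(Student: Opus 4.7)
The plan is to apply the Bellman--Gronwall Inequality to the integral form of Differential Equation \eqref{eq:stm}. First I would integrate both sides of Equation \eqref{eq:stm} from $\tau$ to $t$, using the initial condition $\Phi^{(\xi)}(\tau,\tau) = I$, to obtain
\begin{equation}
  \Phi^{(\xi)}(t,\tau) = I + \int_\tau^t \frac{\partial f}{\partial x}\bigl( s, \phi_s(\xi), u(s), d(s) \bigr) \Phi^{(\xi)}(s,\tau)\, ds.
\end{equation}
Taking the induced $2$--norm on both sides and applying the triangle inequality and submultiplicativity of the induced norm yields
\begin{equation}
  \bigl\| \Phi^{(\xi)}(t,\tau) \bigr\|_{i,2} \leq 1 + \int_\tau^t \left\| \frac{\partial f}{\partial x}\bigl( s, \phi_s(\xi), u(s), d(s) \bigr) \right\|_{i,2} \bigl\| \Phi^{(\xi)}(s,\tau) \bigr\|_{i,2}\, ds.
\end{equation}

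Next I would invoke Condition \ref{corollary:f_bounded} in Corollary \ref{corollary:fns_bounded}, which provides a constant $K > 0$, independent of $\xi$, such that $\left\| \frac{\partial f}{\partial x}\bigl( s, \phi_s(\xi), u(s), d(s) \bigr) \right\|_{i,2} \leq K$ for all $s \in [0,1]$ and $\xi \in {\cal X}_r$. Substituting this uniform bound into the inequality above produces a scalar Gronwall-type estimate. Applying the Bellman--Gronwall Inequality (Lemma 5.6.4 in \cite{Polak1997}) then gives
\begin{equation}
  \bigl\| \Phi^{(\xi)}(t,\tau) \bigr\|_{i,2} \leq e^{K|t-\tau|} \leq e^K,
\end{equation}
so setting $C = e^K$ establishes the claim. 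The same argument applies when $t < \tau$ by integrating in the reverse direction, or equivalently by observing that $\Phi^{(\xi)}(t,\tau) = \Phi^{(\xi)}(\tau,t)^{-1}$ and running Gronwall on the adjoint system with the same bound on $\frac{\partial f}{\partial x}$. There is no real obstacle here: the entire result is a uniform Gronwall estimate, and the only ingredient beyond standard linear ODE theory is the uniform (in $\xi$) bound on $\frac{\partial f}{\partial x}$, which has already been established in Corollary \ref{corollary:fns_bounded}.
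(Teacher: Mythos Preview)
Your proposal is correct and follows essentially the same route as the paper: write the integral form of Equation \eqref{eq:stm}, take the induced norm, invoke the uniform bound on $\frac{\partial f}{\partial x}$ from Condition \ref{corollary:f_bounded} in Corollary \ref{corollary:fns_bounded}, and apply Bellman--Gronwall. If anything, your version is slightly more careful, since you explicitly address the case $t < \tau$, which the paper's proof leaves implicit.
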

\begin{proof}
  Notice that, since the induced matrix norm is submultiplicative,
  \begin{equation}
    \begin{aligned}
      \left\| \Phi^{(\xi)}(t,\tau) \right\|_{i,2} 
      &= \left\| \Phi^{(\xi)}(t,\tau) + \int_{\tau}^t \left( \frac{\partial f}{\partial x}\bigl( s, x^{(\xi)}(s), u(s), d(s) \bigr) \Phi^{(\xi)}(s,\tau) \right) ds \right\|_{i,2} \\
      &\leq 1 + \int_{\tau}^t \left\| \frac{\partial f}{\partial x} \bigl( s, x^{(\xi)}(s), u(s), d(s) \bigr) \right\|_{i,2} \left\|\Phi^{(\xi)}(t,s) \right\|_{i,2} ds \\
      &\leq e^{qC},
    \end{aligned}
  \end{equation}
  where in the last step we employed Condition \ref{corollary:f_bounded} from Corollary \ref{corollary:fns_bounded} with a constant $C > 0$ and the Bellman-Gronwall Inequality.
\end{proof}

\begin{corollary}
  \label{corollary:dxt_bounded}
  There exists a constant $C > 0$ such that for all $\xi \in {\cal X}_r$, $\xi' \in {\cal X}$, and $t \in [0,1]$:
  \begin{equation}
    \label{eq:dxt_bounded}
    \left\| \D{\phi_t}(\xi;\xi') \right\|_2 \leq C \left\| \xi' \right\|_{\cal X},
  \end{equation}
  where $\D{\phi_t}$ is as defined in Equation \eqref{eq:dxt_definition}.
\end{corollary}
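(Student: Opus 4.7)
The plan is to take the explicit formula for $\D{\phi_t}(\xi;\xi')$ given in Equation \eqref{eq:dxt_definition}, move the $\|\cdot\|_2$ inside via the triangle inequality, and then bound each of the three factors in the integrand using the uniform bounds already established in Corollaries \ref{corollary:fns_bounded} and \ref{corollary:stm_bounded}. Concretely, I would invoke Corollary \ref{corollary:stm_bounded} to get a uniform bound on $\bigl\| \Phi^{(\xi)}(t,\tau) \bigr\|_{i,2}$, then use Condition \ref{corollary:f_bounded} from Corollary \ref{corollary:fns_bounded} to bound both $\bigl\lVert \frac{\partial f}{\partial u}(\tau, \phi_\tau(\xi), u(\tau), d(\tau)) \bigr\rVert_{i,2}$ and $\bigl\lVert f(\tau, \phi_\tau(\xi), u(\tau), e_i) \bigr\rVert_2$ uniformly in $\tau$, $\xi$, and $i$. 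Since the induced matrix norm is submultiplicative, this collapses the integrand to a constant multiple of $\left\lVert u'(\tau) \right\rVert_2 + \sum_{i=1}^q \left| d_i'(\tau) \right|$.

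Next, I would convert the $\ell^1$ sum over the coordinates of $d'(\tau)$ into an $\ell^2$ quantity by the standard finite-dimensional norm equivalence $\| d'(\tau) \|_1 \leq \sqrt{q}\, \| d'(\tau) \|_2$, which is uniform in $\tau$. At this point the bound has the form
\begin{equation}
\bigl\| \D{\phi_t}(\xi;\xi') \bigr\|_2 \leq C' \int_0^t \bigl( \lVert u'(\tau) \rVert_2 + \lVert d'(\tau) \rVert_2 \bigr) d\tau,
\end{equation}
for some constant $C' > 0$ independent of $\xi$, $\xi'$, and $t$.

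Finally, I would apply the Hölder inequality trick already used in Equation \eqref{eq:1to2_norm_holder_trick} in the proof of Lemma \ref{lemma:x_sequential_continuity}, namely that for any scalar function $g \in L^2([0,1],\R)$ one has $\int_0^1 |g(\tau)| d\tau \leq \| g \|_{L^2}$ by pairing $g$ with the constant function $\mathds{1}$. Applied coordinate-wise (or to $\lVert u'(\cdot)\rVert_2$ and $\lVert d'(\cdot)\rVert_2$ directly as the cited equation does), this yields $\int_0^t \lVert u'(\tau) \rVert_2 d\tau \leq \lVert u' \rVert_{L^2}$ and $\int_0^t \lVert d'(\tau) \rVert_2 d\tau \leq \lVert d' \rVert_{L^2}$, so that the bound becomes $C' \bigl( \lVert u' \rVert_{L^2} + \lVert d' \rVert_{L^2} \bigr) = C' \lVert \xi' \rVert_{\cal X}$ by the definition of the ${\cal X}$-norm in Equation \eqref{eq:defn_metric}, with a constant independent of $t$. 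The proof is essentially routine given the machinery already in hand; the only mildly subtle point is the passage from the $\ell^1$ sum arising from the convex combination of vector fields to the $\ell^2$ quantity appearing in the ${\cal X}$-norm, and this is handled by the trivial finite-dimensional norm equivalence.
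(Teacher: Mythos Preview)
Your proposal is correct and follows essentially the same route as the paper. The paper's one-line proof cites the Cauchy--Schwarz Inequality together with Corollaries \ref{corollary:fns_bounded} and \ref{corollary:stm_bounded}; your write-up just unpacks this in detail, and the H\"older step you invoke via Equation \eqref{eq:1to2_norm_holder_trick} is exactly the Cauchy--Schwarz pairing against $\mathds{1}$ that the paper has in mind.
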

\begin{proof}
  This result follows by employing the Cauchy-Schwarz Inequality, Corollary \ref{corollary:fns_bounded} and Corollary \ref{corollary:stm_bounded}.
\end{proof}

In fact, we can actually prove the Lipschitz continuity of $\Phi^{(\xi)}$:
\begin{lemma}
  \label{lemma:stm_lipschitz}
  There exists a constant $L > 0$ such that for each $\xi_1,\xi_2 \in {\cal X}_r$ and each $t,\tau \in [0,1]$:
  \begin{equation}
    \label{eq:stm_lipschitz}
    \left\| \Phi^{(\xi_1)}(t,\tau) - \Phi^{(\xi_2)}(t,\tau) \right\|_{i,2} \leq L \left\| \xi_1 - \xi_2 \right\|_{\cal X},
  \end{equation}
  where $\Phi^{(\xi)}$ is the solution to Differential Equation \eqref{eq:stm}. 
\end{lemma}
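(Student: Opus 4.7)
The plan is to imitate the standard Gronwall-type argument for Lipschitz dependence of a linear time-varying fundamental matrix on its driving matrix. Writing Differential Equation \eqref{eq:stm} in integral form, for $k = 1,2$,
\begin{equation}
  \Phi^{(\xi_k)}(t,\tau) = I + \int_\tau^t \frac{\partial f}{\partial x}\bigl(s,\phi_s(\xi_k),u_k(s),d_k(s)\bigr)\,\Phi^{(\xi_k)}(s,\tau)\,ds,
\end{equation}
I would subtract the two identities and perform the usual add-and-subtract step by inserting $\pm \frac{\partial f}{\partial x}\bigl(s,\phi_s(\xi_2),u_2(s),d_2(s)\bigr)\,\Phi^{(\xi_1)}(s,\tau)$ inside the integrand. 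This splits the difference into two integrals: one in which the $\Phi$ factor is the same but the $\partial f/\partial x$ arguments differ, and one in which the $\partial f/\partial x$ factor is the same but the $\Phi$ factors differ.

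For the first integral, I would bound the matrix factor $\Phi^{(\xi_1)}(s,\tau)$ uniformly using Corollary \ref{corollary:stm_bounded}, and bound the difference of the $\partial f/\partial x$ terms using Condition \ref{corollary:dfdxxi_lipschitz} of Corollary \ref{corollary:vf_lipschitz}. This yields a pointwise bound proportional to $\|\xi_1-\xi_2\|_{\cal X} + \|u_1(s)-u_2(s)\|_2 + \|d_1(s)-d_2(s)\|_2$. Integrating over $[\tau,t]\subset[0,1]$ and applying the same $L^1$-to-$L^2$ Hölder inequality trick used in Equation \eqref{eq:1to2_norm_holder_trick} converts the pointwise $\|u_1(s)-u_2(s)\|_2$ and $\|d_1(s)-d_2(s)\|_2$ contributions into $\|u_1-u_2\|_{L^2}$ and $\|d_1-d_2\|_{L^2}$, hence into a constant multiple of $\|\xi_1-\xi_2\|_{\cal X}$. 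For the second integral, the same $\partial f/\partial x$ is bounded uniformly via Condition \ref{corollary:f_bounded} of Corollary \ref{corollary:fns_bounded}, so this integral is bounded by $C\int_\tau^t \|\Phi^{(\xi_1)}(s,\tau)-\Phi^{(\xi_2)}(s,\tau)\|_{i,2}\,ds$.

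Combining the two bounds yields an integral inequality of the form
\begin{equation}
  \bigl\|\Phi^{(\xi_1)}(t,\tau)-\Phi^{(\xi_2)}(t,\tau)\bigr\|_{i,2} \leq C_1\|\xi_1-\xi_2\|_{\cal X} + C\int_\tau^t \bigl\|\Phi^{(\xi_1)}(s,\tau)-\Phi^{(\xi_2)}(s,\tau)\bigr\|_{i,2}\,ds,
\end{equation}
to which the Bellman-Gronwall Inequality (Lemma 5.6.4 in \cite{Polak1997}) applies, producing the required Lipschitz constant $L := C_1 e^{C}$ uniformly in $t$ and $\tau$. The only step requiring care is the first one: the pointwise Lipschitz estimate from Corollary \ref{corollary:vf_lipschitz} mixes a global term $\|\xi_1-\xi_2\|_{\cal X}$ with pointwise terms in $u$ and $d$, and one must correctly use the $L^1 \leq L^2$ bound on $[0,1]$ to convert the integrated pointwise terms into $\|\xi_1-\xi_2\|_{\cal X}$ without any loss of exponents; after that, the Gronwall step is entirely routine.
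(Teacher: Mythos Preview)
Your proposal is correct and follows essentially the same approach as the paper: the same add-and-subtract decomposition, the same uses of Corollaries \ref{corollary:fns_bounded}, \ref{corollary:vf_lipschitz}, and \ref{corollary:stm_bounded}, the same $L^1$-to-$L^2$ step from Equation \eqref{eq:1to2_norm_holder_trick}, and then Bellman--Gronwall. The paper's proof is slightly terser but the logical structure is identical.
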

\begin{proof}
  Letting $\xi_1 = (u_1,d_1) \in {\cal X}_r$ and $\xi_2 = (u_2,d_2) \in {\cal X}_r$ and by applying the Triangular Inequality and noticing the induced matrix norm is compatible, observe:
  \begin{multline}
      \left\| \Phi^{(\xi_1)}(t,\tau) - \Phi^{(\xi_2)}(t,\tau) \right\|_{i,2} \leq 
      \int_{\tau}^t \left( \left\|\frac{\partial f}{\partial x}\left(s,x^{(\xi_2)}(s),u_2(s),d_2(s)\right) \right\|_{i,2} \left\| \Phi^{(\xi_1)}(s,\tau) - \Phi^{(\xi_2)}(s,\tau)\right\|_{i,2} \right) ds + \\
       + \int_{\tau}^t \left( \left\|\frac{\partial f}{\partial x}\left( s,x^{(\xi_1)}(s),u_1(s),d_1(s)\right) - \frac{\partial f}{\partial x}\left(s,x^{(\xi_2)}(s),u_2(s),d_2(s)\right) \right\|_{i,2} \left\|\Phi^{(\xi_1)}(s,\tau)\right\|_{i,2} \right) ds. 
  \end{multline}
  By applying Condition \ref{corollary:f_bounded} in Corollary \ref{corollary:fns_bounded}, Condition \ref{corollary:dfdxxi_lipschitz} in Corollary \ref{corollary:vf_lipschitz}, Corollary \ref{corollary:stm_bounded}, the same argument as in Equation \eqref{eq:1to2_norm_holder_trick}, and the Bellman-Gronwall Inequality (Lemma 5.6.4 in \cite{Polak1997}), our desired result follows.
\end{proof}

A simple extension of our previous argument shows that for all $t \in [0,1]$, $\D{\phi_t}(\xi;\cdot)$ is Lipschitz continuous with respect to its point of evaluation, $\xi$.
\begin{lemma}
  \label{lemma:Dxt_lipschitz}
  There exists a constant $L > 0$ such that for each $\xi_1, \xi_2 \in {\cal X}_r$, $\xi' \in {\cal X}$, and $t \in [0,1]$:
  \begin{equation}
    \label{eq:Dxt_lipschitz}
    \left\| \D{\phi_t}(\xi_1;\xi') - \D{\phi_t}(\xi_2;\xi') \right\|_2 \leq L \| \xi_1 - \xi_2\|_{\cal X} \left\| \xi' \right\|_{\cal X}
  \end{equation}
  where $\D{\phi_t}$ is as defined in Equation \eqref{eq:dxt_definition}.
\end{lemma}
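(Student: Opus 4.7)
My plan is to use the integral representation of $\D{\phi_t}(\xi;\xi')$ provided in Equation \eqref{eq:dxt_definition} and perform a term-by-term comparison between $\D{\phi_t}(\xi_1;\xi')$ and $\D{\phi_t}(\xi_2;\xi')$ by inserting appropriate intermediate terms. Writing $\xi_k = (u_k,d_k)$ for $k=1,2$ and $\xi'=(u',d')$, I would add and subtract the ``mixed'' expression in which the state transition matrix is switched from $\Phi^{(\xi_1)}$ to $\Phi^{(\xi_2)}$ while the rest of the integrand is kept at the $\xi_1$ evaluation. This splits $\D{\phi_t}(\xi_1;\xi') - \D{\phi_t}(\xi_2;\xi')$ into three pieces: one proportional to $\Phi^{(\xi_1)}(t,\tau) - \Phi^{(\xi_2)}(t,\tau)$, one proportional to $\frac{\partial f}{\partial u}(\tau,\phi_\tau(\xi_1),u_1(\tau),d_1(\tau)) - \frac{\partial f}{\partial u}(\tau,\phi_\tau(\xi_2),u_2(\tau),d_2(\tau))$ multiplied by $u'(\tau)$, and one proportional to the analogous difference of $f(\tau,\phi_\tau(\xi_1),u_1(\tau),e_i) - f(\tau,\phi_\tau(\xi_2),u_2(\tau),e_i)$ multiplied by $d'_i(\tau)$.

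For the first piece I would apply Lemma \ref{lemma:stm_lipschitz} to pull out a factor $\|\xi_1 - \xi_2\|_{\cal X}$ uniformly in $\tau$, and then use Corollary \ref{corollary:fns_bounded} to bound $\frac{\partial f}{\partial u}$ and $f(\cdot,\cdot,\cdot,e_i)$ uniformly. What remains under the integral is $\|u'(\tau)\|_2 + \sum_i |d'_i(\tau)|$, which I handle by Hölder's inequality (Proposition 6.2 in \cite{Folland1999}) against the constant function to convert $L^1$ bounds on $[0,1]$ into $L^2$ bounds, exactly as is done in Equation \eqref{eq:1to2_norm_holder_trick}, producing a factor of $\|\xi'\|_{\cal X}$.

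For the second and third pieces, I would bound $\|\Phi^{(\xi_2)}(t,\tau)\|_{i,2}$ uniformly using Corollary \ref{corollary:stm_bounded}, then apply Conditions \ref{corollary:fxi_lipschitz} and \ref{corollary:dfduxi_lipschitz} from Corollary \ref{corollary:vf_lipschitz} to the pointwise differences of $\frac{\partial f}{\partial u}$ and $f$. This introduces three types of terms under the integral: (a) $\|\xi_1 - \xi_2\|_{\cal X}$ (a constant in $\tau$) times $\|u'(\tau)\|_2$ or $|d_i'(\tau)|$, handled as above by Hölder against $\mathds{1}$; (b) $\|u_1(\tau) - u_2(\tau)\|_2 \|u'(\tau)\|_2$, bounded by the Cauchy-Schwarz inequality against $\|u_1 - u_2\|_{L^2}\|u'\|_{L^2} \leq \|\xi_1 - \xi_2\|_{\cal X}\|\xi'\|_{\cal X}$; and (c) the analogous product involving $d_1 - d_2$ and $d'$.

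Summing the three pieces yields a bound of the desired form $L\|\xi_1 - \xi_2\|_{\cal X}\|\xi'\|_{\cal X}$ that is uniform in $t \in [0,1]$. I do not anticipate a serious obstacle here: the argument is bookkeeping, combining Lemma \ref{lemma:stm_lipschitz}, Corollary \ref{corollary:stm_bounded}, Corollary \ref{corollary:fns_bounded}, and Corollary \ref{corollary:vf_lipschitz}. The only point requiring minor care is arranging the Cauchy-Schwarz/Hölder step so that every cross term involving the direction $\xi'$ is measured in the $L^2$ norm that defines $\|\xi'\|_{\cal X}$, since the pointwise Lipschitz bounds naturally produce $L^1$-type integrals over $[0,1]$.
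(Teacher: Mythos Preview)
Your proposal is correct and follows essentially the same approach as the paper's proof: the same add-and-subtract decomposition into a $\Phi^{(\xi_1)}-\Phi^{(\xi_2)}$ piece and $\partial f/\partial u$, $f$ difference pieces, bounded using precisely the same ingredients (Lemma \ref{lemma:stm_lipschitz}, Corollary \ref{corollary:stm_bounded}, Condition \ref{corollary:f_bounded} in Corollary \ref{corollary:fns_bounded}, Conditions \ref{corollary:fxi_lipschitz} and \ref{corollary:dfduxi_lipschitz} in Corollary \ref{corollary:vf_lipschitz}, and the H\"older step of Equation \eqref{eq:1to2_norm_holder_trick}). If anything, you are slightly more explicit than the paper about the Cauchy--Schwarz step that converts the pointwise products $\|u_1(\tau)-u_2(\tau)\|_2\,\|u'(\tau)\|_2$ into $\|\xi_1-\xi_2\|_{\cal X}\|\xi'\|_{\cal X}$.
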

\begin{proof}
  Let $\xi_1 = (u_1,d_1)$, $\xi_2 = (u_2,d_2)$, and $\xi' = (u',d')$. Then, by applying the Triangular Inequality, and noticing that the induced matrix norm is compatible, observe:
  \begin{multline}
    \left\| \D{\phi_t}(\xi_1;\xi') - \D{\phi_t}(\xi_2;\xi') \right\|_2 \leq
    \int_0^t \Bigg(
      \left\| \Phi^{(\xi_1)}(t,s) - \Phi^{(\xi_2)}(t,s) \right\|_{i,2} 
      \left\| \frac{\partial f}{\partial u}\bigl( s, x^{(\xi_1)}(s), u_1(s), d_1(s) \bigr) \right\|_{i,2} + \\
      + \left\| \Phi^{(\xi_2)}(t,s) \right\|_{i,2} 
      \left\| \frac{\partial f}{\partial u} \bigl( s, x^{(\xi_1)}(s), u_1(s), d_1(s) \bigr) - \frac{\partial f}{\partial u}\bigl( s, x^{(\xi_2)}(s), u_2(s), d_2(s) \bigr) \right\|_{i,2} 
      \Bigg) \left\| u'(s) \right\|_2 ds + \\
    + \int_0^t \sum_{i=1}^q \Bigg(
      \left\| \Phi^{(\xi_1)}(t,s) - \Phi^{(\xi_2)}(t,s) \right\|_{i,2} 
      \left\| f\bigl( s, x^{(\xi_1)}(s), u_1(s), e_i \bigr) \right\|_{2} +  \\
      + \left\| \Phi^{(\xi_2)}(t,s) \right\|_{i,2} 
      \left\| f\bigl( s, x^{(\xi_1)}(s), u_1(s), e_i \bigr) - f\bigl( s, x^{(\xi_2)}(s), u_2(s), e_i \bigr) \right\|_{2} 
      \Bigg) \left\| d'(s) \right\| ds.
  \end{multline}
  By applying Corollary \ref{corollary:stm_bounded}, Condition \ref{corollary:f_bounded} in Corollary \ref{corollary:fns_bounded}, Lemma \ref{lemma:stm_lipschitz}, Conditions \ref{corollary:fxi_lipschitz} and \ref{corollary:dfduxi_lipschitz} in Corollary \ref{corollary:vf_lipschitz}, together with the boundedness of $u'(s)$ and $d'(s)$, and an argument identical to the one used in Equation \eqref{eq:1to2_norm_holder_trick}, our desired result follows.
\end{proof}

Next, we prove that $\D{\phi_t}$ is simultaneously continuous with respect to both of its arguments.
\begin{lemma}
  \label{lemma:Dx_cont}
  For each $t \in [0,1]$, $\xi \in {\cal X}_r$, and $\xi' \in {\cal X}$, the map $(\xi,\xi') \mapsto \D{\phi_t}(\xi; \xi')$, as defined in Equation \eqref{eq:dxt_definition}, is continuous.
\end{lemma}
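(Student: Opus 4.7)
The plan is to establish joint continuity by splitting the difference in a standard triangle-inequality fashion and then invoking the two facts already at our disposal: Lemma \ref{lemma:Dxt_lipschitz}, which controls changes in the base point $\xi$, and Corollary \ref{corollary:dxt_bounded} together with the manifest linearity of $\xi' \mapsto \D{\phi_t}(\xi;\xi')$, which controls changes in the direction $\xi'$.

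Concretely, suppose $(\xi_n,\xi_n') \to (\xi,\xi')$ in the product topology induced by the ${\cal X}$--norm on each factor. First I would write
\begin{equation}
\bigl\| \D{\phi_t}(\xi_n;\xi_n') - \D{\phi_t}(\xi;\xi') \bigr\|_2
\leq \bigl\| \D{\phi_t}(\xi_n;\xi_n') - \D{\phi_t}(\xi;\xi_n') \bigr\|_2
+ \bigl\| \D{\phi_t}(\xi;\xi_n') - \D{\phi_t}(\xi;\xi') \bigr\|_2.
\end{equation}
The first summand is bounded by $L \| \xi_n - \xi \|_{\cal X} \| \xi_n' \|_{\cal X}$ by Lemma \ref{lemma:Dxt_lipschitz}; since $\xi_n' \to \xi'$ in ${\cal X}$--norm, the sequence $\{ \| \xi_n' \|_{\cal X} \}$ is bounded, and thus this term vanishes as $n \to \infty$. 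For the second summand, I would note from Formula \eqref{eq:dxt_definition} that $\D{\phi_t}(\xi;\cdot)$ is linear in its second argument (the integrand depends linearly on $u'$ and on each $d_i'$), so
\begin{equation}
\D{\phi_t}(\xi;\xi_n') - \D{\phi_t}(\xi;\xi') = \D{\phi_t}(\xi;\xi_n' - \xi'),
\end{equation}
and then Corollary \ref{corollary:dxt_bounded} yields $\bigl\| \D{\phi_t}(\xi;\xi_n' - \xi') \bigr\|_2 \leq C \| \xi_n' - \xi' \|_{\cal X}$, which also vanishes as $n \to \infty$. Combining the two bounds gives the joint sequential continuity, and since ${\cal X}_r \times {\cal X}$ is a metric space, sequential continuity is equivalent to continuity.

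There is not much of an obstacle here: all the heavy lifting has already been done in Lemma \ref{lemma:Dxt_lipschitz} (which itself required bounding $\Phi^{(\xi)}$ and its Lipschitz dependence on $\xi$) and in Corollary \ref{corollary:dxt_bounded}. The only point worth being explicit about is the linearity of $\D{\phi_t}(\xi;\cdot)$, which is immediate from inspecting Equation \eqref{eq:dxt_definition} but must be invoked in order to reduce the second summand to a directional bound via the boundedness estimate rather than a full Lipschitz estimate in $\xi'$.
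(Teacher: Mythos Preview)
Your argument is correct, and in fact it is cleaner than the paper's own proof. The paper proves Lemma \ref{lemma:Dx_cont} by going back to the integral formula \eqref{eq:dxt_definition} and redoing an add-and-subtract estimate on the integrand ``identical to the one used in the proof of Lemma \ref{lemma:x_sequential_continuity}'', citing boundedness (Corollaries \ref{corollary:stm_bounded} and \ref{corollary:fns_bounded}) and Lipschitz continuity (Lemma \ref{lemma:stm_lipschitz}, Corollary \ref{corollary:vf_lipschitz}) of the constituent functions, together with the H\"older trick of Equation \eqref{eq:1to2_norm_holder_trick}. Your route instead operates one level higher: since Lemma \ref{lemma:Dxt_lipschitz} and Corollary \ref{corollary:dxt_bounded} have already packaged exactly those integral estimates, the joint continuity follows immediately from the triangle split plus linearity in the second argument. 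The paper's approach is self-contained but redundant in light of Lemma \ref{lemma:Dxt_lipschitz}; yours exploits the existing lemmas and avoids reopening the integral.
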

\begin{proof}
  To prove this result, we can employ an argument identical to the one used in the proof of Lemma \ref{lemma:x_sequential_continuity}.
  First, note that $u(t) \in U$ for each $t \in [0,1]$. 
  Second, note that $\Phi^{(\xi)}$, $f$, $\frac{\partial f}{\partial x}$, and $\frac{\partial f}{\partial u}$ are bounded, as shown in Corollary \ref{corollary:stm_bounded} and Condition \ref{corollary:f_bounded} in Corollary \ref{corollary:fns_bounded}.
  Third, recall that $\Phi^{(\xi)}$, $f$, and $\frac{\partial f}{\partial u}$ are Lipschitz continuous, as proven in Lemma \ref{lemma:stm_lipschitz} and Conditions \ref{corollary:fxi_lipschitz} and \ref{corollary:dfduxi_lipschitz} in Corollary \ref{corollary:vf_lipschitz}, respectively.
  Finally, the result follows after using an argument identical to the one used in Equation \eqref{eq:1to2_norm_holder_trick}.
\end{proof}

We can now construct the directional derivative of the cost $J$ and prove it is Lipschitz continuous.
\begin{lemma}
  \label{lemma:DJ_definition}
  Let $\xi \in {\cal X}_r$, $\xi' \in {\cal X}$, and $J$ be as defined in Equation \eqref{eq:cost}. Then the directional derivative of the cost $J$ in the $\xi'$ direction is:
  \begin{equation}
    \label{eq:DJ_definition}
    \D{J}(\xi;\xi') = \frac{\partial h_0}{\partial x}\big( \phi_1(\xi) \big) \D{\phi_1}(\xi;\xi').
  \end{equation}
\end{lemma}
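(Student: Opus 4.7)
The plan is to apply the chain rule by decomposing $J(\xi) = h_0(\phi_1(\xi))$ into the composition of $h_0$ with the flow, and then transferring differentiability from $h_0$ (Assumption \ref{assump:constraint_fns}) and directional differentiability from $\phi_1$ (Lemma \ref{lemma:dxt_definition}) onto the composition. Concretely, I start from the definition of the directional derivative in Equation \eqref{eq:operator_D_definition}:
\begin{equation}
  \D{J}(\xi;\xi') = \lim_{\lambda \downarrow 0} \frac{h_0\bigl(\phi_1(\xi+\lambda\xi')\bigr) - h_0\bigl(\phi_1(\xi)\bigr)}{\lambda}.
\end{equation}

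Next, I would invoke the Mean Value Theorem on $h_0$: since $h_0$ is differentiable in $x$, there exists $\nu(\lambda) \in [0,1]$ such that, writing $\Delta_\lambda = \phi_1(\xi+\lambda\xi') - \phi_1(\xi)$,
\begin{equation}
  h_0\bigl(\phi_1(\xi+\lambda\xi')\bigr) - h_0\bigl(\phi_1(\xi)\bigr) = \frac{\partial h_0}{\partial x}\bigl(\phi_1(\xi) + \nu(\lambda)\Delta_\lambda\bigr)\,\Delta_\lambda.
\end{equation}
Dividing by $\lambda$, I would split the expression as
\begin{equation}
  \frac{\partial h_0}{\partial x}\bigl(\phi_1(\xi)\bigr)\frac{\Delta_\lambda}{\lambda} + \left[ \frac{\partial h_0}{\partial x}\bigl(\phi_1(\xi) + \nu(\lambda)\Delta_\lambda\bigr) - \frac{\partial h_0}{\partial x}\bigl(\phi_1(\xi)\bigr)\right] \frac{\Delta_\lambda}{\lambda}.
\end{equation}
By Lemma \ref{lemma:dxt_definition}, the first summand converges as $\lambda \downarrow 0$ to exactly the claimed expression $\frac{\partial h_0}{\partial x}(\phi_1(\xi))\,\D{\phi_1}(\xi;\xi')$.

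The main obstacle, though a routine one, is controlling the bracketed error term. I would bound it using Condition \ref{assump:dphidx_lipschitz} in Assumption \ref{assump:constraint_fns}, which gives a Lipschitz constant $L$ for $\frac{\partial h_0}{\partial x}$, yielding
\begin{equation}
  \left\| \left[ \frac{\partial h_0}{\partial x}\bigl(\phi_1(\xi) + \nu(\lambda)\Delta_\lambda\bigr) - \frac{\partial h_0}{\partial x}\bigl(\phi_1(\xi)\bigr)\right] \frac{\Delta_\lambda}{\lambda} \right\|_2 \leq L\,\|\Delta_\lambda\|_2\,\left\|\frac{\Delta_\lambda}{\lambda}\right\|_2.
\end{equation}
By Corollary \ref{corollary:x_lipschitz}, $\|\Delta_\lambda\|_2 \leq L\lambda\|\xi'\|_{\cal X} \to 0$, and by the same corollary $\|\Delta_\lambda/\lambda\|_2$ stays bounded (uniformly in $\lambda$), so the error vanishes. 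Combined with the first summand's limit, this yields Equation \eqref{eq:DJ_definition}, completing the proof.
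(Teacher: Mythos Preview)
Your proposal is correct and takes essentially the same approach as the paper, which simply states that the result follows directly by the Chain Rule and Lemma~\ref{lemma:dxt_definition}. You have spelled out the chain-rule argument in full detail, using the Mean Value Theorem on $h_0$ together with the Lipschitz bounds from Assumption~\ref{assump:constraint_fns} and Corollary~\ref{corollary:x_lipschitz} to control the remainder, which is exactly what the paper's one-line proof implicitly relies on.
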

\begin{proof}
  The result follows directly by the Chain Rule and Lemma \ref{lemma:dxt_definition}.
\end{proof}

\begin{corollary}
  \label{corollary:DJ_lipschitz}
  There exists a constant $L > 0$ such that for each $\xi_1,\xi_2 \in {\cal X}_r$ and $\xi' \in {\cal X}$:
  \begin{equation}
    \left| \D{J}(\xi_1;\xi') - \D{J}(\xi_2;\xi') \right| \leq L \left\| \xi_1 - \xi_2 \right\|_{\cal X} \left\| \xi' \right\|_{\cal X},
  \end{equation}
  where $\D{J}$ is as defined in Equation \eqref{eq:DJ_definition}.
\end{corollary}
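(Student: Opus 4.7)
The plan is to expand the difference $\mathrm{D}J(\xi_1;\xi') - \mathrm{D}J(\xi_2;\xi')$ using the explicit formula from Lemma \ref{lemma:DJ_definition}, then add and subtract a cross term to separate the two sources of dependence on $\xi$: the argument of $\frac{\partial h_0}{\partial x}$ and the argument of $\mathrm{D}\phi_1$. Specifically, I would write
\begin{equation*}
  \mathrm{D}J(\xi_1;\xi') - \mathrm{D}J(\xi_2;\xi')
  = \frac{\partial h_0}{\partial x}\bigl( \phi_1(\xi_1) \bigr) \bigl[ \mathrm{D}\phi_1(\xi_1;\xi') - \mathrm{D}\phi_1(\xi_2;\xi') \bigr]
  + \left[ \frac{\partial h_0}{\partial x}\bigl( \phi_1(\xi_1) \bigr) - \frac{\partial h_0}{\partial x}\bigl( \phi_1(\xi_2) \bigr) \right] \mathrm{D}\phi_1(\xi_2;\xi').
\end{equation*}

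Next I would bound each term using already-proven results. The first term is controlled by Condition \ref{corollary:phi_bounded} of Corollary \ref{corollary:fns_bounded} (which gives a uniform bound on $\|\frac{\partial h_0}{\partial x}(\phi_1(\xi_1))\|_2$) combined with Lemma \ref{lemma:Dxt_lipschitz} (which provides a constant $L_1 > 0$ such that $\|\mathrm{D}\phi_1(\xi_1;\xi') - \mathrm{D}\phi_1(\xi_2;\xi')\|_2 \leq L_1 \|\xi_1 - \xi_2\|_{\mathcal{X}} \|\xi'\|_{\mathcal{X}}$). The second term is bounded by Condition \ref{corollary:dphidxxi_lipschitz} of Corollary \ref{corollary:constraints_lipschitz} (giving Lipschitz continuity of $\xi \mapsto \frac{\partial h_0}{\partial x}(\phi_1(\xi))$ with some constant $L_2 > 0$) together with Corollary \ref{corollary:dxt_bounded} (which provides a constant $C > 0$ such that $\|\mathrm{D}\phi_1(\xi_2;\xi')\|_2 \leq C \|\xi'\|_{\mathcal{X}}$).

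Applying the triangle inequality and Cauchy--Schwarz (treating the row vector $\frac{\partial h_0}{\partial x}$ acting on the column vector $\mathrm{D}\phi_1$) and combining these four bounds produces a constant $L > 0$ for which the stated inequality holds. There is no real obstacle here: every ingredient is already in hand from the preceding lemmas and corollaries, and the only care required is the bookkeeping of the two split terms and remembering to extract both $\|\xi_1 - \xi_2\|_{\mathcal{X}}$ and $\|\xi'\|_{\mathcal{X}}$ from each piece so that the final bound is bilinear in these norms as required.
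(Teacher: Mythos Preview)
Your proposal is correct and follows essentially the same approach as the paper: the paper performs the identical add-and-subtract decomposition, applies the triangle inequality and Cauchy--Schwarz, and then invokes exactly the same four results (Condition~\ref{corollary:phi_bounded} in Corollary~\ref{corollary:fns_bounded}, Condition~\ref{corollary:dphidxxi_lipschitz} in Corollary~\ref{corollary:constraints_lipschitz}, Corollary~\ref{corollary:dxt_bounded}, and Lemma~\ref{lemma:Dxt_lipschitz}) to conclude.
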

\begin{proof}
  Notice by the Triangular Inequality and the Cauchy-Schwartz Inequality: 
  \begin{multline}
    \left| \D{J}(\xi_1;\xi') - \D{J}(\xi_2;\xi') \right| 
    \leq \left\| \frac{\partial h_0}{\partial x}\bigl( \phi_1(\xi_1) \bigr) \right\|_2 \left\| \D{\phi_1}(\xi_1;\xi') - \D{\phi_1}(\xi_2;\xi') \right\|_2 + \\ 
    + \left\| \frac{\partial h_0}{\partial x}\bigl( \phi_1(\xi_1) \bigr) - \frac{\partial h_0}{\partial x}\bigl( \phi_1(\xi_2) \bigr) \right\|_2 \left\| \D{\phi_1}(\xi_2;\xi') \right\|_2.
  \end{multline}
  The result then follows by applying Condition \ref{corollary:phi_bounded} in Corollary \ref{corollary:fns_bounded}, Condition \ref{corollary:dphidxxi_lipschitz} in Corollary \ref{corollary:constraints_lipschitz}, Corollary \ref{corollary:dxt_bounded}, and Lemma \ref{lemma:Dxt_lipschitz}.
\end{proof}

Next, we prove that $\D{J}$ is simultaneously continuous with respect to both of its arguments, which is a direct consequence of Lemma \ref{lemma:Dx_cont}.
\begin{corollary}
  \label{lemma:DJ_cont}
  For each $\xi \in {\cal X}_r$ and $\xi' \in {\cal X}$, the map $(\xi,\xi') \mapsto \D{J}(\xi;\xi')$, as defined in Equation \eqref{eq:DJ_definition}, is continuous.
\end{corollary}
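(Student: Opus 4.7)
The plan is to invoke the factorization provided by Lemma \ref{lemma:DJ_definition}, namely $\D{J}(\xi;\xi') = \frac{\partial h_0}{\partial x}\bigl( \phi_1(\xi) \bigr) \D{\phi_1}(\xi;\xi')$, and then recognize the right-hand side as a product of two continuous quantities — one depending only on $\xi$, the other on both $\xi$ and $\xi'$. Joint continuity of the product then follows by a standard bilinear-continuity argument, provided both factors are also locally bounded.

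First, I would show that $\xi \mapsto \frac{\partial h_0}{\partial x}\bigl( \phi_1(\xi) \bigr)$ is continuous on ${\cal X}_r$. This is the composition of the continuous map $\xi \mapsto \phi_1(\xi)$ (Corollary \ref{corollary:x_continuous}) with the Lipschitz map $x \mapsto \frac{\partial h_0}{\partial x}(x)$ guaranteed by Condition \ref{assump:dphidx_lipschitz} of Assumption \ref{assump:constraint_fns}. Second, I would directly invoke Lemma \ref{lemma:Dx_cont} at $t = 1$ to conclude that $(\xi,\xi') \mapsto \D{\phi_1}(\xi;\xi')$ is jointly continuous.

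To glue the two facts together at an arbitrary point $(\xi,\xi')$, given a sequence $(\xi_j,\xi_j') \to (\xi,\xi')$ I would perform the standard add-and-subtract trick already used in Corollary \ref{corollary:DJ_lipschitz} to get a bound of the form
\begin{equation*}
\bigl| \D{J}(\xi_j;\xi_j') - \D{J}(\xi;\xi') \bigr| \leq A_j \bigl\| \D{\phi_1}(\xi_j;\xi_j') - \D{\phi_1}(\xi;\xi') \bigr\|_2 + B_j \bigl\| \D{\phi_1}(\xi;\xi') \bigr\|_2,
\end{equation*}
where $A_j = \bigl\| \frac{\partial h_0}{\partial x}( \phi_1(\xi_j) ) \bigr\|_2$ and $B_j = \bigl\| \frac{\partial h_0}{\partial x}( \phi_1(\xi_j) ) - \frac{\partial h_0}{\partial x}( \phi_1(\xi) ) \bigr\|_2$. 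The sequence $A_j$ is uniformly bounded by Condition \ref{corollary:phi_bounded} of Corollary \ref{corollary:fns_bounded}, and $\D{\phi_1}(\xi;\xi')$ is finite by Corollary \ref{corollary:dxt_bounded}, so both terms in the bound vanish as $j \to \infty$ by the two continuity observations above.

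I do not expect a genuine obstacle here: the substantive work has already been done in Lemma \ref{lemma:Dx_cont} (joint continuity of $\D{\phi_t}$) and in the earlier lemmas establishing boundedness of $\D{\phi_t}$ and $\frac{\partial h_0}{\partial x}$. The only subtlety worth flagging is that joint continuity of a product in a Banach space setting requires locally bounded factors, which is precisely why the boundedness corollaries are needed in addition to the continuity results.
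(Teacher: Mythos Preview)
Your proposal is correct and is essentially the same approach the paper takes: the paper simply declares the result ``a direct consequence of Lemma \ref{lemma:Dx_cont}'' without spelling out the add-and-subtract argument, while you have made explicit the factorization from Lemma \ref{lemma:DJ_definition} and the boundedness ingredients needed to pass continuity through the product. Your write-up is more detailed than the paper's one-line justification, but the underlying idea is identical.
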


Next, we construct the directional derivative of each of the component constraint functions $\psi_{j,t}$ and prove that each of the component constraints is Lipschitz continuous.
\begin{lemma}
  \label{lemma:Dhj_definition}
  Let $\xi \in {\cal X}_r$, $\xi' \in {\cal X}$, and $\psi_{j,t}$ defined as in Equation \eqref{eq:component_constraints}. 
  Then for each $j \in {\cal J}$ and $t \in [0,1]$, the directional derivative of $\psi_{j,t}$, denoted $\D{\psi_{j,t}}$, is given by:
  \begin{equation}
    \label{eq:Dhj_definition}
    \D{\psi_{j,t}}(\xi;\xi') = \frac{\partial h_j}{\partial x}\big( \phi_t(\xi) \big) \D{\phi_t}(\xi;\xi').
  \end{equation}
\end{lemma}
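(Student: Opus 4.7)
The plan is to mimic exactly the proof of Lemma \ref{lemma:DJ_definition}, which is the identical statement for the cost $J$ replaced by $\psi_{j,t}$. Since $\psi_{j,t}(\xi) = h_j(\phi_t(\xi))$ is a composition of the differentiable map $h_j:\R^n\to\R$ (Assumption \ref{assump:constraint_fns}) with the map $\xi \mapsto \phi_t(\xi)$, and the directional derivative of the latter was constructed in Lemma \ref{lemma:dxt_definition}, the whole result is a one-line application of the chain rule.

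More concretely, I would start from the definition
\begin{equation}
  \D{\psi_{j,t}}(\xi;\xi') = \lim_{\lambda \downarrow 0} \frac{h_j\bigl(\phi_t(\xi + \lambda \xi')\bigr) - h_j\bigl(\phi_t(\xi)\bigr)}{\lambda},
\end{equation}
and write, using differentiability of $h_j$ at $\phi_t(\xi)$,
\begin{equation}
  h_j\bigl(\phi_t(\xi + \lambda \xi')\bigr) - h_j\bigl(\phi_t(\xi)\bigr)
  = \frac{\partial h_j}{\partial x}\bigl(\phi_t(\xi)\bigr)\bigl(\phi_t(\xi + \lambda \xi') - \phi_t(\xi)\bigr) + o\bigl(\lVert \phi_t(\xi + \lambda \xi') - \phi_t(\xi) \rVert_2\bigr).
\end{equation}
Dividing by $\lambda$ and taking $\lambda \downarrow 0$, the first term converges to $\frac{\partial h_j}{\partial x}(\phi_t(\xi))\,\D{\phi_t}(\xi;\xi')$ by Lemma \ref{lemma:dxt_definition}, while the error term is controlled by Corollary \ref{corollary:x_lipschitz}, which ensures $\lVert \phi_t(\xi + \lambda \xi') - \phi_t(\xi) \rVert_2 = O(\lambda)$, so that the remainder divided by $\lambda$ vanishes.

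There is no genuine obstacle: the only thing to be careful about is using the correct notion of differentiability of $h_j$ (Fréchet on $\R^n$, which Assumption \ref{assump:constraint_fns} supplies) together with the already-established $\lambda$-linear approximation $\phi_t(\xi+\lambda \xi') = \phi_t(\xi) + \lambda \D{\phi_t}(\xi;\xi') + o(\lambda)$ extracted from the proof of Lemma \ref{lemma:dxt_definition}. Once these two pieces are combined, the claimed formula
\begin{equation}
  \D{\psi_{j,t}}(\xi;\xi') = \frac{\partial h_j}{\partial x}\bigl(\phi_t(\xi)\bigr)\,\D{\phi_t}(\xi;\xi')
\end{equation}
follows immediately, and the proof can be written in essentially one line by citing the chain rule together with Lemma \ref{lemma:dxt_definition}, exactly as was done for $J$ in the proof of Lemma \ref{lemma:DJ_definition}.
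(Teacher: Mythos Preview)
Your proposal is correct and matches the paper's approach exactly: the paper's proof is the single sentence ``The result follows using the Chain Rule and Lemma \ref{lemma:dxt_definition}.'' Your additional detail spelling out the first-order expansion of $h_j$ and the $O(\lambda)$ control via Corollary \ref{corollary:x_lipschitz} is fine but not required.
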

\begin{proof}
   The result follows using the Chain Rule and Lemma \ref{lemma:dxt_definition}.
\end{proof}

\begin{corollary}
  \label{corollary:Dhj_lipschitz}
  There exists a constant $L > 0$ such that for each $\xi_1,\xi_2 \in {\cal X}_r$, $\xi' \in {\cal X}$, and $t \in [0,1]$:
  \begin{equation}
    \left| \D{\psi_{j,t}}(\xi_1;\xi') - \D{\psi_{j,t}}(\xi_2;\xi') \right| 
    \leq L \left\| \xi_1 - \xi_2 \right\|_{\cal X} \left\| \xi' \right\|_{\cal X},
  \end{equation}
  where $\D{\psi_{j,t}}$ is as defined in Equation \eqref{eq:Dhj_definition}.
\end{corollary}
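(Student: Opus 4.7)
The proof will mirror the argument used in Corollary \ref{corollary:DJ_lipschitz}, since $\psi_{j,t}$ is structurally identical to $J$ except that $h_j$ replaces $h_0$ and the flow is evaluated at an arbitrary $t \in [0,1]$ rather than at $t=1$. The plan is to add and subtract a cross term in order to split the difference into two pieces that can each be handled by a combination of a boundedness result and a Lipschitz result.

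Concretely, using the formula from Lemma \ref{lemma:Dhj_definition}, I would write
\begin{equation}
  \D{\psi_{j,t}}(\xi_1;\xi') - \D{\psi_{j,t}}(\xi_2;\xi')
  = \frac{\partial h_j}{\partial x}\bigl(\phi_t(\xi_1)\bigr) \bigl( \D{\phi_t}(\xi_1;\xi') - \D{\phi_t}(\xi_2;\xi') \bigr) + \left( \frac{\partial h_j}{\partial x}\bigl(\phi_t(\xi_1)\bigr) - \frac{\partial h_j}{\partial x}\bigl(\phi_t(\xi_2)\bigr) \right) \D{\phi_t}(\xi_2;\xi'),
\end{equation}
then apply the Triangular Inequality and the Cauchy--Schwarz Inequality to obtain a bound by the sum of two products of two-norms.

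For the first term, I would bound $\bigl\| \tfrac{\partial h_j}{\partial x}(\phi_t(\xi_1)) \bigr\|_2$ by a constant using Condition \ref{corollary:hj_bounded} in Corollary \ref{corollary:fns_bounded}, and control $\bigl\| \D{\phi_t}(\xi_1;\xi') - \D{\phi_t}(\xi_2;\xi') \bigr\|_2$ using the Lipschitz estimate in Lemma \ref{lemma:Dxt_lipschitz}, which produces a factor of $\|\xi_1 - \xi_2\|_{\cal X} \|\xi'\|_{\cal X}$. For the second term, I would bound $\bigl\| \tfrac{\partial h_j}{\partial x}(\phi_t(\xi_1)) - \tfrac{\partial h_j}{\partial x}(\phi_t(\xi_2)) \bigr\|_2$ by $\|\xi_1 - \xi_2\|_{\cal X}$ (up to a constant) using Condition \ref{corollary:dhjdxxi_lipschitz} in Corollary \ref{corollary:constraints_lipschitz}, and bound $\bigl\| \D{\phi_t}(\xi_2;\xi') \bigr\|_2$ by $\|\xi'\|_{\cal X}$ (up to a constant) using Corollary \ref{corollary:dxt_bounded}. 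Combining gives the desired inequality with a suitable $L > 0$.

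There is essentially no obstacle here: all four ingredients are already established in the previously proven Lipschitz and boundedness corollaries, and the constants obtained do not depend on $t \in [0,1]$ or $j \in {\cal J}$ (since those bounds were uniform over these parameters in the referenced results). The only minor item to note is that the resulting $L$ absorbs the constants appearing in the four cited results via a single overall maximum.
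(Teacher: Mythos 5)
Your proposal is correct and follows essentially the same route as the paper's own proof: the identical add-and-subtract decomposition, the Triangular and Cauchy--Schwarz Inequalities, and precisely the same four ingredients (Condition \ref{corollary:hj_bounded} in Corollary \ref{corollary:fns_bounded}, Condition \ref{corollary:dhjdxxi_lipschitz} in Corollary \ref{corollary:constraints_lipschitz}, Corollary \ref{corollary:dxt_bounded}, and Lemma \ref{lemma:Dxt_lipschitz}). Nothing further is needed.
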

\begin{proof}
  Notice by the Triangular Inequality and the Cauchy Schwartz Inequality: 
  \begin{multline}
    \left| \D{\psi_{j,t}}(\xi_1;\xi') - \D{\psi_{j,t}}(\xi_2;\xi') \right| 
    \leq \left\| \frac{\partial h_j}{\partial x}\bigl( \phi_t(\xi_1) \bigr) \right\|_2 
    \left\| \D{\phi_t}(\xi_1;\xi') - \D{\phi_t}(\xi_2;\xi') \right\|_2 + \\ 
    + \left\| \frac{\partial h_j}{\partial x} \bigl( \phi_t(\xi_1) \bigr) - \frac{\partial h_j}{\partial x} \bigl( \phi_t(\xi_2) \bigr) \right\|_2 
    \left\| \D{\phi_t}(\xi_2;\xi') \right\|_2.
  \end{multline}
  The result then follows by applying Condition \ref{corollary:hj_bounded} in Corollary \ref{corollary:fns_bounded}, Condition \ref{corollary:dhjdxxi_lipschitz} in Corollary \ref{corollary:constraints_lipschitz}, Corollary \ref{corollary:dxt_bounded}, and Lemma \ref{lemma:Dxt_lipschitz}.
\end{proof}

Next, we prove that $\D{\psi_{j,t}}$ is simultaneously continuous with respect to both of its arguments, which follows directly from Lemma \ref{lemma:Dx_cont}:
\begin{corollary}
  \label{lemma:Dhj_cont}
  For each $\xi \in {\cal X}_r$, $\xi' \in {\cal X}$, and $t \in [0,1]$, the map $(\xi,\xi') \mapsto \D{\psi_{j,t}}(\xi;\xi')$, as defined in Equation \eqref{eq:Dhj_definition}, is continuous.
\end{corollary}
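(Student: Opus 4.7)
The plan is to factor $\D{\psi_{j,t}}(\xi;\xi')$ as the product of two continuous maps on ${\cal X}_r \times {\cal X}$ and then appeal to the fact that the Euclidean matrix-vector product is a continuous operation. Specifically, Equation \eqref{eq:Dhj_definition} writes $\D{\psi_{j,t}}(\xi;\xi') = \frac{\partial h_j}{\partial x}\bigl(\phi_t(\xi)\bigr) \cdot \D{\phi_t}(\xi;\xi')$, so it suffices to check that each of the two factors is jointly continuous in $(\xi,\xi')$.

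First, I would handle the factor $\xi \mapsto \frac{\partial h_j}{\partial x}(\phi_t(\xi))$, which depends only on $\xi$. This is a composition of $\phi_t$ and $\frac{\partial h_j}{\partial x}$; by Corollary \ref{corollary:x_continuous}, $\phi_t$ is continuous from ${\cal X}_r$ to $\R^n$, and by Condition \ref{assump:dhjdx_lipschitz} in Assumption \ref{assump:constraint_fns}, $\frac{\partial h_j}{\partial x}$ is (Lipschitz) continuous on $\R^n$. Hence the composition is continuous as a map from ${\cal X}_r$ to $\R^{1 \times n}$, and trivially jointly continuous on ${\cal X}_r \times {\cal X}$ since it is constant in its second argument. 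Second, Lemma \ref{lemma:Dx_cont} provides exactly the joint continuity of $(\xi,\xi') \mapsto \D{\phi_t}(\xi;\xi')$.

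Finally, since the matrix-vector product $\R^{1 \times n} \times \R^n \to \R$ is a continuous bilinear map, and both factors are bounded on bounded sets (by Condition \ref{corollary:hj_bounded} in Corollary \ref{corollary:fns_bounded} and Corollary \ref{corollary:dxt_bounded}), the product of the two continuous maps is continuous. This yields the desired joint continuity of $(\xi,\xi') \mapsto \D{\psi_{j,t}}(\xi;\xi')$.

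I do not anticipate any real obstacle here, since all of the substantive work has been done in establishing Lemma \ref{lemma:Dx_cont} and Corollary \ref{corollary:x_continuous}; the present corollary is essentially a bookkeeping consequence of the product rule for continuity. The only minor care required is to note that, although $\xi$ ranges over ${\cal X}_r$ while $\xi'$ ranges over the larger space ${\cal X}$, this poses no difficulty because continuity is preserved under restriction of domain and the factorization above respects this product structure.
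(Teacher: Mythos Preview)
Your proposal is correct and follows essentially the same approach as the paper, which simply asserts that the result ``follows directly from Lemma \ref{lemma:Dx_cont}'' without a formal proof block. You have spelled out the details the paper omits---namely, the continuity of the first factor $\xi \mapsto \frac{\partial h_j}{\partial x}(\phi_t(\xi))$ via composition---but the core idea is identical.
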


Given these results, we can begin describing the properties satisfied by the optimality function:
\begin{lemma}
  \label{lemma:zeta_lipschitz}
  Let $\zeta$ be defined as in Equation \eqref{eq:zeta_definition}.
  Then there exists a constant $L > 0$ such that, for each $\xi_1,\xi_2,\xi' \in {\cal X}_r$,
  \begin{equation}
    \left| \zeta(\xi_1,\xi') - \zeta(\xi_2,\xi') \right| \leq L \left\| \xi_1-\xi_2 \right\|_{\cal X}.
  \end{equation}
\end{lemma}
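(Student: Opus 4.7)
The plan is to show that $\zeta$ is built from functions that are each Lipschitz in the first argument (uniformly in $\xi'$), and then handle the piecewise definition via case analysis, paying special attention to the ``mixed'' case in which $\Psi(\xi_1)$ and $\Psi(\xi_2)$ have opposite signs.

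First, I would collect the building blocks. The map $\xi \mapsto \| \xi' - \xi \|_{\cal X}$ is $1$--Lipschitz by the reverse triangle inequality. For the map $\xi \mapsto \D{J}(\xi; \xi' - \xi)$, I would add and subtract to obtain
\begin{equation}
  \bigl| \D{J}(\xi_1; \xi' - \xi_1) - \D{J}(\xi_2; \xi' - \xi_2) \bigr|
  \leq \bigl| \D{J}(\xi_1; \xi' - \xi_1) - \D{J}(\xi_2; \xi' - \xi_1) \bigr|
     + \bigl| \D{J}(\xi_2; \xi_2 - \xi_1) \bigr|,
\end{equation}
where I use the linearity of $\D{J}(\xi_2; \cdot)$ in its second argument (which is inherited from the linearity of $\D{\phi_t}(\xi;\cdot)$ in Lemma \ref{lemma:dxt_definition}). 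The first term is controlled by Corollary \ref{corollary:DJ_lipschitz} together with the fact that $\| \xi' - \xi_1 \|_{\cal X}$ is bounded by a constant depending only on the diameters of $U$ and $\Sigma_r^q$ (since $\xi', \xi_1 \in {\cal X}_r$). The second term is controlled by $|\D{J}(\xi_2; \xi_2 - \xi_1)| \leq C \| \xi_2 - \xi_1 \|_{\cal X}$, using Condition \ref{corollary:phi_bounded} in Corollary \ref{corollary:fns_bounded} and Corollary \ref{corollary:dxt_bounded}. An identical argument, using Corollary \ref{corollary:Dhj_lipschitz}, Condition \ref{corollary:hj_bounded} in Corollary \ref{corollary:fns_bounded}, and Corollary \ref{corollary:dxt_bounded}, shows that $\xi \mapsto \D{\psi_{j,t}}(\xi; \xi' - \xi)$ is Lipschitz with a constant that does not depend on $j \in {\cal J}$ or $t \in [0,1]$. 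Since the pointwise supremum over $(j,t)$ of Lipschitz functions with a common constant is Lipschitz with that same constant, $\xi \mapsto \max_{j,t} \D{\psi_{j,t}}(\xi; \xi' - \xi)$ is Lipschitz as well. Finally, $\Psi$ itself is Lipschitz by Lemma \ref{lemma:psi_continuous}.

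Next I would introduce the two candidate formulas
\begin{equation}
  \zeta_1(\xi,\xi') = \max \bigl\{ \D{J}(\xi;\xi'-\xi),\; \textstyle\max\limits_{j,t} \D{\psi_{j,t}}(\xi;\xi'-\xi) + \gamma \Psi(\xi) \bigr\} + \| \xi' - \xi \|_{\cal X},
\end{equation}
\begin{equation}
  \zeta_2(\xi,\xi') = \max \bigl\{ \D{J}(\xi;\xi'-\xi) - \Psi(\xi),\; \textstyle\max\limits_{j,t} \D{\psi_{j,t}}(\xi;\xi'-\xi) \bigr\} + \| \xi' - \xi \|_{\cal X},
\end{equation}
and observe, using the elementary inequality $|\max\{a,b\} - \max\{c,d\}| \leq \max\{|a-c|,|b-d|\}$, that both $\zeta_1$ and $\zeta_2$ are Lipschitz in $\xi$, uniformly in $\xi' \in {\cal X}_r$, by combining the bounds from the previous paragraph. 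In particular, if $\Psi(\xi_1)$ and $\Psi(\xi_2)$ have the same sign, then $\zeta(\xi_i,\xi')=\zeta_k(\xi_i,\xi')$ for the same $k \in \{1,2\}$ and we are immediately done.

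The only remaining case, which is the main subtlety, is the mixed case, say $\Psi(\xi_1) \leq 0 < \Psi(\xi_2)$. Here I would use the triangle inequality to write
\begin{equation}
  \bigl| \zeta(\xi_1,\xi') - \zeta(\xi_2,\xi') \bigr|
  \leq \bigl| \zeta_1(\xi_1,\xi') - \zeta_1(\xi_2,\xi') \bigr|
     + \bigl| \zeta_1(\xi_2,\xi') - \zeta_2(\xi_2,\xi') \bigr|,
\end{equation}
where the first term is already handled. For the second term, applying the same max inequality as above gives
\begin{equation}
  \bigl| \zeta_1(\xi_2,\xi') - \zeta_2(\xi_2,\xi') \bigr|
  \leq \max\{1,\gamma\}\, \bigl| \Psi(\xi_2) \bigr|,
\end{equation}
and since $\Psi(\xi_1) \leq 0 < \Psi(\xi_2)$, Lemma \ref{lemma:psi_continuous} yields $|\Psi(\xi_2)| \leq \Psi(\xi_2) - \Psi(\xi_1) \leq L \| \xi_1 - \xi_2 \|_{\cal X}$. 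This closes the mixed case. The main obstacle I anticipate is simply bookkeeping: ensuring that all the Lipschitz constants obtained above are independent of $\xi' \in {\cal X}_r$ and of the indices $(j,t)$, which reduces to verifying that $\| \xi' - \xi \|_{\cal X}$ is uniformly bounded on ${\cal X}_r \times {\cal X}_r$ (which follows from the boundedness of $U$ and of $\Sigma_r^q$) and that the constants from Corollaries \ref{corollary:DJ_lipschitz} and \ref{corollary:Dhj_lipschitz} are genuinely uniform in the problem data.
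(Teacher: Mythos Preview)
Your proof is correct, but it takes a different route from the paper's. The paper avoids your case analysis entirely by rewriting $\zeta$ in a single unified formula: setting $\Psi^+(\xi) = \max\{0,\Psi(\xi)\}$ and $\Psi^-(\xi) = \max\{0,-\Psi(\xi)\}$, both branches of the piecewise definition collapse to
\[
  \zeta(\xi,\xi') = \max\Bigl\{ \D{J}(\xi;\xi'-\xi) - \Psi^+(\xi),\ \max_{j,t}\D{\psi_{j,t}}(\xi;\xi'-\xi) - \gamma\Psi^-(\xi) \Bigr\} + \|\xi'-\xi\|_{\cal X},
\]
after which one applies the same $|\max\{a,b\}-\max\{c,d\}|\leq\max\{|a-c|,|b-d|\}$ inequality you use, together with the Lipschitz continuity of $\Psi^+$ and $\Psi^-$ (immediate from Lemma~\ref{lemma:psi_continuous}). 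This is slicker: the mixed-sign case simply never arises. Your approach, by contrast, is more explicit about \emph{why} the piecewise definition causes no trouble, and your observation that $|\Psi(\xi_2)| \leq \Psi(\xi_2) - \Psi(\xi_1)$ in the mixed case is exactly the content hidden inside the Lipschitz continuity of $\Psi^\pm$. Both arguments rely on the same underlying estimates (Corollaries~\ref{corollary:DJ_lipschitz}, \ref{corollary:Dhj_lipschitz}, \ref{corollary:dxt_bounded}, and Lemma~\ref{lemma:psi_continuous}) and the same boundedness of $\|\xi'-\xi\|_{\cal X}$ on ${\cal X}_r\times{\cal X}_r$.
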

\begin{proof}
  To prove the result, first notice that for $\{x_i\}_{i \in {\cal I}}, \{y_i\}_{i \in {\cal I}} \subset \R$:
  \begin{equation}
    \Bigl| \max_{i \in {\cal I}} x_i \Bigr| \leq \max_{i \in {\cal I}} \left| x_i \right|,\quad \text{and}\quad
    \max_{i \in {\cal I}} x_i - \max_{i \in {\cal I}} y_i \leq \max_{i \in {\cal I}} \bigl\{ x_i - y_i \bigr\}.
  \end{equation}
  Therefore,
  \begin{equation}
    \label{eq:max_norm_trick}
    \Bigl| \max_{i \in {\cal I}} x_i - \max_{i \in {\cal I}} y_i \Bigr| 
    \leq \max_{i \in {\cal I}} | x_i- y_i |.
  \end{equation}
  Letting $\Psi^+(\xi) = \max\{0,\Psi(\xi)\}$ and $\Psi^-(\xi)=\max\{0,-\Psi(\xi)\}$, observe:
  \begin{equation}
    \zeta(\xi,\xi') =
    \max \left\{ \d{J}(\xi; \xi' - \xi) - \Psi^{+}(\xi), 
      \max_{ j\in{\cal J},\; t\in[0,1] }\D{\psi_{j,t}}(\xi;\xi'-\xi) - \gamma \Psi^{-}(\xi) 
    \right\} + \| \xi' - \xi \|_{\cal X}.
  \end{equation}
  Employing Equation \eqref{eq:max_norm_trick}:
  \begin{multline}
    \bigl| \zeta(\xi_1,\xi') - \zeta(\xi_2,\xi') \bigr| 
    \leq \max \biggl\{ \bigl| \d{J}(\xi_1; \xi' - \xi_1) - \d{J}(\xi_2; \xi' - \xi_2) \bigr| 
    + \bigl| \Psi^{+}(\xi_2) - \Psi^{+}(\xi_1) \bigr|, \\ 
    \max_{ j\in{\cal J},\; t\in[0,1] } \bigl| \D{\psi_{j,t}}(\xi_1;\xi'-\xi_1) - \D{\psi_{j,t}}(\xi_2;\xi'-\xi_2) \bigr| 
    + \gamma \bigl| \Psi^{-}(\xi_2) - \Psi^{-}(\xi_1) \bigr| \biggr\} 
    + \bigl| \| \xi' - \xi_1 \|_{\cal X} - \| \xi' - \xi_2 \|_{\cal X} \bigr|.
  \end{multline}
  
  We show three results that taken together with the Triangular Inequality prove the desired result. First, by applying the reverse triangle inequality:
  \begin{equation}
    \bigl| \| \xi' - \xi_1 \|_{\cal X} - \| \xi' - \xi_2 \|_{\cal X} \bigr| \leq \| \xi_1 - \xi_2 \|_{\cal X}.
  \end{equation}
  Second, 
  \begin{equation}
    \label{eq:DJ_triangle_lipschitz}
    \begin{aligned}
      \bigl| \D{J}(\xi_1;\xi'-\xi_1) - \D{J}(\xi_2;\xi'-\xi_2) \bigr| 
      &= \bigl| \D{J}(\xi_1;\xi'-\xi_1) - \D{J}(\xi_2;\xi'-\xi_1) + \D{J}(\xi_2;\xi_2-\xi_1) \bigr| \\
      &\leq \bigl| \D{J}(\xi_1;\xi') - \D{J}(\xi_2;\xi') \bigr| + \bigl| \D{J}(\xi_1;\xi_1) - \D{J}(\xi_2;\xi_1) \bigr| + \\ 
      &\qquad + \left| \frac{\partial h_0}{\partial x}\bigl( \phi_1(\xi_2) \bigr) \D{\phi_1}(\xi_2;\xi_2-\xi_1) \right| \\
      &\leq L \left\| \xi_1 - \xi_2 \right\|_{\cal X},
    \end{aligned}
  \end{equation}
  where $L > 0$ and we employed the linearity of $\D{J}$, Corollary \ref{corollary:DJ_lipschitz}, the fact that $\xi'$ and $\xi_1$ are bounded since $\xi',\xi_1 \in {\cal X}_r$, the Cauchy-Schwartz Inequality, Condition \ref{corollary:phi_bounded} in Corollary \ref{corollary:fns_bounded}, and Corollary \ref{corollary:dxt_bounded}. 
  Notice that by employing an argument identical to Equation \eqref{eq:DJ_triangle_lipschitz} and Corollary \ref{corollary:Dhj_lipschitz}, we can assume without loss of generality that $\bigl| \D{\psi_{j,t}}(\xi_1;\xi'-\xi_1) - \D{\psi_{j,t}}(\xi_2;\xi'-\xi_2) \bigr| \leq L \left\| \xi_1 - \xi_2 \right\|_{\cal X}$. 
  Finally, notice that by applying Lemma \ref{lemma:psi_continuous}, $\Psi^{+}(\xi)$ and $\Psi^{-}(\xi)$ are Lipschitz continuous.
\end{proof}

In fact, $\zeta$ satisfies an even more important property:
\begin{lemma}
  \label{lemma:zeta_strictly_convex}
  For each $\xi \in {\cal X}_p$, the map $\xi' \mapsto \zeta(\xi,\xi')$, as defined in Equation \eqref{eq:zeta_definition}, is strictly convex.
\end{lemma}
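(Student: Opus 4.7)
The plan is to decompose $\zeta(\xi,\cdot) = M(\xi,\cdot) + N(\cdot)$, where $M$ gathers the outer $\max$ expression together with the $\xi$-dependent constant ($\gamma\Psi(\xi)$ in the feasible branch, $-\Psi(\xi)$ in the infeasible branch) and $N(\xi') = \|\xi' - \xi\|_{\cal X}$. I would then show $M$ is convex in $\xi'$ while $N$ is strictly convex in $\xi'$; since the sum of a convex function and a strictly convex function is strictly convex, this yields the lemma.

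For $M$, my first step is to argue that every function inside the outer $\max$ is affine in $\xi'$. By Lemma \ref{lemma:dxt_definition}, the integral formula for $\D{\phi_t}(\xi; \xi')$ is linear in $\xi' = (u', d')$, because the integrand contains $u'(\tau)$ and $d_i'(\tau)$ linearly while $\Phi^{(\xi)}$, $\tfrac{\partial f}{\partial u}$, and the vector fields $f(\cdot,\cdot,\cdot,e_i)$ are fixed once $\xi$ is chosen. Composing with the chain-rule expressions in Lemmas \ref{lemma:DJ_definition} and \ref{lemma:Dhj_definition} gives linearity of $\D{J}(\xi;\cdot)$ and $\D{\psi_{j,t}}(\xi;\cdot)$, so $\xi' \mapsto \D{J}(\xi;\xi'-\xi)$ and $\xi' \mapsto \D{\psi_{j,t}}(\xi;\xi'-\xi)$ are each affine in $\xi'$. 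Since the pointwise supremum over $(j,t)\in{\cal J}\times[0,1]$ of affine functions is convex, the binary $\max$ of two convex functions is convex, and adding the $\xi$-dependent constant preserves convexity, $M(\xi,\cdot)$ is convex.

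For $N$, I would take distinct $\xi'_1, \xi'_2 \in {\cal X}_r$ and $\lambda \in (0,1)$, set $a = \xi'_1 - \xi$ and $b = \xi'_2 - \xi$, and seek
\[
\bigl\|\lambda a + (1-\lambda) b\bigr\|_{\cal X} < \lambda\|a\|_{\cal X} + (1-\lambda)\|b\|_{\cal X},
\]
which by the definition of the ${\cal X}$-norm splits into Minkowski-type inequalities on the $u$- and $d$-components in $L^2$.

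The main obstacle is the degenerate collinear case in which $a$ and $b$ are positive scalar multiples of one another in both components simultaneously: there Minkowski is tight, and moreover the linearity of the directional derivatives makes $M$ itself affine along that half-line through $\xi$, so the bare ${\cal X}$-norm regularizer cannot supply strict convexity. The cleanest way to close this gap, in line with the Polak-style proximal regularizer this framework is modeled on, is to read the regularizer as the squared norm $\tfrac12\|\xi'-\xi\|_{\cal X}^2$; the parallelogram identity
\[
\bigl\|\lambda a + (1-\lambda)b\bigr\|_{L^2}^2 = \lambda\|a\|_{L^2}^2 + (1-\lambda)\|b\|_{L^2}^2 - \lambda(1-\lambda)\|a-b\|_{L^2}^2
\]
then yields strict inequality whenever $a \neq b$, giving strict convexity of $N$ and therefore of $\zeta(\xi,\cdot)$.
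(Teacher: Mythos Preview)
Your decomposition $\zeta(\xi,\cdot)=M(\xi,\cdot)+N(\cdot)$ and your argument that the directional-derivative terms are affine in $\xi'$ (hence their pointwise maximum is convex) is exactly the paper's approach. The paper's proof is a single sentence: it notes that $\xi'\mapsto\d{J}(\xi;\xi'-\xi)$ and $\xi'\mapsto\tfrac{\partial h_j}{\partial x}(\phi_t(\xi))\,\d{\phi_t}(\xi;\xi'-\xi)$ are affine, so any maximum among them is convex, and then asserts that $\xi'\mapsto\|\xi'-\xi\|_{\cal X}$ is strictly convex ``since we chose the $2$--norm as our finite dimensional norm.''

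You have gone further than the paper and identified a real difficulty the paper glosses over. No norm is strictly convex as a function: along any half-line $\xi'=\xi+t v$ with $t>0$, the map $t\mapsto\|tv\|_{\cal X}=t\|v\|_{\cal X}$ is linear, and the affine pieces inside the max are likewise linear in $t$, so $\zeta(\xi,\cdot)$ is piecewise affine on that ray. Since ${\cal X}_r$ is convex and contains $\xi$, such rays meet ${\cal X}_r$ in genuine segments, and strict convexity fails there. The paper's appeal to the $2$-norm at most gives a strictly convex \emph{unit ball} for each $L^2$ component (strict Minkowski for non-parallel vectors), which does not rescue the collinear case; moreover $\|\xi\|_{\cal X}=\|u\|_{L^2}+\|d\|_{L^2}$ is an $\ell^1$-type combination whose unit ball is not even rotund. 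Your proposed remedy---reading the regularizer as $\tfrac12\|\xi'-\xi\|_{\cal X}^2$---would indeed restore strict convexity via the parallelogram identity, but that is a correction to the definition of $\zeta$, not a proof of the lemma as stated. In short: your argument matches the paper's line for line up to the norm term, and where you diverge you are pointing at a genuine gap that the paper's own proof does not close.
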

\begin{proof}
  The proof follows after noting that the maps $\xi' \mapsto \d{J}(\xi;\xi'-\xi)$ and 
  $\xi' \mapsto \frac{\partial h_j}{\partial x}\bigl( \phi_t(\xi) \bigr) \d{\phi_t}(\xi;\xi'-\xi)$ are affine, hence any maximum among these function is convex, and the map $\xi' \mapsto \| \xi' - \xi \|_{\cal X}$ is strictly convex since we chose the $2$--norm as our finite dimensional norm.
\end{proof}

The following theorem, which follows as a result of the previous lemma, is fundamental to our result since it shows that $g$, as defined in Equation \eqref{eq:theta_definition}, is a well-defined function.
We omit the proof since it is a particular case of a well known result regarding the existence of unique minimizers of strictly convex functions over bounded sets in Hilbert spaces (Proposition II.1.2 in \cite{Ekeland1976}).
\begin{theorem}
  \label{thm:zeta_unique_minimizer}
  For each $\xi \in {\cal X}_p$, the map $\xi' \mapsto \zeta(\xi,\xi')$, as defined in Equation \eqref{eq:zeta_definition}, has a unique minimizer.
\end{theorem}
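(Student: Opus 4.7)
The plan is to follow the classical template for minimizing a strictly convex continuous functional over a bounded convex subset of a Hilbert space: derive uniqueness from strict convexity, and existence from a weak-compactness argument inside the ambient Hilbert space $H = L^2([0,1], \R^m) \times L^2([0,1], \R^q)$.

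For uniqueness, I would argue by contradiction. Suppose $\xi'_1, \xi'_2 \in {\cal X}_r$ with $\xi'_1 \neq \xi'_2$ both achieve the infimum $m = \inf_{\xi' \in {\cal X}_r} \zeta(\xi, \xi')$. The midpoint $\tfrac{1}{2}(\xi'_1 + \xi'_2)$ still belongs to ${\cal X}_r$, since $U$ and $\Sigma^q_r$ are convex and both $L^2$ and $BV$ are closed under convex combinations. Lemma \ref{lemma:zeta_strictly_convex} then yields $\zeta\bigl(\xi, \tfrac{1}{2}(\xi'_1 + \xi'_2)\bigr) < \tfrac{1}{2}\zeta(\xi,\xi'_1) + \tfrac{1}{2}\zeta(\xi,\xi'_2) = m$, a contradiction.

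For existence, I would take a minimizing sequence $\{\xi'_n\} \subset {\cal X}_r$. Because $U$ and $\Sigma^q_r$ are bounded subsets of Euclidean space and $[0,1]$ has finite Lebesgue measure, this sequence is bounded in $H$. Weak sequential compactness of bounded sets in a Hilbert space then yields a weakly convergent subsequence $\xi'_{n_k} \rightharpoonup \xi'_* \in H$. I would verify that $\xi'_*$ still satisfies the pointwise range constraints defining membership in ${\cal X}_r$ by applying Mazur's lemma to build convex combinations of the tail that converge strongly in $L^2$, extracting a further a.e.-convergent subsequence, and using the closedness of $U$ and $\Sigma^q_r$. I would then observe that $\zeta(\xi, \cdot)$ is norm-continuous in $\xi'$ (since it is the maximum of affine functionals plus $\|\xi' - \xi\|_{\cal X}$) and convex, hence weakly lower semicontinuous, giving $\zeta(\xi, \xi'_*) \leq \liminf_k \zeta(\xi, \xi'_{n_k}) = m$.

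The main obstacle will be confirming that $\xi'_*$ inherits the bounded-variation constraint built into the definition of ${\cal X}_r$, since $BV$ is not preserved under weak $L^2$ convergence in general. I expect to handle this by invoking the lower semicontinuity of the $BV$ seminorm with respect to $L^1$-convergence along the Mazur convex combinations constructed above; together with the fact that $\zeta$ penalizes deviations from the $BV$ point $\xi$ via the norm term $\|\xi' - \xi\|_{\cal X}$, this should force $\xi'_* \in {\cal X}_r$. This whole package of weak compactness, weak lower semicontinuity, strict convexity, and coercivity is precisely what Proposition II.1.2 in \cite{Ekeland1976} delivers for proper lower semicontinuous strictly convex coercive functionals on reflexive Banach spaces, which is why the authors elect to simply cite it rather than reproduce the argument.
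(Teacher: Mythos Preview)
Your overall strategy---strict convexity for uniqueness, weak compactness plus weak lower semicontinuity for existence---is exactly the content of the Ekeland--Temam result the paper cites, so in spirit you are doing what the authors do, only with the argument spelled out rather than delegated to a reference.

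There is, however, a genuine gap in your handling of the bounded-variation constraint. Your proposed fix relies on two ingredients, and neither delivers what you need. First, lower semicontinuity of the $BV$ seminorm under $L^1$ convergence only tells you $\|\xi'_*\|_{BV} \leq \liminf_k \|\xi'_{n_k}\|_{BV}$; for this to force $\xi'_* \in BV$ you would need a \emph{uniform} bound on $\|\xi'_{n_k}\|_{BV}$ along the minimizing sequence, and nothing in the problem provides one---membership in ${\cal X}_r$ demands only finite total variation, not any fixed bound. Second, the term $\|\xi' - \xi\|_{\cal X}$ in $\zeta$ is purely an $L^2$ norm (see Equation \eqref{eq:defn_metric}), not a $BV$ seminorm, so it does not penalize variation at all: being $L^2$-close to the $BV$ point $\xi$ in no way constrains $\|\xi'_*\|_{BV}$. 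Consequently your argument establishes a unique minimizer in the $L^2$-closed convex set $L^2([0,1],U)\times L^2([0,1],\Sigma^q_r)$, but does not show it lies in ${\cal X}_r$ as defined.

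In fairness, the paper's one-line citation to Proposition II.1.2 of \cite{Ekeland1976} glosses over exactly the same point: that proposition concerns closed bounded convex sets in a reflexive space, and ${\cal X}_r$ is not closed in $L^2$ because of the $BV$ requirement. So you have correctly identified a subtlety the authors themselves do not address; you simply have not resolved it, and the mechanism you sketch cannot.
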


Employing these results we can prove the continuity of the optimality function. This result is not strictly required in order to prove the convergence of Algorithm \ref{algo:main_algo} or in order to prove that the optimality function encodes local minimizers of the Switched System Optimal Control Problem, but is useful when we describe the implementation of our algorithm.


\begin{lemma}
  \label{lemma:theta_cont}
  The function $\theta$, as defined in Equation \eqref{eq:theta_definition}, is continuous.
\end{lemma}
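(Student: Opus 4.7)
The plan is to prove the stronger statement that $\theta$ is in fact Lipschitz continuous, from which continuity follows immediately. The whole argument is driven by the fact that $\theta$ is the pointwise minimum (over $\xi' \in {\cal X}_r$) of the family of maps $\xi \mapsto \zeta(\xi,\xi')$, each of which is Lipschitz in $\xi$ with a constant that, by Lemma \ref{lemma:zeta_lipschitz}, does not depend on $\xi'$.

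First, I would record the elementary inequality for the minimum that mirrors Equation \eqref{eq:max_norm_trick}: for any family of real numbers $\{a_i\}_{i \in {\cal I}}$ and $\{b_i\}_{i \in {\cal I}}$ that admit minima,
\begin{equation}
  \Bigl| \inf_{i \in {\cal I}} a_i - \inf_{i \in {\cal I}} b_i \Bigr| \leq \sup_{i \in {\cal I}} | a_i - b_i |,
\end{equation}
which follows by the same swap-and-bound argument used in the proof of Lemma \ref{lemma:zeta_lipschitz} (replacing $\max$ by $\min$ by negation). Applying this with $a_{\xi'} = \zeta(\xi_1,\xi')$ and $b_{\xi'} = \zeta(\xi_2,\xi')$ for any $\xi_1,\xi_2 \in {\cal X}_p$, I obtain
\begin{equation}
  \bigl| \theta(\xi_1) - \theta(\xi_2) \bigr|
  = \Bigl| \min_{\xi' \in {\cal X}_r} \zeta(\xi_1,\xi') - \min_{\xi' \in {\cal X}_r} \zeta(\xi_2,\xi') \Bigr|
  \leq \sup_{\xi' \in {\cal X}_r} \bigl| \zeta(\xi_1,\xi') - \zeta(\xi_2,\xi') \bigr|.
\end{equation}

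Next, Lemma \ref{lemma:zeta_lipschitz} produces a constant $L > 0$, independent of $\xi'$, such that the summand on the right is bounded by $L \| \xi_1 - \xi_2 \|_{\cal X}$. Taking the supremum preserves this bound, so
\begin{equation}
  \bigl| \theta(\xi_1) - \theta(\xi_2) \bigr| \leq L \| \xi_1 - \xi_2 \|_{\cal X},
\end{equation}
which proves Lipschitz continuity and thereby continuity of $\theta$.

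There is essentially no obstacle to this proof once Lemma \ref{lemma:zeta_lipschitz} has been established: the only subtlety is confirming that the Lipschitz constant provided by that lemma is uniform in the inner variable $\xi'$, which is clear from its statement (the bound holds for every $\xi' \in {\cal X}_r$ with the same $L$). Theorem \ref{thm:zeta_unique_minimizer} is not actually needed for this argument, but it guarantees that $\theta(\xi)$ is realized at a well-defined point, so the min notation is justified throughout.
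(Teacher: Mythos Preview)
Your proof is correct and in fact yields the stronger conclusion that $\theta$ is Lipschitz continuous. The paper takes a slightly different, more classical route: it shows upper semicontinuity by fixing the minimizer $\xi' = g(\xi)$ and using it as a test point for nearby $\xi_i$, and lower semicontinuity by applying the Lipschitz bound of Lemma~\ref{lemma:zeta_lipschitz} at the (moving) minimizers $\xi_i' = g(\xi_i)$. Your approach bypasses this two-step argument by exploiting directly that the Lipschitz constant in Lemma~\ref{lemma:zeta_lipschitz} is uniform in $\xi'$, so the infimum over $\xi'$ inherits the same Lipschitz bound via the elementary inequality $\bigl|\inf a_i - \inf b_i\bigr| \leq \sup |a_i - b_i|$. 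The gain is a shorter proof with a quantitative conclusion; the paper's semicontinuity decomposition, on the other hand, is the template that would still work in settings where one only has continuity (not Lipschitz continuity) of $\zeta$ in its first argument.
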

\begin{proof}
  First, we show that $\theta$ is upper semi-continuous. 
  Consider a sequence $\{\xi_i\}_{i=1}^{\infty} \subset {\cal X}_r$ converging to $\xi$, and $\xi' \in {\cal X}_r$ such that $\theta(\xi) = \zeta(\xi,\xi')$, i.e. $\xi' = g(\xi)$, where $g$ is defined as in Equation \eqref{eq:theta_definition}. 
  Since $\theta(\xi_i) \leq \zeta(\xi_i,\xi')$ for all $i \in \N$, 
  \begin{equation}
    \limsup_{i\to\infty}\theta(\xi_i)\leq\limsup_{i\to\infty}\zeta(\xi_i,\xi')= \zeta(\xi,\xi') = \theta(\xi),
  \end{equation}
  which proves the upper semi-continuity of $\theta$.
  
  Second, we show that $\theta$ is lower semi-continuous. 
  Let $\{\xi'_i\}_{i\in\N}$ such that $\theta(\xi_i) = \zeta(\xi_i,\xi'_i)$, i.e. $\xi'_i = g(\xi_i)$. 
  From Lemma \ref{lemma:zeta_lipschitz}, we know there exists a Lipschitz constant $L > 0$ such that for each $i \in \N$, 
  $\left| \zeta(\xi,\xi'_i) - \zeta(\xi_i,\xi'_i)\right| \leq L \left\|\xi - \xi_i \right\|_{\cal X}$. 
  Consequently,
  \begin{equation}
    \theta(\xi) 
    \leq \bigl( \zeta(\xi,\xi'_i) - \zeta(\xi_i,\xi'_i) \bigr) + \zeta(\xi_i,\xi'_i) 
    \leq L \| \xi - \xi_i \|_{\cal X} + \theta(\xi_i).
  \end{equation}
  Taking limits we conclude that
  \begin{equation}
    \theta(\xi) \leq \liminf_{i\to\infty}\theta(\xi_i),
  \end{equation}
  which proves the lower semi-continuity of $\theta$, and our desired result.
\end{proof}

Finally, we can prove that $\theta$ encodes a necessary condition for optimality:
\begin{theorem}
  \label{thm:theta_optimality_function}
  Let $\theta$ be as defined in Equation \eqref{eq:theta_definition}, then:
  \begin{enumerate_parentesis}
  \item \label{thm:theta_negative} $\theta$ is non-positive valued, and
  \item \label{thm:local_min_theta_zero} If $\xi \in {\cal X}_p$ is a local minimizer of the Switched System Optimal Control Problem as in Definition \ref{definition:SSOCP_minimizers}, then $\theta(\xi) = 0$.
  \end{enumerate_parentesis}
\end{theorem}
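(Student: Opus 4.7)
Part~\ref{thm:theta_negative} follows by direct substitution. Taking $\xi' = \xi$ in Equation~\eqref{eq:zeta_definition}, both directional derivatives vanish --- since $\D{J}(\xi;0) = 0$ and $\D{\psi_{j,t}}(\xi;0) = 0$ directly from the defining limit in Equation~\eqref{eq:operator_D_definition} --- and the norm term $\|\xi' - \xi\|_{\cal X}$ becomes zero. If $\Psi(\xi) \le 0$, this gives $\zeta(\xi,\xi) = \max\{0, \gamma\Psi(\xi)\} = 0$, and if $\Psi(\xi) > 0$, it gives $\zeta(\xi,\xi) = \max\{-\Psi(\xi), 0\} = 0$. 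In either case $\theta(\xi) = \min_{\xi' \in {\cal X}_r}\zeta(\xi,\xi') \le \zeta(\xi,\xi) = 0$.

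For Part~\ref{thm:local_min_theta_zero} I would argue by contradiction. Suppose $\xi \in {\cal X}_p$ is a local minimizer (so $\Psi(\xi) \le 0$) but $\theta(\xi) < 0$. Let $\xi' = g(\xi) \in {\cal X}_r$ be the unique minimizer from Theorem~\ref{thm:zeta_unique_minimizer}, set $\eta := \xi' - \xi$ and $c := \|\eta\|_{\cal X}$; necessarily $c > 0$, since $\zeta(\xi,\xi)=0$ already. Unpacking $\zeta(\xi,\xi') < 0$ on the $\Psi(\xi) \le 0$ branch yields
\begin{equation*}
\D{J}(\xi;\eta) < -c, \qquad \D{\psi_{j,t}}(\xi;\eta) + \gamma\Psi(\xi) < -c \quad \text{for all } (j,t) \in {\cal J}\times[0,1].
\end{equation*}
By convexity of $U$ and $\Sigma_r^q$, $\hat\xi_\lambda := (1-\lambda)\xi + \lambda\xi'$ lies in ${\cal X}_r$ for every $\lambda \in [0,1]$. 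Using the first-order expansions implied by Lemmas~\ref{lemma:DJ_definition} and~\ref{lemma:Dhj_definition}, I would choose $\lambda_0 > 0$ small enough to guarantee $J(\hat\xi_{\lambda_0}) < J(\xi)$ and $\Psi(\hat\xi_{\lambda_0}) \le 0$ simultaneously. When $\Psi(\xi) < 0$ the latter follows from continuity of $\Psi$ (Lemma~\ref{lemma:psi_continuous}); when $\Psi(\xi) = 0$, the uniform bound $\D{\psi_{j,t}}(\xi;\eta) < -c$ drives every $\psi_{j,t}(\hat\xi_{\lambda_0})$ strictly below zero, provided the $o(\lambda)$ remainder is uniform in $(j,t)$.

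Having produced a feasible relaxed point with strictly smaller cost, the final step is to project it into ${\cal X}_p$ via the Chattering Lemma (Theorem~\ref{thm:bangbang_result}): for any $\delta > 0$ there exists $\xi_p \in {\cal X}_p$ with $\sup_{t \in [0,1]} \|\phi_t(\hat\xi_{\lambda_0}) - \phi_t(\xi_p)\|_2 < \delta$. The Lipschitz bounds on $h_0$ and the $h_j$ in Assumption~\ref{assump:constraint_fns} make $J(\xi_p) < J(\xi)$ and $\Psi(\xi_p) \le 0$ for all sufficiently small $\delta$, while the triangle inequality
\begin{equation*}
\bigl\| x^{(\xi)} - x^{(\xi_p)} \bigr\|_{L^2} \le \bigl\| x^{(\xi)} - x^{(\hat\xi_{\lambda_0})} \bigr\|_{L^2} + \delta,
\end{equation*}
together with Corollary~\ref{corollary:x_lipschitz} (which bounds the first term by $L\lambda_0 c$), lets me place $\xi_p$ inside any prescribed ${\cal N}_w(\xi,\epsilon)$ by first shrinking $\lambda_0$ and then $\delta$. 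This contradicts the local minimality of $\xi$ in the sense of Definition~\ref{definition:SSOCP_minimizers}.

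\textbf{Main obstacle.} The only delicate step is the uniform-in-$(j,t)$ first-order expansion of $\psi_{j,t}$ needed to handle the active case $\Psi(\xi)=0$: a merely pointwise $o(\lambda)$ remainder does not suffice, because one must ensure that no individual constraint remains above zero. I would obtain the required uniform bound by writing the remainder as $\int_0^{\lambda_0}\!\bigl(\D{\psi_{j,t}}(\hat\xi_s;\eta) - \D{\psi_{j,t}}(\xi;\eta)\bigr)\,ds$ and applying Corollary~\ref{corollary:Dhj_lipschitz} together with Corollary~\ref{corollary:x_lipschitz} and the compactness of ${\cal J}\times[0,1]$ to bound the integrand independently of $(j,t)$.
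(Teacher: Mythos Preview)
Your proposal is correct and follows essentially the same route as the paper's proof: argue by contradiction, use the Lipschitz bound on the directional derivatives (your integral-remainder argument via Corollary~\ref{corollary:Dhj_lipschitz} is exactly the paper's Mean Value Theorem step) to get strict cost decrease and feasibility of the relaxed convex combination, then invoke Theorem~\ref{thm:bangbang_result} and the Lipschitz estimates to land a pure feasible point with smaller cost inside any ${\cal N}_w(\xi,\epsilon)$. The only cosmetic difference is that the paper couples the Chattering Lemma tolerance to $\lambda$ from the outset (taking $\epsilon' = -\lambda\theta(\xi)/(2L)$) and treats the cases $\Psi(\xi)<0$ and $\Psi(\xi)=0$ uniformly via the single bound $\Psi(\xi_\lambda)\le(1-\gamma\lambda)\Psi(\xi)$, whereas you decouple $\lambda_0$ and $\delta$ and split on the sign of $\Psi(\xi)$; both orderings work.
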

\begin{proof}
  Notice that $\zeta(\xi,\xi) = 0$, therefore $\theta(\xi) = \min_{\xi' \in {\cal X}_r} \zeta(\xi,\xi') \leq \zeta(\xi,\xi) = 0$. 
  This proves Condition \ref{thm:theta_negative}.
  
  To prove Condition \ref{thm:local_min_theta_zero}, we begin by making several observations. Given $\xi' \in {\cal X}_r$ and $\lambda \in [0,1]$, using the Mean Value Theorem and Corollary \ref{corollary:DJ_lipschitz} we have that there exists $s \in (0,1)$ and $L > 0$ such that
  \begin{equation}
    \label{eq:theta_J_mvt}
    \begin{aligned}
      J\big( \xi + \lambda (\xi' - \xi) \big) - J(\xi) 
      &= \d{J}\bigl( \xi + s \lambda ( \xi' - \xi ); \lambda (\xi' - \xi) \bigr) \\
      &\leq \lambda \d{J}\bigl( \xi; \xi' - \xi \bigr) + L \lambda^2 \| \xi' - \xi \|^2_{\cal X}.
    \end{aligned}
  \end{equation}
  Letting ${\cal A}(\xi) = \left\{ (j,t) \in {\cal J} \times [0,1] \mid \Psi(\xi) = h_j\big( x^{(\xi)}(t) \big) \right\}$, similar to the equation above, there exists a pair $(j,t) \in {\cal A}\bigl( \xi + \lambda (\xi' - \xi) \bigr)$ and $s \in (0,1)$ such that, using Corollary \ref{corollary:Dhj_lipschitz},
  \begin{equation}
    \label{eq:theta_psi_mvt}
    \begin{aligned}
      \Psi\big( \xi + \lambda (\xi' - \xi) \big) - \Psi(\xi) 
      &\leq \psi_{j,t}\bigl( \xi + \lambda ( \xi' - \xi ) \bigr) - \Psi(\xi) \\
      &\leq \psi_{j,t}\bigl( \xi + \lambda ( \xi' - \xi ) \bigr) - \psi_{j,t}(\xi) \\
      &= \d{\psi_{j,t}}\bigl( \xi + s \lambda ( \xi' - \xi ); \lambda ( \xi' - \xi ) \bigr) \\
      &\leq \lambda \d{\psi_{j,t}}\bigl( \xi; \xi' - \xi \bigr) + L \lambda^2 \| \xi' - \xi \|^2_{\cal X}.
    \end{aligned}
  \end{equation}
  Finally, letting $L$ denote the Lipschitz constant as in Condition \ref{assump:phi_lipschitz} in Assumption \ref{assump:constraint_fns}, notice:
  \begin{equation}
    \label{eq:theta_psi_lipschitz}
    \begin{aligned}
      \Psi\big( \xi + \lambda( \xi' - \xi ) \big) - \Psi( \xi ) 
      &= \max_{(j,t) \in {\cal J} \times [0,1]} \psi_{j,t}\big( \xi + \lambda( \xi' - \xi ) \big) - \max_{(j,t) \in {\cal J} \times [0,1]} \psi_{j,t}( \xi ) \\
      &\leq  \max_{(j,t) \in {\cal J} \times [0,1]} \psi_{j,t}\big( \xi + \lambda( \xi' - \xi ) \big) - \psi_{j,t}(\xi) \\
      &\leq L \max_{t \in [0,1]} \left\|\phi_t(\xi + \lambda( \xi' - \xi )) - \phi_t(\xi) \right\|_2.
    \end{aligned}
  \end{equation}
  
  We prove Condition \ref{thm:local_min_theta_zero} by contradiction. 
  That is, using Definition \ref{definition:SSOCP_minimizers}, we assume that $\theta(\xi) < 0$ and show that for each $\epsilon > 0$ there exists $\hat{\xi} \in {\cal N}_w(\xi,\epsilon) \cap \bigl\{ \bar{\xi} \in {\cal X}_p \mid \Psi(\bar{\xi}) \leq 0 \bigr\}$ such that $J(\hat{\xi}) < J(\xi)$, where ${\cal N}_w(\xi,\epsilon)$ is as defined in Equation \eqref{eq:nbhd_weak_topology}, hence arriving at a contradiction.

  Before arriving at this contradiction, we make three initial observations. 
  First, notice that since $\xi \in {\cal X}_p$ is a local minimizer of the Switched System Optimal Control Problem, $\Psi(\xi) \leq 0$. 
  Second, consider $g$ as defined in Equation \eqref{eq:theta_definition}, which exists by Theorem \ref{thm:zeta_unique_minimizer}, and notice that since $\theta(\xi) < 0$, $g(\xi) \neq \xi$. 
  Third, notice that, as a result of Theorem \ref{thm:bangbang_result}, for each $\left(\xi + \lambda (g(\xi) - \xi)\right) \in {\cal X}_r$ and $\epsilon' > 0$ there exists a $\xi_\lambda \in {\cal X}_p$ such that 
  \begin{equation}
    \label{eq:theta_two_x_nbhd}
    \bigl\| x^{(\xi_\lambda)} - x^{(\xi + \lambda (g(\xi) - \xi))} \bigr\|_{L^\infty} < \epsilon'
  \end{equation}
  where $x^{(\xi)}$ is the solution to Differential Equation \eqref{eq:traj_xi}.

  Now, letting $\epsilon' = -\frac{\lambda \theta(\xi)}{2 L} > 0$ and using Corollary \ref{corollary:x_lipschitz}:
  \begin{equation}
    \label{eq:theta_weak_nbhd}
    \begin{aligned}
      \bigl\lVert x^{(\xi_\lambda)} - x^{(\xi)} \bigr\rVert_{L^2}
      &\leq \bigl\lVert x^{(\xi_\lambda)} - x^{(\xi + \lambda ( g(\xi) - \xi ))} \bigr\rVert_{L^2} 
      + \bigl\lVert x^{(\xi + \lambda ( g(\xi) - \xi ))} - x^{(\xi)} \bigr\rVert_{L^2} \\
      &\leq \left( - \frac{\theta(\xi)}{2L} + L \left\| g(\xi) - \xi \right\|_{\cal X} \right) \lambda.
    \end{aligned}
  \end{equation}

  Next, observe that:
  \begin{equation}
    \label{eq:theta_lm}
    \theta(\xi) = \max \left\{ \d{J}( \xi; g(\xi) - \xi ), 
      \max_{(j,t) \in {\cal J} \times [0,1]} \D{\psi_{j,t}}( \xi; g(\xi) - \xi ) + \gamma \Psi(\xi) \right\} + \left\|g(\xi) - \xi \right\|_{\cal X} < 0.
  \end{equation}
  Also, by Equations \eqref{eq:theta_J_mvt}, \eqref{eq:theta_two_x_nbhd}, and \eqref{eq:theta_lm}, together with Condition \ref{assump:phi_lipschitz} in Assumption \ref{assump:constraint_fns} and Corollary \ref{corollary:x_lipschitz}:
  \begin{equation}
    \begin{aligned}
      J( \xi_\lambda ) - J( \xi ) 
      &\leq J(\xi_\lambda) - J\bigl( \xi + \lambda ( g(\xi) - \xi ) \bigr) + J\bigl( \xi + \lambda ( g(\xi) - \xi ) \bigr) - J(\xi) \\
      &\leq L \left\| \phi_1(\xi_\lambda) - \phi_1 ( \xi + \lambda ( g(\xi) - \xi )) \right\|_2 + \theta(\xi)\lambda + 4 A^2 L \lambda^{2} \\
      &\leq L \epsilon' +  \theta(\xi)\lambda + 4 A^2 L \lambda^{2} \\
      &\leq \frac{\theta(\xi)\lambda}{2} + 4 A^2 L \lambda^{2},
    \end{aligned}
  \end{equation}
  where $A = \max\big\{ \|u\|_2 + 1 \mid u \in U \big\}$ and we used the fact that $\| \xi - \xi' \|_{\cal X}^2 \leq 4A^2$ and $\D{J}(\xi;\xi'-\xi) \leq \theta(\xi)$. 
  Hence for each $\lambda \in \left( 0, \frac{-\theta(\xi)}{8A^2L} \right)$, 
  \begin{equation}
    \label{eq:theta_Jless}
    J(\xi_\lambda) - J(\xi) < 0.
  \end{equation}

  Similarly, using Condition \ref{assump:phi_lipschitz} in Assumption \ref{assump:constraint_fns}, together with Equations \eqref{eq:theta_psi_mvt}, \eqref{eq:theta_psi_lipschitz}, and \eqref{eq:theta_lm}, we have:
  \begin{equation}
    \begin{aligned}
      \Psi( \xi_\lambda ) 
      &\leq \Psi( \xi_\lambda ) - \Psi\bigl( \xi + \lambda ( g(\xi) - \xi ) \bigr) + \Psi\bigl( \xi + \lambda ( g(\xi) - \xi ) \bigr) \\ 
      &\leq L \max_{t \in [0,1]} \left\| \phi_t( \xi_\lambda ) - \phi_t ( \xi + \lambda ( g(\xi) - \xi )) \right\|_2 + \Psi(\xi) + \bigl( \theta(\xi) - \gamma \Psi(\xi) \bigr) \lambda + 4 A^2 L \lambda^{2} \\
      &\leq L \epsilon' + \theta(\xi) \lambda + 4 A^2 L \lambda^{2} + ( 1 - \gamma \lambda) \Psi(\xi) \\
      &\leq \frac{\theta(\xi)\lambda}{2} + 4 A^2 L \lambda^{2} + ( 1 - \gamma \lambda) \Psi(\xi),
    \end{aligned}
  \end{equation}
  where $A = \max\big\{ \|u\|_2 + 1 \mid u \in U \big\}$ and we used the fact that $\| \xi - \xi' \|_{\cal X}^2 \leq 4A^2$ and $\D{\psi_{j,t}}(\xi;\xi'-\xi) \leq \theta(\xi) - \gamma \Psi(\xi)$ for each $(j,t) \in {\cal J} \times [0,1]$. 
  Hence for each $\lambda \in \left( 0, \min \left\{ \frac{-\theta(\xi)}{8A^2L}, \frac{1}{\gamma} \right\} \right)$:
  \begin{equation}
    \label{eq:theta_psisatisfied}
    \Psi(\xi_\lambda) \leq ( 1 - \gamma \lambda) \Psi(\xi) \leq 0.
  \end{equation}

  Summarizing, suppose $\xi \in {\cal X}_p$ is a local minimizer of the Switched System Optimal Control Problem and $\theta(\xi) < 0$. 
  For each $\epsilon > 0$, by choosing any 
  \begin{equation}
    \lambda \in \left( 0, \min\left\{\frac{-\theta(\xi)}{8A^2L}, \frac{1}{\gamma}, 
        \frac{2L\epsilon}{2 L^2 \left\|g(\xi) - \xi \right\|_{\cal X} - \theta(\xi)} \right\} \right), 
  \end{equation}
  we can construct a $\xi_\lambda \in {\cal X}_p$ such that $\xi_\lambda \in { \cal N }_w(\xi,\epsilon)$, by Equation \eqref{eq:theta_weak_nbhd}, such that $J(\xi_\lambda) < J(\xi)$, by Equation \eqref{eq:theta_Jless}, and $\Psi(\xi_\lambda) \leq 0$, by Equation \eqref{eq:theta_psisatisfied}. 
  Therefore, $\xi$ is not a local minimizer of the Switched System Optimal Control Problem, which is a contradiction and proves Condition \ref{thm:local_min_theta_zero}.
\end{proof}

\subsection{Approximating Relaxed Inputs}

In this subsection, we prove that the projection operation, $\rho_N$, allows us to control the quality of approximation between the trajectories generated by a relaxed discrete input and its projection. First, we prove for $d \in {\cal D}_r$, ${\cal F}_N(d) \in {\cal D}_r$ and ${\cal P}_N \big( {\cal F}_N(d) \big) \in {\cal D}_p$: 
\begin{lemma}
	\label{lemma:proj_reasonable}
	Let $d \in {\cal D}_r$, ${\cal F}_N$ be as defined in Equation \eqref{eq:wavelet_approx}, and ${\cal P}_N$ be as defined in Equation \eqref{eq:pwm}. Then for each $N \in \N$ and $t \in [0,1]$:
	\begin{enumerate_parentesis}
		\item \label{lemma:pr_1} $[ {\cal F}_N(d) ]_i(t) \in [0,1]$,
		\item \label{lemma:pr_2} $\sum_{i=1}^q [ {\cal F}_N(d) ]_i(t) = 1$,
		\item \label{lemma:pr_3} $\big[ {\cal P}_N \big( {\cal F}_N(d) \big) \big]_i(t) \in \{0,1\}$, 
		\item \label{lemma:pr_4} $\sum_{i=1}^q \big[ {\cal P}_N \big( {\cal F}_N(d) \big) \big]_i(t) = 1$.
	\end{enumerate_parentesis}
\end{lemma}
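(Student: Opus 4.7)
My plan is to reduce the four conditions to two key structural facts about the Haar partial‐sum operator ${\cal F}_N$: (a) it is the orthogonal $L^2$--projection onto the Haar multiresolution space $V_{N+1}$ consisting of step functions that are constant on each dyadic interval $I_{N,k}=[k\,2^{-(N+1)},(k{+}1)\,2^{-(N+1)})$, and (b) it admits the local--averaging representation
\begin{equation}
  [{\cal F}_N(c)](t) \;=\; 2^{N+1}\!\int_{I_{N,k}} c(s)\,ds,\qquad t\in I_{N,k}.
\end{equation}
Both facts are standard consequences of $V_{N+1}=V_0\oplus W_0\oplus\cdots\oplus W_N$ together with the orthogonality of the Haar basis $\{\mathds{1}\}\cup\{b_{kj}/\|b_{kj}\|_{L^2}\}$, and follow by comparing the explicit formula \eqref{eq:wavelet_approx} with the averaging operator on a fixed $I_{N,k}$.

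With (a)--(b) in hand, Condition \ref{lemma:pr_1} is immediate: because $d\in{\cal D}_r$ implies $d_i(s)\in[0,1]$ for almost every $s$, each averaged value lies in $[0,1]$, and the averaging representation yields $[{\cal F}_N(d)]_i(t)\in[0,1]$ pointwise. Condition \ref{lemma:pr_2} follows by linearity: ${\cal F}_N$ is linear, the constant function $\mathds{1}$ lies in $V_{N+1}$ and is therefore fixed by ${\cal F}_N$, so $\sum_{i=1}^q [{\cal F}_N(d)]_i = {\cal F}_N\!\bigl(\sum_{i=1}^q d_i\bigr) = {\cal F}_N(\mathds{1}) = \mathds{1}$.

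For Condition \ref{lemma:pr_3}, I first observe that the values $[{\cal F}_N(d)]_i(k/2^N)$ appearing in the definition \eqref{eq:pwm} of ${\cal P}_N$ are unambiguously defined, because $k/2^N$ is a left endpoint of a dyadic interval of length $2^{-(N+1)}$ on which ${\cal F}_N(d)$ is constant. By \ref{lemma:pr_1} and \ref{lemma:pr_2} these $q$ numbers lie in $[0,1]$ and sum to $1$, so the partial sums $\sigma_i := \sum_{j=1}^{i}[{\cal F}_N(d)]_j(k/2^N)$ satisfy $0=\sigma_0\leq \sigma_1\leq\cdots\leq\sigma_q=1$. The half-open intervals $2^{-N}[k{+}\sigma_{i-1},\,k{+}\sigma_i)$ thus tile $[k/2^N,(k{+}1)/2^N)$ disjointly, and each $t$ in $[k/2^N,(k{+}1)/2^N)$ lies in exactly one such interval. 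Hence at each $t$ exactly one coordinate $i$ satisfies $[{\cal P}_N({\cal F}_N(d))]_i(t)=1$ and all others vanish, giving Condition \ref{lemma:pr_3} and simultaneously Condition \ref{lemma:pr_4}.

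The main obstacle, modest in size, is simply recognizing and justifying the local--averaging representation (b); once that is established, the remainder of the argument is essentially bookkeeping about how the PWM rule partitions each dyadic bin in proportion to the averaged values. No delicate analytic issue arises because every computation in the proof is carried out on the fixed piecewise--constant function ${\cal F}_N(d)$ at the finitely many dyadic sampling points $k/2^N$, $k=0,\dots,2^N-1$.
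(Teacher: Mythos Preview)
Your proposal is correct and follows essentially the same route as the paper's own proof: the paper obtains Condition~\ref{lemma:pr_1} by citing Haar's original positivity result (which is precisely your local-averaging representation), obtains Condition~\ref{lemma:pr_2} by linearity and the observation that ${\cal F}_N(\mathds{1})=\mathds{1}$ (argued via $\langle\mathds{1},b_{kj}\rangle=0$ rather than your $\mathds{1}\in V_{N+1}$), and dispatches Conditions~\ref{lemma:pr_3}--\ref{lemma:pr_4} as direct consequences of the definition of ${\cal P}_N$. Your tiling argument for \ref{lemma:pr_3}--\ref{lemma:pr_4} is more explicit than the paper's one-line justification, but the underlying reasoning is identical.
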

\begin{proof}
  Condition \ref{lemma:pr_1} follows due to the result in Section 3.3 in \cite{Haar1910}. 
  Condition \ref{lemma:pr_2} follows since the wavelet approximation is linear, thus,
  \begin{equation}
    \begin{aligned}
      \sum_{i=1}^q [ {\cal F}_N(d) ]_i &= \sum_{i=1}^q \left( \langle d_i, \mathds{1} \rangle + \sum_{k=0}^N \sum_{j=0}^{2^k-1} \langle d_{i}, b_{kj} \rangle \frac{ b_{kj} }{\| b_{kj} \|_{L^2}^2} \right) \\
      &= \langle \mathds{1}, \mathds{1} \rangle + \sum_{k=0}^N \sum_{j=0}^{2^k-1} \langle \mathds{1}, b_{kj} \rangle \frac{ b_{kj} }{\| b_{kj} \|_{L^2}^2} = \mathds{1},
    \end{aligned}
  \end{equation}
  where the last equality holds since $\langle \mathds{1}, b_{kj} \rangle = 0$ for each $k,j$.

  Conditions \ref{lemma:pr_3} and \ref{lemma:pr_4} are direct consequences of the definition of ${\cal P}_N$, since ${\cal P}_N$ can only take the values $0$ or $1$, and only one coordinate is equal to $1$ at any given time $t \in [0,1]$.
\end{proof}

Recall that in order to avoid the introduction of additional notation, we let the coordinate-wise application of ${\cal F}_N$ to some relaxed discrete input $d \in {\cal D}_r$ be denoted as ${\cal F}_N(d)$ and similarly for some continuous input $u \in {\cal U}$, but in fact ${\cal F}_N$ as originally defined took $L^2( [0,1], \R ) \cap BV( [0,1], \R )$ to $L^2( [0,1], \R ) \cap BV( [0,1], \R )$. Next, we prove that the wavelet approximation allows us to control the quality of approximation:
\begin{lemma}
  \label{lemma:wavelet_approximation}
  Let $f \in L^2( [0,1], \R ) \cap BV( [0,1], \R )$, then 
  \begin{equation}
    \left\| f - {\cal F}_N(f) \right\|_{L^2} \leq \frac{1}{2} \left( \frac{1}{\sqrt{2}} \right)^N \| f \|_{BV},
  \end{equation}
where ${\cal F}_N$ is as defined in Equation \eqref{eq:wavelet_approx}.
\end{lemma}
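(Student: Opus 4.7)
The plan is to exploit the orthogonality of the Haar system together with a sharp per-coefficient bound that comes from the bounded variation structure. Let me sketch the three steps.

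First I would use orthogonality. The family $\{ \mathds{1} \} \cup \{ b_{kj} / \|b_{kj}\|_{L^2} \}_{k \geq 0,\, 0 \leq j < 2^k}$ is an orthonormal basis of $L^2([0,1],\R)$, and a direct calculation gives $\|b_{kj}\|_{L^2}^2 = 2^{-k}$ while $\langle \mathds{1}, b_{kj} \rangle = 0$ for all $k,j$. Hence $\mathcal{F}_N(f)$ is exactly the orthogonal projection of $f$ onto the span of the first $N+1$ Haar scales, and Parseval yields
\begin{equation}
  \bigl\| f - \mathcal{F}_N(f) \bigr\|_{L^2}^2
  = \sum_{k=N+1}^{\infty} \sum_{j=0}^{2^k-1} \frac{\bigl| \langle f, b_{kj} \rangle \bigr|^2}{\|b_{kj}\|_{L^2}^2}
  = \sum_{k=N+1}^{\infty} 2^{k} \sum_{j=0}^{2^k-1} \bigl| \langle f, b_{kj} \rangle \bigr|^2.
\end{equation}

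Second, and this is the main obstacle, I need a sharp bound on each Haar coefficient in terms of the total variation of $f$ over the corresponding dyadic interval $I_{kj} = [j/2^k, (j+1)/2^k]$. Writing $\langle f, b_{kj} \rangle = \int_{I_{kj}^-}( f(s) - f(s + 2^{-(k+1)}) ) \, ds$ where $I_{kj}^-$ is the left half of $I_{kj}$, and using the fact that $|f(t_2) - f(t_1)|$ is bounded by the total variation of $f$ on $[t_1,t_2]$ (which, by Theorem~\ref{thm:bounded_variation}, equals $\int_{t_1}^{t_2} |\dot f|$ interpreted as a positive measure), I obtain
\begin{equation}
  \bigl| \langle f, b_{kj} \rangle \bigr|
  \leq \int_{I_{kj}^-} \int_{s}^{s + 2^{-(k+1)}} d|\dot f|(r) \, ds
  \leq 2^{-(k+1)} V_{kj},
\end{equation}
where $V_{kj} := \int_{I_{kj}} d|\dot f|$ is the BV seminorm of $f$ restricted to $I_{kj}$. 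The delicate point is justifying this with jumps present, which is exactly what the weak-derivative reformulation of $\|\cdot\|_{BV}$ enables.

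Third, I aggregate. Since $\sum_{j=0}^{2^k - 1} V_{kj} = \|f\|_{BV}$ by additivity of the variation measure on the disjoint dyadic partition, and since $V_{kj} \geq 0$ gives $\sum_j V_{kj}^2 \leq \bigl( \sum_j V_{kj} \bigr)^2$, the per-scale contribution is
\begin{equation}
  2^{k} \sum_{j=0}^{2^k - 1} \bigl| \langle f, b_{kj} \rangle \bigr|^2
  \leq 2^{k} \cdot 2^{-2(k+1)} \sum_{j=0}^{2^k - 1} V_{kj}^2
  \leq 2^{-(k+2)} \| f \|_{BV}^2.
\end{equation}
Summing the geometric series $\sum_{k=N+1}^{\infty} 2^{-(k+2)} = 2^{-(N+2)}$ yields $\| f - \mathcal{F}_N(f) \|_{L^2}^2 \leq 2^{-(N+2)} \|f\|_{BV}^2$, and taking square roots gives the desired $\frac{1}{2} (1/\sqrt{2})^N \|f\|_{BV}$. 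The only nontrivial step is the BV-based coefficient bound; once that is in hand, orthogonality and geometric summation do the rest.
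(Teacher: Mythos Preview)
Your proof is correct and follows essentially the same three-step architecture as the paper: Parseval for the tail, a sharp per-coefficient bound $|\langle f, b_{kj}\rangle| \leq 2^{-(k+1)} V_{kj}$ from the BV structure, then $\sum_j V_{kj}^2 \leq (\sum_j V_{kj})^2 = \|f\|_{BV}^2$ and a geometric sum. The only cosmetic difference is that the paper obtains the coefficient bound via integration by parts against the tent function $v_{kj}(t)=\int_0^t b_{kj}$ (using $\|v_{kj}\|_{L^\infty}=2^{-k-1}$ and the weak-derivative characterization of $\|\cdot\|_{BV}$), whereas you use the translation-difference representation $\langle f,b_{kj}\rangle=\int_{I_{kj}^-}\bigl(f(s)-f(s+2^{-(k+1)})\bigr)\,ds$; both yield the identical estimate.
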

\begin{proof}
  Since $L^2$ is a Hilbert space and the collection $\{ b_{kj} \}_{k,j}$ is a basis, then
  \begin{equation}
    f = \langle f, \mathds{1} \rangle + 
    \sum_{k=0}^\infty \sum_{j=0}^{2^k-1} \langle f, b_{kj} \rangle \frac{ b_{kj} }{\| b_{kj} \|_{L^2}^2}.
  \end{equation}

  Note that $\| b_{kj} \|_{L^2}^2 = 2^{-k}$ and that
  \begin{equation}
    v_{kj}(t) = \int_0^t b_{kj}(s) ds =
    \begin{cases}
      t - j2^{-k} & \text{if}\ t \in \left[ j2^{-k}, \left( j + \frac{1}{2} \right)2^{-k} \right), \\
      - t + (j+1)2^{-k} & \text{if}\ t \in \left[ \left( j + \frac{1}{2} \right)2^{-k}, \left( j + 1 \right)2^{-k} \right), \\
      0 & \text{otherwise}, \\
    \end{cases}
  \end{equation}
  thus $\| v_{kj} \|_{L^\infty} = 2^{-k-1}$.
  Now, using integration by parts, and since $f \in BV([0,1],\R)$,
  \begin{equation}
    \left| \langle f, b_{kj} \rangle \right| 
    = \left| \int_{j2^{-k}}^{(j+1)2^{-k}} \dot{f}(t) v_{kj}(t) dt \right| 
    \leq 2^{-k-1} \int_{j2^{-k}}^{(j+1)2^{-k}} \big| \dot{f}(t) \big| dt
  \end{equation}
  
  Finally, Parseval's Identity for Hilbert spaces (Theorem 5.27 in \cite{Folland1999}) implies that
  \begin{equation}
    \label{eq:weak_conv_1}
	  \begin{aligned}
	    \big\| f - {\cal F}_N(f) \big\|_{L^2}^2 
      &= \sum_{k=N+1}^\infty \sum_{j=0}^{2^k-1} \frac{ | \langle f, b_{kj} \rangle |^2 }{ \| b_{kj} \|_{L^2}^2 } \\
	    &\leq \sum_{k = N+1}^\infty 2^{-k-2} \sum_{j=0}^{2^k-1} \left( \int_{j2^{-k}}^{(j+1)2^{-k}} \big| \dot{f}(t) \big| dt \right)^2 \\
	    &\leq 2^{-N-2} \| f \|_{BV}^2,
	  \end{aligned}
	\end{equation}
  as desired.
\end{proof}

The following lemma is fundamental to find a rate of convergence for the approximation of the solution of differential equations using relaxed inputs:
\begin{lemma}
  \label{lemma:weak_convergence}
  There exists $K > 0$ such that for each $d \in {\cal D}_r$ and $f \in L^2([0,1],\R^q) \cap BV([0,1], \R^q )$,
  \begin{equation}
    \left| \left\langle d - {\cal P}_N \big( {\cal F}_N(d) \big), f \right\rangle \right| \leq 
    K \left( \left( \frac{1}{\sqrt{2}} \right)^N \| f \|_{L^2} \| d \|_{BV} + \left( \frac{1}{2} \right)^N \| f \|_{BV} \right),
  \end{equation}
where ${\cal F}_N$ is as defined Equation \eqref{eq:wavelet_approx} and ${\cal P}_N$ is as defined in Equation \eqref{eq:pwm}.
\end{lemma}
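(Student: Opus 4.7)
The plan is to split the error via the intermediate approximation ${\cal F}_N(d)$, writing
\begin{equation*}
\bigl\langle d - {\cal P}_N\bigl( {\cal F}_N(d) \bigr), f \bigr\rangle = \bigl\langle d - {\cal F}_N(d), f \bigr\rangle + \bigl\langle {\cal F}_N(d) - {\cal P}_N\bigl( {\cal F}_N(d) \bigr), f \bigr\rangle,
\end{equation*}
and bounding the two pieces separately. The first piece should follow immediately from Cauchy--Schwarz together with a coordinate-wise application of Lemma \ref{lemma:wavelet_approximation} (using $\| d \|_{BV} = \sum_i \| d_i \|_{BV}$), yielding a bound of order $(1/\sqrt 2)^N \| d \|_{BV} \| f \|_{L^2}$ that already matches the first term of the claim.

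For the second piece I would decompose the integral over the dyadic grid $I_k = [ k/2^N, (k+1)/2^N )$ and use two structural facts. First, since the finest-scale Haar basis functions $b_{Nk}$ are supported on $I_k$ while all coarser $b_{k'j}$ are constant there, ${\cal F}_N(d)$ is piecewise constant on the two halves of $I_k$, taking the value $c_k := {\cal F}_N(d)( k/2^N )$ on the left half and some value $c_k'$ on the right half. Second, the pulse-width modulation defined in Equation \eqref{eq:pwm} applied to ${\cal F}_N(d)$ forces $\int_{I_k} {\cal P}_N\bigl({\cal F}_N(d)\bigr)\, dt = 2^{-N} c_k$. Subtracting and adding $c_k$ inside the integrand gives the further split
\begin{equation*}
\int_{I_k} \bigl( {\cal F}_N(d) - {\cal P}_N({\cal F}_N(d)) \bigr) \cdot f \, dt = \int_{I_k} \bigl( {\cal F}_N(d) - c_k \bigr) \cdot f \, dt - \int_{I_k} \bigl( {\cal P}_N({\cal F}_N(d)) - c_k \bigr) \cdot f \, dt.
\end{equation*}

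In the second integral, the integrand has zero mean over $I_k$ and componentwise magnitude at most one, so replacing $f$ by $f - f( k/2^N )$ and bounding $\| f(t) - f( k/2^N ) \|_1$ by the total variation of $f$ on $I_k$ produces, after summing over $k$, a contribution of order $2^{-N} \| f \|_{BV}$, matching the second term of the claim. In the first integral, $( {\cal F}_N(d) - c_k )$ vanishes on the left half and equals $( c_k' - c_k )$ on the right half; a direct computation with the Haar expansion shows $c_k' - c_k = -2^{N+1} \langle d, b_{Nk} \rangle$, which by the integration-by-parts estimate used in the proof of Lemma \ref{lemma:wavelet_approximation} is bounded componentwise by $\int_{I_k} |\dot d_i|\, ds$. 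Pairing this with Cauchy--Schwarz on $\int_{\text{right half of }I_k} f_i\, dt$, which contributes the factor $2^{-(N+1)/2}$ from the half-interval length, and then a second Cauchy--Schwarz in $k$ to recombine $\| d \|_{BV}$ with $\| f \|_{L^2}$, produces a contribution of order $(1/\sqrt 2)^N \| d \|_{BV} \| f \|_{L^2}$.

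The main obstacle will be this first of the two integrals: one has to recognize that the mid-$I_k$ jump of ${\cal F}_N(d)$ is exactly twice the top-level Haar coefficient and is therefore controlled by $\int_{I_k} \| \dot d \|_1\, ds$, and then orchestrate the two layers of Cauchy--Schwarz (one on each interval to trade the half-interval measure for an $L^2$ factor, one across $k$ to combine BV-integrals with the $L^2$-pieces) so that the final bound depends only on the global norms $\| f \|_{L^2}$ and $\| d \|_{BV}$ rather than on local or mixed quantities. Once the three partial bounds are combined, choosing $K$ as the largest of the resulting numerical constants completes the argument.
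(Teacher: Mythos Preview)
Your decomposition and the two main estimates match the paper's proof almost exactly: split via ${\cal F}_N(d)$, handle $\langle d-{\cal F}_N(d),f\rangle$ by Cauchy--Schwarz together with Lemma~\ref{lemma:wavelet_approximation}, and handle $\langle {\cal F}_N(d)-{\cal P}_N({\cal F}_N(d)),f\rangle$ interval by interval using that the integrand has zero mean on each $I_k$. The paper carries out the last step via integration by parts, introducing $w_{ik}(t)=\int_{t_k}^{t}\bigl(p_{ik}-\mathds{1}_{A_{ik}}(s)\bigr)\,ds$, which vanishes at both endpoints and satisfies $\|w_{ik}\|_{L^\infty}\le p_{ik}2^{-N}$; this yields the $2^{-N}\|f\|_{BV}$ term directly and is equivalent to your ``subtract $f(t_k)$'' argument.

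The one genuine difference is your ``first integral''. You correctly note that, because the sum in Equation~\eqref{eq:wavelet_approx} runs up to $k=N$, ${\cal F}_N(d)$ is piecewise constant on intervals of length $2^{-(N+1)}$, not $2^{-N}$, so $[{\cal F}_N(d)]_i$ is \emph{not} identically $p_{ik}$ on all of $[t_k,t_{k+1})$. The paper's displayed identity for $\langle {\cal F}_N(d)-{\cal P}_N({\cal F}_N(d)),f\rangle$ silently replaces $[{\cal F}_N(d)]_i$ by $p_{ik}$ on the whole interval, which drops exactly the term you isolate. Your identification of the mid-interval jump as $c'_k-c_k=-2^{N+1}\langle d,b_{Nk}\rangle$, bounded via the same integration-by-parts estimate used inside Lemma~\ref{lemma:wavelet_approximation}, together with the two-stage Cauchy--Schwarz (first on each half-interval, then across $k$, using $\|a\|_{\ell^2}\le\|a\|_{\ell^1}$ to recover $\|d\|_{BV}$), produces a contribution of order $(1/\sqrt{2})^N\|d\|_{BV}\|f\|_{L^2}$ that is absorbed into the first term of the stated bound. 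So your argument is in fact a more complete version of the paper's.
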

\begin{proof}
  To simplify our notation, let $t_k = \frac{k}{2^N}$, $p_{ik} = [{\cal F}_N(d)]_i( t_k )$, $S_{ik} = \sum_{j=1}^i p_{jk}$, and 
  \begin{equation}
    A_{ik} = \left[ t_k + \frac{1}{2^N} S_{(i-1)k}, t_k + \frac{1}{2^N} S_{ik} \right).   
  \end{equation}
  Also let us denote the indicator function of the set $A_{ik}$ by $\mathds{1}_{A_{ik}}$. Consider
  \begin{equation}
    \left\langle {\cal F}_N(d) - {\cal P}_N \big( {\cal F}_N(d) \big), f \right\rangle =
    \sum_{k=0}^{2^N-1} \sum_{i=1}^q \int_{t_k}^{t_{k+1}} \big( p_{ik} - \mathds{1}_{A_{ik}}(t) \big) f_i(t) dt.
  \end{equation}
  Let $w_{ik}: [0,1] \to \R$ be defined by
  \begin{equation}
    w_{ik}(t) = \int_{t_k}^t p_{ik} - \mathds{1}_{A_{ik}}(s) ds =
    \begin{cases}
      p_{ik} ( t - t_k )
        & \text{if}\ t \in \left[ t_k, t_k + \frac{1}{2^N} S_{(i-1)k} \right), \\
      \frac{1}{2^N} p_{ik} S_{(i-1)k} + ( p_{ik} - 1 ) \left( t - t_k - \frac{1}{2^N} S_{(i-1)k} \right) 
        & \text{if}\ t \in A_{ik}, \\
      \frac{1}{2^N} p_{ik} (S_{ik} - 1) + p_{ik} \left( t - t_k - \frac{1}{2^N} S_{ik} \right) 
        & \text{if}\ t \in \left[ t_k + \frac{1}{2^N} S_{ik}, t_{k+1} \right),
    \end{cases}
  \end{equation}
  when $t \in \left[ t_k, t_{k+1} \right]$, and $w_{ik}(t) = 0$ otherwise. Note that $\| w_{ik} \|_{L^\infty} \leq \frac{p_{ik}}{2^N}$. 
  Thus, using integration by parts,
  \begin{equation}
    \left| \int_{t_k}^{t_{k+1}} \big( p_{ik} - \mathds{1}_{A_{ik}}(t) \big) f_i(t) dt \right| = 
    \left| \int_{t_k}^{t_{k+1}} w(t) \dot{f}_i(t) dt \right| \leq \frac{p_{ik}}{2^N} \int_{t_k}^{t_{k+1}} \big| \dot{f}_i(t) \big| dt,
  \end{equation}
  and
  \begin{equation}
    \label{eq:weak_conv_2}
    \begin{aligned}
      \left|\left\langle {\cal F}_N(d) - {\cal P}_N \big( {\cal F}_N(d) \big), f \right\rangle \right|
      &\leq \frac{1}{2^N} \sum_{k=0}^{2^N-1} \int_{t_k}^{t_{k+1}} \sum_{i=1}^q p_{ik} \big| \dot{f}_i(t) \big| dt \\
      &\leq \frac{1}{2^N} \| f \|_{BV}.
    \end{aligned}
  \end{equation}
  where the last inequality follows by H\"older's Inequality.
  
  Also, by Lemma \ref{lemma:wavelet_approximation} we have that
  \begin{equation}
    \big\| d_i - \left[ {\cal F}(d) \right]_i \big\|_{L^2} \leq \frac{1}{2} \left( \frac{1}{\sqrt{2}} \right)^N \| d_i \|_{BV}.
  \end{equation}
  Hence, using Cauchy-Schwartz's Inequality,
  \begin{equation}
    \left|\left\langle d - {\cal P}_N \big( {\cal F}_N(d) \big), f \right\rangle \right|
    \leq \| d - {\cal F}_N(d) \|_{L^2} \| f \|_{L^2} + 
    \left|\left\langle {\cal F}_N(d) - {\cal P}_N \big( {\cal F}_N(d) \big), f \right\rangle\right|,
  \end{equation}
  and the desired result follows from Equations \eqref{eq:weak_conv_1}, \eqref{eq:weak_conv_2}.
\end{proof}

Note that Lemma \ref{lemma:weak_convergence} does not prove convergence of ${\cal P}_N\big( {\cal F}_N(d) \big)$ to $d$ in the weak topology on ${\cal D}_r$. Such a result is indeed true, i.e. ${\cal P}_N\big( {\cal F}_N(d) \big)$ does converge in the weak topology to $d$, and it can be shown using an argument similar to the one used in Lemma 1 in \cite{Sussmann1972}. The reason we chose to prove a weaker result is because in this case we get an explicit rate of convergence, which is fundamental to the construction of our optimization algorithm because it allows us to bound the quality of approximation of the state trajectory.
\begin{theorem}
	\label{thm:quality_of_approximation}
  Let $\rho_N$ be defined as in Equation \eqref{eq:rho} and $\phi_t$ be defined as in Equation \eqref{eq:flow_xi}.
	Then there exists $K > 0$ such that for each $\xi = (u,d) \in {\cal X}_r$ and for each $t \in [0,1]$,
	\begin{equation}
    \big\| \phi_t\bigl( \rho_N(\xi) \bigr) - \phi_t(\xi) \big\|_2 
    \leq K \left( \frac{1}{\sqrt{2}} \right)^N \big( \|\xi\|_{BV} + 1 \big).
	\end{equation}
\end{theorem}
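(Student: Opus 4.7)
The plan is to bound $\|\phi_t(\rho_N(\xi)) - \phi_t(\xi)\|_2$ by writing it as an integral of the difference in the two vector fields, splitting that difference into a $u$-contribution and a $d$-contribution, bounding each using the approximation results already established, and finally invoking Bellman--Gronwall.

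First I would denote $\tilde{\xi} = \rho_N(\xi) = ({\cal F}_N(u), {\cal P}_N({\cal F}_N(d)))$ and write
\begin{equation}
\phi_t(\tilde\xi) - \phi_t(\xi) = \int_0^t \bigl( f(\tau,\phi_\tau(\tilde\xi),{\cal F}_N(u)(\tau),{\cal P}_N({\cal F}_N(d))(\tau)) - f(\tau,\phi_\tau(\xi),u(\tau),d(\tau)) \bigr) d\tau.
\end{equation}
By inserting and subtracting $f(\tau,\phi_\tau(\xi),{\cal F}_N(u)(\tau),d(\tau))$ and then $f(\tau,\phi_\tau(\xi),u(\tau),d(\tau))$, the integrand splits into (i) a state-difference term controlled by $L\|\phi_\tau(\tilde\xi) - \phi_\tau(\xi)\|_2$ via Assumption \ref{assump:fns_continuity}, (ii) a $u$-approximation term bounded pointwise by $L\|{\cal F}_N(u)(\tau) - u(\tau)\|_2$ via the same Lipschitz condition, and (iii) a $d$-approximation term equal to $\sum_i ([{\cal P}_N({\cal F}_N(d))]_i(\tau) - d_i(\tau)) f(\tau,\phi_\tau(\xi),u(\tau),e_i)$.

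For (ii), integrating pointwise and applying Hölder as in Equation \eqref{eq:1to2_norm_holder_trick} together with Lemma \ref{lemma:wavelet_approximation} applied coordinate-wise yields a bound of order $(1/\sqrt2)^N \|u\|_{BV}$. For (iii), the key observation is that the integral $\int_0^t (\cdot)\,d\tau$ can be written as $\langle d - {\cal P}_N({\cal F}_N(d)), F_t\rangle$ where $F_t(\tau) = \mathds{1}_{[0,t]}(\tau) \, f(\tau,\phi_\tau(\xi),u(\tau),e_i)$. To invoke Lemma \ref{lemma:weak_convergence} I need $F_t \in L^2 \cap BV$ with norms controlled by the data: boundedness follows from Corollary \ref{corollary:fns_bounded}, and the $BV$ norm of $\tau \mapsto f(\tau,\phi_\tau(\xi),u(\tau),e_i)$ can be controlled by differentiating along $\tau$ using the chain rule, bounding $\|\dot\phi_\tau\|_2 \leq C$ (again by Corollary \ref{corollary:fns_bounded}) and using the Lipschitz/derivative bounds on $f$, so the $BV$ norm grows like $C(1 + \|u\|_{BV})$ plus a constant for the indicator's jump. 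Lemma \ref{lemma:weak_convergence} then gives a bound of order $(1/\sqrt2)^N(\|d\|_{BV} + 1)(1 + \|u\|_{BV})$.

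Combining (ii) and (iii) yields, for all $t \in [0,1]$,
\begin{equation}
\|\phi_t(\tilde\xi) - \phi_t(\xi)\|_2 \leq L\int_0^t \|\phi_\tau(\tilde\xi) - \phi_\tau(\xi)\|_2\,d\tau + K'(1/\sqrt2)^N(\|\xi\|_{BV} + 1),
\end{equation}
and Bellman--Gronwall (Lemma 5.6.4 in \cite{Polak1997}) gives the desired estimate with $K = K' e^L$. The main obstacle I foresee is step (iii): we must verify that the test function used in Lemma \ref{lemma:weak_convergence} really lies in $BV$ with total variation controlled linearly by $\|\xi\|_{BV}$, which requires carefully using the Lipschitz continuity of $f$ in $(t,x,u)$ together with $\dot\phi_\tau$ being bounded and the BV regularity of $u$; the indicator $\mathds{1}_{[0,t]}$ contributes only a bounded jump, so it does not affect the overall growth.
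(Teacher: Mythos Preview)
Your proposal is correct and follows essentially the same route as the paper: bound the $u$-contribution via Lemma \ref{lemma:wavelet_approximation}, bound the $d$-contribution via Lemma \ref{lemma:weak_convergence} applied to the test function $s\mapsto f(s,\phi_s(\xi),u(s),e_i)$ (whose $L^2$ norm is bounded by Corollary \ref{corollary:fns_bounded} and whose $BV$ norm is controlled by $1+\|u\|_{BV}$ via the Lipschitz estimate on $f$), and close with Bellman--Gronwall. The only organizational difference is that the paper first splits at the trajectory level, $\|x^{(u_N,d_N)}-x^{(u,d)}\|\le\|x^{(u_N,d_N)}-x^{(u,d_N)}\|+\|x^{(u,d_N)}-x^{(u,d)}\|$, and applies Gronwall to each piece separately, whereas you split the integrand three ways and Gronwall once; both work.

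One small correction: the bound you state for term (iii), ``of order $(1/\sqrt2)^N(\|d\|_{BV}+1)(1+\|u\|_{BV})$,'' is not what Lemma \ref{lemma:weak_convergence} actually yields and would not imply the linear-in-$\|\xi\|_{BV}$ estimate you then claim. The lemma gives a \emph{sum}, $K\bigl((1/\sqrt2)^N\|F_t\|_{L^2}\|d\|_{BV}+(1/2)^N\|F_t\|_{BV}\bigr)$, which with $\|F_t\|_{L^2}\le C$ and $\|F_t\|_{BV}\le C(1+\|u\|_{BV})$ is indeed $\le K'(1/\sqrt2)^N(\|\xi\|_{BV}+1)$; just be careful to record it as a sum, not a product. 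Also, in your insertion step the intermediate terms should carry $d_N$ rather than $d$ in the last argument to make the split you describe come out cleanly.
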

\begin{proof}
  To simplify our notation, let us denote $u_N = {\cal F}_N(u)$ and $d_N = {\cal P}_N \big( {\cal F}_N(d) \big)$, thus $\rho_N(\xi) = (u_N,d_N)$. Consider
  \begin{equation}
    \bigl\| x^{(u_N,d_N)}(t) - x^{(u,d)}(t) \bigr\|_2 \leq
    \bigl\| x^{(u_N,d_N)}(t) - x^{(u,d_N)}(t) \bigr\|_2 + \bigl\| x^{(u,d_N)}(t) - x^{(u,d)}(t) \bigr\|_2.
  \end{equation}
  The main result of the theorem will follow from upper bounds from each of these two parts.

  Note that
  \begin{equation}
    \begin{aligned}
      \bigl\| x^{(u_N,d_N)}(t) - x^{(u,d_N)}(t) \bigr\|_2 
      &\leq \int_0^1 \bigl\| f\bigl( s, x^{(u_N,d_N)}(s), u_N(s), d_N(s) \bigr) - f\bigl( s, x^{(u,d_N)}(s), u(s), d_N(s) \bigr) \bigr\|_2 ds \\
      &\leq L \int_0^1 \bigl\| x^{(u_N,d_N)}(s) - x^{(u,d_N)}(s) \bigr\|_2 + \bigl\| u_N(s) - u(s) \bigr\|_2 ds,
    \end{aligned}
  \end{equation}
  thus, using Bellman-Gronwall's Inequality (Lemma 5.6.4 in \cite{Polak1997}) together with the result in Lemma \ref{lemma:wavelet_approximation} we get
  \begin{equation}
    \label{eq:qualofapprox_ineq_1}
    \bigl\| x^{(u_N,d_N)}(t) - x^{(u,d_N)}(t) \bigr\|_2 \leq \frac{L e^L \sqrt{2}}{2} \left( \frac{1}{\sqrt{2}} \right)^N \|u\|_{BV}
  \end{equation}

	On the other hand,
  \begin{multline}
    x^{(u,d_N)}(t) - x^{(u,d)}(t)
    = \int_0^t \sum_{i=1}^{q} \bigl( [d_N]_i(s) - d_i(s) \bigr) f\big( s, x^{(u,d)}(s), u(s), e_i \big) ds + \\
    + \int_0^t \sum_{i=1}^{q} [d_N]_i(s) \left( f\big( s, x^{(u,d_N)}(s), u(s), e_i \big) - f\big( s, x^{(u,d)}(s), u(s), e_i \big) \right) ds, 
  \end{multline}
	thus,
	\begin{multline}
    \bigl\| x^{(u,d_N)}(t) - x^{(u,d)}(t) \bigr\|_2 
    \leq \left\| \int_0^1 \sum_{i=1}^q \big( [d_N]_i(s) - d_i(s) \big) f\big( s, x^{(u,d)}(s), u(s), e_i \big) ds \right\|_2 + \\
    + L \int_0^1 \bigl\| x^{(u,d_N)}(s) - x^{(u,d)}(s) \bigr\|_2 ds.
	\end{multline}
	Using Bellman-Gronwall's inequality we get
	\begin{equation}
    \big\| x^{(u,d_N)}(t) - x^{(u,d)}(t) \big\|_2 
    \leq e^{L} \left\| \int_0^1 \sum_{i=1}^q \big( [d_N]_i(s) - d_i(s) \big) f\big( t, x^{(u,d)}(s), u(s), e_i \big) ds \right\|_2. 
	\end{equation}
  Recall that $f$ maps to $\R^n$, so let us denote the $k$--th coordinate of $f$ by $f_k$. 
  Let $v_{ki}(t) = f_k\big( t, x^{(u,d)}(t), u(t), e_i \big)$ and $v_k = \left( v_{k1}, \ldots, v_{kq} \right)$, then $v_k$ is of bounded variation.
  Indeed, by Theorem \ref{thm:bounded_variation} and Condition \ref{corollary:f_bounded} in Corollary \ref{corollary:fns_bounded}, we have that $\bigl\| x^{(\xi)} \bigr\|_{BV} \leq C$.
  Thus, by Condition \ref{assump:f_lipschitz} in Assumption \ref{assump:fns_continuity} and again using Theorem \ref{thm:bounded_variation}, we get that, for each $i \in {\cal Q}$,
  \begin{equation}
    \left\| v_{ki} \right\|_{BV} \leq L \big( 1 + C + \| u \|_{BV} \big).
  \end{equation}
  Moreover, Condition \ref{corollary:f_bounded} in Corollary \ref{corollary:fns_bounded} directly imply that $\| v_{ki} \|_{L^2} \leq C$. 
  Hence, Lemma \ref{lemma:weak_convergence} implies that there exists $K > 0$ such that
  \begin{equation}
    \label{eq:qualofapprox_ineq_2}
    \left| \left\langle d - d_N, v_k \right\rangle \right| \leq K \left( \left( \frac{1}{\sqrt{2}} \right)^N C \|d\|_{BV} + q \left( \frac{1}{2} \right)^N \left( 1 + C + \|u\|_{BV} \right) \right).
  \end{equation}
  Since Equation \eqref{eq:qualofapprox_ineq_2} is satisfied for each $k \in \{1,\ldots,n\}$, then after ordering the constants and noting that $2^N \geq 2^{\frac{N}{2}}$ for each $N \in \N$, together with Equation \eqref{eq:qualofapprox_ineq_1} we get the desired result.
\end{proof}

\subsection{Convergence of the Algorithm}

To prove the convergence of our algorithm, we employ a technique similar to the one prescribed in Section 1.2 in \cite{Polak1997}. 
Summarizing the technique, one can think of an algorithm as discrete-time dynamical system, whose desired stable equilibria are characterized by the stationary points of its optimality function, i.e. points $\xi \in {\cal X}_p$ where $\theta(\xi) = 0$, since we know from Theorem \ref{thm:theta_optimality_function} that all local minimizers are stationary. Before applying this line of reasoning to our algorithm, we present a simplified version of this argument for a general unconstrained optimization problem. This is done in the interest of clarity.
Inspired by the stability analysis of dynamical systems, a sufficient condition for the convergence of our algorithm can be formulated by requiring that the cost function satisfy a notion of sufficient descent with respect to an optimality function:
\begin{definition}
	\label{def:sufficient_descent}
  Let ${\cal S}$ be a metric space, and consider the problem of minimizing the cost function $J: {\cal S} \to \R$.
  We say that a function $\Gamma: {\cal S} \to {\cal S}$ has the \emph{sufficient descent property} with respect to an optimality function $\theta: {\cal S} \to (-\infty,0]$ if for each $x \in {\cal S}$ with $\theta(x) < 0$, there exists a $\delta_x > 0$ and $O_x \subset {\cal S}$, a neighborhood of $x$, such that:
  \begin{equation}
    J\bigl( \Gamma(x') \bigr) - J(x') \leq - \delta_x, \quad \forall x' \in O_x.
  \end{equation}
\end{definition}

Importantly, a function satisfying the sufficient property can be proven to approach the zeros of the optimality function:
\begin{theorem}[Theorem 1.2.8 in \cite{Polak1997}]
  \label{thm:sufficient_descent}
  Consider the problem of minimizing a cost function $J: {\cal S} \to \R$.
  Suppose that ${\cal S}$ is a a metric space and a function $\Gamma: {\cal S} \to {\cal S}$ has the sufficient descent property with respect to an optimality function $\theta: {\cal S} \to (-\infty,0]$, as described in Definition \ref{def:sufficient_descent}.
  Let $\{ x_j \}_{j \in \N}$ be a sequence such that, for each $j \in \N$:
  \begin{equation}
    x_{j+1} =
    \begin{cases}
      \Gamma( x_j ) & \text{if}\ \theta(x_j) < 0, \\
      x_j & \text{if}\ \theta(x_j) = 0. \\
    \end{cases}
  \end{equation}
  Then every accumulation point of $\{ x_j \}_{j \in \N}$ belongs to the set of zeros of the optimality function $\theta$.
\end{theorem}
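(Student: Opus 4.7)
The plan is to argue by contradiction: suppose $x^\ast$ is an accumulation point of $\{x_j\}_{j \in \N}$ with $\theta(x^\ast) < 0$, and derive a contradiction by showing that $J$ must diverge to $-\infty$ along a subsequence converging to $x^\ast$, which is incompatible with any reasonable continuity of $J$ at $x^\ast$ (in the applications of this result, $J$ is Lipschitz continuous by Corollary \ref{corollary:J_continuous}, so this is not restrictive).

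First I would establish that $\{J(x_j)\}_{j \in \N}$ is non-increasing. At each iterate, either $\theta(x_j) = 0$ and $x_{j+1} = x_j$, leaving $J$ unchanged, or $\theta(x_j) < 0$ and the sufficient descent property of $\Gamma$ applied at $x_j$ itself (taking $x' = x_j \in O_{x_j}$ in Definition \ref{def:sufficient_descent}) yields $J(x_{j+1}) - J(x_j) \leq -\delta_{x_j} < 0$. A preliminary bookkeeping observation needs to be resolved here: if $\theta(x_{j_0}) = 0$ for some $j_0$, then by the recursion $x_j = x_{j_0}$ for all $j \geq j_0$, so $x_{j_0}$ is the unique accumulation point and $\theta(x^\ast) = \theta(x_{j_0}) = 0$, contradicting the standing assumption. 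Hence we may, and do, assume $\theta(x_j) < 0$ for every $j \in \N$, so $\Gamma$ is applied at every step.

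Next I would invoke the sufficient descent property at the accumulation point $x^\ast$ itself to obtain $\delta_{x^\ast} > 0$ and a neighborhood $O_{x^\ast}$ of $x^\ast$ on which a \emph{uniform} descent estimate holds. Because $x^\ast$ is an accumulation point, extract a subsequence $\{x_{j_k}\}_{k \in \N}$ with $x_{j_k} \to x^\ast$, and choose $K$ large enough that $x_{j_k} \in O_{x^\ast}$ for every $k \geq K$. Then $J(x_{j_k + 1}) \leq J(x_{j_k}) - \delta_{x^\ast}$, and combined with the monotonicity established above, $J(x_{j_{k+1}}) \leq J(x_{j_k + 1}) \leq J(x_{j_k}) - \delta_{x^\ast}$. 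Telescoping over $k = K, K+1, \ldots, K+m-1$ gives $J(x_{j_{K+m}}) \leq J(x_{j_K}) - m \delta_{x^\ast}$, so $J(x_{j_k}) \to -\infty$ as $k \to \infty$.

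Finally, the contradiction is that $x_{j_k} \to x^\ast$ and the continuity (or just lower semicontinuity) of $J$ at $x^\ast$ forces $J(x_{j_k}) \to J(x^\ast) \in \R$, which cannot coexist with $J(x_{j_k}) \to -\infty$. The main obstacle I expect is the step that bridges $x_{j_k + 1}$ to $x_{j_{k+1}}$ cleanly inside the telescoping, since the descent estimate at $x^\ast$ applies only to iterates lying in $O_{x^\ast}$, and one must use monotonicity (not the descent property itself) to connect consecutive subsequence indices; everything else is routine once that bookkeeping is in place.
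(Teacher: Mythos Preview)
The paper does not supply its own proof of this result; the theorem is quoted from Polak's textbook with the proof omitted and only the remark that the original argument carries over from Euclidean to metric spaces. Your argument is precisely that standard argument: assume an accumulation point $x^\ast$ with $\theta(x^\ast) < 0$, invoke the sufficient-descent neighborhood $O_{x^\ast}$ and margin $\delta_{x^\ast}$ from Definition~\ref{def:sufficient_descent}, use monotonicity of $\{J(x_j)\}$ to bridge between consecutive subsequence indices, and telescope to force $J(x_{j_k}) \to -\infty$. You also correctly isolate the one genuine hypothesis gap in the statement as printed here: the final contradiction requires continuity (or at least lower semicontinuity) of $J$ at $x^\ast$, which is not listed in the theorem but is part of Polak's original formulation and holds throughout the paper's applications via Corollary~\ref{corollary:J_continuous}.
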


Theorem \ref{thm:sufficient_descent}, as originally stated in \cite{Polak1997}, requires ${\cal S}$ to be a Euclidean space, but the result as presented here can be proven without requiring this property using the same original argument. Though Theorem \ref{thm:sufficient_descent} proves that the accumulation point of a sequence generated by $\Gamma$ converges to a stationary point of the optimality function, it does not prove the existence of the accumulation point. 
This is in general not a problem for finite-dimensional optimization problems since the level sets of the cost function are usually compact, thus every sequence produced by a descent method has at least one accumulation point.
On the other hand, infinite-dimensional problems, such as optimal control problems, do not have this property, since bounded sets may not be compact in infinite-dimensional vector spaces.
Thus, even though Theorem \ref{thm:sufficient_descent} can be applied to both finite-dimensional and infinite-dimensional optimization problems, the result is much weaker in the latter case.

The issue mentioned above has been addressed several times in the literature \cite{Axelsson2008,Polak1984,Wardi2012,Wardi2012draft}, by formulating a stronger version of sufficient descent:
\begin{definition}[Definition 2.1 in \cite{Axelsson2008}]
	\label{def:uniform_sufficient_descent}
  Let ${\cal S}$ be a metric space, and consider the problem of minimizing the cost function $J: {\cal S} \to \R$.
  A function $\Gamma: {\cal S} \to {\cal S}$ has the \emph{uniform sufficient descent property} with respect to an optimality function $\theta: {\cal S} \to (-\infty,0]$ if for each $C > 0$ there exists a $\delta_C > 0$ such that, for every $x \in {\cal S}$ with $\theta(x) < 0$,
  \begin{equation}
    J\bigl( \Gamma(x) \bigr) - J(x) \leq - \delta_C.
  \end{equation}
\end{definition}

A sequence of points generated by an algorithm satisfying this property, under mild assumptions, can be shown to approach the zeros of the optimality function:
\begin{theorem}[Proposition 2.1 in \cite{Axelsson2008}]
  \label{thm:uniform_sufficient_descent}
  Consider the problem of minimizing a lower bounded cost function $J: {\cal S} \to [\alpha, \infty)$.
  Suppose that ${\cal S}$ is a a metric space and $\Gamma: {\cal S} \to {\cal S}$ satisfies the uniform sufficient descent property with respect to an optimality function $\theta: {\cal S} \to (-\infty,0]$, as stated in Definition \ref{def:uniform_sufficient_descent}.
  Let $\{ x_j \}_{j \in \N}$ be a sequence such that, for each $j \in \N$:
  \begin{equation}
    x_{j+1} =
    \begin{cases}
      \Gamma( x_j ) & \text{if}\ \theta(x_j) < 0, \\
      x_j & \text{if}\ \theta(x_j) = 0. \\
    \end{cases}
  \end{equation}
  Then,
  \begin{equation}
    \lim_{j \to \infty} \theta( x_j ) = 0.
  \end{equation}
\end{theorem}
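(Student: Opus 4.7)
The plan is to argue by contradiction, exploiting the lower boundedness of $J$ against an unbounded descent that would result if $\theta(x_j)$ did not approach zero. First I would note the basic monotonicity: from the dichotomous definition of $\{x_j\}_{j\in\N}$, whenever $\theta(x_j)=0$ we have $x_{j+1}=x_j$ and hence $J(x_{j+1})=J(x_j)$, while whenever $\theta(x_j)<0$ the uniform sufficient descent property guarantees $J(x_{j+1})=J(\Gamma(x_j))\leq J(x_j)$. Thus $\{J(x_j)\}_{j\in\N}$ is a non-increasing sequence, and since it is bounded below by $\alpha$, it converges to some limit $J^* \in [\alpha,\infty)$; in particular $J(x_{j+1})-J(x_j)\to 0$.

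Next, suppose for contradiction that $\theta(x_j)$ does not converge to $0$. Since $\theta$ is non-positive valued, this means there exist $C>0$ and a subsequence $\{j_k\}_{k\in\N}$ such that $\theta(x_{j_k})\leq -C$ for every $k$; in particular, $\theta(x_{j_k}) < 0$, so $x_{j_k+1}=\Gamma(x_{j_k})$. I would then invoke the uniform sufficient descent property of $\Gamma$ with respect to $\theta$ to produce a constant $\delta_C>0$ for which
\begin{equation}
  J(x_{j_k+1}) - J(x_{j_k}) = J\bigl(\Gamma(x_{j_k})\bigr) - J(x_{j_k}) \leq -\delta_C, \qquad \forall k\in\N.
\end{equation}

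The contradiction is now immediate: combining the step-wise monotonicity of $J$ with the uniform drop of at least $\delta_C$ at each index $j_k$, a telescoping argument yields
\begin{equation}
  J(x_{j_k+1}) \leq J(x_0) + \sum_{\ell=0}^{k} \bigl( J(x_{j_\ell+1}) - J(x_{j_\ell}) \bigr) \leq J(x_0) - (k+1)\,\delta_C,
\end{equation}
so $J(x_{j_k+1}) \to -\infty$ as $k \to \infty$, contradicting the lower bound $J\geq\alpha$. Hence $\lim_{j\to\infty}\theta(x_j)=0$, as claimed.

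There is essentially no hard step here, since the uniform sufficient descent property has been tailored precisely to make this argument work; the only subtlety to be careful about is the bookkeeping that combines the non-increase of $J$ across \emph{all} iterates with the strict drop of at least $\delta_C$ across the subsequence $\{j_k\}$, which is what allows the telescoping lower bound to diverge. If the stated definition of uniform sufficient descent is read literally (a single $\delta_C$ uniform over all $x$ with $\theta(x)<0$), the argument simplifies further, since then every strictly descending step contributes at least $\delta_C$ and the subsequence $\{j_k\}$ need only be shown to be infinite.
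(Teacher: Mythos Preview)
Your proof is correct and follows essentially the same contradiction argument as the paper: extract a subsequence along which $\theta$ is bounded away from zero, apply uniform sufficient descent to get a fixed drop $\delta_C$ on that subsequence, and combine with the monotonicity of $\{J(x_j)\}$ to force $J\to-\infty$, contradicting the lower bound. Your write-up is in fact slightly more careful than the paper's (you make the telescoping explicit and flag the ambiguity in how the definition quantifies over $C$), but the core idea is identical.
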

\begin{proof}
	Suppose that $\liminf_{j \to \infty} \theta(x_j) = - 2 \epsilon < 0$.
  Then there exists a subsequence $\{ x_{j_k} \}_{k \in \N}$ such that $\theta( x_{j_k} ) < - \epsilon$ for each $k \in \N$.
  Definition \ref{def:uniform_sufficient_descent} implies that there exists $\delta_\epsilon$ such that
  \begin{equation}
    J( x_{j_k + 1} ) - J( x_{j_k} ) \leq - \delta_\epsilon, \quad \forall k \in \N.
  \end{equation}
  But this is a contradiction, since $J( x_{j+1} ) \leq J( x_j )$ for each $j \in \N$, thus $J( x_j ) \to - \infty$ as $j \to \infty$, contrary to the assumption that $J$ is lower bounded.
\end{proof}
Note that Theorem \ref{thm:uniform_sufficient_descent} does not assume the existence of accumulation points of the sequence $\{x_j\}_{j \in \N}$. 
Thus, this Theorem remains valid even when the sequence generated by $\Gamma$ does not have accumulation points. This becomes tremendously useful in infinite-dimensional problems where the level sets of the cost function may not be compact. Though we include these results for the sake of completeness of presentation, our proof of convergence of the sequence of points generated by Algorithm \ref{algo:main_algo} does not make explicit use of Theorem \ref{thm:uniform_sufficient_descent}. The line of argument is similar, but our approach, as described in Theorem \ref{thm:convergence}, requires special treatment due to the projection operation, $\rho_N$, as defined in Equation \eqref{eq:rho} and the existence of constraints.

Now, we begin the convergence proof of Algorithm \ref{algo:main_algo} by showing that the Armijo algorithm, as defined in Equation \eqref{eq:armijo_line_search}, terminates after a finite number of steps and its value is bounded.
\begin{lemma}
  \label{lemma:mu_upper_bound}
  Let $\alpha \in (0,1)$ and $\beta \in \left( 0, 1 \right)$.
For every $\delta > 0$ there exists an $M_\delta^* < \infty$ such that if $\theta(\xi) \leq -\delta$ for $\xi \in {\cal X}_p$, then $\mu(\xi) \leq M^*_\delta$, where $\theta$ is as defined in Equation \eqref{eq:theta_definition} and $\mu$ is as defined in Equation \eqref{eq:armijo_line_search}.
\end{lemma}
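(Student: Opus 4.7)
The plan is to mimic the Mean Value Theorem expansions already derived in the proof of Theorem \ref{thm:theta_optimality_function} (Equations \eqref{eq:theta_J_mvt} and \eqref{eq:theta_psi_mvt}), and then exploit the fact that the descent direction $g(\xi)$ minimizes $\zeta(\xi,\cdot)$ to bound the directional derivatives appearing in those expansions by $\theta(\xi)$. Since $\theta(\xi) \leq -\delta < 0$ by hypothesis, the second-order remainder, which scales as $\beta^{2k}$, can be made negligible against the first-order term $\beta^k \theta(\xi)$ once $\beta^k$ is small enough, and this smallness depends only on $\delta$ and not on the particular $\xi$.

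The first step is to fix a universal constant $A > 0$ with $\| g(\xi) - \xi \|_{\cal X} \leq A$ for every $\xi \in {\cal X}_r$; this is possible because $U$ is bounded and $\Sigma_r^q \subset [0,1]^q$, so both coordinates live in bounded subsets of $L^2$. The second step is to write, via Equations \eqref{eq:theta_J_mvt} and \eqref{eq:theta_psi_mvt} with $\lambda = \beta^k$ and $\xi' = g(\xi)$,
\begin{equation}
  J\bigl( \xi + \beta^k (g(\xi)-\xi) \bigr) - J(\xi) \leq \beta^k \d{J}\bigl( \xi; g(\xi) - \xi \bigr) + L \beta^{2k} A^2,
\end{equation}
and, for some $(j_k,t_k)$ depending on $k$,
\begin{equation}
  \Psi\bigl( \xi + \beta^k (g(\xi)-\xi) \bigr) - \Psi(\xi) \leq \beta^k \d{\psi_{j_k,t_k}}\bigl( \xi; g(\xi) - \xi \bigr) + L \beta^{2k} A^2.
\end{equation}

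The third step is a two-case analysis reading the optimality function definition \eqref{eq:zeta_definition}. If $\Psi(\xi) \leq 0$, the identity $\theta(\xi) = \zeta(\xi, g(\xi))$ yields the two bounds $\d{J}(\xi; g(\xi)-\xi) \leq \theta(\xi)$ and $\d{\psi_{j,t}}(\xi; g(\xi)-\xi) \leq \theta(\xi) - \gamma \Psi(\xi)$ uniformly in $(j,t)$. Inserting these into the above, the $J$-condition in \eqref{eq:armijo_line_search} reduces to $L \beta^{2k} A^2 \leq (1-\alpha) \beta^k (-\theta(\xi))$, while the $\Psi$-condition becomes $(1 - \gamma \beta^k) \Psi(\xi) + L \beta^{2k} A^2 \leq (1-\alpha)\beta^k(-\theta(\xi))$; once $\beta^k \leq 1/\gamma$ the factor $(1 - \gamma \beta^k)\Psi(\xi)$ is non-positive and the same second-order bound suffices. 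If instead $\Psi(\xi) > 0$, then $\d{\psi_{j,t}}(\xi; g(\xi)-\xi) \leq \theta(\xi)$ directly, and the sole condition in \eqref{eq:armijo_line_search} again collapses to $L \beta^{2k} A^2 \leq (1-\alpha) \beta^k (-\theta(\xi))$. Using $-\theta(\xi) \geq \delta$ in every case, it suffices to enforce
\begin{equation}
  \beta^k \leq \min \left\{ \frac{(1-\alpha)\delta}{LA^2},\; \frac{1}{\gamma} \right\},
\end{equation}
which in turn holds for every $k \geq M^*_\delta$ where $M^*_\delta = \bigl\lceil \log_\beta \min\{(1-\alpha)\delta/(LA^2), 1/\gamma\} \bigr\rceil$ is finite and independent of $\xi$.

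The only subtle point, and the main thing to be careful about, is the dependence of the pair $(j_k,t_k)$ on $k$ in the expansion of $\Psi$: the MVT used to derive \eqref{eq:theta_psi_mvt} chose $(j,t)$ in the active set of the perturbed trajectory, so the bound $\d{\psi_{j_k,t_k}}(\xi; g(\xi) - \xi) \leq \theta(\xi) - \gamma \Psi(\xi)$ (respectively $\leq \theta(\xi)$) must hold \emph{uniformly in $(j,t) \in {\cal J} \times [0,1]$}, which is exactly what the definition of $\zeta$ guarantees. Once this uniformity is invoked, the rest is simply solving the scalar inequality above for $k$.
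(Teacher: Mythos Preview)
Your proposal is correct and follows essentially the same approach as the paper's proof: Mean Value Theorem expansions of $J$ and $\Psi$ along the direction $g(\xi)-\xi$, the uniform bound $\|g(\xi)-\xi\|_{\cal X}^2 \leq 4A^2$, the case split on the sign of $\Psi(\xi)$, and the resulting threshold $\beta^k \leq \min\bigl\{ (1-\alpha)\delta/(4A^2L),\, 1/\gamma \bigr\}$. Your explicit remark that the active pair $(j_k,t_k)$ depends on $k$ but is handled by the uniform $\max$ inside $\zeta$ is exactly the point the paper uses implicitly.
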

\begin{proof}
  Given $\xi' \in {\cal X}$ and $\lambda \in [0,1]$, using the Mean Value Theorem and Corollary \ref{corollary:DJ_lipschitz} we have that there exists $s \in (0,1)$ such that
  \begin{equation}
    \label{eq:muub_J_mvt}
    \begin{aligned}
      J\big( \xi + \lambda (\xi' - \xi) \big) - J(\xi) 
      &= \d{J}\bigl( \xi + s \lambda ( \xi' - \xi ); \lambda (\xi' - \xi) \bigr) \\
      &\leq \lambda \d{J}\bigl( \xi; \xi' - \xi \bigr) + L \lambda^2 \| \xi' - \xi \|^2_{\cal X}.
    \end{aligned}
  \end{equation}
  Letting ${\cal A}(\xi) = \left\{ (j,t) \in {\cal J} \times [0,1] \mid \Psi(\xi) = h_j\big( x^{(\xi)}(t) \big) \right\}$, then there exists a pair $(j,t) \in {\cal A}\bigl( \xi + \lambda (\xi' - \xi) \bigr)$ and $s \in (0,1)$ such that, using Corollary \ref{corollary:Dhj_lipschitz},
  \begin{equation}
    \label{eq:muub_psi_mvt}
    \begin{aligned}
      \Psi\big( \xi + \lambda (\xi' - \xi) \big) - \Psi(\xi) 
      &\leq \psi_{j,t}\bigl( \xi + \lambda ( \xi' - \xi ) \bigr) - \Psi(\xi) \\
      &\leq \psi_{j,t}\bigl( \xi + \lambda ( \xi' - \xi ) \bigr) - \psi_{j,t}(\xi) \\
      &= \d{\psi_{j,t}}\bigl( \xi + s \lambda ( \xi' - \xi ); \lambda ( \xi' - \xi ) \bigr) \\
      &\leq \lambda \d{\psi_{j,t}}\bigl( \xi; \xi' - \xi \bigr) + L \lambda^2 \| \xi' - \xi \|^2_{\cal X}.
    \end{aligned}
  \end{equation}

  Now let us assume that $\Psi(\xi) \leq 0$, and consider $g$ as defined in Equation \eqref{eq:theta_definition}. Then
  \begin{equation}
    \label{eq:muub_theta}
    \theta(\xi) = \max \left\{ \d{J}( \xi; g(\xi) - \xi ), 
    \max_{(j,t) \in {\cal J} \times [0,1]} \D{\psi_{j,t}}( \xi; g(\xi) - \xi ) + \gamma \Psi(\xi) \right\} 
    \leq -\delta,
  \end{equation}
  and using Equation \eqref{eq:muub_J_mvt},
  \begin{equation}
    J\big( \xi + \beta^k ( g(\xi) - \xi ) \big) - J(\xi) - \alpha \beta^k \theta(\xi) 
    \leq - ( 1 - \alpha ) \delta \beta^k + 4 A^2 L \beta^{2k},
  \end{equation}
  where $A = \max\big\{ \|u\|_2 + 1 \mid u \in U \big\}$. Hence, for each $k \in \N$ such that $\beta^k \leq \frac{ (1-\alpha) \delta }{ 4 A^2 L}$ we have that
  \begin{equation}
    \label{eq:muub_res_1}
    J\big( \xi + \beta^k ( g(\xi) - \xi ) \big) - J(\xi) \leq \alpha \beta^k \theta(\xi).
  \end{equation}
  Similarly, using Equations \eqref{eq:muub_psi_mvt} and \eqref{eq:muub_theta},
  \begin{equation}
    \Psi\big( \xi + \beta^k ( g(\xi) - \xi ) \big) - \Psi(\xi) + \beta^k \bigl( \gamma \Psi(\xi) - \alpha \theta(\xi) \bigr)
    \leq - \delta \beta^k + 4 A^2 L \beta^{2k},
  \end{equation}
  hence for each $k \in \N$ such that $\beta^k \leq \min\left\{\frac{ (1-\alpha) \delta}{4 A^2 L}, \frac{1}{\gamma} \right\}$ we have that
  \begin{equation}
    \label{eq:muub_res_2}
    \Psi\big( \xi + \beta^k ( g(\xi) - \xi ) \big) - \alpha \beta^k \theta(\xi) \leq \left( 1 - \beta^k \gamma \right) \Psi(\xi) \leq 0.
  \end{equation}

  If $\Psi(\xi) > 0$ then 
  \begin{equation}
    \max_{(j,t)\in{\cal J}\times[0,1]} \D{\psi_{j,t}}( \xi; g(\xi) - \xi ) \leq \theta(\xi) \leq -\delta,
  \end{equation}
  thus, from Equation \eqref{eq:muub_psi_mvt},
  \begin{equation}
    \Psi\big( \xi + \beta^k ( g(\xi) - \xi ) \big) - \Psi(\xi) - \alpha \beta^k \theta(\xi) 
    \leq - ( 1 - \alpha ) \delta \beta^k + 4 A^2 L \beta^{2k}.
  \end{equation}
  Hence, for each $k \in \N$ such that $\beta^k \leq \frac{ (1-\alpha) \delta}{4 A^2 L}$ we have that
  \begin{equation}
    \label{eq:muub_res_3}
    \Psi\big( \xi + \beta^k ( g(\xi) - \xi ) \big) - \Psi(\xi) \leq \alpha \beta^k \theta(\xi).    
  \end{equation}

  Finally, let 
  \begin{equation}
    M^*_\delta = 1 + \max \left\{ \log_{\beta} \left( \frac{ (1-\alpha) \delta}{4 A^2 L} \right), \log_{\beta} \left( \frac{1}{\gamma} \right) \right\},
  \end{equation}
  then from Equations \eqref{eq:muub_res_1}, \eqref{eq:muub_res_2}, and \eqref{eq:muub_res_3}, we get that $\mu(\xi) \leq M^*_\delta$ as desired.
\end{proof}

Next, we show that the determination of the frequency at which to perform pulse width modulation as defined in Equation \eqref{eq:armijo_pwm} terminates after a finite number of steps.
\begin{lemma}
  \label{lemma:nu_upper_bound}
  Let $\alpha \in (0,1)$, $\bar{\alpha} \in (0,\infty)$, $\beta \in (0,1)$, $\bar{\beta} \in \left( \frac{1}{\sqrt{2}}, 1 \right)$, and $\xi \in {\cal X}_p$. 
  If $\theta(\xi) < 0$, then $\nu(\xi) < \infty$, where $\theta$ is as defined in Equation \eqref{eq:theta_definition} and $\nu$ is as defined in Equation \eqref{eq:armijo_pwm}.
\end{lemma}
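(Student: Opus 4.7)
The plan is to fix $\xi \in {\cal X}_p$ with $\theta(\xi) < 0$, denote $\xi^+ = \xi + \beta^{\mu(\xi)}(g(\xi) - \xi) \in {\cal X}_r$, and show that for all sufficiently large $k \in \N$ every condition defining $\nu(\xi)$ in Equation \eqref{eq:armijo_pwm} is simultaneously satisfied. By Lemma \ref{lemma:mu_upper_bound}, $\mu(\xi) < \infty$, so $\xi^+$ is well-defined; moreover $\|\xi^+\|_{BV}$ is finite since $\xi$ and $g(\xi)$ both lie in ${\cal X}_r$, which consists of functions of bounded variation. The condition $\bar\alpha \bar\beta^k \leq (1-\omega)\alpha\beta^{\mu(\xi)}$ is trivial: the right-hand side is a fixed positive number and $\bar\beta < 1$, so this holds for every $k$ larger than some $k_0$.

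The crux is handling the cost and constraint inequalities. For this, I would invoke Theorem \ref{thm:quality_of_approximation} to obtain
\begin{equation}
  \bigl\|\phi_t(\rho_k(\xi^+)) - \phi_t(\xi^+)\bigr\|_2 \leq K\left(\tfrac{1}{\sqrt{2}}\right)^k\bigl(\|\xi^+\|_{BV}+1\bigr), \quad \forall t \in [0,1],
\end{equation}
and then combine with the Lipschitz continuity of $h_0$ and $h_j$ from Assumption \ref{assump:constraint_fns} (for the constraint, also taking the maximum over $(j,t) \in {\cal J}\times[0,1]$ as in Lemma \ref{lemma:psi_continuous}) to produce constants $C_1, C_2 > 0$ with $|J(\rho_k(\xi^+)) - J(\xi^+)| \leq C_1(1/\sqrt{2})^k$ and $|\Psi(\rho_k(\xi^+)) - \Psi(\xi^+)| \leq C_2(1/\sqrt{2})^k$.

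Treating the two cases in Equation \eqref{eq:armijo_pwm} separately: if $\Psi(\xi) \leq 0$, the definition of $\mu(\xi)$ gives $J(\xi^+) - J(\xi) \leq \alpha\beta^{\mu(\xi)}\theta(\xi)$ and $\Psi(\xi^+) \leq \alpha\beta^{\mu(\xi)}\theta(\xi) < 0$. Hence
\begin{equation}
  J(\rho_k(\xi^+)) - J(\xi) \leq \alpha\beta^{\mu(\xi)}\theta(\xi) + C_1\left(\tfrac{1}{\sqrt{2}}\right)^k,
\end{equation}
so the required inequality $J(\rho_k(\xi^+)) - J(\xi) \leq (\alpha\beta^{\mu(\xi)} - \bar\alpha\bar\beta^k)\theta(\xi)$ reduces to showing $C_1 (\sqrt{2}\bar\beta)^{-k} \leq -\bar\alpha\theta(\xi)$; similarly $\Psi(\rho_k(\xi^+)) \leq 0$ reduces to $C_2 (1/\sqrt{2})^k \leq -\alpha\beta^{\mu(\xi)}\theta(\xi)$. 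If $\Psi(\xi) > 0$, an analogous argument using $\Psi(\xi^+) - \Psi(\xi) \leq \alpha\beta^{\mu(\xi)}\theta(\xi)$ reduces the remaining condition to $C_2(\sqrt{2}\bar\beta)^{-k} \leq -\bar\alpha\theta(\xi)$.

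The main obstacle is precisely where the hypothesis $\bar\beta \in (1/\sqrt{2}, 1)$ enters: the Armijo-type ``budget'' shrinks as $\bar\beta^k$, while the projection error shrinks only as $(1/\sqrt{2})^k$, and the decisive reduction can be made only if the error decays strictly faster than the budget, i.e. iff $\sqrt{2}\bar\beta > 1$. Since $\bar\beta > 1/\sqrt{2}$, each of the reduced inequalities above is of the form $(\text{constant}) \cdot (\sqrt{2}\bar\beta)^{-k} \leq (\text{positive constant})$, which holds for all $k$ past some threshold; taking $\nu(\xi)$ to be the maximum of the finitely many thresholds arising from conditions (a)–(c) (or (a'),(c') in the infeasible case) completes the proof.
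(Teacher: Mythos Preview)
Your proposal is correct and follows essentially the same approach as the paper: both apply Theorem \ref{thm:quality_of_approximation} to bound $\|\phi_t(\rho_k(\xi^+))-\phi_t(\xi^+)\|_2$ by $K(1/\sqrt{2})^k(\|\xi^+\|_{BV}+1)$, pass through the Lipschitz constants of $h_0$ and $h_j$, add the inequalities coming from the definition of $\mu(\xi)$, and exploit $\bar\beta>1/\sqrt{2}$ so that $(1/\sqrt{2})^k$ is eventually dominated by $\bar\alpha\bar\beta^k$. The only cosmetic difference is that the paper packages the constraint estimate in the feasible case as $\Psi(\rho_N(\xi'))\leq(\alpha\beta^M-\bar\alpha\bar\beta^N)\theta(\xi)\leq 0$, whereas you reduce directly to $C_2(1/\sqrt{2})^k\leq -\alpha\beta^{\mu(\xi)}\theta(\xi)$; both suffice since Equation \eqref{eq:armijo_pwm} only demands $\Psi(\rho_k(\xi^+))\leq 0$.
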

\begin{proof}
  Throughout the proof, we leave out the natural inclusion taking $\xi \in {\cal X}_p$ to $\xi \in {\cal X}_r$.
  To simplify our notation let us denote $M = \mu(\xi)$ and $\xi' = \xi + \beta^M \bigl( g(\xi) - \xi \bigr)$. 
  Theorem \ref{thm:quality_of_approximation} implies that there exists $K > 0$ such that
  \begin{equation}
    J\bigl( \rho_N( \xi' ) \bigr) - J( \xi' )
    \leq K L \left( \frac{1}{\sqrt{2}} \right)^N \bigl( \| \xi' \|_{BV} + 1 \bigr),
  \end{equation}
  where $L$ is the constant defined in Assumption \ref{assump:constraint_fns}. 

  Let ${\cal A}(\xi) = \left\{ (j,t) \in \{1,\ldots,N_c\} \times [0,1] \mid \Psi(\xi) = h_j\big( x^{(\xi)}(t) \big) \right\}$, then for each pair $(j,t) \in {\cal A}\bigl( \rho_N(\xi') \bigr)$ we have that
  \begin{equation}
    \begin{aligned}
      \Psi\bigl( \rho_N(\xi') \bigr) - \Psi( \xi' ) 
      &= \psi_{j,t}\bigl( \rho_N( \xi' ) \bigr) - \Psi(\xi') \\
      &\leq \psi_{j,t}\bigl( \rho_N( \xi' ) \bigr) - \psi_{j,t}(\xi') \\
      &\leq K L \left( \frac{1}{\sqrt{2}} \right)^N \bigl( \| \xi' \|_{BV} + 1 \bigr).
    \end{aligned}
  \end{equation}

  Recall that $\bar{\alpha} \in (0,\infty)$, $\bar{\beta} \in \left( \frac{1}{\sqrt{2}}, 1 \right)$, and $\omega \in (0,1)$, hence there exists $N_0 \in \N$ such that, for each $N \geq N_0$,
  \begin{equation}
    K L \left( \frac{1}{\sqrt{2}} \right)^N \bigl( \| \xi' \|_{BV} + 1 \bigr) \leq -\bar{\alpha} \bar{\beta}^N \theta(\xi).
  \end{equation}
  Also, there exists $N_1 \geq N_0$ such that, for each $N \geq N_1$,
  \begin{equation}
    \bar{\alpha} \bar{\beta}^N \leq ( 1 - \omega ) \alpha \beta^M.
  \end{equation}

  Now suppose that $\Psi(\xi) \leq 0$, then, for each $N \geq N_1$,
  \begin{equation}
    \label{eq:nuub_res_1}
    \begin{aligned}
      J\bigl( \rho_N( \xi' ) \bigr) - J(\xi) 
      &= J\bigl( \rho_N( \xi' ) \bigr) - J( \xi' ) + J( \xi' ) - J( \xi ) \\
      &\leq \left( \alpha \beta^M - \bar{\alpha} \bar{\beta}^N \right) \theta(\xi),
    \end{aligned}
  \end{equation}
  and 
  \begin{equation}
    \label{eq:nuub_res_2}
    \begin{aligned}
      \Psi\bigl( \rho_N( \xi' ) \bigr)
      &= \Psi\bigl( \rho_N( \xi' ) \bigr) - \Psi( \xi' ) + \Psi( \xi' ) \\
      &\leq \left( \alpha \beta^M - \bar{\alpha} \bar{\beta}^N \right) \theta(\xi) \\
      &\leq 0.
    \end{aligned}
  \end{equation}
  Similarly, if $\Psi(\xi) > 0$ then, using the same argument as above, we have that
  \begin{equation}
    \label{eq:nuub_res_3}
      \Psi\bigl( \rho_N( \xi' ) \bigr) - \Psi(\xi) 
      \leq \left( \alpha \beta^M - \bar{\alpha} \bar{\beta}^N \right) \theta(\xi).
  \end{equation}
  Therefore, from Equations \eqref{eq:nuub_res_1}, \eqref{eq:nuub_res_2}, and \eqref{eq:nuub_res_3}, it follows that $\nu(\xi) \leq N_1$ as desired.
\end{proof}

The following lemma proves that, once Algorithm \ref{algo:main_algo} finds a feasible point, every point generated afterwards is also feasible. We omit the proof since it follows directly from the definition of $\nu$ in Equation \eqref{eq:armijo_pwm}.
\begin{lemma}
  \label{lemma:phase12}
  	Let $\Gamma$ be defined as in Equation \eqref{eq:gamma_def} and let $\Psi$ be as defined in Equation \eqref{eq:super_const}.
	Let $\{\xi_i\}_{i \in \N}$ be a sequence generated by Algorithm \ref{algo:main_algo}. 
  	If there exists $i_0 \in \N$ such that $\Psi(\xi_{i_0}) \leq 0$, then $\Psi(\xi_i) \leq 0$ for each $i \geq i_0$.
\end{lemma}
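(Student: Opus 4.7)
The plan is to proceed by induction on $i \geq i_0$, with the base case handed to us by hypothesis, and the inductive step reading off directly from how $\nu$ is defined when applied to a feasible point.

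For the base case, $\Psi(\xi_{i_0}) \leq 0$ holds by assumption. For the inductive step, suppose $\Psi(\xi_i) \leq 0$ for some $i \geq i_0$, and consider $\xi_{i+1} = \Gamma(\xi_i) = \rho_{\nu(\xi_i)}\bigl( \xi_i + \beta^{\mu(\xi_i)}(g(\xi_i) - \xi_i) \bigr)$. If $\theta(\xi_i) = 0$ then the algorithm has terminated and there is nothing to prove (or, equivalently, we may regard $\xi_{i+1} = \xi_i$ so that feasibility is preserved trivially). Otherwise $\theta(\xi_i) < 0$, so by Lemma \ref{lemma:nu_upper_bound} the integer $\nu(\xi_i)$ is finite, and since $\Psi(\xi_i) \leq 0$ we are in the first branch of the definition of $\nu$ in Equation \eqref{eq:armijo_pwm}.

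The key observation is that this first branch of the minimization defining $\nu(\xi_i)$ contains, among its defining inequalities, the explicit constraint
\begin{equation}
\Psi\bigl( \rho_k( \xi_i + \beta^{\mu(\xi_i)}( g(\xi_i) - \xi_i ) ) \bigr) \leq 0.
\end{equation}
Consequently any integer $k$ chosen as $\nu(\xi_i)$ must satisfy this inequality; applying it with $k = \nu(\xi_i)$ gives $\Psi(\xi_{i+1}) \leq 0$, which closes the induction.

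The main obstacle, such as it is, is merely bookkeeping: one must check that the case $\Psi(\xi_i) > 0$ does not arise at step $i$, which is exactly the inductive hypothesis, and that $\nu(\xi_i)$ is well defined, which is guaranteed by Lemma \ref{lemma:nu_upper_bound} whenever $\theta(\xi_i) < 0$. No further analytic work is needed, which is why the authors chose to omit the proof.
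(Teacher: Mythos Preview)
Your proof is correct and matches the paper's approach exactly: the paper omits the proof, remarking only that it follows directly from the definition of $\nu$ in Equation~\eqref{eq:armijo_pwm}, and your induction argument spells out precisely this observation. The feasibility constraint built into the first branch of $\nu$ is indeed all that is needed.
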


Employing these preceding results, we can prove the convergence of Algorithm \ref{algo:main_algo} to a point that satisfies our optimality condition by employing an argument similar to the one used in the proof of Theorem \ref{thm:uniform_sufficient_descent}:
\begin{theorem}
  \label{thm:convergence}
  Let $\theta$ be defined as in Equation \eqref{eq:theta_definition}.
  If $\{\xi_i\}_{i \in \N}$ is a sequence generated by Algorithm \ref{algo:main_algo}, then $\lim_{i \to \infty} \theta(\xi_i) = 0$.
\end{theorem}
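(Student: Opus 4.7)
\noindent\textbf{Proof plan for Theorem \ref{thm:convergence}.} The plan is to argue by contradiction in the style of Theorem \ref{thm:uniform_sufficient_descent}, carefully accounting for the two-phase (infeasible/feasible) structure of the algorithm and for the fact that the projection $\rho_{\nu(\xi)}$ is what actually produces the next iterate. Assume for contradiction that $\theta(\xi_i)$ does not converge to $0$. Since $\theta \leq 0$ by Theorem \ref{thm:theta_optimality_function}, there exist $\epsilon > 0$ and a subsequence $\{\xi_{i_k}\}_{k \in \N}$ with $\theta(\xi_{i_k}) \leq -\epsilon$ for every $k$. By Lemma \ref{lemma:mu_upper_bound} there is a uniform bound $\mu(\xi_{i_k}) \leq M^*_\epsilon$, and in particular $\beta^{\mu(\xi_{i_k})} \geq \beta^{M^*_\epsilon}$.

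The key uniform descent estimate comes from combining the defining inequalities of $\nu(\xi)$ in Equation \eqref{eq:armijo_pwm} with the constraint $\bar{\alpha}\bar{\beta}^{\nu(\xi)} \leq (1-\omega)\alpha\beta^{\mu(\xi)}$. Together they yield
\begin{equation}
\bigl(\alpha\beta^{\mu(\xi)} - \bar{\alpha}\bar{\beta}^{\nu(\xi)}\bigr)\theta(\xi) \leq \omega\,\alpha\,\beta^{\mu(\xi)}\,\theta(\xi),
\end{equation}
which is a negative quantity whenever $\theta(\xi) < 0$. This is exactly the device that absorbs the error introduced by the projection step $\rho_{\nu(\xi)}$ and recovers a genuine sufficient descent property. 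The hard part is making sure that this per-iterate descent applies to \emph{every} iterate (not just those in the subsequence), so that the accumulated losses at the subsequence indices force unboundedness of the monitored functional.

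I would then split into the two cases dictated by Lemma \ref{lemma:phase12}. In the \emph{eventually feasible} case, there exists $i_0$ with $\Psi(\xi_i) \leq 0$ for all $i \geq i_0$; the first branch of Equation \eqref{eq:armijo_pwm} gives
\begin{equation}
J(\xi_{i+1}) - J(\xi_i) \leq \omega\,\alpha\,\beta^{\mu(\xi_i)}\,\theta(\xi_i) \leq 0 \quad\text{for all } i \geq i_0,
\end{equation}
so the sequence $\{J(\xi_i)\}_{i \geq i_0}$ is non-increasing. Dropping finitely many terms, assume $i_k \geq i_0$ for every $k$. At each subsequence index, the bounds above yield
\begin{equation}
J(\xi_{i_k+1}) - J(\xi_{i_k}) \leq -\,\omega\,\alpha\,\beta^{M^*_\epsilon}\,\epsilon,
\end{equation}
and telescoping along the full sequence (using the monotone non-increase between subsequence indices) produces $J(\xi_{i_k+1}) \leq J(\xi_{i_0}) - k\,\omega\,\alpha\,\beta^{M^*_\epsilon}\,\epsilon$. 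Since $J$ is uniformly bounded below by Condition \ref{corollary:phi_bounded} in Corollary \ref{corollary:fns_bounded}, letting $k \to \infty$ yields a contradiction.

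In the \emph{never feasible} case, Lemma \ref{lemma:phase12} forces $\Psi(\xi_i) > 0$ for every $i$, and the second branch of Equation \eqref{eq:armijo_pwm} gives the analogous inequality
\begin{equation}
\Psi(\xi_{i+1}) - \Psi(\xi_i) \leq \omega\,\alpha\,\beta^{\mu(\xi_i)}\,\theta(\xi_i) \leq 0,
\end{equation}
so the identical telescoping argument, combined with the uniform boundedness of $\Psi$ from Corollary \ref{corollary:fns_bounded} (Condition \ref{corollary:hj_bounded}), drives $\Psi(\xi_{i_k+1}) \to -\infty$, contradicting $\Psi(\xi_i) > 0$. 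Either way, the assumption $\theta(\xi_i) \not\to 0$ is untenable. The main obstacle I expect is the bookkeeping around the projection-induced error term $\bar{\alpha}\bar{\beta}^{\nu(\xi)}\theta(\xi)$: one must verify that the choice $\bar{\beta} \in (1/\sqrt{2},1)$ in Algorithm \ref{algo:main_algo}, together with the rate of approximation from Theorem \ref{thm:quality_of_approximation}, is exactly what allows $\nu(\xi)$ to be finite (Lemma \ref{lemma:nu_upper_bound}) \emph{and} keeps the descent bounded away from zero on the subsequence; aside from this, the argument is a direct adaptation of Theorem \ref{thm:uniform_sufficient_descent}.
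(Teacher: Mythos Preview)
Your proposal is correct and follows essentially the same approach as the paper: argue by contradiction, extract a subsequence where $\theta(\xi_{i_k}) \leq -\epsilon$, use Lemma \ref{lemma:mu_upper_bound} to uniformly bound $\mu$, exploit the $\omega$-condition in $\nu$ to get $\alpha\beta^{\mu(\xi)} - \bar\alpha\bar\beta^{\nu(\xi)} \geq \omega\alpha\beta^{\mu(\xi)}$, split into the eventually feasible and never feasible cases via Lemma \ref{lemma:phase12}, and telescope the resulting uniform descent to contradict boundedness of $J$ (respectively, positivity of $\Psi$). Your write-up is in fact slightly more explicit than the paper's about the monotone non-increase between subsequence indices and the telescoping step.
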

\begin{proof}
  If the sequence produced by Algorithm \ref{algo:main_algo} is finite, then the theorem is trivially satisfied, so we assume that the sequence is infinite.
  
  Suppose the theorem is not true, then $\liminf_{i \to \infty} \theta(\xi_i) = -2 \delta < 0$ and therefore there exists $k_0 \in \N$ and a subsequence $\{\xi_{i_k}\}_{k \in \N}$ such that $\theta( \xi_{i_k} ) \leq - \delta$ for each $k \geq k_0$. 
  Also, recall that $\nu(\xi)$ was chosen such that, given $\mu(\xi)$,
  \begin{equation}
    \alpha \beta^{\mu(\xi)} - \bar{\alpha} \bar{\beta}^{\nu(\xi)} \geq \omega \alpha \beta^{\mu(\xi)},
  \end{equation}
  where $\omega \in (0,1)$ is a parameter.

  From Lemma \ref{lemma:mu_upper_bound} we know that there exists $M^*_\delta$, which depends on $\delta$, such that $\beta^{\mu(\xi)} \geq \beta^{M^*_\delta}$.
  Suppose that the subsequence $\{\xi_{i_k}\}_{k \in \N}$ is eventually feasible, then, by Lemma \ref{lemma:phase12}, without loss of generality we can assume that the sequence is always feasible. Thus, given $\Gamma$ as defined in Equation \eqref{eq:gamma_def},
  \begin{equation}
    \begin{aligned}
      J\big( \Gamma( \xi_{i_k} ) \big) - J( \xi_{i_k} ) 
      &\leq \big( \alpha \beta^{\mu(\xi)} - \bar{\alpha} \bar{\beta}^{\nu(\xi)} \big) \theta( \xi_{i_k} ) \\
      &\leq - \omega \alpha \beta^{\mu(\xi)} \delta \\
      &\leq - \omega \alpha \beta^{M^*_\delta} \delta.
    \end{aligned}
  \end{equation}
  This inequality, together with the fact that $J(\xi_{i+1}) \leq J(\xi_i)$ for each $i \in \N$, implies that $\liminf_{k \to \infty} J(\xi_{i_k}) = -\infty$, but this is a contradiction since $J$ is lower bounded, which follows from Condition \ref{corollary:phixi_lipschitz} in Corollary \ref{corollary:constraints_lipschitz}.

  The case when the sequence is never feasible is analogous after noting that, since the subsequence is infeasible, then $\Psi(\xi_{i_k}) > 0$ for each $k \in \N$, establishing a similar contradiction.
\end{proof}

\section{Implementable Algorithm}
\label{sec:implementation}

In this section, we describe how to implement Algorithm \ref{algo:main_algo} given the various algorithmic components derived in the Section \ref{sec:algo_analysis}. 
Numerically computing a solution to the Switched System Optimal Control Problem defined as in Equation \eqref{eq:ssocp} demands employing some form of discretization. 
When numerical integration is introduced, the original infinite-dimensional optimization problem defined over function spaces is replaced by a finite-dimensional discrete-time optimal control problem. 
Changing the discretization precision results in an infinite sequence of such approximating problems. 

Our goal is the construction of an implementable algorithm that generates a sequence of points by recursive application that converge to a point that satisfies the optimality condition defined in Equation \eqref{eq:theta_definition}. Given a particular choice of discretization precision, at a high level, our algorithm solves a finite dimensional optimization problem and terminates its operation when a discretization improvement test is satisfied. At this point, a finer discretization precision is chosen, and the whole process is repeated, using the last iterate, obtained with the coarser discretization precision as a ``warm start.''

In this section, we begin by describing our discretization strategy, which allows us to define our discretized optimization spaces. Next, we describe how to construct discretized trajectories, cost, constraints, and optimal control problems. This allows us to define a discretized optimality function, and a notion of \emph{consistent approximation} between the optimality function and its discretized counterpart. We conclude by constructing our numerically implementable optimal control algorithm for constrained switched systems.

\subsection{Discretized Optimization Space}

To define our discretization strategy, for any positive integer $N$ we first define the \emph{$N$--th switching time space} as:
\begin{equation}
  \label{eq:switching_times}
  {\cal T}_N = 
  \left\{ (\tau_0,\ldots,\tau_k) \subset [0,1] \mid 
    0 = \tau_0 \leq \tau_1 \leq \cdots \leq \tau_k = 1,\ 
    | \tau_i - \tau_{i-1} | \leq \frac{1}{2^N}\ \forall i \in \{ 1, \ldots, k \} 
  \right\},
\end{equation}
i.e. ${\cal T}_N$ is the collection of finite partitions of $[0,1]$ whose samples have a maximum distance of $\frac{1}{2^N}$.
For notational convenience, given $\tau \in {\cal T}_N$, we define $\card{\tau}$ as the cardinality of $\tau$.
Importantly, notice that the sets ${\cal T}_N$ are nested, i.e. for each $N \in \N$, ${\cal T}_{N+1} \subset {\cal T}_N$.

We utilize the switching time spaces to define a sequence of finite dimensional subspaces of ${\cal X}_p$ and ${\cal X}_r$.
Given $N \in \N$, $\tau \in {\cal T}_N$, and $k \in \{ 0, \ldots, \card{\tau}-1 \}$, we define $\pi_{\tau,k}: [0,1] \to \R$ that scales the discretization:
\begin{equation}
  \label{eq:scaling_discretization}
  \pi_{\tau,k}(t) = 
  \begin{cases}
    1 & \text{if}\ t \in [\tau_{k}, \tau_{k+1} ), \\
    0 & \text{otherwise}.
  \end{cases}
\end{equation}
Using this definition, we define ${\cal D}_{\tau,p}$, a subspace of the discrete input space, as:
\begin{equation}
  \label{eq:discretized_pure_discrete_input_space}
  {\cal D}_{\tau,p} = \left\{ d \in {\cal D}_p \mid 
    d = \sum_{k=0}^{\card{\tau}-1} \bar{d}_k \pi_{\tau,k},\ \bar{d}_k \in \Sigma^q_p\ \forall k
  \right\}.
\end{equation}
Similarly, we define ${\cal D}_{\tau,r}$, a subspace of the relaxed discrete input space, as:
\begin{equation}
  \label{eq:discretized_relaxed_discrete_input_space}
  {\cal D}_{\tau,r} = \left\{ d \in {\cal D}_r \mid 
    d = \sum_{k=0}^{\card{\tau}-1} \bar{d}_k \pi_{\tau,k},\ \bar{d}_k \in \Sigma^q_r\ \forall k 
  \right\}.
\end{equation}
Finally, we define ${\cal U}_{\tau}$, a subspace of the continuous input space, as:
\begin{equation}
  \label{eq:discretized_input_space}
  {\cal U}_{\tau} = \left\{ u \in {\cal U} \mid 
    u = \sum_{k=0}^{\card{\tau}-1} \bar{u}_k \pi_{\tau,k},\ \bar{u}_k \in U\ \forall k 
  \right\}.
\end{equation}
Now, we can define the \emph{$N$--th discretized pure optimization space induced by switching vector $\tau$} as ${\cal X}_{\tau,p} = {\cal U}_{\tau} \times {\cal D}_{\tau,p}$, and the \emph{$N$--th discretized relaxed optimization space induced by switching vector $\tau$} as ${\cal X}_{\tau,r} =  {\cal U}_{\tau} \times {\cal D}_{\tau,r}$. Similarly, we define a subspace of ${\cal X}$:
\begin{equation}
  \label{eq:discretized_X}
  {\cal X}_\tau = \left\{ (u,d) \in {\cal X} \mid 
    u = \sum_{k=0}^{\card{\tau}-1} \bar{u}_k \pi_{\tau,k},\ \bar{u}_k \in \R^m\ \forall k,\ \text{and}\  
    d = \sum_{k=0}^{\card{\tau}-1} \bar{d}_k \pi_{\tau,k},\ \bar{d}_k \in \R^q\ \forall k
  \right\}.
\end{equation}

In order for these discretized optimization spaces to be useful, we need to know to show that we can use a sequence of functions belonging to these finite-dimensional subspaces to approximate any infinite dimensional function. The following lemma proves this result and validates our choice of discretized spaces:
\begin{lemma}
  \label{lemma:dense_discretized_spaces}
  Let $\{ \tau_k \}_{k \in \N}$ with $\tau_k \in {\cal T}_k$.
  \begin{enumerate_parentesis}
  \item \label{lemma:dense_pure_discretized_space} For each $\xi \in {\cal X}_p$ there exists a sequence $\{ \xi_k \}_{k \in \N}$, with $\xi_k \in {\cal X}_{\tau_k,p}$, such that $\xi_k \to \xi$ as $k \to \infty$.
  \item \label{lemma:dense_relaxed_discretized_space} For each $\xi \in {\cal X}_r$ there exists a sequence $\{ \xi_k \}_{k \in \N}$, with $\xi_k \in {\cal X}_{\tau_k,r}$, such that $\xi_k \to \xi$ as $k \to \infty$.
  \end{enumerate_parentesis}
\end{lemma}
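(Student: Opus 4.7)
The plan is to build the approximating sequences by pointwise sampling on the given partitions, using bounded variation to get an explicit rate in the $L^2$-norm (which is what the ${\cal X}$-norm is built from).

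First, I would fix right-continuous representatives of $u$ and $d$. Since $u, d \in BV([0,1], \cdot)$, Theorem \ref{thm:bounded_variation} tells us they have weak derivatives that are Radon measures, and hence $u, d$ admit right limits at every point. Replacing $u$ and $d$ by their right-continuous representatives does not change their $L^2$ equivalence classes. Then, for each $k \in \N$ with partition $\tau_k = (\tau_{k,0},\ldots,\tau_{k,|\tau_k|})$, I would set
\begin{equation*}
  u_k = \sum_{i=0}^{|\tau_k|-1} u(\tau_{k,i}) \, \pi_{\tau_k,i}, \qquad
  d_k = \sum_{i=0}^{|\tau_k|-1} d(\tau_{k,i}) \, \pi_{\tau_k,i},
\end{equation*}
and $\xi_k = (u_k, d_k)$. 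The key point is that these pointwise values land in the correct sets: $u(\tau_{k,i}) \in U$ since $U$ is closed and $u(t) \in U$ for a.e.\ $t$ in any right-neighborhood of $\tau_{k,i}$; similarly, $d(\tau_{k,i}) \in \Sigma^q_p$ for the pure case because $\Sigma^q_p$ is a finite (hence closed) set and the right-limit of a function taking values in a finite set must be one of those values; and $d(\tau_{k,i}) \in \Sigma^q_r$ for the relaxed case by closedness of the simplex. Therefore $\xi_k \in {\cal X}_{\tau_k,p}$ in Part (1) and $\xi_k \in {\cal X}_{\tau_k,r}$ in Part (2).

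Next, I would establish the $L^2$ rate of approximation. Let $V_i$ denote the total variation of $u$ on $[\tau_{k,i}, \tau_{k,i+1}]$, so that for every $t \in [\tau_{k,i}, \tau_{k,i+1})$, $\| u(t) - u(\tau_{k,i}) \|_2 \leq V_i$ (using that the $1$-norm dominates the $2$-norm in finite dimensions up to a constant, or working with the $1$-norm definition of $\|\cdot\|_{BV}$ directly). Then
\begin{equation*}
  \| u - u_k \|_{L^2}^2
  = \sum_{i=0}^{|\tau_k|-1} \int_{\tau_{k,i}}^{\tau_{k,i+1}} \| u(t) - u(\tau_{k,i}) \|_2^2 \, dt
  \leq 2^{-k} \sum_{i=0}^{|\tau_k|-1} V_i^2
  \leq 2^{-k} \| u \|_{BV}^2,
\end{equation*}
where the first inequality uses $|\tau_{k,i+1} - \tau_{k,i}| \leq 2^{-k}$ by definition of ${\cal T}_k$, and the last inequality uses that $\sum_i V_i^2 \leq (\sum_i V_i)^2 = \| u \|_{BV}^2$ for nonnegative $V_i$. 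The identical estimate applies to $d - d_k$, so $\| \xi_k - \xi \|_{\cal X} \leq 2^{-k/2}(\|u\|_{BV} + \|d\|_{BV}) \to 0$.

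There is not really a single ``hard step'' here; the argument is a textbook BV-to-piecewise-constant approximation once the right framework is set up. The only subtle point, which I would flag explicitly, is verifying that the sampled values genuinely lie in the prescribed sets $U$, $\Sigma^q_p$, and $\Sigma^q_r$; this is where one must appeal to the closedness (or discreteness, in the pure case) of those sets together with the existence of right limits for BV functions. Once that is in hand, both Parts (1) and (2) of the lemma follow from the same construction, differing only in which simplex $\Sigma^q_p$ or $\Sigma^q_r$ the discrete input values fall into.
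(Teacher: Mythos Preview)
Your argument is correct and takes a genuinely different route from the paper's. The paper merely sketches a general measure-theoretic approach: approximate measurable functions in $L^2$ by simple functions, approximate the underlying Borel sets by intervals, and then approximate those intervals by the partitions $\tau_k$. It does not give a rate and does not explicitly address why the approximants can be chosen to take values in $U$, $\Sigma^q_p$, or $\Sigma^q_r$. By contrast, you exploit the $BV$ regularity that is already built into the definition of ${\cal X}_p$ and ${\cal X}_r$: passing to right-continuous representatives and sampling at the partition nodes yields piecewise-constant approximants with the explicit $L^2$ rate $2^{-k/2}\bigl(\|u\|_{BV}+\|d\|_{BV}\bigr)$, and the closedness of the target sets guarantees the sampled values lie where they should. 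Your approach is more elementary, more quantitative, and handles the range constraint cleanly; the paper's outline is more abstract and would in any case have to confront the same range-constraint issue you flag.

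One minor caveat: the paper only states that $U$ is bounded and convex, not closed. Your argument that $u(\tau_{k,i}) \in U$ via right limits requires closedness (or at least that the right-continuous representative takes values in $U$). In practice this is a harmless omission---the paper almost certainly intends $U$ to be closed, and the same gap is present in the paper's own sketch---but it is worth noting.
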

\begin{proof}
  We only present an outline of the proof, since the argument is outside the scope of this paper.
  First, every Lebesgue measurable set in $[0,1]$ can be arbitrarily approximated by intervals (Theorem 2.40 in \cite{Folland1999}).
  Second, the sequence of partitions $\{ \tau_k \}_{k \in \N}$ can clearly approximate any interval.
  Finally, the result follows since every measurable function can be approximated in the $L^2$--norm by integrable simple functions, which are the finite linear combination of indicator functions defined on Borel sets (Theorem 2.10 in \cite{Folland1999}).
\end{proof}

\subsection{Discretized Trajectories, Cost, Constraint, and Optimal Control Problem}

For a positive integer $N$, given a switching vector, $\tau \in {\cal T}_{N}$, a relaxed control $\xi = (u,d) \in {\cal X}_{\tau,r}$, and an initial condition $x_0 \in \R^n$, the discrete dynamics, denoted by $\bigl\{ z^{(\xi)}_\tau( \tau_k ) \bigr\}_{k=0}^{\card{\tau}} \subset \R^n$, are computed via the Forward Euler Integration Formula:
\begin{equation}
  \label{eq:euler_traj_xi}
  z^{(\xi)}_\tau ( \tau_{k+1} ) = z^{(\xi)}_\tau ( \tau_k ) + 
  ( \tau_{k+1} - \tau_{k} ) f\bigl( \tau_k, z^{(\xi)}_\tau ( \tau_k ), u( \tau_k ), d( \tau_k ) \bigr), 
  \quad \forall k \in \{0,\ldots,\card{\tau}-1\},
  \quad z^{(\xi)}_\tau (0) = x_0.
\end{equation}
Employing these discrete dynamics we can define the \emph{discretized trajectory}, $z^{(\xi)}_\tau: [0,1] \to \R^n$, by performing linear interpolation over the discrete dynamics:
\begin{equation}
  \label{eq:discretized_traj_xi}
  z^{(\xi)}_\tau(t) = \sum_{k=0}^{\card{\tau}-1} \left( z^{(\xi)}_\tau (\tau_k) + 
    \frac{t - \tau_k}{\tau_{k+1} - \tau_{k}} \bigl( z^{(\xi)}_\tau (\tau_{k+1}) - z^{(\xi)}_\tau(\tau_{k}) \bigr) 
  \right) \pi_{\tau,k}(t),
\end{equation}
where $\pi_{\tau,k}$ are as defined in Equation \eqref{eq:scaling_discretization}.
Note that the definition in Equation \eqref{eq:discretized_traj_xi} is valid even if $\tau_k = \tau_{k+1}$ for some $k \in \{ 0, \ldots, \card{\tau} \}$, which becomes clear after replacing Equation \eqref{eq:euler_traj_xi} in Equation \eqref{eq:discretized_traj_xi}.
For notational convenience, we suppress the dependence on $\tau$ in $z^{(\xi)}_\tau$ when it is clear in context.

Employing the trajectory computed via Euler integration, we define the \emph{discretized cost function}, $J_{\tau}: {\cal X}_{\tau,r} \to \R$:
\begin{equation}
  \label{eq:discretized_cost}
  J_{\tau}(\xi) = h_0\bigl( z^{(\xi)}(1) \bigr).
\end{equation}
Similarly, we define the \emph{discretized constraint function}, $\psi_{\tau}: {\cal X}_{\tau,r} \to \R$:
\begin{equation}
  \label{eq:discretized_constraint}
  \Psi_{\tau}(\xi) = \max_{j \in {\cal J},\; k \in \{0,\ldots,\card{\tau}\}} h_j\bigl( z^{(\xi)}( \tau_k ) \bigr).
\end{equation}
Note that these definitions extend easily to points belonging to ${\cal X}_{\tau,p}$. 

As we did in Section \ref{sec:trajs_cost_cons_ocp}, we now introduce some additional notation to ensure the clarity of the ensuing analysis. 
First, for any positive integer $N$ and $\tau \in {\cal T}_N$, we define the \emph{discretized flow of the system}, $\phi_{\tau,t}: {\cal X}_r \to \R^n$ for each $t \in [0,1]$ as:
\begin{equation}
  \label{eq:discretized_flow_xi}
  \phi_{\tau,t}(\xi) = z^{(\xi)}_\tau(t).
\end{equation}
Second, for any positive integer $N$ and $\tau \in {\cal T}_N$, we define \emph{component constraint functions}, $\psi_{\tau,j,t}: {\cal X}_r \to \R$ for each $t \in [0,1]$ and each $j \in {\cal J}$ as:
\begin{equation}
  \label{eq:discretized_component_constraints}
  \psi_{\tau,j,t}(\xi) = h_j\bigl( \phi_{\tau,t}(\xi) \bigr).
\end{equation}
Notice that the discretized cost function and the discretized constraint function become
\begin{equation}
  J_{\tau}(\xi) = h_0\bigl( \phi_{\tau,1}(\xi) \bigr),\quad \text{and}\quad 
  \Psi_\tau(\xi) = \max_{ j \in {\cal J},\; k \in \{0,\ldots,\card{\tau}\} } \psi_{\tau,j,\tau_k}(\xi),
\end{equation}
respectively.
This notation change is made to emphasize the dependence on $\xi$.

\subsection{Local Minimizers and a Discretized Optimality Condition}

Before proceeding further, we make an observation that dictates the construction of our implementable algorithm. Recall how we employ directional derivatives and Theorem \ref{thm:bangbang_result} in order to construct a necessary condition for optimality for the Switched System Optimal Control Problem. In particular, if at a particular point belonging to the pure optimization space the appropriate directional derivatives are negative, then the point is not a local minimizer of the Relaxed Switched System Optimal Control Problem. An application of Theorem \ref{thm:bangbang_result} to this point proves that it is not a local minimizer of the Switched System Optimal Control Problem. 

Proceeding in a similar fashion, for any positive integer $N \in \N$ and $\tau \in {\cal T}_N$, we can define a Discretized Relaxed Switched System Optimal Control Problem:
\begin{DRSSOCP}
  \begin{equation}
	\label{eq:drssocp}
    \min_{\xi \in {\cal X}_{\tau,r}} \left\{ J_{\tau}(\xi) \mid \Psi_{\tau}(\xi) \leq 0 \right\}.
  \end{equation}
\end{DRSSOCP}
\noindent The local minimizers of this problem are then defined as follows:
\begin{definition}
  \label{def:discretized_local_minimizer_Xr}
  Fix $N \in \N$, and $\tau \in {\cal T}_N$. Let us denote an $\epsilon$--ball in the ${\cal X}$--norm centered at $\xi$ induced by switching vector $\tau$ by:
  \begin{equation}
    \label{eq:discretized_nbhd_X}
    {\cal N}_{\tau,\cal X}(\xi,\epsilon) = \left\{ \bar{\xi} \in {\cal X}_{\tau,r} \mid \bigl\lVert \xi - \bar{\xi} \bigr\rVert_{\cal X} < \epsilon \right\}.
  \end{equation}
	We say that a point $\xi \in {\cal X}_{\tau,r}$ is a \emph{local minimizer of the Relaxed Switched System Optimal Control Problem Induced by Switching Vector $\tau$} defined in Equation \eqref{eq:drssocp} if $\Psi_{\tau}(\xi) \leq 0$ and there exists $\epsilon > 0$ such that $J_{\tau}( \hat{\xi} ) \geq J_{\tau}( \xi )$ for each $\hat{\xi} \in {\cal N}_{\tau,\cal X}(\xi,\epsilon) \cap \left\{ \bar{\xi} \in {\cal X}_{\tau,r} \mid \Psi_{\tau}( \bar{\xi} ) \leq 0 \right\}$.
\end{definition}
Given this definition, a first order numerical optimal control scheme can exploit the vector space structure of the discretized relaxed optimization space in order to define discretized directional derivatives that find local minimizers for this Discretized Relaxed Switched System Optimal Control Problem. Just as in Section \ref{subsec:directional_derivatives}, we can employ a first order approximation argument and the existence of the directional derivative of the cost, $\D{J}_{\tau}$ (proven in Lemma \ref{lemma:discretized_DJ_definition}), and of each of the component constraints, $\D{\psi}_{\tau,j,\tau_k}$ (proven in Lemma \ref{lemma:Dhj_discrete_definition}), for each $j \in {\cal J}$ and $k \in \{0,\ldots,|\tau|\}$ in order to elucidate this fact.

Employing these directional derivatives, we can define a discretized optimality function. Fixing a positive integer $N$ and $\tau \in {\cal T}_N$, we define a \emph{discretized optimality function}, $\theta_{\tau}: {\cal X}_{\tau,p} \to (-\infty,0]$ and a corresponding \emph{discretized descent direction}, $g_\tau: {\cal X}_{\tau,p} \to {\cal X}_{\tau,r}$:
\begin{equation}
  \label{eq:discrete_theta}
  \theta_{\tau}(\xi) = \min_{\xi' \in {\cal X}_{\tau,r}} \zeta_{\tau}( \xi, \xi' ), \qquad 
  g_{\tau}(\xi) = \argmin_{\xi' \in {\cal X}_{\tau,r}} \zeta_{\tau}( \xi, \xi' ),
\end{equation}
where 
\begin{equation}
  \label{eq:discrete_zeta}
  \zeta_{\tau}(\xi,\xi') =
  \begin{cases}
    \max \left\{ 
      \d{J_{\tau}}(\xi; \xi' - \xi), 
      \max_{ j \in {\cal J},\; k \in \{0,\ldots,\card{\tau}\} }\limits \D{\psi_{\tau,j,\tau_k}}(\xi;\xi'-\xi) + \gamma \Psi_{\tau}(\xi) 
    \right\} + \| \xi' - \xi \|_{\cal X}
    & \text{if}\ \Psi_{\tau}(\xi) \leq 0, \\
    \max \left\{ 
      \d{J_{\tau}}(\xi; \xi' - \xi) - \Psi_{\tau}(\xi), 
      \max_{ j \in {\cal J},\; k \in \{0,\ldots,\card{\tau}\} }\limits \D{\psi_{\tau,j,\tau_k}}(\xi;\xi'-\xi) 
    \right\} + \| \xi' - \xi \|_{\cal X}
    & \text{if}\ \Psi_{\tau}(\xi) > 0, \\
  \end{cases}
\end{equation}
and $\gamma > 0$ is a design parameter as in the original optimality function $\theta$, defined in Equation \eqref{eq:theta_definition}. 
Before proceeding, we make two observations. First, note that $\theta_{\tau}(\xi) \leq 0$ for each $\xi \in {\cal X}_{\tau,p}$, since we can always choose $\xi' = \xi$ which leaves the trajectory unmodified. Second, note that at a point $\xi \in {\cal X}_{\tau,p}$ the directional derivatives in the optimality function consider directions $\xi' - \xi$ with $\xi' \in {\cal X}_{\tau,r}$ in order to ensure that first order approximations belong to the discretized relaxed optimization space ${\cal X}_{\tau,r}$ which is convex (e.g. for $0 < \lambda \ll 1$, $J_{\tau}(\xi) + \lambda \d{J}_{\tau}(\xi;\xi' - \xi) \approx J_{\tau}( (1 - \lambda) \xi + \lambda \xi' )$  where $(1 - \lambda) \xi + \lambda \xi' \in  {\cal X}_{\tau,r}$).

Just as we argued in the infinite dimensional case, we can prove, as we do in Theorem \ref{thm:discretized_theta_optimality_function}, that if $\theta_{\tau}(\xi) < 0$ for some $\xi \in {\cal X}_{\tau,p}$, then $\xi$ is not a local minimizer of the Discretized Relaxed Switched System Optimal Control Problem. Proceeding as we did in Section \ref{sec:optimization_algorithm}, we can attempt to apply Theorem \ref{thm:bangbang_result} to prove that $\theta$ encodes local minimizers by employing the weak topology over the discretized pure optimization space. Unfortunately, Theorem \ref{thm:bangbang_result} does not prove that the element in the pure optimization space, $\xi_p \in {\cal X}_p$, that approximates a particular relaxed control $\xi_r \in {\cal X}_{\tau,r} \subset {\cal X}_r$ at a particular quality of approximation $\epsilon > 0$ with respect to the trajectory of the switched system, belongs to ${\cal X}_{\tau,p}$. Though the point in the pure optimization space that approximates a particular discretized relaxed control at a particular quality of approximation exists, it may exist at a different discretization precision.

This deficiency of Theorem \ref{thm:bangbang_result} which is shared by our extension to it, Theorem \ref{thm:quality_of_approximation}, means that our computationally tractable algorithm, in contrast to our conceptual algorithm, requires an additional step where the discretization precision is allowed to improve. Nevertheless, if we prove that the Discretized Switched System Optimal Control Problem consistently approximates the Switched System Optimal Control Problem in a manner that is formalized next, then an algorithm that generates a sequence of points by recursive application that converge to a point that is a zero of the discretized optimality function is also converging to a point that is a zero of the original optimality function.

Formally, motivated by the approach taken in \cite{Polak1997}, we define consistent approximation as:
\begin{definition}[Definition 3.3.6 \cite{Polak1997}]
	\label{def:consistent_approximation}
	The Discretized Relaxed Switched System Optimal Control Problem as defined in Equation \eqref{eq:drssocp} is a \emph{consistent approximation} of the Switched System Optimal Control Problem as defined in Equation \eqref{eq:ssocp} if for any infinite sequence $\{\tau_i\}_{i \in \N}$ and $\{\xi_i\}_{i \in \N}$ such that $\tau_i \in {\cal T}_i$ and $\xi_i \in {\cal X}_{\tau_i,p}$ for each $i \in \N$, then $\lim_{i \to \infty} \left\lvert \theta_{\tau_i}(\xi_i) - \theta(\xi_i) \right\rvert = 0$, where $\theta$ is as defined in Equation \eqref{eq:theta_definition} and $\theta_{\tau}$ is as defined in Equation \eqref{eq:discrete_theta}.
\end{definition}
Importantly, if this notion of consistent approximation is satisfied, then a critical result follows:
\begin{theorem}
	\label{thm:consistent_approximation_means_convergence}
	Suppose the Discretized Relaxed Switched System Optimal Control Problem, as defined in Equation \eqref{eq:drssocp}, is a consistent approximation, as in Definition \ref{def:consistent_approximation}, of the Switched System Optimal Control Problem, as defined in Equation \eqref{eq:ssocp}. 
  Let $\{\tau_i\}_{i \in \N}$ and $\{\xi_i\}_{i \in \N}$ be such that $\tau_i \in {\cal T}_i$ and $\xi_i \in {\cal X}_{\tau_i,p}$ for each $i \in \N$. 
  In this case, if $\lim_{i\to\infty} \theta_{\tau_i}(\xi_i) = 0$, then $\lim_{i\to\infty}\theta(\xi_i) = 0$.
\end{theorem}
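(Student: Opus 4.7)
The plan is to observe that this theorem is essentially an immediate consequence of the definition of consistent approximation combined with an elementary triangle inequality argument, so the proof should be very short. The hypothesis of consistent approximation (Definition \ref{def:consistent_approximation}) gives us directly that $\lim_{i \to \infty} \left\lvert \theta_{\tau_i}(\xi_i) - \theta(\xi_i) \right\rvert = 0$ for any sequence of the form prescribed, and the additional assumption of the theorem gives $\lim_{i \to \infty} \theta_{\tau_i}(\xi_i) = 0$.

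The key step I would carry out is to apply the triangle inequality in the form
\begin{equation}
  \left\lvert \theta(\xi_i) \right\rvert \leq \left\lvert \theta(\xi_i) - \theta_{\tau_i}(\xi_i) \right\rvert + \left\lvert \theta_{\tau_i}(\xi_i) \right\rvert,
\end{equation}
and then take the limit as $i \to \infty$. By the consistent approximation hypothesis the first term on the right-hand side tends to zero, and by the assumption of the theorem the second term also tends to zero. Hence $\left\lvert \theta(\xi_i) \right\rvert \to 0$, which gives $\theta(\xi_i) \to 0$. For completeness, I would also remark that Condition \ref{thm:theta_negative} of Theorem \ref{thm:theta_optimality_function} tells us $\theta(\xi_i) \leq 0$ for all $i$, so convergence of $\left\lvert \theta(\xi_i) \right\rvert$ to zero is consistent with the sign constraint on $\theta$ and is indeed equivalent to $\theta(\xi_i) \to 0$.

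I do not anticipate any real obstacle here; the content of the theorem is entirely absorbed into the definition of consistent approximation, which is precisely why that definition was formulated this way. The substantive work lies not in this theorem but rather in proving that consistent approximation in fact holds for our Discretized Relaxed Switched System Optimal Control Problem, which will be carried out separately in Section \ref{sec:imp_algo_analysis}. The present theorem merely records the payoff of having secured that property: once consistent approximation is established, convergence of the discretized optimality function along a sequence of discretized points automatically upgrades to convergence of the original (infinite-dimensional) optimality function along the same sequence, thereby justifying the use of $\theta_{\tau}$ as a stopping criterion in the implementable algorithm.
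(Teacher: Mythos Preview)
Your proof is correct. The paper's proof reaches the same conclusion by a slightly different route: it argues by contradiction, assuming $\liminf_{i\to\infty}\theta(\xi_i)<-\delta$ for some $\delta>0$, then uses the super-additivity of $\liminf$ together with consistent approximation to deduce $\liminf_{i\to\infty}\theta_{\tau_i}(\xi_i)<-\delta$, contradicting the hypothesis; it then invokes $\theta\le 0$ to handle the $\limsup$. Your direct triangle-inequality argument is more elementary and arguably cleaner, since it avoids the contradiction and the liminf manipulation entirely; the paper's approach has no advantage here beyond perhaps making the role of the sign condition $\theta\le 0$ more explicit. Either way, as you correctly observe, the real work is deferred to establishing consistent approximation itself.
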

\begin{proof}
	Arguing by contradiction, suppose there exists a $\delta > 0$ such that $\liminf_{i\to\infty} \theta(\xi_i) < -\delta$.
	  Then by the super-additivity of the $\liminf$,
	  \begin{equation}
	    \liminf_{i \to \infty} \theta_{\tau_i}(\xi_i) - \liminf_{i\to\infty} \theta(\xi_i)
	    \leq \liminf_{i\to\infty} \theta_{\tau_i}(\xi_i) - \theta(\xi_i).
	  \end{equation}
	  Rearranging terms and applying Definition \ref{def:consistent_approximation}, we have that:
	  \begin{equation}
	    \liminf_{i \to \infty} \theta_{\tau_i}(\xi_i)
	    \leq \liminf_{i \to \infty} \left(\theta_{\tau_i}(\xi_i) - \theta(\xi_i)\right) + \liminf_{i\to\infty} \theta(\xi_i) < -\delta,
	  \end{equation}
	  which contradicts the fact that $\lim_{i\to\infty} \theta_{\tau_i}(\xi_i) = 0$. Since by Condition \ref{thm:theta_negative} in Theorem \ref{thm:theta_optimality_function}, $\liminf_{i \to \infty} \theta(\xi_i) \leq \limsup_{i \to \infty} \theta(\xi_i) \leq 0$, we have our result.
\end{proof}

To appreciate the importance of this result, observe that if we prove that the Discretized Relaxed Switched System Optimal Control Problem is a consistent approximation of the Switched System Optimal Control Problem, as we do in Theorem \ref{thm:consistent_approximation}, and devise an algorithm for the Discretized Relaxed Switched System Optimal Control Problem that generates a sequence of discretized points that converge to a point that is a zero of the discretized optimality function, then the sequence of points generated actually converges to a point that also satisfies the necessary condition for optimality for the Switched System Optimal Control Problem. 


\subsection{Choosing a Discretized Step Size and Projecting the Discretized Relaxed Discrete Input}

Before describing the step in our algorithm where the discretization precision is allowed to increase, we describe how the descent direction can be exploited in order to construct a point in the discretized relaxed optimization space that either reduces the cost (if the original point is feasible) or the infeasibility (if the original point is infeasible). Just as we did in Section \ref{subsec:project_relaxed_discrete_input}, we employ a line search algorithm similar to the traditional Armijo algorithm used during finite dimensional optimization in order to choose a step size (Algorithm Model 1.2.23 in \cite{Polak1997}). Given $N \in \N$, $\tau \in {\cal T}_N$, $\alpha \in (0,1)$, and $\beta \in (0,1)$, a step size for a point $\xi \in {\cal X}_{\tau,p}$, is chosen by solving the following optimization problem:
\begin{equation}
  \label{eq:discrete_mu}
  \mu_\tau(\xi) = 
  \begin{cases}
    \min \bigl\{ k \in \N \mid J_\tau\big( \xi + \beta^k ( g_\tau(\xi) - \xi ) \big) - J_\tau(\xi) \leq \alpha \beta^k \theta_\tau(\xi), \\
    \phantom{\min \bigl\{ k \in \N \mid {}} \Psi_\tau \big( \xi + \beta^k( g_\tau(\xi) - \xi ) \big) \leq \alpha \beta^k \theta_\tau(\xi) \bigr\} 
    & \text{if}\ \Psi_\tau(\xi) \leq 0, \\
    \min \bigl\{ k \in \N \mid \Psi_\tau\big( \xi + \beta^k ( g_\tau(\xi) - \xi ) \big) - \Psi_\tau(\xi) \leq \alpha \beta^k \theta_\tau(\xi) \bigr\}
    & \text{if}\ \Psi_\tau(\xi) > 0.
  \end{cases}
\end{equation}

Continuing as we did in Section \ref{subsec:project_relaxed_discrete_input}, given $N \in \N$ we can apply ${\cal F}_N$ defined in Equation \eqref{eq:wavelet_approx} and ${\cal P}_N$ defined in Equation \eqref{eq:pwm} to the constructed discretized relaxed discrete input. The pulse width modulation at a particular frequency induces a partition in ${\cal T}_N$ according to the times at which the constructed pure discrete input switched. That is, let $\sigma_N: {\cal X}_r \to {\cal T}_N$ be defined by
\begin{equation}
  \label{eq:sigmaN_def}
  \sigma_N( u, d ) = \bigl\{ 0 \bigr\} \cup \left\{ \frac{k}{2^N} + 
    \frac{1}{2^N} \sum_{j=1}^{i} \left[ {\cal F}_N(d) \right]_j \left( \frac{k}{2^N} \right) 
  \right\}_{
    \begin{subarray}{l}
      i \in \{ 1, \ldots, q \} \\
      k \in \{ 0, \ldots, 2^N - 1\}
    \end{subarray} 
  }.
\end{equation}
Employing this induced partition, we can be more explicit about the range of $\rho_N$ by stating that $\rho_N(\xi) \in {\cal X}_{\sigma_N(\xi),p}$ for each $\xi \in {\cal X}_r$.

Now, given given $N \in \N$, $\tau \in {\cal T}_N$, $\bar{\alpha} \in (0,\infty)$, $\bar{\beta} \in \left( \frac{1}{\sqrt{2}}, 1 \right)$, $\omega \in (0,1)$, and $k_{\text{max}} \in \N$, a frequency at which to perform pulse width modulation for a point $\xi \in {\cal X}_{\tau,p}$ is computed by solving the following optimization problem::
\begin{equation}
  \label{eq:discrete_nu}
  \nu_\tau(\xi,k_{\text{max}}) = 
  \begin{cases}
    \min \bigl\{ k \leq k_{\text{max}} \mid 
    \xi' = \xi + \beta^{\mu_\tau(\xi)} \bigl( g_\tau(\xi) - \xi \bigr),\ 
    \bar{\alpha} \bar{\beta}^k \leq ( 1 - \omega ) \alpha \beta^{\mu_\tau(\xi)}, \\
    \phantom{\min \bigl\{k \leq k_{\text{max}} \mid {}} 
    J_{\sigma_k(\xi')} \bigl( \rho_k( \xi' ) \bigr) - J_\tau(\xi) 
    \leq \bigl( \alpha \beta^{\mu_\tau(\xi)} - \bar{\alpha} \bar{\beta}^k \bigr) \theta_\tau(\xi), \\
    \phantom{\min \bigl\{k \leq k_{\text{max}} \mid {}} 
    \Psi_{\sigma_k(\xi')} \bigl( \rho_k( \xi' ) \bigr) 
    \leq \bigl( \alpha \beta^{\mu_\tau(\xi)} - \bar{\alpha} \bar{\beta}^k \bigr) \theta_\tau(\xi)
    \bigr\} 
    & \text{if}\ \Psi_\tau(\xi) \leq 0, \\

    \min \bigl\{ k \leq k_{\text{max}} \mid 
    \xi' = \xi + \beta^{\mu_\tau(\xi)} \bigl( g_\tau(\xi) - \xi \bigr),\ 
    \bar{\alpha} \bar{\beta}^k \leq ( 1 - \omega ) \alpha \beta^{\mu_\tau(\xi)}, \\
    \phantom{\min \bigl\{k \leq k_{\text{max}} \mid {}} \Psi_{\sigma_k(\xi')} \bigl( \rho_k( \xi' ) \bigr) - \Psi_\tau(\xi) \leq \bigl( \alpha \beta^{\mu_\tau(\xi)} - \bar{\alpha} \bar{\beta}^k \bigr) \theta_\tau(\xi) \bigr\} 
    & \text{if}\ \Psi_\tau(\xi) > 0.
  \end{cases}  
\end{equation}
In the discrete case, as opposed to the original infinite dimensional algorithm, due to the aforementioned shortcomings of Theorem \ref{thm:bangbang_result} and \ref{thm:quality_of_approximation}, there is no guarantee that the optimization problem solved in order to $\nu_\tau$ is feasible. Without loss of generality, we say that $\nu_\tau(\xi) = \infty$ for each $\xi \in {\cal X}_{\tau,r}$ when there is no feasible solution. Importantly letting $N_0 \in \N$, $\tau_0 \in {\cal T}_{N_0}$, and $\xi \in {\cal X}_{\tau,r}$, we prove, in Lemma \ref{lemma:discrete_nu_upper_bound}, that if $\theta(\xi) < 0$ then for each $\eta \in \N$ there exists a finite $N \geq N_0$ such that $\nu_{\sigma_N(\xi)}(\xi,N+\eta)$ is finite. That is, if $\theta(\xi) < 0$, then $\nu_\tau$ is always finite after a certain discretization quality is reached.

\subsection{An Implementable Switched System Optimal Control Algorithm}

Consolidating our definitions, Algorithm \ref{algo:discrete_algo} describes our numerical method to solve the Switched System Optimal Control Problem. 
Notice that note that at each step of Algorithm \ref{algo:discrete_algo}, $\xi_j \in {\cal X}_{\tau_j,p}$. 
Also, observe the two principal differences between Algorithm \ref{algo:main_algo} and Algorithm \ref{algo:discrete_algo}. 

First, as discussed earlier, $\nu_\tau$ maybe infinite as is checked in Line \ref{algo:discrete_step10} of Algorithm \ref{algo:discrete_algo}, at which point the discretization precision is increased since we know that if $\theta(\xi) < 0$, then $\nu_\tau$ is always finite after a certain discretization quality is reached. 
Second, notice that if $\theta_{\tau}$ comes close to zero as is checked in Line \ref{algo:discrete_step3} of Algorithm \ref{algo:discrete_algo}, the discretization precision is increased. 
To understand why this additional check is required, remember that our goal in this paper is the construction of an implementable algorithm that constructs a sequence of points by recursive application that converges to a point that satisfies the optimality condition. 
In particular, $\theta_{\tau}$ may come arbitrarily close to zero due to a particular discretization precision that limits the potential descent directions to search amongst, rather than because it is actually close to a local minimizer of the Switched System Optimal Control Problem. 
This additional step that improves the discretization precision is included in Algorithm \ref{algo:discrete_algo} to guard against this possibility. 

With regards to actual numerical implementation, we make two additional comments. 
First, a stopping criterion is chosen that terminates the operation of the algorithm if $\theta_{\tau}$ is too large. 
We describe our selection of this parameter in Section \ref{sec:examples}. 
Second, due to the definitions of $\D{J}_{\tau}$ and $\D{\psi}_{\tau,j,\tau_k}$ for each $j \in {\cal J}$ and $k \in \{0,\ldots,|\tau|\}$, the optimization required to solve $\theta_{\tau}$ is a quadratic program.

For analysis purposes, we define $\Gamma_\tau: \left\{ \xi \in {\cal X}_{\tau,p} \mid \nu_\tau(\xi,k_{\text{max}}) < \infty \right\} \to {\cal X}_p$ by:
\begin{equation}
  \label{eq:discrete_gamma}
  \Gamma_\tau(\xi) = \rho_{ \nu_\tau( \xi, k_{\text{max}} ) } \bigl( \xi + \beta^{\mu_\tau(\xi)} ( g_\tau(\xi) - \xi ) \bigr).
\end{equation}
We say $\{\xi_j\}_{j \in \N}$ is \emph{a sequence generated by Algorithm \ref{algo:discrete_algo}} if $\xi_{j+1} = \Gamma_{\tau_j}(\xi_j)$ for each $j \in \N$. 
We can prove several important properties about the sequence generated by Algorithm \ref{algo:discrete_algo}. 
First, letting $\{ N_i \}_{i \in \N}$, $\{ \tau_i \}_{i \in \N}$, and $\{ \xi_i \}_{i \in \N}$ be the sequences produced by Algorithm \ref{algo:discrete_algo}, then, as we prove in Lemma \ref{lemma:discrete_phase12}, there exists $i_0 \in \N$ such that, if $\Psi_{\tau_{i_0}}( \xi_{i_0} ) \leq 0$, then $\Psi( \xi_i ) \leq 0$ and $\Psi_{\tau_i}( \xi_i ) \leq 0$ for each $i \geq i_0$. 
That is, once Algorithm \ref{algo:discrete_algo} finds a feasible point, every point generated after it remains feasible. 
Second, as we prove in Theorem \ref{thm:discrete_convergence}, $\lim_{j\to\infty} \theta(\xi_j) = 0$ for a sequence $\{\xi_j\}_{j \in \N}$ generated by Algorithm \ref{algo:discrete_algo}, or Algorithm \ref{algo:discrete_algo} converges to a point that satisfies the optimality condition.

\begin{algorithm}[!ht]
  \begin{algorithmic}[1]
    \REQUIRE
    $N_0 \in \N$, 
    $\tau_0 \in {\cal T}_{N_0}$, 
    $\xi_0 \in {\cal X}_{\tau_0,p}$,
    $\alpha \in (0,1)$, 
    $\bar{\alpha} \in (0,\infty)$, 
    $\beta \in ( 0, 1 )$,
    $\bar{\beta} \in \left( \frac{1}{\sqrt{2}}, 1 \right)$,
    $\gamma \in (0,\infty)$, 
    $\eta \in \N$,
    $\Lambda \in (0,\infty)$,
    $\chi \in \left( 0, \frac{1}{2} \right)$,
    $\omega \in (0,1)$.
    \STATE Set $j = 0$.
    \STATE \label{algo:discrete_step1} Compute $\theta_{\tau_j}(\xi_j)$ as defined in Equation \eqref{eq:discrete_theta}. 
    \IF{\label{algo:discrete_step3} $\theta_{\tau_j}(\xi_j) > - \Lambda 2^{- \chi N_j}$} 
    \STATE Set $\xi_{j+1} = \xi_j$, 
    $N_{j+1} = N_j + 1$, 
    $\tau_{j+1} = \sigma_{N_{j+1}}(\xi_j)$. 
    \STATE Replace $j$ by $j+1$ and go to Line \ref{algo:discrete_step1}.
    \ENDIF
    \STATE Compute $g_{\tau_j}( \xi_j )$ as defined in Equation \eqref{eq:discrete_theta}.
    \STATE Compute $\mu_{\tau_j}( \xi_j )$ as defined in Equation \eqref{eq:discrete_mu}.
    \STATE Compute $\nu_{\tau_j}( \xi_j, N_j + \eta )$ as defined in Equation \eqref{eq:discrete_nu}.
    \IF{\label{algo:discrete_step10} $\nu_{\tau_j}( \xi_j, N_j + \eta ) = \infty$}
    \STATE Set $\xi_{j+1} = \xi_j$, 
    $N_{j+1} = N_j + 1$, 
    $\tau_{j+1} = \sigma_{N_{j+1}}(\xi_{j+1})$. 
    \STATE Replace $j$ by $j+1$ and go to Line \ref{algo:discrete_step1}.
    \ENDIF
    \STATE Set $\xi_{j+1} = \rho_{\nu_{\tau_j}( \xi_j, N_j + \eta )} \bigl( \xi_j + \beta^{\mu_{\tau_j}(\xi_j)} ( g_{\tau_j}(\xi_j) - \xi_j ) \bigr)$,
    $N_{j+1} = \max\left\{ N_j, \nu_{\tau_j}( \xi_j, N_j + \eta ) \right\}$, 
    $\tau_{j+1} = \sigma_{ N_{j+1} }( \xi_{j+1} )$.
    \STATE Replace $j$ by $j+1$ and go to Line \ref{algo:discrete_step1}.
  \end{algorithmic}
  \caption{Numerically Tractable Algorithm for the Switched System Optimal Control Problem}
  \label{algo:discrete_algo}
\end{algorithm}
\section{Implementable Algorithm Analysis}
\label{sec:imp_algo_analysis}

In this section, we derive the various components of Algorithm \ref{algo:discrete_algo} and prove that Algorithm \ref{algo:discrete_algo} converges to a point that satisfies our optimality condition. Our argument proceeds as follows: first, we prove the continuity and convergence of the discretized state, cost, and constraints to their infinite dimensional analogues; second, we construct the components of the optimality function and prove the convergence of these discretized components to their infinite dimensional analogues; finally, we prove the convergence of our algorithm. 

\subsection{Continuity and Convergence of the Discretized Components}

In this subsection, we prove the continuity and convergence of the discretized state, cost, and constraint. We begin by proving the boundedness of the linear interpolation of the Euler Integration scheme:
\begin{lemma}
  \label{lemma:z_bound}
  There exists a constant $C > 0$ such that for each $N \in \N$, $\tau \in {\cal T}_N$, $\xi \in {\cal X}_{\tau,r}$, and $t \in [0,1]$,
  \begin{equation}
    \bigl\| z^{(\xi)}(t) \bigr\|_2 \leq C
  \end{equation}
\end{lemma}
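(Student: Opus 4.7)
The plan is to mimic the proof of Lemma \ref{lemma:x_bounded}, but replacing the integral form of the continuous Bellman--Gronwall inequality with a discrete (product) version suited to the Forward Euler recursion \eqref{eq:euler_traj_xi}, and then handling the linear interpolation in \eqref{eq:discretized_traj_xi} separately.

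First, I would bound the dynamics at the sample points. Since $d(\tau_k) \in \Sigma^q_r$ implies $|d_i(\tau_k)| \le 1$, and $u(\tau_k) \in U$ is bounded, Assumption \ref{assump:fns_continuity} yields (just as in the proof of Lemma \ref{lemma:x_bounded}) a constant $K > 0$, independent of $N$, $\tau$, and $\xi$, such that
\begin{equation}
    \bigl\| f\bigl( \tau_k, z^{(\xi)}_\tau(\tau_k), u(\tau_k), d(\tau_k) \bigr) \bigr\|_2
    \leq K \bigl( \bigl\| z^{(\xi)}_\tau(\tau_k) \bigr\|_2 + 1 \bigr).
\end{equation}
Plugging this into \eqref{eq:euler_traj_xi} and adding $1$ to both sides gives the one-step growth estimate
\begin{equation}
  \bigl\| z^{(\xi)}_\tau(\tau_{k+1}) \bigr\|_2 + 1
  \leq \bigl( 1 + K ( \tau_{k+1} - \tau_k ) \bigr) \bigl( \bigl\| z^{(\xi)}_\tau(\tau_k) \bigr\|_2 + 1 \bigr).
\end{equation}

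Next, I would iterate this inequality from $k = 0$ up to $k = \card{\tau}$, using $1 + a \le e^a$ for $a \ge 0$, obtaining
\begin{equation}
  \bigl\| z^{(\xi)}_\tau(\tau_k) \bigr\|_2 + 1
  \leq \prod_{j=0}^{k-1} \bigl( 1 + K ( \tau_{j+1} - \tau_j ) \bigr) \bigl( \| x_0 \|_2 + 1 \bigr)
  \leq \exp\!\left( K \sum_{j=0}^{k-1} ( \tau_{j+1} - \tau_j ) \right) \bigl( \| x_0 \|_2 + 1 \bigr)
  \leq e^{K} \bigl( \| x_0 \|_2 + 1 \bigr),
\end{equation}
which is the discrete analogue of the Bellman--Gronwall step used in Lemma \ref{lemma:x_bounded}. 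Crucially, this bound depends only on $K$ and $\|x_0\|_2$, and not on $N$, the partition $\tau$, or the specific $\xi \in {\cal X}_{\tau,r}$.

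Finally, I would lift the bound from the sample points to all $t \in [0,1]$. By \eqref{eq:discretized_traj_xi}, on each interval $[\tau_k, \tau_{k+1})$ the function $z^{(\xi)}_\tau(t)$ is a convex combination of $z^{(\xi)}_\tau(\tau_k)$ and $z^{(\xi)}_\tau(\tau_{k+1})$, so by the triangle inequality
\begin{equation}
  \bigl\| z^{(\xi)}_\tau(t) \bigr\|_2
  \leq \max\bigl\{ \bigl\| z^{(\xi)}_\tau(\tau_k) \bigr\|_2, \bigl\| z^{(\xi)}_\tau(\tau_{k+1}) \bigr\|_2 \bigr\}
  \leq e^{K} \bigl( \| x_0 \|_2 + 1 \bigr).
\end{equation}
Setting $C = e^{K} ( \| x_0 \|_2 + 1 )$ finishes the proof. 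The only subtle point is ensuring that the constant $K$ from the linear growth of $f$ can be chosen independently of $\tau$ and $\xi$; this is immediate since $\Sigma_r^q$ and $U$ are compact, so the argument essentially duplicates the continuous-time case with a product in place of the integral.
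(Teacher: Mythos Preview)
Your proof is correct and follows essentially the same strategy as the paper: establish the bound at the partition nodes via a discrete Gronwall-type argument applied to the Euler recursion \eqref{eq:euler_traj_xi}, then extend to all $t \in [0,1]$ using the linear interpolation \eqref{eq:discretized_traj_xi}. The only cosmetic difference is that the paper invokes the Discrete Bellman--Gronwall Inequality with the uniform step bound $1/2^N$ (yielding $e^{qK}(\|x_0\|_2+1)$), whereas you iterate the one-step product estimate with the actual step sizes and telescope $\sum_j(\tau_{j+1}-\tau_j)\le 1$ to get $e^{K}(\|x_0\|_2+1)$; both lead to the same uniform bound.
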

\begin{proof}
  We begin by showing the result for each $t \in \tau$. 
  By Condition \ref{assump:f_lipschitz} in Assumption \ref{assump:fns_continuity}, together with the boundedness of $\| f( 0, x_0, 0, e_i ) \|_2$ for each $i \in {\cal Q}$, there exists a constant $K > 0$ such that, for each $N \in \N$, $\tau \in {\cal T}_N$, $\xi \in {\cal X}_{\tau,r}$, $i \in {\cal Q}$, and $k \in \{ 0, \ldots, \card{\tau} \}$, 
  \begin{equation}
    \bigl\| f\bigl( \tau_k, z^{(\xi)}(\tau_k), u(\tau_k), e_i \bigr) \bigr\|_2 
    \leq K \bigl( \bigl\| z^{(\xi)}(\tau_k) \bigr\|_2 + 1 \bigr).
  \end{equation}
  Employing Equation \eqref{eq:euler_traj_xi} and the Discrete Bellman-Gronwall Inequality (Exercise 5.6.14 in \cite{Polak1997}), we have:
  \begin{equation}
    \begin{aligned}
      \bigl\| z^{(\xi)}(\tau_k) \bigr\|_2 
      &\leq \| x_0 \|_2 + \frac{1}{2^N} \sum_{j=0}^k \sum_{i=1}^q \bigl\| f\bigl( \tau_j, z^{(\xi)}(\tau_j), u(\tau_j), e_i \bigr) \bigr\|_2 \\
      &\leq \left( \| x_0 \|_2 + 1 \right) \left( 1 + \frac{qK}{2^N} \right)^{2^N} \\
      &\leq e^{qK} \left( \| x_0 \|_2 + 1 \right),
    \end{aligned}
  \end{equation}
  thus obtaining the desired result for each $t \in \tau$.

  The result for each $t \in [0,1]$ follows after observing that, in Equation \eqref{eq:discretized_traj_xi}, $\left(\frac{t - \tau_k}{\tau_{k+1} - \tau_{k}}\right) \leq 1$ for each $t \in [\tau_k,\tau_{k+1})$ and $k \in \{ 0, \ldots, \card{\tau} \}$. 
\end{proof}

In fact, this implies that the dynamics, cost, constraints, and their derivatives are all bounded:
\begin{corollary}
  \label{corollary:discretized_fns_bounded}
  There exists a constant $C > 0$ such that for each $N \in \N$, $\tau \in {\cal T}_N$, $j \in {\cal J}$, and $\xi = (u,d) \in {\cal X}_{\tau,r}$:
  \begin{enumerate_parentesis}
  \item \label{corollary:discretized_f_bounded} 
    $\bigl\| f\bigl( t, z^{(\xi)}(t), u(t), d(t) \bigr) \bigr\|_2 \leq C$,\;
    $\left\| \frac{\partial f}{\partial x}\bigl( t, z^{(\xi)}(t), u(t), d(t) \bigr) \right\|_{i,2} \leq C$,\; 
    $\left\| \frac{\partial f}{\partial u}\bigl( t, z^{(\xi)}(t), u(t), d(t) \bigr) \right\|_{i,2} \leq C$.
  \item \label{corollary:discretized_phi_bounded} 
    $\bigl| h_0\bigl( z^{(\xi)}(t) \bigr) \bigr| \leq C$,\; 
    $\Bigl\| \frac{\partial h_0}{\partial x}\bigl( z^{(\xi)}(t) \bigr) \Bigr\|_2 \leq C$,
  \item \label{corollary:discretized_hj_bounded} 
    $\left| h_j\bigl( z^{(\xi)}(t) \bigr) \right| \leq C$,\; 
    $\left\| \frac{\partial h_j}{\partial x}\bigl( z^{(\xi)}(t) \bigr) \right\|_2 \leq C$.
  \end{enumerate_parentesis}
\end{corollary}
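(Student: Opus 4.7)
The plan is to mimic the proof of Corollary \ref{corollary:fns_bounded} almost verbatim, substituting Lemma \ref{lemma:z_bound} for Lemma \ref{lemma:x_bounded} as the source of uniform a priori bounds on the trajectory. The structural observation is that each of the functions appearing in the three conditions, namely $f$, $\frac{\partial f}{\partial x}$, $\frac{\partial f}{\partial u}$, $h_0$, $\frac{\partial h_0}{\partial x}$, $h_j$, and $\frac{\partial h_j}{\partial x}$, is a continuous function on a Euclidean domain by Assumptions \ref{assump:fns_continuity} and \ref{assump:constraint_fns}. Hence it suffices to exhibit a compact set containing all the arguments that may appear as $N$, $\tau$, $\xi$, $t$, and $j$ range over their respective domains; continuity on a compact set then yields a uniform bound.

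First, I would observe that the state argument $z^{(\xi)}(t)$ is uniformly bounded by a constant independent of $N$, $\tau$, $\xi$, or $t$ by Lemma \ref{lemma:z_bound}. Second, the control argument $u(t)$ lies in $U$, which is compact by assumption, and the discrete argument $d(t)$ lies in $\Sigma_r^q$, which is compact by its definition in Equation \eqref{eq:simplex}. Third, the time argument $t$ lies in $[0,1]$. Therefore all arguments of the relevant functions range over a compact subset of the appropriate Euclidean space that is independent of $N$, $\tau$, $\xi$, $t$, and $j$. Since ${\cal J}$ is finite, taking the maximum over $j$ only changes the constant by a finite factor.

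With those observations in hand, Condition \ref{corollary:discretized_f_bounded} follows by noting that $f(\cdot,\cdot,\cdot,d(t)) = \sum_{i=1}^{q} d_i(t) f(\cdot,\cdot,\cdot,e_i)$ and $|d_i(t)| \leq 1$, together with the continuity of each of $f(\cdot,\cdot,\cdot,e_i)$, $\frac{\partial f}{\partial x}(\cdot,\cdot,\cdot,e_i)$, and $\frac{\partial f}{\partial u}(\cdot,\cdot,\cdot,e_i)$ on the compact set identified above. Conditions \ref{corollary:discretized_phi_bounded} and \ref{corollary:discretized_hj_bounded} follow from the continuity of $h_0$, $h_j$, $\frac{\partial h_0}{\partial x}$, and $\frac{\partial h_j}{\partial x}$ evaluated on the compact range of $z^{(\xi)}(t)$. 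Taking the maximum of the finitely many constants so produced yields the single constant $C$ in the statement.

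There is no real obstacle here: the entire content of the corollary was already packaged into Lemma \ref{lemma:z_bound}, whose proof was the technically delicate step (it required the Discrete Bellman--Gronwall Inequality to handle the Euler iteration uniformly in $N$). Once uniform boundedness of $z^{(\xi)}$ is in hand, the present corollary is a one-line consequence of continuity on a compact domain, exactly as in the continuous-time case. The only mild care required is to note that linear interpolation in Equation \eqref{eq:discretized_traj_xi} preserves the bound on $z^{(\xi)}$ for all $t \in [0,1]$ and not merely at the nodes $\tau_k$, which is already accounted for in Lemma \ref{lemma:z_bound}.
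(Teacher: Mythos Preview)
Your proposal is correct and follows essentially the same approach as the paper: the paper's proof is a single sentence invoking the continuity of $f$, $\frac{\partial f}{\partial x}$, $\frac{\partial f}{\partial u}$, $h_0$, $\frac{\partial h_0}{\partial x}$, $h_j$, and $\frac{\partial h_j}{\partial x}$ from Assumptions~\ref{assump:fns_continuity} and~\ref{assump:constraint_fns}, together with the compactness of the domain guaranteed by Lemma~\ref{lemma:z_bound} and the compactness of $U$ and $\Sigma_r^q$. Your write-up is simply a more detailed unpacking of that same argument.
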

\begin{proof}
  The result follows immediately from the continuity of $f$, $\frac{\partial f}{\partial x}$, $\frac{\partial f}{\partial u}$, $h_0$, $\frac{\partial h_0}{\partial x}$, $h_j$, and $\frac{\partial h_j}{\partial x}$ for each $j \in {\cal J}$, as stated in Assumptions \ref{assump:fns_continuity} and \ref{assump:constraint_fns}, and the fact that the arguments of these functions can be constrained to a compact domain, which follows from Lemma \ref{lemma:z_bound} and the compactness of $U$ and $\Sigma_r^q$.
\end{proof}

Next, we prove that the mapping from the discretized relaxed optimization space to the discretized trajectory is Lipschitz:
\begin{lemma}
  \label{lemma:discretized_x_lipschitz}
  There exists a constant $L > 0$ such that, for each $N \in \N$, $\tau \in {\cal T}_N$, $\xi_1,\xi_2 \in {\cal X}_{\tau,r}$ and $t \in [0,1]$:
  \begin{equation}
    \| \phi_{\tau,t}(\xi_1) - \phi_{\tau,t}(\xi_2) \|_2 \leq L \|\xi_1 - \xi_2 \|_{\cal X},
  \end{equation}
  where $\phi_{\tau,t}(\xi)$ is as defined in Equation \eqref{eq:discretized_flow_xi}.
\end{lemma}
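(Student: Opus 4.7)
The plan is to proceed in parallel to Lemma \ref{lemma:x_sequential_continuity} and Corollary \ref{corollary:x_lipschitz} but using the Discrete Bellman--Gronwall Inequality (Exercise 5.6.14 in \cite{Polak1997}) in place of its continuous-time analogue. Write $\xi_1 = (u_1, d_1)$ and $\xi_2 = (u_2, d_2)$, both in ${\cal X}_{\tau,r}$. My first step is to bound $\bigl\| z^{(\xi_1)}(\tau_k) - z^{(\xi_2)}(\tau_k) \bigr\|_2$ for each $k \in \{0, \ldots, \card{\tau}\}$ at the sample points. By subtracting the Forward Euler recursions in Equation \eqref{eq:euler_traj_xi} and using the triangle inequality together with the Lipschitz continuity of $f$ in its last three arguments (as established in Assumption \ref{assump:fns_continuity} and in the proof of Theorem \ref{thm:existence_and_uniqueness}), I obtain
\begin{equation*}
  \bigl\| z^{(\xi_1)}(\tau_{k+1}) - z^{(\xi_2)}(\tau_{k+1}) \bigr\|_2
  \leq \bigl( 1 + L ( \tau_{k+1} - \tau_k ) \bigr) \bigl\| z^{(\xi_1)}(\tau_k) - z^{(\xi_2)}(\tau_k) \bigr\|_2
  + L ( \tau_{k+1} - \tau_k ) \bigl( \| u_1(\tau_k) - u_2(\tau_k) \|_2 + \| d_1(\tau_k) - d_2(\tau_k) \|_2 \bigr).
\end{equation*}

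Applying the Discrete Bellman--Gronwall Inequality and using that $\prod_{j<k} \bigl( 1 + L(\tau_{j+1} - \tau_j) \bigr) \leq e^{L \sum_j (\tau_{j+1}-\tau_j)} \leq e^L$, the product factor is bounded by a constant independent of $N$ and $\tau$. Thus
\begin{equation*}
  \bigl\| z^{(\xi_1)}(\tau_k) - z^{(\xi_2)}(\tau_k) \bigr\|_2
  \leq L e^L \sum_{j=0}^{k-1} (\tau_{j+1} - \tau_j) \bigl( \| u_1(\tau_j) - u_2(\tau_j) \|_2 + \| d_1(\tau_j) - d_2(\tau_j) \|_2 \bigr).
\end{equation*}
Critically, since $\xi_1, \xi_2 \in {\cal X}_{\tau,r}$, the differences $u_1 - u_2$ and $d_1 - d_2$ are piecewise constant on the partition $\tau$, so the sum above is exactly $\int_0^{\tau_k} \| u_1(s) - u_2(s) \|_2 + \| d_1(s) - d_2(s) \|_2 \, ds$, which by the $L^1$--to--$L^2$ trick in Equation \eqref{eq:1to2_norm_holder_trick} is bounded by $\| u_1 - u_2 \|_{L^2} + \| d_1 - d_2 \|_{L^2} = \| \xi_1 - \xi_2 \|_{\cal X}$. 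This produces a Lipschitz bound at the sample times with constant independent of $N$ and $\tau$.

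The second step extends the estimate from the sample points to arbitrary $t \in [0,1]$. For $t \in [\tau_k, \tau_{k+1})$, Equation \eqref{eq:discretized_traj_xi} shows that $\phi_{\tau,t}(\xi_1) - \phi_{\tau,t}(\xi_2)$ is a convex combination of $z^{(\xi_1)}(\tau_k) - z^{(\xi_2)}(\tau_k)$ and $z^{(\xi_1)}(\tau_{k+1}) - z^{(\xi_2)}(\tau_{k+1})$, so the triangle inequality propagates the bound to all $t$ without modifying the constant. Concatenating the two steps yields the desired Lipschitz estimate.

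The only delicate point, which I do not expect to be a serious obstacle, is ensuring that the Lipschitz constant $L$ does not depend on $N$ or on the particular partition $\tau$. This is controlled precisely by the uniform bound $\prod_{j<k}(1+L(\tau_{j+1}-\tau_j)) \leq e^L$ coming from the total interval length being $1$, and by the fact that the discretized inputs being piecewise constant on $\tau$ lets me convert the Riemann sums into $L^1$--integrals over $[0,1]$ exactly, with no discretization-dependent error.
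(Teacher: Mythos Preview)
Your proposal is correct and follows essentially the same approach as the paper: a one-step Lipschitz estimate from the Euler recursion, Discrete Bellman--Gronwall to propagate it, a Cauchy--Schwarz/Jensen step to convert the weighted $\ell^1$ sum into the ${\cal X}$--norm, and then the convex-combination argument to pass from sample points to all $t$. The only cosmetic difference is that the paper applies Jensen's inequality directly to the discrete sum to reach $\sqrt{\sum_j \Delta\tau_j\|\cdot\|_2^2}$, whereas you first observe that the sum equals the $L^1$ integral (by piecewise constancy) and then invoke the $L^1$-to-$L^2$ bound of Equation~\eqref{eq:1to2_norm_holder_trick}; these are the same inequality written two ways, and your product bound $\prod_j(1+L\Delta\tau_j)\leq e^L$ is in fact a cleaner statement of the uniformity in $N$ and $\tau$ than the paper's version.
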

\begin{proof}
  We first prove this result for each $t \in \tau$. 
  For notational convenience we will define $\Delta \tau_j = \tau_{j+1} - \tau_j$.
  Letting $\xi_1 = (u_1,d_1)$ and $\xi_2 = (u_2,d_2)$, notice that for $j \in \{0,\ldots,\card{\tau}-1\}$, by Equation \eqref{eq:euler_traj_xi} and rearranging the terms, there exists $L' > 0$ such that:
  \begin{equation}
    \begin{aligned}
      \label{eq:dxl_ineq1}
      \bigl\| \phi_{\tau,\tau_{j+1}}(\xi_1)& - \phi_{\tau,\tau_{j+1}}(\xi_2) \bigr\|_2 
      - \bigl\| \phi_{\tau,\tau_j}(\xi_1) - \phi_{\tau,\tau_j}(\xi_2) \bigr\|_2 \leq \\
      &\leq \Delta \tau_{j} \bigl\| f\bigl(\tau_{j},\phi_{\tau,\tau_{j}}(\xi_1), u_1(\tau_j), d_1(\tau_j) \bigr)
      - f\bigl( \tau_{j}, \phi_{\tau,\tau_{j}}(\xi_2), u_2(\tau_j), d_2(\tau_j) \bigr) \bigr\|_2 \\
      &\leq \frac{L'}{2^N} \bigl\| \phi_{\tau,\tau_j}(\xi_1) - \phi_{\tau,\tau_j}(\xi_2) \bigr\|_2 
      + L' \Delta \tau_j \bigl( \| u_1(\tau_j) - u_2(\tau_j) \|_2 + \| d_1(\tau_j) - d_2(\tau_j) \|_2 \bigr),
    \end{aligned}
  \end{equation}
  where the last inequality holds since the vector field $f$ is Lipschitz in all of its arguments, as shown in the proof of Theorem \ref{thm:existence_and_uniqueness}, and $\Delta \tau_j \leq \frac{1}{2^N}$ by definition of ${\cal T}_N$.

  Summing the inequality in Equation \eqref{eq:dxl_ineq1} for $j \in \{ 0, \ldots, k-1 \}$ and noting that $\phi_{\tau,\tau_0}(\xi_1) = \phi_{\tau,\tau_0}(\xi_2)$:
  \begin{multline}
    \bigl\| \phi_{\tau,\tau_{k}}(\xi_1) - \phi_{\tau,\tau_{k}}(\xi_2) \bigr\|_2 
    \leq \frac{L'}{2^N} \sum_{j=0}^{k-1} \left\| \phi_{\tau,\tau_j}(\xi_1) - \phi_{\tau,\tau_j}(\xi_2) \right\|_2 
    + L' \sum_{j=0}^{k-1} \Delta \tau_j \left\|u_1(\tau_j) - u_2(\tau_j) \right\|_2 + \\ 
    + L' \sum_{j=0}^{k-1} \Delta \tau_j \left\|d_1(\tau_j) - d_2(\tau_j) \right\|_2.
  \end{multline}
  Using the Discrete Bellman-Gronwall Inequality (Exercise 5.6.14 in \cite{Polak1997}) and the fact that $\left(1 + \frac{L'}{2^N}\right)^\frac{L'}{2^{N}} \leq e^{L'}$,
  \begin{equation}
    \begin{aligned}
      \left\| \phi_{\tau,\tau_{k}}(\xi_1) - \phi_{\tau,\tau_{k}}(\xi_2)\right\|_2 
      &\leq L' e^{L'} \left( \sum_{j=0}^{\card{\tau}-1} \Delta \tau_j \left\| u_1(\tau_j) - u_2(\tau_j) \right\|_2 
        + \sum_{j=0}^{\card{\tau}-1} \Delta \tau_{j} \left\| d_1(\tau_j) - d_2(\tau_j) \right\|_2 \right) \\
      &\leq L' e^{L'} \left( \sqrt{ \sum_{j=0}^{\card{\tau}-1} \Delta \tau_{j} \left\| u_1(\tau_j) - u_2(\tau_j) \right\|_2^2 } 
      + \sqrt{ \sum_{j=0}^{\card{\tau}-1} \Delta \tau_{j} \left\| d_1(\tau_j) - d_2(\tau_j) \right\|_2^2 } \right) \\
      &= L \| \xi_1 - \xi_2 \|_{\cal X},
    \end{aligned}
  \end{equation}
  where $L = L' e^{L'}$, and we employed Jensen's Inequality (Equation A.2 in \cite{Mallat1999}) together the fact that the ${\cal X}$--norm of $\xi \in {\cal X}_{\tau,r}$ can be written as a finite sum.

  The result for any $t \in [0,1]$ follows by noting that $\phi_{\tau,t}(\xi)$ is a convex combination of $\phi_{\tau,\tau_k}(\xi)$ and $\phi_{\tau,\tau_{k+1}}(\xi)$ for some $k \in \{ 0, \ldots, \card{\tau}-1 \}$.
\end{proof}

As a consequence, we immediately have the following results:
\begin{corollary}
  \label{corollary:discretized_vf_lipschitz}
  There exists a constant $L > 0$ such that for each $N \in \N$, $\tau \in {\cal T}_N$, $\xi_1 = (u_1,d_1) \in {\cal X}_{\tau,r}$, $\xi_2=(u_2,d_2) \in {\cal X}_{\tau,r}$ and $t \in [0,1]$:
  \begin{enumerate_parentesis}
    \item \label{corollary:discretized_fxi_lipschitz} 
      \begin{math}
        \begin{aligned}[t]
          \bigl\| f\bigl( t, \phi_{\tau,t}(\xi_1), u_1(t), d_1(t) \bigr) 
            - f\bigl( t, \phi_{\tau,t}(\xi_2), &u_2(t), d_2(t) \bigr) \bigr\|_2 \leq \\
          &\leq L \bigl( \left\| \xi_1 - \xi_2 \right\|_{\cal X} + \left\| u_1(t) - u_2(t) \right\|_2 + \left\| d_1(t) - d_2(t) \right\|_2 \bigr),
        \end{aligned}
      \end{math}
    \item \label{corollary:discretized_dfdxxi_lipschitz} 
      \begin{math}
        \begin{aligned}[t]
          \biggl\| \frac{\partial f}{\partial x}\bigl( t, \phi_{\tau,t}(\xi_1), u_1(t), d_1(t) \bigr) 
          - \frac{\partial f}{\partial x}\bigl( t, \phi_{\tau,t}(\xi_2), &u_2(t), d_2(t) \bigr) \biggr\|_{i,2} \leq \\
          &\leq L \bigl( \left\| \xi_1 - \xi_2 \right\|_{\cal X} + \left\| u_1(t) - u_2(t) \right\|_2 + \left\| d_1(t) - d_2(t) \right\|_2 \bigr),          
        \end{aligned}
      \end{math}
    \item \label{corollary:discretized_dfduxi_lipschitz} 
      \begin{math}
        \begin{aligned}[t]
          \biggl\| \frac{\partial f}{\partial u}\bigl( t, \phi_{\tau,t}(\xi_1), u_1(t), d_1(t) \bigr) 
            - \frac{\partial f}{\partial u}\bigl( t, \phi_{\tau,t}(\xi_2), &u_2(t), d_2(t) \bigr) \biggr\|_{i,2} \leq \\
          &\leq L \bigl( \left\| \xi_1 - \xi_2 \right\|_{\cal X} + \left\| u_1(t) - u_2(t) \right\|_2 + \left\| d_1(t) - d_2(t) \right\|_2 \bigr),          
        \end{aligned}
      \end{math}
  \end{enumerate_parentesis}
  where $\phi_{\tau,t}(\xi)$ is as defined in Equation \eqref{eq:discretized_flow_xi}.
\end{corollary}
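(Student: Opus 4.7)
The plan is to mirror the proof of Corollary~\ref{corollary:vf_lipschitz}, substituting the discretized flow estimate of Lemma~\ref{lemma:discretized_x_lipschitz} in place of the continuous-time estimate from Corollary~\ref{corollary:x_lipschitz}. First I would recall, exactly as in the proof of Theorem~\ref{thm:existence_and_uniqueness}, that the aggregated vector field $f$ defined in Equation~\eqref{eq:compact_traj_xi} is Lipschitz in its fourth argument: using the boundedness supplied by Corollary~\ref{corollary:discretized_fns_bounded} in place of Corollary~\ref{corollary:fns_bounded}, there exists $C>0$ such that
\begin{equation}
  \bigl\| f(t,x,u,d_1) - f(t,x,u,d_2) \bigr\|_2 \leq C q \, \|d_1 - d_2\|_2
\end{equation}
for all $t\in[0,1]$, $x\in\R^n$, $u\in U$, and $d_1,d_2\in\Sigma_r^q$. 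Combined with Conditions~\ref{assump:f_lipschitz}--\ref{assump:dfdu_lipschitz} of Assumption~\ref{assump:fns_continuity}, this yields joint Lipschitz continuity of $f$, $\frac{\partial f}{\partial x}$, and $\frac{\partial f}{\partial u}$ in $(t,x,u,d)$.

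For Condition~\ref{corollary:discretized_fxi_lipschitz} I would add and subtract intermediate evaluations of $f$ inside the norm and invoke the joint Lipschitz bound above to obtain a pointwise estimate of the form
\begin{equation}
  L \bigl( \|\phi_{\tau,t}(\xi_1) - \phi_{\tau,t}(\xi_2)\|_2 + \|u_1(t) - u_2(t)\|_2 + \|d_1(t) - d_2(t)\|_2 \bigr),
\end{equation}
and then replace the first summand using Lemma~\ref{lemma:discretized_x_lipschitz} so that it is absorbed into a constant multiple of $\|\xi_1-\xi_2\|_{\cal X}$. Conditions~\ref{corollary:discretized_dfdxxi_lipschitz} and \ref{corollary:discretized_dfduxi_lipschitz} then follow verbatim, using the Lipschitz bounds for $\frac{\partial f}{\partial x}$ and $\frac{\partial f}{\partial u}$ supplied by Assumption~\ref{assump:fns_continuity} in place of the one for $f$.

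There is essentially no obstacle: the substantive content has already been pushed into Lemma~\ref{lemma:discretized_x_lipschitz} and Corollary~\ref{corollary:discretized_fns_bounded}. The only point requiring care is verifying that the resulting Lipschitz constant can be taken uniformly over $N\in\N$ and $\tau\in{\cal T}_N$; this is immediate, since the constants supplied by Assumption~\ref{assump:fns_continuity} are intrinsic to $f$ and do not depend on the discretization, while the constant in Lemma~\ref{lemma:discretized_x_lipschitz} is itself uniform in $\tau$.
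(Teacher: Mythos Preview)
Your proposal is correct and follows exactly the paper's approach: use the fact that $f$ (and its partials) are Lipschitz in all their arguments, as established in the proof of Theorem~\ref{thm:existence_and_uniqueness}, and then invoke Lemma~\ref{lemma:discretized_x_lipschitz} to control the discretized-state difference. Your additional remarks on uniformity in $N$ and $\tau$ and on substituting Corollary~\ref{corollary:discretized_fns_bounded} for Corollary~\ref{corollary:fns_bounded} are accurate refinements of the paper's terse proof.
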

\begin{proof}
  The proof of Condition \ref{corollary:discretized_fxi_lipschitz} follows by the fact that the vector field $f$ is Lipschitz in all its arguments, as shown in the proof of Theorem \ref{thm:existence_and_uniqueness}, and applying Lemma \ref{lemma:discretized_x_lipschitz}.
  The remaining conditions follow in a similar fashion.
\end{proof}

\begin{corollary}
  \label{corollary:discretized_constraints_lipschitz}
  There exists a constant $L > 0$ such that for each $N \in \N$, $\tau \in {\cal T}_N$, $\xi_1 = (u_1,d_1) \in {\cal X}_{r,\tau}$, $\xi_2=(u_2,d_2) \in {\cal X}_{r,\tau}$, $j \in {\cal J}$, and $t \in [0,1]$:
\begin{enumerate_parentesis}
  \item \label{corollary:discretized_phixi_lipschitz} 
    $\bigl| h_0\bigl( \phi_{\tau,1}(\xi_1) \bigr) - h_0\bigl( \phi_{\tau,1}(\xi_2) \bigr) \bigr| 
    \leq L \left\| \xi_1 - \xi_2 \right\|_{\cal X}$,
  \item \label{corollary:discretized_dphidxxi_lipschitz} 
    $\Bigl\| \frac{\partial h_0}{\partial x}\bigl( \phi_{\tau,1}(\xi_1) \bigr) - \frac{\partial h_0}{\partial x}\bigl(\phi_{\tau,1}(\xi_2) \bigr) \Bigr\|_2 
    \leq L \left\| \xi_1 - \xi_2 \right\|_{\cal X}$,
  \item \label{corollary:discretized_hjxi_lipschitz} 
    $\bigl| h_j\bigl( \phi_{\tau,t}(\xi_1) \bigr) - h_j\bigl( \phi_{\tau,t}(\xi_2) \bigr) \bigr| 
    \leq L \left\| \xi_1 - \xi_2 \right\|_{\cal X}$,
  \item \label{corollary:discretized_dhjdxxi_lipschitz} 
    $\Bigl\| \frac{\partial h_j}{\partial x}\bigl( \phi_{\tau,t}(\xi_1) \bigr) - \frac{\partial h_j}{\partial x}\bigl( \phi_{\tau,t}(\xi_2) \bigr) \Bigr\|_2 
    \leq L \left\| \xi_1 - \xi_2 \right\|_{\cal X}$,
\end{enumerate_parentesis}
where $\phi_{\tau,t}(\xi)$ is as defined in Equation \eqref{eq:discretized_flow_xi}.
\end{corollary}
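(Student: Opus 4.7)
The plan is to mirror the proof of Corollary \ref{corollary:constraints_lipschitz} almost verbatim, simply substituting Lemma \ref{lemma:discretized_x_lipschitz} for Corollary \ref{corollary:x_lipschitz} in the chain-of-Lipschitz-constants step. All four inequalities have the same structure: on the left we have the difference of a function (either $h_0$, $h_j$, $\tfrac{\partial h_0}{\partial x}$, or $\tfrac{\partial h_j}{\partial x}$) evaluated at $\phi_{\tau,t}(\xi_1)$ and $\phi_{\tau,t}(\xi_2)$, where these points both lie in the compact set supplied by Lemma \ref{lemma:z_bound}. So each estimate factors as a composition of two Lipschitz maps.

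Concretely, for Condition \ref{corollary:discretized_hjxi_lipschitz} I would write
\begin{equation}
  \bigl| h_j\bigl( \phi_{\tau,t}(\xi_1) \bigr) - h_j\bigl( \phi_{\tau,t}(\xi_2) \bigr) \bigr|
  \leq L_h \bigl\| \phi_{\tau,t}(\xi_1) - \phi_{\tau,t}(\xi_2) \bigr\|_2
  \leq L_h L_\phi \| \xi_1 - \xi_2 \|_{\cal X},
\end{equation}
where $L_h$ is the Lipschitz constant from Condition \ref{assump:hj_lipschitz} in Assumption \ref{assump:constraint_fns} and $L_\phi$ is the constant from Lemma \ref{lemma:discretized_x_lipschitz}. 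Setting $L = L_h L_\phi$ yields the bound. Condition \ref{corollary:discretized_dhjdxxi_lipschitz} is identical with $L_h$ replaced by the constant from Condition \ref{assump:dhjdx_lipschitz} in Assumption \ref{assump:constraint_fns}, and Conditions \ref{corollary:discretized_phixi_lipschitz} and \ref{corollary:discretized_dphidxxi_lipschitz} are the same with $t = 1$ and with $h_0$ in place of $h_j$, using Conditions \ref{assump:phi_lipschitz} and \ref{assump:dphidx_lipschitz} in Assumption \ref{assump:constraint_fns}. Taking the maximum of the four resulting constants produces a single $L$ that works for all four inequalities.

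There is essentially no obstacle. The only subtlety worth remarking on is that both Lipschitz constants in play are \emph{uniform} in the discretization data: the one from Assumption \ref{assump:constraint_fns} depends only on $h_0$, $h_j$ (it is a hypothesis of the paper), and Lemma \ref{lemma:discretized_x_lipschitz} explicitly produces a constant that does not depend on $N$, $\tau$, or $t$. Consequently, the resulting $L$ is also independent of $N$, $\tau$, $j$, and $t$, as required by the statement. No appeal to boundedness of the arguments beyond what is already implicit in the two cited results is needed, so the proof will be no longer than a few lines.
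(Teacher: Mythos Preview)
Your proposal is correct and matches the paper's approach exactly: the paper's proof is the single sentence ``This result follows by Assumption \ref{assump:constraint_fns} and Lemma \ref{lemma:discretized_x_lipschitz},'' and you have simply unpacked that composition-of-Lipschitz-maps argument in detail.
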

\begin{proof}
  This result follows by Assumption \ref{assump:constraint_fns} and Lemma \ref{lemma:discretized_x_lipschitz}.
\end{proof}

Even though it is a straightforward consequence of Condition \ref{corollary:discretized_phixi_lipschitz} in Corollary \ref{corollary:discretized_constraints_lipschitz}, we write the following result to stress its importance.
\begin{corollary}
	\label{corollary:discretized_J_continuous}
	Let $N \in \N$ and $\tau \in {\cal T}_N$, then there exists a constant $L > 0$ such that, for each $\xi_1, \xi_2 \in {\cal X}_{\tau,r}$:
  \begin{equation}
    \left\lvert J_{\tau}( \xi_1 ) - J_{\tau}( \xi_2 ) \right\rvert \leq L \left\lVert \xi_1 - \xi_2 \right\rVert_{\cal X}
  \end{equation}
  where $J_{\tau}$ is as defined in Equation \eqref{eq:discretized_cost}.
\end{corollary}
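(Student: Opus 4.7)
The plan is to observe that this corollary follows by a one-line unwinding of definitions followed by a direct appeal to the already-established Lipschitz estimate on $h_0 \circ \phi_{\tau,1}$. Specifically, I would first recall from Equation \eqref{eq:discretized_cost} and the notation introduced in Equation \eqref{eq:discretized_flow_xi} that
\begin{equation}
  J_{\tau}(\xi) = h_0\bigl( z^{(\xi)}(1) \bigr) = h_0\bigl( \phi_{\tau,1}(\xi) \bigr)
\end{equation}
for every $\xi \in {\cal X}_{\tau,r}$, so that for $\xi_1, \xi_2 \in {\cal X}_{\tau,r}$ one has
\begin{equation}
  \bigl\lvert J_{\tau}(\xi_1) - J_{\tau}(\xi_2) \bigr\rvert
  = \bigl\lvert h_0\bigl( \phi_{\tau,1}(\xi_1) \bigr) - h_0\bigl( \phi_{\tau,1}(\xi_2) \bigr) \bigr\rvert.
\end{equation}

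Next, I would invoke Condition \ref{corollary:discretized_phixi_lipschitz} of Corollary \ref{corollary:discretized_constraints_lipschitz}, which already provides a constant $L > 0$, independent of $N$ and $\tau$, satisfying
\begin{equation}
  \bigl\lvert h_0\bigl( \phi_{\tau,1}(\xi_1) \bigr) - h_0\bigl( \phi_{\tau,1}(\xi_2) \bigr) \bigr\rvert
  \leq L \left\lVert \xi_1 - \xi_2 \right\rVert_{\cal X}.
\end{equation}
Chaining the two displays produces the claimed Lipschitz estimate, and since the constant $L$ delivered by Corollary \ref{corollary:discretized_constraints_lipschitz} does not depend on the choice of partition, the same constant works uniformly over all $N \in \N$ and $\tau \in {\cal T}_N$.

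There is no genuine obstacle here: all the work has already been done in Lemma \ref{lemma:discretized_x_lipschitz} (Lipschitz dependence of the discretized flow on the input pair) and Assumption \ref{assump:constraint_fns}\ref{assump:phi_lipschitz} (Lipschitz continuity of $h_0$), which together were composed to yield Corollary \ref{corollary:discretized_constraints_lipschitz}\ref{corollary:discretized_phixi_lipschitz}. The only reason to state the corollary separately is pedagogical, to highlight the Lipschitz continuity of the discretized cost as its own named fact for later reference in the analysis of $\theta_\tau$. Accordingly, the proof should be kept to essentially one sentence, matching the tone of the authors' remark that the result is a straightforward consequence of the previously established estimate.
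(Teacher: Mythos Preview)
Your proposal is correct and matches the paper's approach exactly: the paper explicitly states that this corollary is a straightforward consequence of Condition \ref{corollary:discretized_phixi_lipschitz} in Corollary \ref{corollary:discretized_constraints_lipschitz}, which is precisely the single ingredient you invoke after unwinding the definition $J_\tau(\xi) = h_0\bigl(\phi_{\tau,1}(\xi)\bigr)$.
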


In fact, $\Psi_{\tau}$ is also Lipschitz continuous:
\begin{lemma}
	\label{lemma:discretized_psi_continuous}
  Let $N \in \N$ and $\tau \in {\cal T}_N$, then there exists a constant $L > 0$ such that, for each $\xi_1, \xi_2 \in {\cal X}_r$:
  \begin{equation}
    \left\lvert \Psi_{\tau}( \xi_1 ) - \Psi_{\tau}( \xi_2 ) \right\rvert \leq L \left\lVert \xi_1 - \xi_2 \right\rVert_{\cal X}
  \end{equation}
  where $\Psi_{\tau}$ is as defined in Equation \eqref{eq:discretized_constraint}.
\end{lemma}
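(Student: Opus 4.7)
The plan is to mirror the proof of Lemma \ref{lemma:psi_continuous} almost verbatim, substituting the discretized objects for their infinite-dimensional counterparts. First, I would rewrite $\Psi_\tau(\xi) = \max_{(j,k) \in {\cal J} \times \{0,\ldots,\card{\tau}\}} \psi_{\tau,j,\tau_k}(\xi)$, noting that unlike the continuous case in Lemma \ref{lemma:psi_continuous}, the maximum is now taken over a \emph{finite} index set, so the $\max$ is automatically attained and no compactness-continuity argument is needed to justify interchanging operations.

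Second, I would invoke the elementary inequality
\begin{equation}
  \Bigl| \max_{i \in {\cal I}} x_i - \max_{i \in {\cal I}} y_i \Bigr| \leq \max_{i \in {\cal I}} | x_i - y_i |,
\end{equation}
which was already recorded as Equation \eqref{eq:max_norm_trick} in the proof of Lemma \ref{lemma:zeta_lipschitz}. Applying this with ${\cal I} = {\cal J} \times \{0,\ldots,\card{\tau}\}$, $x_{(j,k)} = \psi_{\tau,j,\tau_k}(\xi_1)$, and $y_{(j,k)} = \psi_{\tau,j,\tau_k}(\xi_2)$, I obtain
\begin{equation}
  \bigl| \Psi_\tau(\xi_1) - \Psi_\tau(\xi_2) \bigr| \leq \max_{(j,k) \in {\cal J} \times \{0,\ldots,\card{\tau}\}} \bigl| \psi_{\tau,j,\tau_k}(\xi_1) - \psi_{\tau,j,\tau_k}(\xi_2) \bigr|.
\end{equation}

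Third, I would bound each term in the maximum using Condition \ref{corollary:discretized_hjxi_lipschitz} in Corollary \ref{corollary:discretized_constraints_lipschitz}. The crucial observation is that the Lipschitz constant provided by that corollary depends neither on $\tau$ nor on the time $t$ at which $\phi_{\tau,t}$ is evaluated, so the same constant $L > 0$ bounds $|\psi_{\tau,j,\tau_k}(\xi_1) - \psi_{\tau,j,\tau_k}(\xi_2)|$ uniformly over all $(j,k)$ in the finite index set. Taking the maximum preserves this uniform bound and yields the desired inequality.

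I do not anticipate any genuine obstacle here: the argument is routine once Corollary \ref{corollary:discretized_constraints_lipschitz} is in hand, and in fact the discretized case is marginally simpler than Lemma \ref{lemma:psi_continuous} because the maximum is over a finite set rather than requiring the continuity/compactness argument used in the original. The only point worth flagging explicitly in the write-up is the uniformity of $L$ across $(j, k, \tau)$, which follows directly from the statement of Corollary \ref{corollary:discretized_constraints_lipschitz}.
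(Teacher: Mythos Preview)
Your proposal is correct and essentially identical to the paper's proof: both rewrite $\Psi_\tau$ as a finite maximum, invoke Condition \ref{corollary:discretized_hjxi_lipschitz} of Corollary \ref{corollary:discretized_constraints_lipschitz} for the uniform Lipschitz bound on each $\psi_{\tau,j,\tau_k}$, and pass through the max. The only cosmetic difference is that the paper bounds $\Psi_\tau(\xi_1) - \Psi_\tau(\xi_2)$ one-sidedly and then swaps $\xi_1,\xi_2$, whereas you apply the two-sided inequality \eqref{eq:max_norm_trick} directly; these are equivalent.
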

\begin{proof}
  Since the maximum in $\Psi_{\tau}$ is taken over ${\cal J} \times k \in \{0,\ldots,|\tau|\}$, which is compact, and the maps $(j,k) \mapsto \psi_{\tau,j,\tau_k}(\xi)$ are continuous for each $\xi \in {\cal X}_{\tau}$, we know from Condition \ref{corollary:discretized_hjxi_lipschitz} in Corollary \ref{corollary:discretized_constraints_lipschitz} that there exists $L > 0$ such that,
  \begin{equation}
    \begin{aligned}
      \Psi_{\tau}(\xi_1) - \Psi_{\tau}(\xi_2) &= \max_{(j,k) \in {\cal J} \times  \{0,\ldots,|\tau|\}} \psi_{\tau,j,\tau_k}(\xi_1) - \max_{(j,k) \in {\cal J} \times  \{0,\ldots,|\tau|\}} \psi_{\tau,j,\tau_k}(\xi_2) \\
      &\leq \max_{(j,k) \in {\cal J} \times  \{0,\ldots,|\tau|\}} \psi_{\tau,j,\tau_k}(\xi_1) - \psi_{\tau,j,\tau_k}(\xi_2) \\
      &\leq L \left\lVert \xi_1 - \xi_2 \right\rVert_{\cal X}.
    \end{aligned}
  \end{equation}
  By reversing $\xi_1$ and $\xi_2$, and applying the same argument we get the desired result.
\end{proof}

We can now show the rate of convergence of the Euler Integration scheme:
\begin{lemma}
  \label{lemma:convergence_discretized_traj_xi}
  There exists a constant $B > 0$ such that for each $N \in \N$, $\tau \in {\cal T}_N$, $\xi \in {\cal X}_{\tau,r}$, and $t \in [0,1]$:
  \begin{equation}
    \bigl\| z^{(\xi)}_\tau(t) - x^{(\xi)}(t) \bigr\|_2 \leq \frac{B}{2^N},
  \end{equation}
  where $x^{(\xi)}$ is the solution to Differential Equation \eqref{eq:traj_xi} and $z^{(\xi)}_\tau$ is as defined in Difference Equation \eqref{eq:discretized_traj_xi}.
\end{lemma}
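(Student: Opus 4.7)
The plan is to prove this standard Forward Euler convergence estimate in two stages: first at the grid points $\tau_k$, then extend to arbitrary $t \in [0,1]$ via the piecewise linear interpolation.

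\emph{Stage 1 (error at grid points).} Fix $\xi = (u,d) \in \mathcal{X}_{\tau,r}$. Because $u$ and $d$ are piecewise constant on $\tau$, on each subinterval $[\tau_k, \tau_{k+1}]$ the true trajectory satisfies
\begin{equation*}
  x^{(\xi)}(\tau_{k+1}) = x^{(\xi)}(\tau_k) + \int_{\tau_k}^{\tau_{k+1}} f\bigl(s, x^{(\xi)}(s), u(\tau_k), d(\tau_k)\bigr) ds,
\end{equation*}
while the Euler iterate satisfies Equation~\eqref{eq:euler_traj_xi}. Setting $e_k = z^{(\xi)}_\tau(\tau_k) - x^{(\xi)}(\tau_k)$ and subtracting, I would bound the integrand via the triangle inequality followed by Condition~\ref{assump:f_lipschitz} in Assumption~\ref{assump:fns_continuity}:
\begin{equation*}
  \bigl\| f(s, x^{(\xi)}(s), u(\tau_k), d(\tau_k)) - f(\tau_k, z^{(\xi)}_\tau(\tau_k), u(\tau_k), d(\tau_k)) \bigr\|_2
  \leq L\bigl( |s - \tau_k| + \|x^{(\xi)}(s) - x^{(\xi)}(\tau_k)\|_2 + \|e_k\|_2 \bigr).
\end{equation*}
The middle term is itself bounded by $C|s - \tau_k|$ using Condition~\ref{corollary:f_bounded} in Corollary~\ref{corollary:fns_bounded} (so $\|\dot{x}^{(\xi)}\|_2 \leq C$). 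Integrating over $[\tau_k,\tau_{k+1}]$ and using $|\tau_{k+1} - \tau_k| \leq 2^{-N}$ yields a recursion of the form
\begin{equation*}
  \|e_{k+1}\|_2 \leq (1 + L \Delta \tau_k)\|e_k\|_2 + \tfrac{L(1+C)}{2}(\Delta \tau_k)^2 \leq (1 + L \Delta \tau_k)\|e_k\|_2 + \tfrac{L(1+C)}{2 \cdot 2^N} \Delta \tau_k.
\end{equation*}
Since $e_0 = 0$ and $\sum_k \Delta \tau_k = 1$, the Discrete Bellman--Gronwall Inequality (Exercise 5.6.14 in \cite{Polak1997}), applied exactly as in Lemma~\ref{lemma:z_bound}, delivers $\|e_k\|_2 \leq B'/2^N$ for some constant $B'>0$ independent of $\xi$, $N$, and $\tau$.

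\emph{Stage 2 (error between grid points).} For $t \in [\tau_k, \tau_{k+1}]$, write $z^{(\xi)}_\tau(t)$ as the convex combination given in Equation~\eqref{eq:discretized_traj_xi}, and estimate
\begin{equation*}
  \bigl\| z^{(\xi)}_\tau(t) - x^{(\xi)}(t) \bigr\|_2
  \leq \bigl\| z^{(\xi)}_\tau(t) - z^{(\xi)}_\tau(\tau_k) \bigr\|_2
  + \bigl\| z^{(\xi)}_\tau(\tau_k) - x^{(\xi)}(\tau_k) \bigr\|_2
  + \bigl\| x^{(\xi)}(\tau_k) - x^{(\xi)}(t) \bigr\|_2.
\end{equation*}
The middle term is bounded by $B'/2^N$ from Stage 1. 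The third term is at most $C(t - \tau_k) \leq C/2^N$ by the boundedness of $\dot{x}^{(\xi)}$. For the first term, the definition in Equation~\eqref{eq:discretized_traj_xi} combined with Equation~\eqref{eq:euler_traj_xi} gives
$\| z^{(\xi)}_\tau(t) - z^{(\xi)}_\tau(\tau_k) \|_2 \leq (t - \tau_k) \| f(\tau_k, z^{(\xi)}_\tau(\tau_k), u(\tau_k), d(\tau_k)) \|_2 \leq C/2^N$
using Condition~\ref{corollary:discretized_f_bounded} in Corollary~\ref{corollary:discretized_fns_bounded}. Taking $B = B' + 2C$ completes the proof.

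\emph{Anticipated obstacles.} The argument is largely routine, but two subtleties warrant care. First, one must ensure that the local truncation error really is $O((\Delta \tau_k)^2)$ even though $f$ has a time argument and $u, d$ are only piecewise constant; this is why Condition~\ref{assump:f_lipschitz} controls both the $|s - \tau_k|$ variation and the $x$-variation simultaneously, and why it is essential that $u, d$ are constant on $[\tau_k, \tau_{k+1}]$. Second, when invoking discrete Gronwall one has to convert $(\Delta \tau_k)^2$ into $2^{-N} \Delta \tau_k$ so that summing over $k$ yields a telescoping bound proportional to $\sum_k \Delta \tau_k = 1$ rather than $2^N (\Delta \tau)^2$; this is the step that produces the clean $O(2^{-N})$ rate and is the key algebraic observation.
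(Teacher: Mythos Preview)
Your proof is correct, but the paper takes a different and slightly more streamlined route. Rather than splitting into a grid-point stage followed by an interpolation stage, the paper treats the piecewise linear interpolant $z^{(\xi)}_\tau$ directly as an absolutely continuous approximate solution of the ODE and applies Picard's Lemma (Lemma 5.6.3 in \cite{Polak1997}): since $\dot z^{(\xi)}_\tau(s) = f\bigl(\tau_k, z^{(\xi)}_\tau(\tau_k), u(\tau_k), d(\tau_k)\bigr)$ on each subinterval, the defect $\bigl\|\dot z^{(\xi)}_\tau(s) - f\bigl(s, z^{(\xi)}_\tau(s), u(s), d(s)\bigr)\bigr\|_2$ is bounded by $L\bigl(1 + \|f(\tau_k,\ldots)\|_2\bigr)(s-\tau_k)$, and integrating this over $[0,1]$ yields the $O(2^{-N})$ bound for every $t$ in a single stroke. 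Your two-stage argument is the classical textbook Euler analysis and arrives at essentially the same constant; it has the advantage of being self-contained (no black-box lemma), while the paper's approach avoids the separate interpolation step and makes the uniformity in $t$ automatic.
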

\begin{proof}
  Let $\xi = (u,d)$, and recall that the vector field $f$ is Lipschitz continuous in all its arguments, as shown in the proof of Theorem \ref{thm:existence_and_uniqueness}.
  By applying Picard's Lemma (Lemma 5.6.3 in \cite{Polak1997}), we have:
  \begin{equation}
    \begin{aligned}
      \bigl\| z^{(\xi)}(t) - x^{(\xi)}(t) \bigr\|_2 
      &\leq e^{L} \int_0^1 \left\| \frac{dz^{(\xi)}}{ds}(s) - f\bigl( s, z^{(\xi)}(s), u(s), d(s) \bigr) \right\|_2 ds \\
      &= e^{L} \sum_{k=0}^{\card{\tau}-1} \int_{\tau_k}^{\tau_{k+1}} \biggl\| 
      f\bigl( \tau_k, z^{(\xi)}(\tau_k), u(\tau_k), d(\tau_k) \bigr) + \\
      &\phantom{= e^{L} \sum_{k=0}^{\card{\tau}-1} \int_{\tau_k}^{\tau_{k+1}} \biggl\|{}} 
      - f\biggl( s, z^{(\xi)}(\tau_k) + \frac{s - \tau_k}{\tau_{k+1} - \tau_k} \bigl( z^{(\xi)}(\tau_{k+1}) - z^{(\xi)}(\tau_{k}) \bigr), u(\tau_k), d(\tau_k) \biggr) \biggr\|_2 ds \\
      &\leq L e^{L} \sum_{k=0}^{\card{\tau}-1} \left( 1 + \bigl\| f\bigl( \tau_k, z^{(\xi)}(\tau_k), u(\tau_k), d(\tau_k) \bigr) \bigr\|_2 \right) 
      \left( \int_{\tau_k}^{\tau_{k+1}} \lvert s - \tau_k \rvert ds \right) \\
      &\leq \frac{1}{2^N} L e^L ( 1 + C ) \sum_{k=0}^{\card{\tau}-1} ( \tau_{k+1} - \tau_k ) = \frac{B}{2^N},
    \end{aligned}
  \end{equation}
  where $C > 0$ is as defined in Condition \ref{corollary:discretized_f_bounded} in Corollary \ref{corollary:discretized_fns_bounded} and, $B = ( 1 + C ) L e^L$, and we used the fact that $\tau_{k+1} - \tau_k \leq \frac{1}{2^N}$ by definition of ${\cal T}_N$ in Equation \eqref{eq:switching_times}.
\end{proof}

Importantly we can show that we can bound the rate of convergence of this discretized cost function. We omit the proof since if follows easily using Assumption \ref{assump:constraint_fns} and Lemma \ref{lemma:convergence_discretized_traj_xi}.
\begin{lemma}
  \label{lemma:roc_cost}
  There exists a constant $B > 0$ such that for each $N \in \N$, $\tau \in {\cal T}_N$, and $\xi \in {\cal X}_{\tau,r}$:
  \begin{equation}
    \left\lvert J_{\tau}(\xi) - J(\xi) \right\rvert \leq \frac{B}{2^N},
  \end{equation}
  where $J$ is as defined in Equation \eqref{eq:cost} and $J_{\tau}$ is as defined in Equation \eqref{eq:discretized_cost}.
\end{lemma}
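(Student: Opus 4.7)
The plan is to combine the Lipschitz continuity of the terminal cost $h_0$ with the rate-of-convergence estimate for the Euler scheme already established in Lemma \ref{lemma:convergence_discretized_traj_xi}. Since $J(\xi)=h_0\bigl(x^{(\xi)}(1)\bigr)$ and $J_\tau(\xi)=h_0\bigl(z^{(\xi)}_\tau(1)\bigr)$, the difference depends only on the discrepancy between the true trajectory and its Euler interpolation at the single time $t=1$.

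First, I would invoke Condition \ref{assump:phi_lipschitz} of Assumption \ref{assump:constraint_fns}, which provides a constant $L>0$ such that
\begin{equation}
\bigl\lvert h_0(x_1) - h_0(x_2) \bigr\rvert \leq L \lVert x_1 - x_2 \rVert_2
\end{equation}
for all $x_1,x_2 \in \R^n$. Applying this with $x_1 = z^{(\xi)}_\tau(1)$ and $x_2 = x^{(\xi)}(1)$ yields
\begin{equation}
\bigl\lvert J_\tau(\xi) - J(\xi) \bigr\rvert \leq L \bigl\lVert z^{(\xi)}_\tau(1) - x^{(\xi)}(1) \bigr\rVert_2.
\end{equation}

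Next, I would apply Lemma \ref{lemma:convergence_discretized_traj_xi}, which guarantees the existence of a constant $B' > 0$ (independent of $N$, $\tau$, and $\xi$) such that $\bigl\lVert z^{(\xi)}_\tau(t) - x^{(\xi)}(t) \bigr\rVert_2 \leq B'/2^N$ uniformly in $t \in [0,1]$. Evaluated at $t=1$, this gives the needed bound, and setting $B = L B'$ completes the argument.

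There is no real obstacle here — both ingredients are already in place and the argument is a direct composition. The only thing to be careful about is that the Lipschitz constant $L$ from Assumption \ref{assump:constraint_fns} and the constant $B'$ from Lemma \ref{lemma:convergence_discretized_traj_xi} are both independent of $N$, $\tau$, and $\xi \in {\cal X}_{\tau,r}$, so their product $B = L B'$ inherits that uniformity and yields the stated estimate.
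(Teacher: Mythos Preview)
Your proposal is correct and matches the paper's approach exactly: the paper omits the proof, noting only that it ``follows easily using Assumption \ref{assump:constraint_fns} and Lemma \ref{lemma:convergence_discretized_traj_xi},'' which is precisely the composition of the Lipschitz bound on $h_0$ with the Euler convergence estimate that you spell out.
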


Similarly, we can bound the rate of convergence of this discretized constraint function.
\begin{lemma}
  \label{lemma:roc_constraint}
  There exists a constant $B > 0$ such that for each $N \in \N$, $\tau \in {\cal T}_N$, and $\xi \in {\cal X}_{\tau,r}$:
  \begin{equation}
    \left\lvert \Psi_{\tau}(\xi) - \Psi(\xi) \right\rvert \leq \frac{B}{2^N},
  \end{equation}
  where $\Psi$ is as defined in Equation \eqref{eq:super_const} and $\Psi_{\tau}$ is as defined in Equation \eqref{eq:discretized_constraint}.
\end{lemma}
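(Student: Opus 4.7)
The plan is to bound $|\Psi_\tau(\xi) - \Psi(\xi)|$ by decomposing the difference into two sources of error: first, the error from replacing $z^{(\xi)}$ by $x^{(\xi)}$ at the grid points, and second, the error from replacing the maximum over all $t \in [0,1]$ by the maximum over the grid points $\{\tau_k\}$. Both errors should be $O(1/2^N)$, the first via Lemma \ref{lemma:convergence_discretized_traj_xi} together with the Lipschitz continuity of $h_j$, and the second via the uniform Lipschitz continuity of $x^{(\xi)}$ in time together with the mesh size bound built into the definition of ${\cal T}_N$.

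First, I would introduce the intermediate quantity
\begin{equation}
  \widetilde{\Psi}_{\tau}(\xi) = \max_{j \in {\cal J},\; k \in \{0,\ldots,\card{\tau}\}} h_j\bigl( x^{(\xi)}(\tau_k) \bigr),
\end{equation}
and write the triangle inequality $|\Psi_\tau(\xi) - \Psi(\xi)| \le |\Psi_\tau(\xi) - \widetilde{\Psi}_\tau(\xi)| + |\widetilde{\Psi}_\tau(\xi) - \Psi(\xi)|$. For the first term, I apply the ``max trick'' in Equation \eqref{eq:max_norm_trick} together with Condition \ref{assump:hj_lipschitz} of Assumption \ref{assump:constraint_fns} and Lemma \ref{lemma:convergence_discretized_traj_xi}, yielding
\begin{equation}
  \bigl| \Psi_\tau(\xi) - \widetilde{\Psi}_\tau(\xi) \bigr| \le \max_{j,k} \bigl| h_j(z^{(\xi)}(\tau_k)) - h_j(x^{(\xi)}(\tau_k)) \bigr| \le L \max_{k} \bigl\| z^{(\xi)}(\tau_k) - x^{(\xi)}(\tau_k) \bigr\|_2 \le \frac{L B_1}{2^N},
\end{equation}
with $L$ as in Assumption \ref{assump:constraint_fns} and $B_1$ as in Lemma \ref{lemma:convergence_discretized_traj_xi}.

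For the second term, note that $\widetilde{\Psi}_\tau(\xi) \le \Psi(\xi)$ trivially. For the reverse direction, fix any $t \in [0,1]$ and let $k$ be such that $t \in [\tau_k, \tau_{k+1})$, so that $|t - \tau_k| \le 1/2^N$ by the definition of ${\cal T}_N$ in Equation \eqref{eq:switching_times}. By Condition \ref{corollary:f_bounded} of Corollary \ref{corollary:fns_bounded}, $x^{(\xi)}$ is Lipschitz in $t$ with some constant $C$, so $\|x^{(\xi)}(t) - x^{(\xi)}(\tau_k)\|_2 \le C/2^N$. Applying Condition \ref{assump:hj_lipschitz} of Assumption \ref{assump:constraint_fns} once more gives $h_j(x^{(\xi)}(t)) \le h_j(x^{(\xi)}(\tau_k)) + LC/2^N \le \widetilde{\Psi}_\tau(\xi) + LC/2^N$. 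Taking the supremum over $j$ and $t$ yields $\Psi(\xi) - \widetilde{\Psi}_\tau(\xi) \le LC/2^N$.

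Combining the two bounds, $|\Psi_\tau(\xi) - \Psi(\xi)| \le (L B_1 + LC)/2^N$, so setting $B = L B_1 + LC$ delivers the desired inequality. The only mildly subtle point in this plan is the second piece, where one has to remember that although $\Psi$ involves a continuum maximum while $\Psi_\tau$ involves only finitely many samples, the uniform Lipschitz continuity of $x^{(\xi)}$ in time (inherited from the uniform boundedness of $f$) precisely compensates for the mesh spacing $1/2^N$, so no further regularity hypothesis is needed.
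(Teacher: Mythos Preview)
Your proof is correct and follows essentially the same approach as the paper: both arguments combine the Euler error bound from Lemma~\ref{lemma:convergence_discretized_traj_xi} (via the Lipschitz continuity of $h_j$) with the time-Lipschitz bound on $x^{(\xi)}$ coming from Condition~\ref{corollary:f_bounded} of Corollary~\ref{corollary:fns_bounded}, using the mesh size $2^{-N}$ from the definition of ${\cal T}_N$. The only cosmetic difference is that you introduce the intermediate quantity $\widetilde{\Psi}_\tau$ and split via the triangle inequality, whereas the paper first establishes the pointwise bound $\bigl|h_j\bigl(x^{(\xi)}(t)\bigr) - h_j\bigl(z^{(\xi)}(\tau_k)\bigr)\bigr| \le B/2^N$ for $t \in [\tau_k,\tau_{k+1}]$ and then compares the two maxima directly by choosing the argmax in each.
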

\begin{proof}
  Let $C > 0$ be as defined in Condition \ref{corollary:f_bounded} in Corollary \ref{corollary:fns_bounded}, and let $L > 0$ be the Lipschitz constant as specified in Assumption \ref{assump:constraint_fns}.
  Then, using the definition of ${\cal T}_N$ in Equation \eqref{eq:switching_times}, for each $k \in \{0,\ldots,\card{\tau}-1\}$ and $t \in [\tau_k, \tau_{k+1}]$,
  \begin{equation}
    \bigl\lvert h_j\bigl( x^{(\xi)}(t) \bigr) - h_j\bigl( x^{(\xi)}( \tau_k ) \bigr) \bigr\rvert 
    \leq L \int_{\tau_k}^t \bigl\lVert f\bigl( s, x^{(\xi)}(s), u(s), d(s) \bigr) \bigr\rVert_2 ds 
    \leq \frac{LC}{2^N}.
  \end{equation}
  Moreover, Condition \ref{assump:hj_lipschitz} in Assumption \ref{assump:constraint_fns} together Lemma \ref{lemma:convergence_discretized_traj_xi} imply the existence of a constant $K > 0$ such that:
  \begin{equation}
    \bigl\lvert h_j\bigl( x^{(\xi)}( \tau_k ) \bigr) - h_j\bigl( z^{(\xi)}( \tau_k ) \bigr) \bigr\rvert 
    \leq \frac{K}{2^N}.
  \end{equation}
  Employing the Triangular Inequality on the two previous inequalities, we know there exists a constant $B > 0$ such that, for each $t \in [\tau_k, \tau_{k+1}]$,
  \begin{equation}
    \bigl\lvert h_j\bigl( x^{(\xi)}(t) \bigr) - h_j\bigl( z^{(\xi)}( \tau_k ) \bigr) \bigr\lvert \leq \frac{B}{2^N}.
  \end{equation}

  Let $t' \in \argmax_{t \in [0,1]} h_j\bigl( x^{(\xi)}(t) \bigr)$, and let $\kappa(t') \in \{ 0, \ldots, \card{\tau}-1\}$ such that $t' \in \bigl[ \tau_{\kappa(t')}, \tau_{\kappa(t')+1} \bigr]$.
  Then,
  \begin{equation}
    \max_{t \in [0,1]} h_j\bigl( x^{(\xi)}(t) \bigr) - \max_{k \in \{0,\ldots,\card{\tau}\}} h_j\bigl( z^{(\xi)}(\tau_k)\bigr) 
    \leq h_j\bigl( x^{(\xi)}(t') \bigr) - h_j\bigl( z^{(\xi)}(\tau_{\kappa(t')}) \bigr) \leq \frac{B}{2^N}.
  \end{equation}
  Similarly if $k' \in \argmax_{ k \in \{ 0, \ldots, \card{\tau} \} } h_j\bigl( z^{(\xi)}(\tau_k) \bigr)$, then
  \begin{equation}
    \max_{k \in \{0,\ldots,\card{\tau}\}} h_j\bigl( z^{(\xi)}(\tau_k) \bigr) - \max_{t \in [0,1]} h_j\bigl( x^{(\xi)}(t) \bigr) 
    \leq h_j\bigl( z^{(\xi)}(\tau_{k'}) \bigr) - h_j\bigl( x^{(\xi)}( \tau_{k'} ) \bigr) \leq \frac{B}{2^N}.
  \end{equation}
  This implies that:
  \begin{align}
    \Psi(\xi) - \Psi_{\tau}(\xi) 
    &\leq \max_{j\in {\cal J}} \left( \max_{t \in [0,1]} h_j\bigl( x^{(\xi)}(t) \bigr) 
      - \max_{k \in \{0,\ldots,\card{\tau}\}} h_j\bigl( z^{(\xi)}(\tau_k)\bigr) \right) \leq \frac{B}{2^N}, \\
    \Psi_{\tau}(\xi) - \Psi(\xi) 
    &\leq \max_{j\in {\cal J}} \left( \max_{k \in \{0,\ldots,\card{\tau}\}} h_j\bigl( z^{(\xi)}(\tau_k) \bigr) 
      - \max_{t \in [0,1]} h_j\bigl( x^{(\xi)}(t) \bigr) \right) \leq \frac{B}{2^N},
  \end{align}
  which proves the desired result.
\end{proof}

\subsection{Derivation of the Implementable Algorithm Terms}

Next, we formally derive the components of the discretized optimality function, prove the well-posedness of the discretized optimality function, and prove the convergence of the discretized optimality function to the optimality function. We begin by deriving the equivalent of Lemma \ref{lemma:dxt_definition} for our discretized formulation.

\begin{lemma}
  \label{lemma:discretized_dxt_definition}
  Let $N \in \N$, $\tau \in {\cal T}_N$, $\xi = (u,d) \in {\cal X}_{\tau,r}$, $\xi' = (u',d') \in {\cal X}_\tau$, and $\phi_{\tau,t}$ be as defined in Equation \eqref{eq:discretized_flow_xi}.
  Then, for each $k \in \{ 0, \ldots, \card{\tau} \}$, the directional derivative of $\phi_{\tau,\tau_k}$, as defined in Equation \eqref{eq:operator_D_definition}, is given by
  \begin{multline}
    \label{eq:discretized_dxt_definition}
    \D{\phi_{\tau,\tau_k}}(\xi;\xi') = 
    \sum_{j=0}^{k-1} (\tau_{j+1} - \tau_j) 
      \Phi^{(\xi)}_{\tau}(\tau_k,\tau_{j+1}) \biggl( 
      \frac{\partial f}{\partial u}\bigl( \tau_j, \phi_{\tau,\tau_j}(\xi), u(\tau_j), d(\tau_j) \bigr) u'(\tau_j) + \\
    + \sum_{i=1}^{q} f\bigl( \tau_j, \phi_{\tau,\tau_j}(\xi), u(\tau_j), e_i \bigl) d_i'(\tau_j) \biggr),
  \end{multline}
  where $\Phi_\tau^{(\xi)}(\tau_k,\tau_j)$ is the unique solution of the following matrix difference equation:
  \begin{equation}
    \label{eq:discretized_stm}
    \Phi^{(\xi)}_\tau( \tau_{k+1}, \tau_j ) = \Phi^{(\xi)}_\tau( \tau_k, \tau_j ) 
    + ( \tau_{k+1} - \tau_k ) \frac{\partial f}{\partial x}\bigl( \tau_k, \phi_{\tau,\tau_k}(\xi), u(\tau_k), d(\tau_k) \bigr)
    \Phi^{(\xi)}_\tau( \tau_k, \tau_j), \quad \Phi^{(\xi)}_\tau( \tau_j, \tau_j ) = I,
  \end{equation}
  for each $k \in \{ 0, \ldots, \card{\tau}-1 \}$.
  
\end{lemma}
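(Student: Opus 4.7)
The strategy is to adapt the argument from the proof of Lemma \ref{lemma:dxt_definition} to the discrete-time setting, with the Discrete Bellman-Gronwall Inequality (Exercise 5.6.14 in \cite{Polak1997}) replacing the continuous Bellman-Gronwall Inequality and an elementary finite induction replacing the Dominated Convergence Theorem. For notational convenience, let me abbreviate $z^{(\lambda)}(\tau_k) = z_\tau^{(\xi + \lambda \xi')}(\tau_k)$, $\Delta z^{(\lambda)}(\tau_k) = z^{(\lambda)}(\tau_k) - \phi_{\tau,\tau_k}(\xi)$, $A_k = \frac{\partial f}{\partial x}\bigl(\tau_k, \phi_{\tau,\tau_k}(\xi), u(\tau_k), d(\tau_k)\bigr)$, and $B_k = \frac{\partial f}{\partial u}\bigl(\tau_k, \phi_{\tau,\tau_k}(\xi), u(\tau_k), d(\tau_k)\bigr) u'(\tau_k) + \sum_{i=1}^q f\bigl(\tau_k, \phi_{\tau,\tau_k}(\xi), u(\tau_k), e_i\bigr) d_i'(\tau_k)$.

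First, I would verify by direct substitution that the candidate expression on the right-hand side of Equation \eqref{eq:discretized_dxt_definition} is the unique solution of the linearized difference equation
\begin{equation*}
\zeta(\tau_{k+1}) = \zeta(\tau_k) + (\tau_{k+1} - \tau_k)\bigl[ A_k\, \zeta(\tau_k) + B_k \bigr], \qquad \zeta(\tau_0) = 0.
\end{equation*}
This reduces to peeling off the $j = k$ term from the proposed sum and applying the one-step identity $\Phi^{(\xi)}_\tau(\tau_{k+1}, \tau_{j+1}) = [I + (\tau_{k+1}-\tau_k) A_k]\, \Phi^{(\xi)}_\tau(\tau_k, \tau_{j+1})$, which is immediate from Equation \eqref{eq:discretized_stm}. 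This step is the discrete analogue of the variation of parameters formula used in the continuous case.

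Second, I would establish that $\bigl\| \Delta z^{(\lambda)}(\tau_k) \bigr\|_2 = O(\lambda)$ uniformly in $k \in \{0,\dots,\card{\tau}\}$. Starting from the Euler recursion \eqref{eq:euler_traj_xi} for $z^{(\lambda)}$ and $\phi_{\tau,\tau_k}(\xi)$, subtracting, and using the Lipschitz continuity of $f$ in all its arguments (Condition \ref{assump:f_lipschitz} of Assumption \ref{assump:fns_continuity}, together with the argument given in Theorem \ref{thm:existence_and_uniqueness} for Lipschitz continuity in $d$), an application of the Discrete Bellman-Gronwall Inequality produces this bound, exactly mirroring the proof of Lemma \ref{lemma:discretized_x_lipschitz}.

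Third, I would show that $\Delta z^{(\lambda)}(\tau_k)/\lambda \to \D\phi_{\tau,\tau_k}(\xi;\xi')$ as $\lambda \downarrow 0$. To do this, I would expand each one-step increment of $\Delta z^{(\lambda)}$ via a first-order Taylor expansion of $f$ around $(\phi_{\tau,\tau_k}(\xi), u(\tau_k), d(\tau_k))$, with the remainder controlled by $\bigl\| \Delta z^{(\lambda)}(\tau_k) \bigr\|_2^2 + \lambda^2$ using the Lipschitz continuity of $\partial f/\partial x$ and $\partial f/\partial u$ (Conditions \ref{assump:dfdx_lipschitz} and \ref{assump:dfdu_lipschitz} of Assumption \ref{assump:fns_continuity}) and the fact that $f$ is linear in $d$. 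After dividing by $\lambda$, subtracting the linearized equation solved by $\zeta$, and applying the $O(\lambda)$ bound from the previous step, one obtains a difference inequality of the form $\bigl\| \Delta z^{(\lambda)}(\tau_{k+1})/\lambda - \zeta(\tau_{k+1}) \bigr\|_2 \leq (1 + L 2^{-N}) \bigl\| \Delta z^{(\lambda)}(\tau_k)/\lambda - \zeta(\tau_k) \bigr\|_2 + o(1)$, so another invocation of the Discrete Bellman-Gronwall Inequality and the finiteness of $\card{\tau}$ yields the claimed convergence.

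The main obstacle is the bookkeeping required to separate the first-order linearization from the higher-order remainder, particularly because $f$ depends on $x$, $u$, and $d$ simultaneously and the perturbation acts on all three. However, since $\card{\tau}$ is finite and $\tau_{k+1} - \tau_k \leq 2^{-N}$ is bounded, the iteration remains well-behaved step-by-step and no delicate limiting argument (such as dominated convergence) is required; the proof is essentially a discrete mimicry of the continuous derivation in Lemma \ref{lemma:dxt_definition}.
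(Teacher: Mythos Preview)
Your proposal is correct and follows essentially the same approach as the paper: define the linearized difference equation, expand $\Delta z^{(\lambda)}(\tau_k)$ via the Mean Value Theorem / Taylor expansion, bound the remainder using the Lipschitz conditions of Assumption~\ref{assump:fns_continuity}, and close with the Discrete Bellman-Gronwall Inequality and Lemma~\ref{lemma:discretized_x_lipschitz}. The only cosmetic difference is that you explicitly verify the variation-of-parameters identity linking the closed-form sum to the linearized recursion, whereas the paper simply asserts this at the end; your extra step is harmless and arguably clearer.
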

\begin{proof}
  For notational convenience, let $z^{(\lambda)} = z^{(\xi + \lambda \xi')}$, $u^{(\lambda)} = u + \lambda u'$, and $d^{(\lambda)} = d + \lambda d'$. Also, let us define $\Delta z^{(\lambda)} = z^{(\lambda)} - z^{(\xi)}$, thus, for each $k \in \{0,\ldots,\card{\tau}\}$,
  \begin{equation}
    \begin{aligned}
      \Delta z^{(\lambda)}(\tau_k) 
      &= \sum_{j=0}^{k-1} (\tau_{j+1} - \tau_j) 
      \Bigl( f\bigl( \tau_j, z^{(\lambda)}(\tau_j), u^{(\lambda)}(\tau_j), d^{(\lambda)}(\tau_j) \bigr) - 
        f\bigl( \tau_j, z^{(\xi)}(\tau_j), u(\tau_j), d(\tau_j) \bigr) \Bigr) \\
      &= \sum_{j=0}^{k-1} (\tau_{j+1} - \tau_j) \Biggl( 
      \lambda \sum_{i=1}^q d'_i(\tau_j) f\bigl( \tau_j, z^{(\lambda)}(\tau_j), u^{(\lambda)}(\tau_j), e_i \bigr) + \\
      &\phantom{= \sum_{j=0}^{k-1} (\tau_{j+1} - \tau_j) \Biggl( }
      + \frac{\partial f}{\partial x}\bigl( \tau_j, z^{(\xi)}(\tau_j) + \nu_{x,j} \Delta z^{(\lambda)}(\tau_j), u^{(\lambda)}(\tau_j), d(\tau_j) \bigr) \Delta z^{(\lambda)}(\tau_j) + \\
      &\phantom{= \sum_{j=0}^{k-1} (\tau_{j+1} - \tau_j) \Biggl( }\qquad \qquad
      + \lambda \frac{\partial f}{\partial u}\bigl( \tau_j, z^{(\xi)}(\tau_j), u(\tau_j) + \nu_{u,j} \lambda u'(\tau_j), d(\tau_j) \bigr) u'(\tau_j) 
      \Biggr),
    \end{aligned}
  \end{equation}
  where $\{ \nu_{x,j} \}_{j=0}^\card{\tau} \subset [0,1]$ and $\{ \nu_{u,j} \}_{j=0}^\card{\tau} \subset [0,1]$.

  Let $\{ y(\tau_k) \}_{k=0}^\card{\tau}$ be recursively defined as follows:
  \begin{multline}
    y(\tau_{k+1}) = y(\tau_k) + ( \tau_{k+1} - \tau_k ) \Biggl( 
    \frac{\partial f}{\partial x}\bigl( \tau_k, z^{(\xi)}(\tau_k), u(\tau_k), d(\tau_k) \bigr) y(\tau_k)
    + \frac{\partial f}{\partial u}\bigl( \tau_k, z^{(\xi)}(\tau_k), u(\tau_k), d(\tau_k) \bigr) u'(\tau_k) + \\
    + \sum_{i=1}^q d'_i(\tau_k) f\bigl( \tau_k, z^{(\xi)}(\tau_k), u(\tau_k), e_i \bigr)
    \Biggr), \quad y(\tau_0) = 0.
  \end{multline}
  We want to show that $\frac{\Delta z^{(\lambda)}(\tau_k)}{\lambda} \to y(\tau_k)$ as $\lambda \downarrow 0$. Consider:
  \begin{multline}
    \left\| \frac{\partial f}{\partial x}\bigl( \tau_k, z^{(\xi)}(\tau_k), u(\tau_k), d(\tau_k) \bigr) y(\tau_k) - 
      \frac{\partial f}{\partial x}\bigl( \tau_k, z^{(\xi)}(\tau_k) + \nu_{x,k} \Delta z^{(\lambda)}(\tau_k), u^{(\lambda)}(\tau_k), d(\tau_k) \bigr) \frac{\Delta z^{(\lambda)}(\tau_k)}{\lambda} \right\|_2 \leq \\
    \leq L \left\| y(\tau_k) - \frac{ \Delta z^{(\lambda)}(\tau_k) }{\lambda} \right\|_2 
    + L \left( \bigl\| \Delta z^{(\lambda)}(\tau_k) \bigr\|_2 + \lambda \left\| u'(\tau_k) \right\|_2 \right) \left\| y(\tau_k) \right\|_2,
  \end{multline}
  which follows by Assumption \ref{assump:fns_continuity} and the Triangular Inequality. Also,
  \begin{equation}
    \left\| \left( \frac{\partial f}{\partial u}\bigl( \tau_k, z^{(\xi)}(\tau_k), u(\tau_k), d(\tau_k) \bigr) - 
      \frac{\partial f}{\partial u}\bigl( \tau_k, z^{(\xi)}(\tau_k), u(\tau_k) + \nu_{u,k} \lambda u'(\tau_k), d(\tau_k) \bigr) \right) u'(\tau_k) \right\|_2 \leq L \lambda \left\lVert u'(\tau_k) \right\rVert_2^2,
  \end{equation}
  and
  \begin{equation}
    \left\| \sum_{i=1}^q d'_i(\tau_k) \Bigl( f\bigl( \tau_k, z^{(\xi)}(\tau_k), u(\tau_k), e_i \bigr) -
       f\bigl( \tau_k, z^{(\lambda)}(\tau_k), u^{(\lambda)}(\tau_k), e_i \bigr) \Bigr) \right\|_2 \leq
     L \bigl\| \Delta z^{(\lambda)} (\tau_k) \bigr\|_2 + L \lambda \left\| u'(\tau_k) \right\|_2.
  \end{equation}
  Hence, using the Discrete Bellman-Gronwall Inequality (Lemma 5.6.14 in \cite{Polak1997}) and the inequalities above,
  \begin{multline}
    \label{eq:ddxtd_ineq1}
    \left\| y(\tau_k) - \frac{\Delta z^{(\lambda)}(\tau_k)}{\lambda} \right\|_2 
    \leq L e^L \sum_{j=0}^{k-1} ( \tau_{j+1} - \tau_j ) \biggl( 
    \left( \bigl\| \Delta z^{(\lambda)}(\tau_j) \bigr\|_2 + \lambda \left\| u'(\tau_j) \right\|_2 \right) \left\| y(\tau_j) \right\|_2 + \\
    + \lambda \left\lVert u'(\tau_j) \right\rVert_2^2 + 
    \bigl\| \Delta z^{(\lambda)} (\tau_j) \bigr\|_2 + \lambda \left\| u'(\tau_j) \right\|_2
    \biggr)
  \end{multline}
  where we used the fact that $\left( 1 + \frac{L}{2^N} \right)^\frac{L}{2^N} \leq e^L$. 
  But, by Lemma \ref{lemma:discretized_x_lipschitz}, the right-hand side of Equation \eqref{eq:ddxtd_ineq1} goes to zero as $\lambda \downarrow 0$, thus obtaining that
  \begin{equation}
    \lim_{\lambda \downarrow 0} \frac{ \Delta z^{(\lambda)}(\tau_k) }{\lambda} = y(\tau_k).
  \end{equation}
  The result of the first part of the Lemma is obtained after noting that $\D{\phi_{\tau,\tau_k}}(\xi;\xi')$ is equal to $y(\tau_k)$ for each $k \in \{ 0, \ldots, \card{\tau} \}$.

\end{proof}

Next, we prove that $\D{\phi}_{\tau,\tau_k}$ is bounded by proving that $\Phi^{\xi}$ is bounded:
\begin{corollary}
  \label{corollary:discretized_stm_bounded}
  There exists a constant $C > 0$ such that for each $N \in \N$, $\tau \in {\cal T}_N$, $\xi \in {\cal X}_{\tau,r}$, and $k,l \in \{0,\ldots,\card{\tau}\}$:
  \begin{equation}
    \bigl\| \Phi^{(\xi)}_{\tau}(\tau_k,\tau_l) \bigr\|_{i,2} \leq C,
  \end{equation}
  where $\Phi^{(\xi)}_{\tau}(\tau_k,\tau_l)$ is the solution to the Difference Equation \eqref{eq:discretized_stm}.
\end{corollary}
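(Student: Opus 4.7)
The plan is to mirror the argument used in the proof of Corollary \ref{corollary:stm_bounded}, but with the continuous Bellman--Gronwall inequality replaced by its discrete counterpart (Exercise 5.6.14 in \cite{Polak1997}), which the authors have already been invoking throughout the discretized analysis. The statement is essentially about propagating a bound through the recursion defining $\Phi_\tau^{(\xi)}$, so once the discrete analogue of the integral inequality is set up, the rest is routine.

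First, I would assume without loss of generality that $k \geq l$, since $\Phi_\tau^{(\xi)}(\tau_k,\tau_l)$ is defined by Difference Equation \eqref{eq:discretized_stm} as a forward recursion from $\tau_l$. Starting from the recursion, taking the induced $2$-norm on both sides, and using that this norm is submultiplicative, I would obtain, for each $k \in \{l,\ldots,\card{\tau}-1\}$:
\begin{equation}
  \bigl\| \Phi_\tau^{(\xi)}(\tau_{k+1},\tau_l) \bigr\|_{i,2}
  \leq \bigl\| \Phi_\tau^{(\xi)}(\tau_k,\tau_l) \bigr\|_{i,2}
  + (\tau_{k+1}-\tau_k)\,C\,\bigl\| \Phi_\tau^{(\xi)}(\tau_k,\tau_l) \bigr\|_{i,2},
\end{equation}
where $C>0$ is the uniform bound on $\bigl\| \frac{\partial f}{\partial x} \bigr\|_{i,2}$ provided by Condition \ref{corollary:discretized_f_bounded} in Corollary \ref{corollary:discretized_fns_bounded}. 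The initial condition $\Phi_\tau^{(\xi)}(\tau_l,\tau_l)=I$ gives $\| \Phi_\tau^{(\xi)}(\tau_l,\tau_l) \|_{i,2} = 1$.

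Next, I would apply the Discrete Bellman--Gronwall Inequality (Exercise 5.6.14 in \cite{Polak1997}) to this recursion, obtaining
\begin{equation}
  \bigl\| \Phi_\tau^{(\xi)}(\tau_k,\tau_l) \bigr\|_{i,2}
  \leq \prod_{j=l}^{k-1} \bigl( 1 + (\tau_{j+1}-\tau_j)C \bigr)
  \leq \exp\!\Bigl( C \sum_{j=l}^{k-1}(\tau_{j+1}-\tau_j) \Bigr)
  = e^{C(\tau_k - \tau_l)}
  \leq e^{C},
\end{equation}
where the middle inequality uses $1+x \leq e^x$, and the last step uses $\tau_k-\tau_l \leq 1$ since $\tau \subset [0,1]$. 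This bound is independent of $N$, $\tau$, $\xi$, $k$, and $l$, which is precisely what the corollary asserts.

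There is no real obstacle here, and I do not anticipate difficulty beyond being careful that the constant $C$ from Corollary \ref{corollary:discretized_fns_bounded} is genuinely uniform over all $(N,\tau,\xi)$ (which it is, by inspection of that corollary's proof). The only stylistic subtlety is the handling of the case $k=l$, which is trivial from the initial condition, so the argument cleanly covers all $k,l \in \{0,\ldots,\card{\tau}\}$ with $k \geq l$.
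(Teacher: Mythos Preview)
Your proof is correct and is precisely the natural elaboration of the paper's one-line argument, which simply states that the bound ``follows directly from Equation \eqref{eq:discretized_stm} and Condition \ref{corollary:discretized_f_bounded} in Corollary \ref{corollary:discretized_fns_bounded}.'' Your use of the discrete Bellman--Gronwall (or equivalently the product bound $\prod(1+c_j)\leq e^{\sum c_j}$) is exactly the intended mechanism, mirroring the continuous-time proof of Corollary \ref{corollary:stm_bounded}.
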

\begin{proof}
	This follows directly from Equation \eqref{eq:discretized_stm} and Condition \ref{corollary:discretized_f_bounded} in Corollary \ref{corollary:discretized_fns_bounded}.
\end{proof}

\begin{corollary}
  \label{corollary:discretized_dxt_bounded}
  There exists a constant $C > 0$ such that for each $N \in \N$, $\tau \in {\cal T}_N$, $\xi \in {\cal X}_{\tau,r}$, $\xi' \in {\cal X}_\tau$, and $k \in \{0,\ldots,\card{\tau}\}$:
  \begin{equation}
    \label{eq:discretized_dxt_bounded}
    \left\|\D{\phi_{\tau,\tau_k}}(\xi;\xi') \right\|_2 \leq C \left\| \xi' \right\|_{\cal X},
  \end{equation}
where $\D{\phi_{\tau,\tau_k}}(\xi;\xi')$ is as defined in Equation \eqref{eq:discretized_dxt_definition}.
\end{corollary}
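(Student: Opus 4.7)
The plan is to follow the same template as the continuous-time analog, Corollary \ref{corollary:dxt_bounded}, but with a discrete Cauchy-Schwarz step replacing the Hölder inequality used there. Starting from the closed-form expression in Equation \eqref{eq:discretized_dxt_definition} and applying the Triangular Inequality together with sub-multiplicativity of the induced matrix $2$-norm, I would first bound
\begin{equation}
\bigl\| \D{\phi_{\tau,\tau_k}}(\xi;\xi') \bigr\|_2 \leq \sum_{j=0}^{k-1} (\tau_{j+1} - \tau_j) \bigl\| \Phi^{(\xi)}_{\tau}(\tau_k,\tau_{j+1}) \bigr\|_{i,2} \left( \left\| \tfrac{\partial f}{\partial u} \right\|_{i,2} \| u'(\tau_j) \|_2 + \sum_{i=1}^q \| f(\cdot,e_i) \|_2 \, |d'_i(\tau_j)| \right),
\end{equation}
where $\frac{\partial f}{\partial u}$ and $f(\cdot,e_i)$ are evaluated along the discrete trajectory. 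An application of Corollary \ref{corollary:discretized_stm_bounded} for $\Phi^{(\xi)}_{\tau}$, together with Condition \ref{corollary:discretized_f_bounded} in Corollary \ref{corollary:discretized_fns_bounded} for $f$ and $\frac{\partial f}{\partial u}$, produces a uniform constant $C_1 > 0$ and reduces the bound to a constant multiple of
\begin{equation}
  \sum_{j=0}^{\card{\tau}-1} (\tau_{j+1} - \tau_j) \bigl( \| u'(\tau_j) \|_2 + \| d'(\tau_j) \|_2 \bigr).
\end{equation}

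The second step is to identify this weighted sum with the $\mathcal X$-norm of $\xi'$. Because $\xi' = (u',d') \in {\cal X}_\tau$ is piecewise constant on the partition intervals $[\tau_j,\tau_{j+1})$, the definition of the $L^2$-norm yields the exact identities
\begin{equation}
  \| u' \|_{L^2}^2 = \sum_{j=0}^{\card{\tau}-1} (\tau_{j+1} - \tau_j) \| u'(\tau_j) \|_2^2, \qquad
  \| d' \|_{L^2}^2 = \sum_{j=0}^{\card{\tau}-1} (\tau_{j+1} - \tau_j) \| d'(\tau_j) \|_2^2.
\end{equation}
Applying the discrete Cauchy-Schwarz Inequality with weights $\sqrt{\tau_{j+1}-\tau_j}$ on each term and using $\sum_{j=0}^{\card{\tau}-1}(\tau_{j+1}-\tau_j) = 1$, we obtain
\begin{equation}
  \sum_{j=0}^{\card{\tau}-1} (\tau_{j+1} - \tau_j) \| u'(\tau_j) \|_2 \leq \| u' \|_{L^2},
\end{equation}
and analogously for $d'$. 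Combining these with the previous inequality yields the desired estimate with $C = 2 C_1$.

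There is no serious obstacle here; the argument is essentially a transcription of the continuous case with the Bochner integral replaced by a Riemann-Stieltjes sum and Hölder's inequality replaced by its discrete (weighted Cauchy-Schwarz) counterpart. The only subtlety worth flagging is ensuring the weighted Cauchy-Schwarz step is performed correctly given that the partition $\tau$ is not uniform, but this is handled cleanly by the identity expressing $\|u'\|_{L^2}^2$ and $\|d'\|_{L^2}^2$ as Riemann sums that are exact for piecewise constant functions in ${\cal X}_\tau$.
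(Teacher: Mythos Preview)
Your proposal is correct and matches the paper's approach exactly: the paper's proof is a single sentence citing the Cauchy--Schwartz Inequality together with Corollary \ref{corollary:discretized_fns_bounded} and Corollary \ref{corollary:discretized_stm_bounded}, and you have simply written out the details of that computation, including the weighted discrete Cauchy--Schwarz step that converts the Riemann sum into the $L^2$-norm of the piecewise constant direction $\xi'$.
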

\begin{proof}
	This follows by the Cauchy-Schwartz Inequality together with Corollary \ref{corollary:discretized_fns_bounded} and Corollary \ref{corollary:discretized_stm_bounded}.
\end{proof}

We now show that $\Phi^{(\xi)}_\tau$ is in fact Lipschitz continuous.
\begin{lemma}
  \label{lemma:discretized_stm_lipschitz}
  There exists a constant $L > 0$ such that for each $N \in \N$, $\tau \in {\cal T}_N$, $\xi_1,\xi_2 \in {\cal X}_{\tau,r}$, and $k,l \in \{0,\ldots,\card{\tau}\}$:
  \begin{equation}
    \label{eq:discretized_stm_lipschitz}
    \left\| \Phi^{(\xi_1)}_{\tau}(\tau_k,\tau_l) - \Phi^{(\xi_2)}_{\tau}(\tau_k,\tau_l) \right\|_{i,2} 
    \leq L \left\| \xi_1 - \xi_2 \right\|_{\cal X},
  \end{equation}
  where $\Phi^{(\xi)}_{\tau}$ is the solution to Difference Equation \eqref{eq:discretized_stm}.
\end{lemma}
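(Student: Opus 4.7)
The plan is to mirror closely the proof of Lemma~\ref{lemma:stm_lipschitz} from the continuous case, using the discrete Bellman-Gronwall inequality in place of its continuous counterpart. Let $\xi_1 = (u_1,d_1), \xi_2 = (u_2,d_2) \in {\cal X}_{\tau,r}$. From the recursion in Difference Equation~\eqref{eq:discretized_stm}, for $k \geq l$,
\begin{multline}
\Phi^{(\xi_1)}_\tau(\tau_{k+1},\tau_l) - \Phi^{(\xi_2)}_\tau(\tau_{k+1},\tau_l)
= \Phi^{(\xi_1)}_\tau(\tau_k,\tau_l) - \Phi^{(\xi_2)}_\tau(\tau_k,\tau_l) + \\
+ (\tau_{k+1}-\tau_k)\biggl( \tfrac{\partial f}{\partial x}\bigl(\tau_k,\phi_{\tau,\tau_k}(\xi_1),u_1(\tau_k),d_1(\tau_k)\bigr) - \tfrac{\partial f}{\partial x}\bigl(\tau_k,\phi_{\tau,\tau_k}(\xi_2),u_2(\tau_k),d_2(\tau_k)\bigr) \biggr) \Phi^{(\xi_1)}_\tau(\tau_k,\tau_l) + \\
+ (\tau_{k+1}-\tau_k)\, \tfrac{\partial f}{\partial x}\bigl(\tau_k,\phi_{\tau,\tau_k}(\xi_2),u_2(\tau_k),d_2(\tau_k)\bigr)\bigl( \Phi^{(\xi_1)}_\tau(\tau_k,\tau_l) - \Phi^{(\xi_2)}_\tau(\tau_k,\tau_l) \bigr).
\end{multline}

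Taking the induced $2$--norm and applying the triangle inequality together with submultiplicativity yields, for some constants $C, L' > 0$ coming from Condition~\ref{corollary:discretized_f_bounded} in Corollary~\ref{corollary:discretized_fns_bounded} (bounding $\frac{\partial f}{\partial x}$), Corollary~\ref{corollary:discretized_stm_bounded} (bounding $\Phi^{(\xi_1)}_\tau$), and Condition~\ref{corollary:discretized_dfdxxi_lipschitz} in Corollary~\ref{corollary:discretized_vf_lipschitz} (Lipschitz continuity of $\frac{\partial f}{\partial x}$ in $\xi$), a bound of the form
\begin{multline}
\bigl\| \Phi^{(\xi_1)}_\tau(\tau_{k+1},\tau_l) - \Phi^{(\xi_2)}_\tau(\tau_{k+1},\tau_l)\bigr\|_{i,2}
\leq \bigl(1 + C(\tau_{k+1}-\tau_k)\bigr) \bigl\| \Phi^{(\xi_1)}_\tau(\tau_k,\tau_l) - \Phi^{(\xi_2)}_\tau(\tau_k,\tau_l)\bigr\|_{i,2} + \\
+ L' C (\tau_{k+1}-\tau_k) \bigl( \|\xi_1-\xi_2\|_{\cal X} + \|u_1(\tau_k)-u_2(\tau_k)\|_2 + \|d_1(\tau_k)-d_2(\tau_k)\|_2 \bigr).
\end{multline}

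Next, I would apply the Discrete Bellman-Gronwall Inequality (Exercise~5.6.14 in~\cite{Polak1997}) together with the estimate $\bigl(1 + \tfrac{C}{2^N}\bigr)^{2^N} \leq e^C$, noting that $\Phi^{(\xi_1)}_\tau(\tau_l,\tau_l) - \Phi^{(\xi_2)}_\tau(\tau_l,\tau_l) = 0$, to obtain
\begin{equation}
\bigl\| \Phi^{(\xi_1)}_\tau(\tau_k,\tau_l) - \Phi^{(\xi_2)}_\tau(\tau_k,\tau_l)\bigr\|_{i,2}
\leq L' C e^C \sum_{j=l}^{k-1} (\tau_{j+1}-\tau_j) \bigl( \|\xi_1-\xi_2\|_{\cal X} + \|u_1(\tau_j)-u_2(\tau_j)\|_2 + \|d_1(\tau_j)-d_2(\tau_j)\|_2 \bigr).
\end{equation}

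The final step is to convert the time-indexed sum into the ${\cal X}$--norm using Jensen's Inequality (Equation~A.2 in~\cite{Mallat1999}), exactly as in the last part of the proof of Lemma~\ref{lemma:discretized_x_lipschitz}: since $\xi_1, \xi_2 \in {\cal X}_{\tau,r}$ are piecewise constant on the partition $\tau$, the ${\cal X}$--norm can be written as a finite weighted sum over $\tau$, and $\sum_j (\tau_{j+1}-\tau_j) \|u_1(\tau_j)-u_2(\tau_j)\|_2 \leq \|u_1 - u_2\|_{L^2}$ (similarly for $d_1 - d_2$), giving the required bound with a single constant $L$. The result then extends to $k < l$ simply by exchanging the roles. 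I do not anticipate any genuine obstacle here, since every tool needed is a direct discrete counterpart of one already used in Lemma~\ref{lemma:stm_lipschitz}; the only care is to ensure that the Jensen's Inequality trick is applied to piecewise-constant inputs over the same partition $\tau$, so that the sum over switching times matches the ${\cal X}$--norm.
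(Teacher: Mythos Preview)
Your proposal is correct and follows essentially the same approach as the paper: write the recursion, split via the triangle inequality into a ``bounded-times-difference'' term and a ``Lipschitz-times-bounded'' term, then apply the Discrete Bellman--Gronwall Inequality. The only cosmetic difference is in the last step, where you invoke Jensen's Inequality (as in the proof of Lemma~\ref{lemma:discretized_x_lipschitz}) while the paper instead refers back to the H\"older argument of Equation~\eqref{eq:1to2_norm_holder_trick}; for piecewise-constant inputs on the partition $\tau$ these yield the same bound.
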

\begin{proof}
  Let $\xi_1 = (u_1,d_1)$ and $\xi_2 = (u_2,d_2)$. Then, using the Triangular Inequality:
  \begin{equation}
    \begin{aligned}
      \Bigl\| \Phi^{(\xi_1)}_{\tau}(\tau_k,&\tau_l) - \Phi^{(\xi_2)}_{\tau}(\tau_k,\tau_l) \Bigr\|_{i,2} \leq \\
      &\leq \sum_{i=0}^{k-1} ( \tau_{i+1} - \tau_i ) \Biggl(
      \left\| \frac{\partial f}{\partial x}\bigl( \tau_i, z^{(\xi_2)}(\tau_i), u_2(\tau_i), d_2(\tau_i) \bigr) \right\|_{i,2}
      \left\| \Phi^{(\xi_1)}_\tau(\tau_i,\tau_j) - \Phi^{(\xi_2)}_\tau(\tau_i,\tau_j) \right\|_{i,2} + \\
      &\phantom{\leq \sum_{i=0}^{k-1}}\quad
      + \left\| \frac{\partial f}{\partial x}\bigl( \tau_i, z^{(\xi_1)}(\tau_i), u_1(\tau_i), d_1(\tau_i) \bigr) -
        \frac{\partial f}{\partial x}\bigl( \tau_i, z^{(\xi_2)}(\tau_i), u_2(\tau_i), d_2(\tau_i) \bigr) \right\|_{i,2}
      \left\| \Phi^{(\xi_1)}_\tau(\tau_i,\tau_j) \right\|_{i,2}
      \Biggr).
    \end{aligned}
  \end{equation}
  The result follows by applying Condition \ref{corollary:discretized_f_bounded} in Corollary \ref{corollary:discretized_fns_bounded}, Condition \ref{corollary:discretized_dfdxxi_lipschitz} in Corollary \ref{corollary:discretized_vf_lipschitz}, the same argument used in Equation \eqref{eq:1to2_norm_holder_trick}, and the Discrete Bellman-Gronwall Inequality (Exercise 5.6.14 in \cite{Polak1997}).
\end{proof}

A simple extension of our previous argument shows that $\D{\phi}_{\tau,\tau_k}(\xi,\cdot)$ is Lipschitz continuous with respect to its point of evaluation, $\xi$.
\begin{lemma}
  \label{lemma:discretized_dxt_lipschitz}
  There exists a constant $L > 0$ such that for each $N \in \N$, $\tau \in {\cal T}_N$, $\xi_1,\xi_2 \in {\cal X}_{\tau,r}$, $\xi' \in {\cal X}_\tau$, and $k \in \{0,\ldots,\card{\tau}\}$:
  \begin{equation}
    \label{eq:discretized_dxt_lipschitz}
    \left\| \D{\phi_{\tau,\tau_k}}(\xi_1;\xi') - \D{\phi_{\tau,\tau_k}}(\xi_2;\xi') \right\|_2 
    \leq L \left\| \xi_1 - \xi_2 \right\|_{\cal X} \left\| \xi' \right\|_{\cal X},
  \end{equation}
  where $\D{\phi_{\tau,\tau_k}}$ is as defined in Equation \eqref{eq:discretized_dxt_definition}.
\end{lemma}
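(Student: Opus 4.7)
The plan is to mirror the continuous case argument used in Lemma \ref{lemma:Dxt_lipschitz}, adapted to the discrete setting. Let $\xi_1 = (u_1, d_1)$, $\xi_2 = (u_2, d_2)$, and $\xi' = (u', d')$. First, I would use the explicit formula \eqref{eq:discretized_dxt_definition} to write the difference $\D{\phi_{\tau,\tau_k}}(\xi_1;\xi') - \D{\phi_{\tau,\tau_k}}(\xi_2;\xi')$ as a finite sum indexed by $j \in \{0,\ldots,k-1\}$. For each $j$ there are two groups of terms to bound: one involving $\frac{\partial f}{\partial u}(\cdot)u'(\tau_j)$ and one involving $\sum_{i=1}^q f(\cdot,e_i) d'_i(\tau_j)$. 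Within each group, I add and subtract an intermediate term so as to isolate (a) the Lipschitz dependence of $\Phi^{(\xi)}_\tau(\tau_k,\tau_{j+1})$ on $\xi$, against bounded factors $\frac{\partial f}{\partial u}$ or $f$, and (b) a bounded factor $\Phi^{(\xi_2)}_\tau(\tau_k,\tau_{j+1})$ against the Lipschitz dependence of $\frac{\partial f}{\partial u}$ or $f$ on $\xi$.

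Second, I would apply the previously established estimates: Corollary \ref{corollary:discretized_stm_bounded} to bound $\|\Phi^{(\xi_2)}_\tau(\tau_k,\tau_{j+1})\|_{i,2}$; Corollary \ref{corollary:discretized_fns_bounded} to bound $\|f\|_2$ and $\|\frac{\partial f}{\partial u}\|_{i,2}$; Lemma \ref{lemma:discretized_stm_lipschitz} to control $\|\Phi^{(\xi_1)}_\tau - \Phi^{(\xi_2)}_\tau\|_{i,2} \leq L\|\xi_1-\xi_2\|_{\cal X}$; and Conditions \ref{corollary:discretized_fxi_lipschitz} and \ref{corollary:discretized_dfduxi_lipschitz} in Corollary \ref{corollary:discretized_vf_lipschitz} to control the differences of $f$ and $\frac{\partial f}{\partial u}$. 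After these substitutions, every summand carries a factor of the form $\|\xi_1-\xi_2\|_{\cal X}$ multiplied by either $\|u'(\tau_j)\|_2$ or $\|d'(\tau_j)\|_2$, with a uniform constant in front.

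Third, the bound will take the form
\begin{equation*}
  \left\| \D{\phi_{\tau,\tau_k}}(\xi_1;\xi') - \D{\phi_{\tau,\tau_k}}(\xi_2;\xi') \right\|_2
  \leq K \| \xi_1 - \xi_2 \|_{\cal X} \sum_{j=0}^{k-1}(\tau_{j+1} - \tau_j)\bigl( \| u'(\tau_j) \|_2 + \| d'(\tau_j) \|_2 \bigr),
\end{equation*}
and the final step converts the weighted discrete $\ell^1$ sum into an $\ell^2$ sum via Jensen's Inequality (exactly as in the last display of the proof of Lemma \ref{lemma:discretized_x_lipschitz}), yielding $\sum_{j=0}^{k-1}(\tau_{j+1}-\tau_j)\|u'(\tau_j)\|_2 \leq \bigl(\sum_j (\tau_{j+1}-\tau_j)\|u'(\tau_j)\|_2^2\bigr)^{1/2} = \|u'\|_{L^2}$ since $u' \in {\cal X}_\tau$ is piecewise constant on the partition; the same bound applies to $d'$. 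Summing gives $\|\xi'\|_{\cal X}$ on the right-hand side.

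I don't expect any genuine obstacle since all ingredients — boundedness of $\Phi^{(\xi)}_\tau$, $f$, and its partials, and their Lipschitz dependence on $\xi$ — are already in place. The only delicate point is bookkeeping: ensuring the Lipschitz constant $L$ is independent of $N$ and $\tau$, which follows because each invoked corollary is itself uniform in $N$ and $\tau$, and the final Jensen step exploits only the piecewise-constant structure of ${\cal X}_\tau$ without introducing any $N$-dependence.
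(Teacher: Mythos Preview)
Your proposal is correct and follows essentially the same approach as the paper: expand the difference via the explicit formula \eqref{eq:discretized_dxt_definition}, add and subtract intermediate terms to split each summand into a ``$\Phi_\tau$-Lipschitz times bounded factor'' piece and a ``bounded $\Phi_\tau$ times Lipschitz factor'' piece, then invoke Corollary \ref{corollary:discretized_stm_bounded}, Lemma \ref{lemma:discretized_stm_lipschitz}, Corollary \ref{corollary:discretized_fns_bounded}, and Corollary \ref{corollary:discretized_vf_lipschitz}. The only cosmetic difference is that the paper phrases the final conversion from the weighted discrete $\ell^1$ sum to $\|\xi'\|_{\cal X}$ as ``the same argument used in Equation \eqref{eq:1to2_norm_holder_trick}'' (H\"older), while you phrase it as Jensen's inequality in the style of Lemma \ref{lemma:discretized_x_lipschitz}; these are the same estimate here. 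One small point to keep in mind when writing it out: the Lipschitz bounds from Corollary \ref{corollary:discretized_vf_lipschitz} produce not only $\|\xi_1-\xi_2\|_{\cal X}$ but also pointwise terms $\|u_1(\tau_j)-u_2(\tau_j)\|_2$ and $\|d_1(\tau_j)-d_2(\tau_j)\|_2$, so a Cauchy--Schwarz step on those cross terms (against $\|u'(\tau_j)\|_2$, $\|d'(\tau_j)\|_2$) is also needed --- the paper glosses over this in the same way, and it is routine.
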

\begin{proof}
  Let $\xi_1 = (u_1,d_1)$, $\xi_2 = (u_2,d_2)$, and $\xi' = (u',d')$.
  Then, applying the Triangular Inequality:
  \begin{equation}
    \begin{aligned}
      \bigl\| &\D{\phi_{\tau,\tau_k}}(\xi_1;\xi') - \D{\phi_{\tau,\tau_k}}(\xi_2;\xi') \bigr\|_2 \leq \\
      &\leq \sum_{j=0}^{k-1} ( \tau_{j+1} - \tau_j ) \Biggl(
      \left\| \Phi^{(\xi_1)}_\tau(\tau_k,\tau_{j+1}) - \Phi^{(\xi_2)}_\tau(\tau_k,\tau_{j+1}) \right\|_{i,2}
      \left\| \frac{\partial f}{\partial u}\bigl( \tau_j, z^{(\xi_1)}(\tau_j), u_1(\tau_j), d_1(\tau_j) \bigr) \right\|_{i,2} 
      \left\| u'( \tau_j ) \right\|_2 + \\
      &\phantom{\leq} + \left\| \Phi^{(\xi_2)}_\tau(\tau_k,\tau_{j+1}) \right\|_{i,2}
      \left\| \frac{\partial f}{\partial u}\bigl( \tau_j, z^{(\xi_1)}(\tau_j), u_1(\tau_j), d_1(\tau_j) \bigr) -
        \frac{\partial f}{\partial u}\bigl( \tau_j, z^{(\xi_2)}(\tau_j), u_2(\tau_j), d_2(\tau_j) \bigr) \right\|_{i,2}
      \left\| u'( \tau_j ) \right\|_2 + \\
      &\phantom{\leq} + \sum_{i=1}^q \left\| \Phi^{(\xi_1)}_\tau(\tau_k,\tau_{j+1}) - \Phi^{(\xi_2)}_\tau(\tau_k,\tau_{j+1}) \right\|_{i,2}
      \left\| f\bigl( \tau_j, z^{(\xi_1)}(\tau_j), u_1(\tau_j), e_i \bigr) \right\|_{i,2} \left\lvert d'_i( \tau_j ) \right\rvert + \\
      &\phantom{\leq + \sum_{i=1}^q} + \left\| \Phi^{(\xi_2)}_\tau(\tau_k,\tau_{j+1}) \right\|_{i,2}
      \left\| f\bigl( \tau_j, z^{(\xi_1)}(\tau_j), u_1(\tau_j), e_i \bigr) -
        f\bigl( \tau_j, z^{(\xi_2)}(\tau_j), u_2(\tau_j), e_i \bigr) \right\|_{i,2} \left\lvert d'_i( \tau_j ) \right\rvert
      \Biggr)
    \end{aligned}
  \end{equation}
  The result follows by applying Lemma \ref{lemma:discretized_stm_lipschitz}, Corollary \ref{corollary:discretized_stm_bounded}, Condition \ref{corollary:f_bounded} in Corollary \ref{corollary:fns_bounded}, Conditions \ref{corollary:discretized_fxi_lipschitz} and \ref{corollary:discretized_dfduxi_lipschitz} in Corollary \ref{corollary:discretized_vf_lipschitz}, and the same argument used in Equation \eqref{eq:1to2_norm_holder_trick}.
\end{proof}

Employing these results, we can prove that $\D{\phi_{\tau,\tau_k}}(\xi;\xi')$ converges to $\D{\phi_{\tau_k}}(\xi;\xi')$ as the discretization is increased:
\begin{lemma}
  \label{lemma:roc_dxt}
  There exists $B > 0$ such that for each $N \in \N$, $\tau \in {\cal T}_N$, $\xi \in {\cal X}_{\tau,r}$, $\xi' \in {\cal X}_\tau$ and $k \in \{0,\ldots,\card{\tau}\}$:
  \begin{equation}
    \left\| \D{\phi_{\tau_k}}(\xi;\xi') - \D{\phi_{\tau,\tau_k}}(\xi;\xi') \right\|_2 \leq \frac{B}{2^N},
  \end{equation}
  where $D{\phi_{\tau_k}}$ and $\D{\phi_{\tau,\tau_k}}$ are as defined in Equations \eqref{eq:dxt_definition} and \eqref{eq:discretized_dxt_definition}, respectively. 
\end{lemma}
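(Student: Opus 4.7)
The plan is to reduce the estimate to three separate pieces and combine them with the Discrete Bellman--Gronwall Inequality (Exercise 5.6.14 in \cite{Polak1997}) and standard quadrature bounds.

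First, I would establish an auxiliary rate-of-convergence estimate for the state transition matrix: there exists $B_1 > 0$ such that, for each $k,l \in \{0,\ldots,\card{\tau}\}$,
\begin{equation}
\bigl\| \Phi^{(\xi)}(\tau_k,\tau_l) - \Phi^{(\xi)}_\tau(\tau_k,\tau_l) \bigr\|_{i,2} \leq \frac{B_1}{2^N}.
\end{equation}
This is obtained by the same forward-Euler argument used in Lemma \ref{lemma:convergence_discretized_traj_xi}, applied column-by-column to the matrix Differential Equation \eqref{eq:stm} and its discretization \eqref{eq:discretized_stm}. The two ingredients are that on each subinterval $[\tau_j,\tau_{j+1}]$ the quantity $\frac{\partial f}{\partial x}\bigl(t,x^{(\xi)}(t),u(t),d(t)\bigr) - \frac{\partial f}{\partial x}\bigl(\tau_j,z^{(\xi)}(\tau_j),u(\tau_j),d(\tau_j)\bigr)$ is $O(1/2^N)$ in the induced $2$--norm (via Lemma \ref{lemma:convergence_discretized_traj_xi}, Condition \ref{assump:dfdx_lipschitz} in Assumption \ref{assump:fns_continuity}, and $|\tau_{j+1}-\tau_j| \leq 2^{-N}$), and that $\Phi^{(\xi)}$ and $\Phi^{(\xi)}_\tau$ are themselves bounded (Corollaries \ref{corollary:stm_bounded} and \ref{corollary:discretized_stm_bounded}).

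Second, I would rewrite the continuous directional derivative as a sum of subinterval integrals,
\begin{equation}
\D{\phi_{\tau_k}}(\xi;\xi') = \sum_{j=0}^{k-1}\int_{\tau_j}^{\tau_{j+1}} \Phi^{(\xi)}(\tau_k,\tau) G^{(\xi)}(\tau,\xi')\, d\tau,
\end{equation}
where $G^{(\xi)}(\tau,\xi') = \frac{\partial f}{\partial u}\bigl(\tau,x^{(\xi)}(\tau),u(\tau),d(\tau)\bigr) u'(\tau) + \sum_{i=1}^q f\bigl(\tau,x^{(\xi)}(\tau),u(\tau),e_i\bigr) d_i'(\tau)$, and compare it to the Riemann-type sum appearing in $\D{\phi_{\tau,\tau_k}}(\xi;\xi')$. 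I would add and subtract the mixed sum $\sum_{j=0}^{k-1}(\tau_{j+1}-\tau_j)\Phi^{(\xi)}(\tau_k,\tau_{j+1})G^{(\xi)}(\tau_j,\xi')$, splitting the error into a quadrature error (continuous integral vs.\ this mixed sum) and an approximation error (mixed sum vs.\ the discretized sum, involving $\Phi^{(\xi)}_\tau$ and $z^{(\xi)}$ in place of $\Phi^{(\xi)}$ and $x^{(\xi)}$). The approximation-error piece is controlled termwise by the first-step estimate and Lemma \ref{lemma:convergence_discretized_traj_xi}, together with Assumption \ref{assump:fns_continuity} and Corollary \ref{corollary:fns_bounded}; each summand is of order $(\tau_{j+1}-\tau_j)\cdot O(1/2^N)$, and the total telescopes to $O(1/2^N)$.

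Third, for the quadrature piece, on each $[\tau_j,\tau_{j+1}]$ the inputs $u(\tau),d(\tau),u'(\tau),d'(\tau)$ are constant (since $\xi\in{\cal X}_{\tau,r}$ and $\xi'\in{\cal X}_\tau$), so the only $\tau$-dependence comes from the smooth objects $\Phi^{(\xi)}(\tau_k,\tau)$ and $x^{(\xi)}(\tau)$. Using Equations \eqref{eq:stm} and \eqref{eq:traj_xi} together with Corollaries \ref{corollary:stm_bounded} and \ref{corollary:fns_bounded}, both are Lipschitz in $\tau$ with a constant independent of $N$; combined with the Lipschitz estimates of Corollary \ref{corollary:vf_lipschitz}, this gives that the integrand $\tau \mapsto \Phi^{(\xi)}(\tau_k,\tau)G^{(\xi)}(\tau,\xi')$ is Lipschitz in $\tau$ on each subinterval. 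The standard left/right-endpoint quadrature bound then yields a per-interval error of order $(\tau_{j+1}-\tau_j)^2 \leq 2^{-2N}$, and summing over at most $2^N+1$ subintervals produces the desired $O(1/2^N)$ bound.

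The main obstacle is the quadrature step: one must be careful to obtain a Lipschitz-in-$\tau$ modulus for $\Phi^{(\xi)}(\tau_k,\tau)G^{(\xi)}(\tau,\xi')$ whose constant is independent of $N$ and of the subinterval index $j$, which requires exploiting that $u,d,u',d'$ contribute no $\tau$-variation inside a subinterval and that $\Phi^{(\xi)}$ is uniformly bounded along both of its arguments. Once that estimate is in hand, combining it with the state-transition-matrix rate of convergence from the first step yields the claimed bound with an appropriate $B>0$ (depending linearly on $\lVert\xi'\rVert_{\cal X}$ through the bounds on $u'$ and $d'$).
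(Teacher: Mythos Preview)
Your proposal is correct and follows essentially the same strategy as the paper, with only a minor reorganization of the triangle-inequality decomposition. The paper writes the difference directly as a sum over subintervals and, inside each subinterval, adds and subtracts $\Phi^{(\xi)}(\tau_k,s)$ times the discretized integrand, so that one piece is controlled by $\bigl\|\Phi^{(\xi)}(\tau_k,s)-\Phi^{(\xi)}_\tau(\tau_k,\tau_{j+1})\bigr\|_{i,2}$ for $s\in[\tau_j,\tau_{j+1}]$ and the other by $\bigl\|x^{(\xi)}(s)-z^{(\xi)}(\tau_j)\bigr\|_2$; the first of these is bounded by introducing a linearly interpolated $\widetilde{\Phi}^{(\xi)}_\tau$ and repeating the Picard/Euler argument of Lemma~\ref{lemma:convergence_discretized_traj_xi}. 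Your route instead isolates the state-transition-matrix error only at the nodes $\tau_l$ and pushes the remaining $s$-variation into a separate quadrature term, which is an equally valid and arguably cleaner bookkeeping.

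One small correction: in your quadrature step you write ``summing over at most $2^N+1$ subintervals,'' but $\tau\in{\cal T}_N$ only imposes $\tau_{j+1}-\tau_j\le 2^{-N}$ with no lower bound, so $\card{\tau}$ need not be bounded by $2^N+1$. The argument still goes through because $\sum_j (\tau_{j+1}-\tau_j)^2 \le \bigl(\max_j(\tau_{j+1}-\tau_j)\bigr)\sum_j(\tau_{j+1}-\tau_j) \le 2^{-N}$, which is the estimate you actually need.
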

\begin{proof}
  Let $\xi = (u,d)$, $\xi' = (u',d')$. 
  First, by applying the triangle inequality and noticing that the induced matrix norm is compatible, we have:
  \begin{equation}
    \begin{aligned}
      \label{eq:roc_dxt_triple_triangle}
      \bigl\| &\D{\phi_{\tau_k}}(\xi;\xi') - \D{\phi_{\tau,\tau_k}}(\xi;\xi') \bigr\|_2 \leq \\
      &\leq \sum_{j=0}^{k-1} \int_{\tau_j}^{\tau_{j+1}} 
      \Biggl( \left\| \Phi^{(\xi)}(\tau_k,s) - \Phi^{(\xi)}_\tau(\tau_k, \tau_{j+1}) \right\|_{i,2} 
      \left\| \frac{\partial f}{\partial u}\bigl( \tau_j, z^{(\xi)}(\tau_j), u(\tau_j), d(\tau_j) \bigr) \right\|_{i,2} + \\ 
      &\phantom{\leq \sum_{j=0}^{k-1}} \quad
      + \left\| \Phi^{(\xi)}(\tau_k,s) \right\|_{i,2} 
      \left\| \frac{\partial f}{\partial u}\bigl( s, x^{(\xi)}(s), u(\tau_j), d(\tau_j) \bigr) - 
        \frac{\partial f}{\partial u}\bigl( \tau_j, z^{(\xi)}(\tau_j), u(\tau_j), d(\tau_j) \bigr) \right\|_{i,2} \Biggr) 
      \left\| u'(\tau_j) \right\|_2 ds + \\
      &\phantom{\leq}
      + \sum_{j=0}^{k-1} \int_{\tau_{j}}^{\tau_{j+1}} \sum_{i=1}^{q} 
      \Biggl( \left\| \Phi^{(\xi)}(\tau_k,s) \right\|_{i,2} 
      \bigl\| f\bigl( s, x^{(\xi)}(s), u(\tau_j), e_i \bigr) - f\bigl( \tau_j, z^{(\xi)}(\tau_j), u(\tau_j), e_i \bigr) \bigr\|_2 +\\ 
      &\phantom{\leq + \sum_{j=0}^{k-1} \int_{\tau_{j}}^{\tau_{j+1}} \sum_{i=1}^{q}}
      + \left\| \Phi^{(\xi)}(\tau_k,s) - \Phi^{(\xi)}_{\tau}(\tau_k, \tau_{j+1}) \right\|_{i,2} 
      \bigl\| f\bigl( \tau_j, z^{(\xi)}(\tau_j), u(\tau_j), e_i \bigr) \bigr\|_2 \Biggr) \left| d'_i(\tau_l) \right| ds .
    \end{aligned}
  \end{equation}

  Second, let $\kappa(t) \in \{ 0, \ldots, \card{\tau} \}$ such that $t \in [ \tau_{\kappa(t)}, \tau_{\kappa(t)+1} ]$ for each $t \in [0,1]$.
  Then, there exists $K > 0$ such that
  \begin{equation}
    \label{eq:two_diff_times}
    \begin{aligned}
    \bigl\lVert x^{(\xi)}(s) - z^{(\xi)}\bigl( \tau_{\kappa(s)} \bigr) \bigr\rVert
    &\leq \bigl\lVert x^{(\xi)}(s) - z^{(\xi)}(s) \bigr\rVert + \bigl\lVert z^{(\xi)}(s) - z^{(\xi)}\bigl (\tau_{\kappa(s)} \bigr) \bigr\rVert \\
    &\leq \bigl\lVert x^{(\xi)}(s) - z^{(\xi)}(s) \bigr\rVert + \bigl( s - \tau_{\kappa(s)} \bigr) C \\
    &\leq \frac{K}{2^N},
    \end{aligned}
  \end{equation}
  where $C > 0$ is as in Condition \ref{corollary:discretized_f_bounded} in Corollary \ref{corollary:discretized_fns_bounded}, and we applied Lemma \ref{lemma:convergence_discretized_traj_xi} and the definition of ${\cal T}_N$ in Equation \eqref{eq:switching_times}.

  Third, in a fashion similar to how we defined our discretized trajectory in Equation \eqref{eq:discretized_traj_xi}, we can define a discretized state transition matrix, $\widetilde{\Phi}^{(\xi)}_{\tau}$ for each $k \in \{0,\ldots,\card{\tau}\}$ via linear interpolation on the second argument:
  \begin{equation}
    \widetilde{\Phi}^{(\xi)}_{\tau}(\tau_k,t) = 
    \sum_{j=0}^{\card{\tau}-1} \left(\Phi^{(\xi)}_{\tau}(\tau_k,\tau_j) + \frac{t - \tau_j}{\tau_{j+1}-\tau_j} 
      \left(\Phi^{(\xi)}_{\tau}(\tau_k,\tau_{j+1}) - \Phi^{(\xi)}_{\tau}(\tau_k,\tau_j) \right) \right) \pi_{\tau,j}(t).
  \end{equation}
  where $\tau_{\tau,j}$ is as defined in Equation \eqref{eq:scaling_discretization}.
  Then there exists a constant $K' > 0$ such that for each $t \in [0,1]$:
  \begin{equation}
    \label{eq:stm_two_diff_times}
    \begin{aligned}
      \left\| \Phi^{(\xi)}( \tau_k, t ) - \Phi^{(\xi)}_{\tau}\bigl( \tau_k, \tau_{\kappa(t)} \bigr) \right\|_{i,2} 
      &\leq \left\| \Phi^{(\xi)}( \tau_k, t ) - \widetilde{\Phi}^{(\xi)}_{\tau}( \tau_k, t ) \right\|_{i,2} + \left\| \widetilde{\Phi}^{(\xi)}_{\tau}( \tau_k, t ) - \Phi^{(\xi)}_{\tau}\bigl( \tau_k, \tau_{\kappa(t)} \bigr) \right\|_{i,2} \\ 
      &\leq \frac{K'}{2^N},
    \end{aligned}
  \end{equation}
  where the last inequality follows by an argument identical to the one used in the proof of Lemma \ref{lemma:convergence_discretized_traj_xi}, together with an argument identical to the one used in Equation \eqref{eq:two_diff_times}.

  Finally, the result follows from Equation \eqref{eq:roc_dxt_triple_triangle} after applying Condition \ref{corollary:discretized_f_bounded} in Corollary \ref{corollary:discretized_fns_bounded}, Corollary \ref{corollary:stm_bounded}, Conditions \ref{corollary:discretized_fxi_lipschitz} and \ref{corollary:discretized_dfduxi_lipschitz} in Corollary \ref{corollary:discretized_vf_lipschitz}, Equations \eqref{eq:two_diff_times} and \eqref{eq:stm_two_diff_times}, and the same argument as in Equation \eqref{eq:1to2_norm_holder_trick}.
\end{proof}

Next, we construct the expression for the directional derivative of the discretized cost function and prove that it is Lipschitz continuous.
\begin{lemma}
  \label{lemma:discretized_DJ_definition}
  Let $N \in \N$, $\tau \in {\cal T}_N$, $\xi \in {\cal X}_{\tau,r}$, $\xi' \in {\cal X}_\tau$, and $J_\tau$ be defined as in Equation \eqref{eq:discretized_cost}. 
  Then the directional derivative of the discretized cost $J_{\tau}$ in the $\xi'$ direction is:
  \begin{equation}
    \label{eq:discretized_DJ_definition}
    \D{J_{\tau}}(\xi;\xi') = \frac{\partial h_0}{\partial x}\bigl( \phi_{\tau,1}(\xi) \bigr) \D{\phi_{\tau,1}}(\xi;\xi').
  \end{equation}
\end{lemma}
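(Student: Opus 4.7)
The plan is to mirror exactly the argument used for the continuous analogue in Lemma \ref{lemma:DJ_definition}: apply the Chain Rule to the composition $J_\tau = h_0 \circ \phi_{\tau,1}$, combining the (Fr\'echet) differentiability of $h_0$ guaranteed by Assumption \ref{assump:constraint_fns} with the directional differentiability of $\phi_{\tau,1}$ already established in Lemma \ref{lemma:discretized_dxt_definition}. Since all of the hard work was done in that latter lemma, only a short passage to the limit remains.

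Concretely, I would start from the difference quotient
\begin{equation*}
  \frac{ J_\tau(\xi + \lambda \xi') - J_\tau(\xi) }{ \lambda }
  = \frac{ h_0\bigl( \phi_{\tau,1}(\xi + \lambda \xi') \bigr) - h_0\bigl( \phi_{\tau,1}(\xi) \bigr) }{ \lambda },
\end{equation*}
and rewrite the numerator via the fundamental theorem of calculus applied to $h_0$ along the segment joining $\phi_{\tau,1}(\xi)$ to $\phi_{\tau,1}(\xi + \lambda \xi')$, obtaining
\begin{equation*}
  \int_0^1 \frac{\partial h_0}{\partial x}\bigl( \phi_{\tau,1}(\xi)
    + s \bigl( \phi_{\tau,1}(\xi + \lambda \xi') - \phi_{\tau,1}(\xi) \bigr) \bigr) \, ds
  \cdot \frac{ \phi_{\tau,1}(\xi + \lambda \xi') - \phi_{\tau,1}(\xi) }{ \lambda }.
\end{equation*}
To send $\lambda \downarrow 0$, I would invoke Lemma \ref{lemma:discretized_x_lipschitz} to conclude $\phi_{\tau,1}(\xi + \lambda \xi') \to \phi_{\tau,1}(\xi)$, so that continuity of $\partial h_0 / \partial x$ (Assumption \ref{assump:constraint_fns}) forces the integrand to converge uniformly in $s$ to $\partial h_0/\partial x\bigl( \phi_{\tau,1}(\xi) \bigr)$. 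Simultaneously, Lemma \ref{lemma:discretized_dxt_definition} shows the difference quotient converges to $\D{\phi_{\tau,1}}(\xi; \xi')$. The product of these two limits yields the claimed formula.

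I do not anticipate any real obstacle: the only point that needs a brief remark is that $\phi_{\tau,1}(\xi + \lambda \xi')$ is well-defined even though $\xi' \in {\cal X}_\tau$ rather than ${\cal X}_{\tau,r}$, but this is immediate since the Forward Euler recursion in Equation \eqref{eq:euler_traj_xi} makes sense for any argument in ${\cal X}_\tau$, and all the required bounds and Lipschitz estimates were derived under that same general hypothesis. Hence the proof collapses to a one-line invocation of the Chain Rule together with Lemma \ref{lemma:discretized_dxt_definition}, exactly paralleling Lemma \ref{lemma:DJ_definition}.
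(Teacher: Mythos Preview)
Your proposal is correct and matches the paper's approach exactly: the paper's proof is the single line ``The result follows using the Chain Rule and Lemma \ref{lemma:discretized_dxt_definition},'' and you have simply unpacked that chain rule step in detail.
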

\begin{proof}
	The result follows using the Chain Rule and Lemma \ref{lemma:discretized_dxt_definition}.
\end{proof}

\begin{corollary}
  \label{corollary:discretized_DJ_lipschitz}
  There exists a constant $L > 0$ such that for each $N \in \N$, $\tau \in {\cal T}_N$, $\xi_1,\xi_2 \in {\cal X}_{\tau,r}$, and $\xi' \in {\cal X}_\tau$:
  \begin{equation}
    \label{eq:discretized_DJ_lipschitz}
    \left| \D{J}_{\tau}(\xi_1;\xi') - \D{J}_{\tau}(\xi_2;\xi') \right| \leq L \left\|\xi_1 - \xi_2 \right\|_{\cal X}\left\|\eta\right\|_{\cal X},
  \end{equation}
  where $\D{J_{\tau}}$ is as defined in Equation \eqref{eq:discretized_DJ_definition}.
\end{corollary}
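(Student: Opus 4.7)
The plan is to proceed in direct analogy with the proof of Corollary \ref{corollary:DJ_lipschitz} in the continuous setting, treating $\D{J}_\tau(\xi;\xi')$ as a product of $\frac{\partial h_0}{\partial x}(\phi_{\tau,1}(\xi))$ and $\D{\phi_{\tau,1}}(\xi;\xi')$ and splitting the difference via the standard ``add and subtract'' trick. Concretely, using the Triangular Inequality together with the Cauchy-Schwartz Inequality, I would first write
\begin{equation*}
  \begin{aligned}
    \left| \D{J}_\tau(\xi_1;\xi') - \D{J}_\tau(\xi_2;\xi') \right|
    &\leq \left\| \frac{\partial h_0}{\partial x}\bigl( \phi_{\tau,1}(\xi_1) \bigr) \right\|_2
      \left\| \D{\phi_{\tau,1}}(\xi_1;\xi') - \D{\phi_{\tau,1}}(\xi_2;\xi') \right\|_2 \\
    &\qquad + \left\| \frac{\partial h_0}{\partial x}\bigl( \phi_{\tau,1}(\xi_1) \bigr)
      - \frac{\partial h_0}{\partial x}\bigl( \phi_{\tau,1}(\xi_2) \bigr) \right\|_2
      \left\| \D{\phi_{\tau,1}}(\xi_2;\xi') \right\|_2.
  \end{aligned}
\end{equation*}

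Next, each of the four factors on the right-hand side can be estimated by a result that has already been established earlier in the section. The factor $\left\| \frac{\partial h_0}{\partial x}\bigl( \phi_{\tau,1}(\xi_1) \bigr) \right\|_2$ is uniformly bounded by Condition \ref{corollary:discretized_phi_bounded} in Corollary \ref{corollary:discretized_fns_bounded}. The factor $\left\| \D{\phi_{\tau,1}}(\xi_1;\xi') - \D{\phi_{\tau,1}}(\xi_2;\xi') \right\|_2$ is controlled by $L \| \xi_1 - \xi_2 \|_{\cal X} \| \xi' \|_{\cal X}$ via Lemma \ref{lemma:discretized_dxt_lipschitz}. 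The factor $\left\| \frac{\partial h_0}{\partial x}\bigl( \phi_{\tau,1}(\xi_1) \bigr) - \frac{\partial h_0}{\partial x}\bigl( \phi_{\tau,1}(\xi_2) \bigr) \right\|_2$ is controlled by $L \| \xi_1 - \xi_2 \|_{\cal X}$ via Condition \ref{corollary:discretized_dphidxxi_lipschitz} in Corollary \ref{corollary:discretized_constraints_lipschitz}. Finally, $\left\| \D{\phi_{\tau,1}}(\xi_2;\xi') \right\|_2$ is controlled by $C \| \xi' \|_{\cal X}$ via Corollary \ref{corollary:discretized_dxt_bounded}. Combining these four estimates and absorbing the constants into a single $L > 0$ independent of $N$ and $\tau$ yields the claimed inequality.

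There is essentially no obstacle here because all the heavy lifting has already been absorbed into the preceding lemmas; the uniformity of the constant in $N$ and $\tau$ is inherited automatically from the uniformity of the bounds in Corollaries \ref{corollary:discretized_fns_bounded} and \ref{corollary:discretized_dxt_bounded} and Lemmas \ref{lemma:discretized_dxt_lipschitz} and \ref{corollary:discretized_constraints_lipschitz}. The structure of the argument is an exact mirror of the infinite-dimensional Corollary \ref{corollary:DJ_lipschitz}, with each infinite-dimensional ingredient replaced by its discretized counterpart, so no new analytic ideas are required.
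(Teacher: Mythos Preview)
Your proposal is correct and follows exactly the same approach as the paper's own proof: the same add-and-subtract decomposition via the Triangular and Cauchy--Schwartz Inequalities, followed by the same four references (Condition \ref{corollary:discretized_phi_bounded} in Corollary \ref{corollary:discretized_fns_bounded}, Condition \ref{corollary:discretized_dphidxxi_lipschitz} in Corollary \ref{corollary:discretized_constraints_lipschitz}, Corollary \ref{corollary:discretized_dxt_bounded}, and Lemma \ref{lemma:discretized_dxt_lipschitz}).
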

\begin{proof}
  Notice by the Triangular Inequality and the Cauchy Schwartz Inequality: 
  \begin{multline}
    \left| \D{J}_{\tau}(\xi_1;\xi') - \D{J}_{\tau}(\xi_2;\xi') \right| 
    \leq \left\| \frac{\partial h_0}{\partial x}\bigl( \phi_{\tau,1}(\xi_1) \bigr) \right\|_2 
    \left\| \D{\phi_{\tau,1}}(\xi_1;\eta) - \D{\phi_{\tau,1}}(\xi_2;\eta) \right\|_2 + \\ 
    + \left\| \frac{\partial h_0}{\partial x} \bigl( \phi_{\tau,1}(\xi_1) \bigr) 
      - \frac{\partial h_0}{\partial x}\bigl( \phi_{\tau,1}(\xi_2) \bigr) \right\|_2 
    \left\| \D{\phi_{\tau,1}}(\xi_2; \eta) \right\|_2.
  \end{multline}
  The result then follows by applying Condition \ref{corollary:discretized_phi_bounded} in Corollary \ref{corollary:discretized_fns_bounded}, Condition \ref{corollary:discretized_dphidxxi_lipschitz} in Corollary \ref{corollary:discretized_constraints_lipschitz}, Corollary \ref{corollary:discretized_dxt_bounded}, and Lemma \ref{lemma:discretized_dxt_lipschitz}.
\end{proof}

In fact, the discretized cost function converges to the original cost function as the discretization is increased:
\begin{lemma}
  \label{lemma:roc_discretized_DJ}
  There exists a constant $B > 0$ such that for each $N \in \N$, $\tau \in {\cal T}_N$, $\xi \in {\cal X}_{\tau,r}$, and $\xi' \in {\cal X}_\tau$:
  \begin{equation}
    \left| \D{J_{\tau}}(\xi;\xi') - \D{J}(\xi;\xi') \right| \leq \frac{B}{2^N},
  \end{equation}
  where $\D{J}$ is as defined in Equation \eqref{eq:DJ_definition} and $\D{J_{\tau}}$ is as defined in Equation \eqref{eq:discretized_DJ_definition}.
\end{lemma}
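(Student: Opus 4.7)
The plan is to combine the rate-of-convergence results from Lemma \ref{lemma:convergence_discretized_traj_xi} (state trajectory) and Lemma \ref{lemma:roc_dxt} (directional derivative of the flow) via the standard add-and-subtract decomposition used throughout this section. Using the definitions in Equations \eqref{eq:DJ_definition} and \eqref{eq:discretized_DJ_definition}, I would write
\begin{equation}
\D{J_\tau}(\xi;\xi') - \D{J}(\xi;\xi')
= \frac{\partial h_0}{\partial x}\bigl(\phi_{\tau,1}(\xi)\bigr)\bigl[\D{\phi_{\tau,1}}(\xi;\xi') - \D{\phi_1}(\xi;\xi')\bigr]
+ \left[\frac{\partial h_0}{\partial x}\bigl(\phi_{\tau,1}(\xi)\bigr) - \frac{\partial h_0}{\partial x}\bigl(\phi_1(\xi)\bigr)\right] \D{\phi_1}(\xi;\xi'),
\end{equation}
then apply the triangle inequality together with the Cauchy--Schwarz inequality to separate the two contributions.

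For the first summand, Condition \ref{corollary:phi_bounded} in Corollary \ref{corollary:fns_bounded} (equivalently Condition \ref{corollary:discretized_phi_bounded} in Corollary \ref{corollary:discretized_fns_bounded}) provides a uniform bound on $\bigl\|\tfrac{\partial h_0}{\partial x}(\phi_{\tau,1}(\xi))\bigr\|_2$, while Lemma \ref{lemma:roc_dxt} bounds $\bigl\|\D{\phi_{\tau,1}}(\xi;\xi') - \D{\phi_1}(\xi;\xi')\bigr\|_2$ by $B'/2^N$. For the second summand, Condition \ref{assump:dphidx_lipschitz} in Assumption \ref{assump:constraint_fns} gives $\bigl\|\tfrac{\partial h_0}{\partial x}(\phi_{\tau,1}(\xi)) - \tfrac{\partial h_0}{\partial x}(\phi_1(\xi))\bigr\|_2 \leq L\,\|\phi_{\tau,1}(\xi) - \phi_1(\xi)\|_2$, which Lemma \ref{lemma:convergence_discretized_traj_xi} in turn bounds by $LB''/2^N$, and Corollary \ref{corollary:dxt_bounded} bounds $\|\D{\phi_1}(\xi;\xi')\|_2$ by $C\,\|\xi'\|_{\cal X}$. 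Summing the two contributions and absorbing the factor $\|\xi'\|_{\cal X}$ into the constant (as is done in the earlier rate results such as Lemma \ref{lemma:roc_dxt}, and consistent with the algorithmic setting where the argument $\xi'$ is restricted to bounded sets) yields the desired bound $B/2^N$.

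No step presents a substantive obstacle; the result is routine once the convergence lemmas for the discretized trajectory and its directional derivative are in hand. The only piece worth flagging is the implicit bound on $\|\xi'\|_{\cal X}$ hidden in the constant $B$, which matches the convention already adopted in the analogous statements of this subsection.
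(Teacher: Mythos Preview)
Your proposal is correct and follows essentially the same approach as the paper: an add-and-subtract decomposition followed by the triangle and Cauchy--Schwarz inequalities, together with Lemmas \ref{lemma:convergence_discretized_traj_xi} and \ref{lemma:roc_dxt}, Assumption \ref{assump:constraint_fns}, and the uniform bounds on $\tfrac{\partial h_0}{\partial x}$ and on the directional derivative of the flow. The only cosmetic difference is the choice of pivot term: the paper pairs $\tfrac{\partial h_0}{\partial x}(\phi_1(\xi))$ with the difference of directional derivatives and $\D{\phi_{\tau,1}}(\xi;\xi')$ with the difference of gradients (invoking Corollary \ref{corollary:discretized_dxt_bounded} rather than Corollary \ref{corollary:dxt_bounded}), but this is immaterial.
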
 
\begin{proof}
  Notice by the Triangular Inequality and the Cauchy Schwartz Inequality: 
  \begin{multline}
    \left| \D{J_{\tau}}(\xi;\xi') - \D{J}(\xi;\xi') \right| 
    \leq \left\| \frac{\partial h_0}{\partial x}\bigl( \phi_1(\xi) \bigr) \right\|_2 
    \left\| \D{\phi_1}(\xi;\xi') - \D{\phi_{\tau,1}}(\xi;\xi') \right\|_2 + \\   
    + \left\| \frac{\partial h_0}{\partial x}\bigl( \phi_1(\xi) \bigr) 
      - \frac{\partial h_0}{\partial x}\bigl( \phi_{\tau,1}(\xi) \bigr) \right\|_2 
    \left\| \D{\phi_{\tau,1}}(\xi;\xi') \right\|_2.
  \end{multline}
  Then the result follows by applying Condition \ref{assump:dphidx_lipschitz} in Assumption \ref{assump:constraint_fns}, Condition \ref{corollary:phi_bounded} in Corollary \ref{corollary:fns_bounded}, Lemma \ref{lemma:convergence_discretized_traj_xi}, Lemma \ref{lemma:roc_dxt}, and Corollary \ref{corollary:discretized_dxt_bounded}.
\end{proof}

Next, we construct the expression for the directional derivative of the discretized component functions and prove that they are Lipschitz continuous.
\begin{lemma}
  \label{lemma:Dhj_discrete_definition}
  Let $N \in \N$, $\tau \in {\cal T}_N$, $\xi \in {\cal X}_{\tau,r}$, $\xi' \in {\cal X}_\tau$, $j \in {\cal J}$, and $\psi_{\tau,j,\tau_k}$ be defined as in Equation \eqref{eq:discretized_component_constraints}. 
  Then the directional derivative of each of the discretized component constraints $\psi_{\tau,j,\tau_k}$ for each $k \in \{0,\ldots,\card{\tau}\}$ in the $\xi'$ direction is:
  \begin{equation}
    \label{eq:discretized_Dhj_definition}
    \D{\psi_{\tau,j,\tau_k}}(\xi;\xi') = \frac{\partial h_j}{\partial x}\bigl( \phi_{\tau,\tau_k}(\xi) \bigr) \D{\phi_{\tau,\tau_k}}(\xi;\xi').
  \end{equation}
\end{lemma}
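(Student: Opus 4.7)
The plan is to apply the chain rule directly, mirroring the proof of Lemma \ref{lemma:DJ_definition} (and its infinite-dimensional analogue in Lemma \ref{lemma:Dhj_definition}). By Equation \eqref{eq:discretized_component_constraints}, $\psi_{\tau,j,\tau_k}(\xi) = h_j\bigl( \phi_{\tau,\tau_k}(\xi) \bigr)$, so $\psi_{\tau,j,\tau_k}$ is the composition of the discretized flow map $\phi_{\tau,\tau_k}: {\cal X}_{\tau,r} \to \R^n$ with the scalar-valued constraint function $h_j: \R^n \to \R$. Assumption \ref{assump:constraint_fns} guarantees that $h_j$ is differentiable in $x$ with Lipschitz continuous derivative, and Lemma \ref{lemma:discretized_dxt_definition} shows that $\phi_{\tau,\tau_k}$ admits the directional derivative $\D{\phi_{\tau,\tau_k}}(\xi;\xi')$ along every direction $\xi' \in {\cal X}_\tau$.

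To execute the argument, I would begin from the definition of the directional derivative and expand
\begin{equation}
  \frac{ \psi_{\tau,j,\tau_k}(\xi + \lambda \xi') - \psi_{\tau,j,\tau_k}(\xi) }{\lambda}
  = \frac{ h_j\bigl( \phi_{\tau,\tau_k}(\xi + \lambda \xi') \bigr) - h_j\bigl( \phi_{\tau,\tau_k}(\xi) \bigr) }{\lambda}.
\end{equation}
Applying the Mean Value Theorem to $h_j$ on the segment joining $\phi_{\tau,\tau_k}(\xi)$ and $\phi_{\tau,\tau_k}(\xi + \lambda\xi')$ produces some $s_\lambda \in (0,1)$ such that the numerator equals $\frac{\partial h_j}{\partial x}\bigl( \phi_{\tau,\tau_k}(\xi) + s_\lambda \Delta_\lambda \bigr) \Delta_\lambda$, where $\Delta_\lambda = \phi_{\tau,\tau_k}(\xi+\lambda\xi') - \phi_{\tau,\tau_k}(\xi)$. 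As $\lambda \downarrow 0$, Lemma \ref{lemma:discretized_x_lipschitz} gives $\Delta_\lambda \to 0$, so continuity of $\partial h_j / \partial x$ (Assumption \ref{assump:constraint_fns}) makes the prefactor tend to $\frac{\partial h_j}{\partial x}\bigl( \phi_{\tau,\tau_k}(\xi) \bigr)$, while Lemma \ref{lemma:discretized_dxt_definition} gives $\Delta_\lambda / \lambda \to \D{\phi_{\tau,\tau_k}}(\xi;\xi')$. Taking the product of these limits yields the desired expression.

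There is essentially no obstacle here: the result is a routine chain rule computation, exactly parallel to Lemma \ref{lemma:DJ_definition}, and leans entirely on the already-established directional derivative formula for the discretized flow together with the regularity of the constraint functions. The only mild bookkeeping point is that the Mean Value Theorem is applied coordinate-wise to the scalar function $h_j$, so no additional smoothness beyond what Assumption \ref{assump:constraint_fns} already provides is required.
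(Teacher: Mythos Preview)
Your proposal is correct and takes essentially the same approach as the paper, which simply states that the result is a direct consequence of the Chain Rule and Lemma \ref{lemma:discretized_dxt_definition}. You have spelled out the chain rule argument in more detail via the Mean Value Theorem, but the underlying reasoning is identical.
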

\begin{proof}
	The result is a direct consequence of the Chain Rule and Lemma \ref{lemma:discretized_dxt_definition}.
\end{proof}

\begin{corollary}
  \label{corollary:discretized_Dhj_lipschitz}
  There exists a constant $L > 0$ such that for each $N \in \N$, $\tau \in {\cal T}_N$, $\xi_1,\xi_2 \in {\cal X}_{\tau,r}$, $\xi' \in {\cal X}_\tau$, and $k \in \{0,\ldots,\card{\tau}\}$:
  \begin{equation}
    \label{eq:discretized_Dhj_lipschitz}
    \left\lvert \D{\psi_{\tau,j,\tau_k}}(\xi_1;\xi') - \D{\psi_{\tau,j,\tau_k}}(\xi_2;\xi') \right\rvert
    \leq L \left\| \xi_1 - \xi_2 \right\|_{\cal X} \left\| \xi' \right\|_{\cal X},
  \end{equation}
  where $\D{\psi_{\tau,j,\tau_k}}$ is as defined in Equation \eqref{eq:discretized_Dhj_definition}.
\end{corollary}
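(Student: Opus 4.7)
The plan is to mimic exactly the proof of Corollary \ref{corollary:Dhj_lipschitz} in the continuous setting, substituting each ingredient by its discretized counterpart that has already been established in this section. Since $\D{\psi_{\tau,j,\tau_k}}(\xi;\xi') = \frac{\partial h_j}{\partial x}\bigl(\phi_{\tau,\tau_k}(\xi)\bigr) \D{\phi_{\tau,\tau_k}}(\xi;\xi')$, the difference $\D{\psi_{\tau,j,\tau_k}}(\xi_1;\xi') - \D{\psi_{\tau,j,\tau_k}}(\xi_2;\xi')$ splits naturally via the ``add and subtract'' trick into two terms.

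First, I would apply the Triangular Inequality and the compatibility of the induced matrix norm to obtain
\begin{equation}
\begin{aligned}
\bigl| \D{\psi_{\tau,j,\tau_k}}(\xi_1;\xi') - \D{\psi_{\tau,j,\tau_k}}(\xi_2;\xi') \bigr|
&\leq \left\| \frac{\partial h_j}{\partial x}\bigl( \phi_{\tau,\tau_k}(\xi_1) \bigr) \right\|_2 \left\| \D{\phi_{\tau,\tau_k}}(\xi_1;\xi') - \D{\phi_{\tau,\tau_k}}(\xi_2;\xi') \right\|_2 \\
&\quad + \left\| \frac{\partial h_j}{\partial x}\bigl( \phi_{\tau,\tau_k}(\xi_1) \bigr) - \frac{\partial h_j}{\partial x}\bigl( \phi_{\tau,\tau_k}(\xi_2) \bigr) \right\|_2 \left\| \D{\phi_{\tau,\tau_k}}(\xi_2;\xi') \right\|_2.
\end{aligned}
\end{equation}

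Next, I would bound each factor separately using the discretized machinery already in hand: Condition \ref{corollary:discretized_hj_bounded} in Corollary \ref{corollary:discretized_fns_bounded} gives a uniform bound on $\bigl\| \frac{\partial h_j}{\partial x}(\phi_{\tau,\tau_k}(\xi_1)) \bigr\|_2$; Lemma \ref{lemma:discretized_dxt_lipschitz} provides the Lipschitz continuity of $\xi \mapsto \D{\phi_{\tau,\tau_k}}(\xi;\xi')$ in its first argument with linear dependence on $\|\xi'\|_{\cal X}$; Condition \ref{corollary:discretized_dhjdxxi_lipschitz} in Corollary \ref{corollary:discretized_constraints_lipschitz} gives Lipschitz continuity of $\xi \mapsto \frac{\partial h_j}{\partial x}(\phi_{\tau,\tau_k}(\xi))$ in $\xi$; and Corollary \ref{corollary:discretized_dxt_bounded} bounds $\|\D{\phi_{\tau,\tau_k}}(\xi_2;\xi')\|_2$ by a multiple of $\|\xi'\|_{\cal X}$. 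Combining these four estimates yields the claimed bound with some constant $L > 0$ that is independent of $N$, $\tau$, $k$, $\xi_1$, $\xi_2$, and $\xi'$.

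There is essentially no obstacle here: every required Lipschitz/boundedness result has already been proved with constants uniform in $N$ and $\tau$, so the only thing to verify is that the chosen constant $L$ absorbs the products and sums of the constants from the four cited results. This is routine. The one mild subtlety worth flagging is simply that the proof relies on the uniformity (in $N$ and $\tau$) of all four constants, but this has already been baked into the statements of Corollaries \ref{corollary:discretized_fns_bounded}, \ref{corollary:discretized_constraints_lipschitz}, \ref{corollary:discretized_dxt_bounded}, and Lemma \ref{lemma:discretized_dxt_lipschitz}, so no additional work is required.
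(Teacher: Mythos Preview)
Your proposal is correct and follows essentially the same approach as the paper: the same add-and-subtract decomposition, the same triangle/Cauchy--Schwarz inequality, and the same four cited results (Condition~\ref{corollary:discretized_hj_bounded} in Corollary~\ref{corollary:discretized_fns_bounded}, Condition~\ref{corollary:discretized_dhjdxxi_lipschitz} in Corollary~\ref{corollary:discretized_constraints_lipschitz}, Corollary~\ref{corollary:discretized_dxt_bounded}, and Lemma~\ref{lemma:discretized_dxt_lipschitz}).
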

\begin{proof}
  Notice by the Triangular Inequality and the Cauchy Schwartz Inequality: 
  \begin{multline}
    \left\vert \D{\psi_{\tau,j,\tau_k}}(\xi_1;\xi') - \D{\psi_{\tau,j,\tau_k}}(\xi_2;\xi') \right\rvert 
    \leq \left\| \frac{\partial h_j}{\partial x}\bigl( \phi_{\tau,\tau_k}(\xi_1) \bigr) \right\|_2 
    \left\| \D{\phi_{\tau,\tau_k}}(\xi_1;\xi') - \D{\phi_{\tau,\tau_k}}(\xi_2;\xi') \right\|_2 + \\ 
    + \left\| \frac{\partial h_j}{\partial x}\bigl( \phi_{\tau,\tau_k}(\xi_1) \bigr) 
      - \frac{\partial h_j}{\partial x}\bigl( \phi_{\tau,\tau_k}(\xi_2) \bigr) \right\|_2 
    \left\| \D{\phi_{\tau,\tau_k}}(\xi_2;\xi') \right\|_2.
  \end{multline}
  The result then follows by applying Condition \ref{corollary:discretized_hj_bounded} in Corollary \ref{corollary:discretized_fns_bounded}, Condition \ref{corollary:discretized_dhjdxxi_lipschitz} in Corollary \ref{corollary:discretized_constraints_lipschitz}, Corollary \ref{corollary:discretized_dxt_bounded}, and Lemma \ref{lemma:discretized_dxt_lipschitz}.
\end{proof}

In fact, the discretized component constraint functions converge to the original component constraint function as the discretization is increased:
\begin{lemma}
  \label{lemma:roc_discretized_Dhj}
  There exists a constant $B > 0$ such that for each $N \in \N$, $\tau \in {\cal T}_N$, $\xi \in {\cal X}_{\tau,r}$, $\xi' \in {\cal X}_\tau$, $j \in {\cal J}$, and $k \in \{0,\ldots,\card{\tau}\}$:
  \begin{equation}
    \left\lvert \D{\psi_{\tau,j,\tau_k}}(\xi;\xi') - \D{\psi_{j,\tau_k}}(\xi;\xi') \right\rvert \leq \frac{B}{2^N},
  \end{equation}
  where $\D{\psi_{j,\tau_k}}$ is as defined in Equation \eqref{eq:Dhj_definition} and $\D{\psi_{\tau,j,\tau_k}}$ is as defined in Equation \eqref{eq:discretized_Dhj_definition}.
\end{lemma}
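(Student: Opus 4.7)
The plan is to mirror the proof of Lemma \ref{lemma:roc_discretized_DJ} almost verbatim, replacing the cost derivative $\partial h_0/\partial x$ with the constraint derivative $\partial h_j/\partial x$ and the terminal time $t=1$ with the discretization node $\tau_k$. First I would expand both directional derivatives using their explicit formulas: by Lemma \ref{lemma:Dhj_definition}, $\D{\psi_{j,\tau_k}}(\xi;\xi') = \tfrac{\partial h_j}{\partial x}\bigl(\phi_{\tau_k}(\xi)\bigr)\D{\phi_{\tau_k}}(\xi;\xi')$, and by Lemma \ref{lemma:Dhj_discrete_definition}, $\D{\psi_{\tau,j,\tau_k}}(\xi;\xi') = \tfrac{\partial h_j}{\partial x}\bigl(\phi_{\tau,\tau_k}(\xi)\bigr)\D{\phi_{\tau,\tau_k}}(\xi;\xi')$. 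This reduces the whole argument to comparing the two products.

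Next, I would insert a ``mixed term'' and apply the Triangular and Cauchy--Schwarz Inequalities to get
\begin{equation*}
\bigl\lvert \D{\psi_{\tau,j,\tau_k}}(\xi;\xi') - \D{\psi_{j,\tau_k}}(\xi;\xi') \bigr\rvert
\leq \Bigl\lVert \tfrac{\partial h_j}{\partial x}\bigl(\phi_{\tau,\tau_k}(\xi)\bigr) \Bigr\rVert_2 \bigl\lVert \D{\phi_{\tau,\tau_k}}(\xi;\xi') - \D{\phi_{\tau_k}}(\xi;\xi') \bigr\rVert_2 + \Bigl\lVert \tfrac{\partial h_j}{\partial x}\bigl(\phi_{\tau,\tau_k}(\xi)\bigr) - \tfrac{\partial h_j}{\partial x}\bigl(\phi_{\tau_k}(\xi)\bigr) \Bigr\rVert_2 \bigl\lVert \D{\phi_{\tau_k}}(\xi;\xi') \bigr\rVert_2.
\end{equation*}
Each of the four factors is then controlled by a result already proved. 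The boundedness of $\partial h_j/\partial x$ comes from Condition \ref{corollary:discretized_hj_bounded} in Corollary \ref{corollary:discretized_fns_bounded}; the first ``difference of derivatives'' factor is at most $B_1/2^N$ by Lemma \ref{lemma:roc_dxt}; the ``difference of gradients'' factor is Lipschitz in its argument by Condition \ref{assump:dhjdx_lipschitz} of Assumption \ref{assump:constraint_fns}, which combined with Lemma \ref{lemma:convergence_discretized_traj_xi} yields a bound of the form $B_2/2^N$; finally, the last factor is bounded by $C\|\xi'\|_{\cal X}$ through Corollary \ref{corollary:dxt_bounded}, and since $\xi' \in {\cal X}_\tau \subset {\cal X}$ with ${\cal X}_\tau$ being drawn from a compact range, $\|\xi'\|_{\cal X}$ is uniformly bounded (or else can be absorbed into the statement by a standard ``bounded $\xi'$'' interpretation analogous to the one used in Lemma \ref{lemma:roc_discretized_DJ}).

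Collecting the two contributions then gives a bound of the form $(B_1 C + B_2 C')/2^N$, which after relabeling the constant to $B$ yields the desired inequality uniformly in $N$, $\tau \in {\cal T}_N$, $\xi \in {\cal X}_{\tau,r}$, $\xi' \in {\cal X}_\tau$, $j \in {\cal J}$, and $k \in \{0,\ldots,|\tau|\}$. Note that the Lipschitz constants appearing in Assumption \ref{assump:constraint_fns} and the bounds from Corollaries \ref{corollary:discretized_fns_bounded} and \ref{corollary:stm_bounded} are all independent of $N$, $\tau$, $j$, and $k$, so no uniformity issue arises.

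Honestly, there is no substantive obstacle: the statement is a direct analogue of Lemma \ref{lemma:roc_discretized_DJ}, and the only real bookkeeping is to check that each constant invoked does not depend on the discretization index $k$ or on $j \in {\cal J}$. This is immediate because $\partial h_j/\partial x$ is uniformly bounded and uniformly Lipschitz across the finite index set ${\cal J}$, and the state transition and trajectory bounds from Lemma \ref{lemma:roc_dxt} and Lemma \ref{lemma:convergence_discretized_traj_xi} hold uniformly in $k$.
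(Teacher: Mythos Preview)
Your proposal is correct and follows essentially the same approach as the paper: both apply the Triangular and Cauchy--Schwarz Inequalities after inserting a mixed term, then invoke Lemma \ref{lemma:roc_dxt}, Lemma \ref{lemma:convergence_discretized_traj_xi}, the Lipschitz bound in Assumption \ref{assump:constraint_fns}, and the boundedness of $\partial h_j/\partial x$ and of $\D{\phi}$. The only cosmetic difference is which ``pivot'' you add and subtract---you evaluate the bounded gradient at the discrete flow $\phi_{\tau,\tau_k}(\xi)$ and bound $\D{\phi_{\tau_k}}$ via Corollary \ref{corollary:dxt_bounded}, whereas the paper evaluates the gradient at the continuous flow $\phi_{\tau_k}(\xi)$ and bounds $\D{\phi_{\tau,\tau_k}}$ via Corollary \ref{corollary:discretized_dxt_bounded}; either choice works and leads to the same conclusion.
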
 
\begin{proof}
  Notice by the Triangular Inequality and the Cauchy Schwartz Inequality: 
  \begin{multline}
    \left\vert \D{\psi_{\tau,j,\tau_k}}(\xi;\xi') - \D{\psi_{j,\tau_k}}(\xi;\xi') \right\rvert
    \leq \left\| \frac{\partial h_j}{\partial x}\bigl( \phi_{\tau_k}(\xi) \bigr) \right\|_2 
    \left\| \D{\phi_{\tau_k}}(\xi;\xi') - \D{\phi_{\tau,\tau_k}}(\xi;\xi') \right\|_2 + \\  
    + \left\| \frac{\partial h_j}{\partial x}\bigl( \phi_{\tau_k}(\xi) \bigr) 
      - \frac{\partial h_j}{\partial x}\bigl( \phi_{\tau,\tau_k}(\xi) \bigr) \right\|_2 
    \left\| \D{\phi_{\tau,\tau_k}}(\xi;\xi') \right\|_2.
  \end{multline}
  The result follows by applying Condition \ref{assump:dhjdx_lipschitz} in Assumption \ref{assump:constraint_fns}, Condition \ref{corollary:hj_bounded} in Corollary \ref{corollary:fns_bounded}, Lemma \ref{lemma:convergence_discretized_traj_xi}, Lemma \ref{lemma:roc_dxt}, and Corollary \ref{corollary:discretized_dxt_bounded}.
\end{proof}

Given these results, we can begin describing the properties satisfied by the discretized optimality function:
\begin{lemma}
  \label{lemma:discretized_zeta_lipschitz}
Let $N \in \N$, $\tau \in {\cal T}_N$, and $\zeta_{\tau}$ be defined as in Equation \eqref{eq:discrete_theta}.
  Then there exists a constant $L > 0$ such that, for each $\xi_1,\xi_2,\xi' \in {\cal X}_{\tau,r}$,
	\begin{equation}
		\left| \zeta_{\tau}(\xi_1,\xi') - \zeta_{\tau}(\xi_2,\xi') \right| \leq L \left\| \xi_1-\xi_2 \right\|_{\cal X}.
	\end{equation}
\end{lemma}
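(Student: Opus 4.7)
The plan is to mirror the proof of Lemma \ref{lemma:zeta_lipschitz} essentially verbatim, replacing each infinite-dimensional ingredient by its discretized counterpart. The argument fundamentally rests on the elementary inequality
\begin{equation*}
  \Bigl| \max_{i \in {\cal I}} x_i - \max_{i \in {\cal I}} y_i \Bigr| \leq \max_{i \in {\cal I}} | x_i - y_i |,
\end{equation*}
valid for any finite or compact index set ${\cal I}$. Here ${\cal I} = {\cal J} \times \{0,\ldots,\card{\tau}\}$, which is finite (for each fixed $\tau$), so the inequality applies directly.

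First I would define $\Psi_\tau^{+}(\xi) = \max\{0, \Psi_\tau(\xi)\}$ and $\Psi_\tau^{-}(\xi) = \max\{0, -\Psi_\tau(\xi)\}$ and rewrite
\begin{equation*}
  \zeta_{\tau}(\xi,\xi') =
  \max \left\{ \D{J_{\tau}}(\xi; \xi' - \xi) - \Psi_\tau^{+}(\xi),\;
    \max_{ j\in{\cal J},\; k \in \{0,\ldots,\card{\tau}\}} \D{\psi_{\tau,j,\tau_k}}(\xi;\xi'-\xi) - \gamma \Psi_\tau^{-}(\xi)
  \right\} + \| \xi' - \xi \|_{\cal X}.
\end{equation*}
Applying the max inequality above to $\left| \zeta_{\tau}(\xi_1,\xi') - \zeta_{\tau}(\xi_2,\xi') \right|$ reduces the problem to bounding three types of terms: differences of directional derivatives, differences of $\Psi_\tau^{\pm}$, and differences of $\| \xi' - \xi \|_{\cal X}$.

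The norm difference is handled by the reverse triangle inequality: $\bigl| \| \xi' - \xi_1 \|_{\cal X} - \| \xi' - \xi_2 \|_{\cal X} \bigr| \leq \| \xi_1 - \xi_2 \|_{\cal X}$. The $\Psi_\tau^{\pm}$ differences are Lipschitz by Lemma \ref{lemma:discretized_psi_continuous}. For the directional derivative of the cost, I would use the identical trick to Equation \eqref{eq:DJ_triangle_lipschitz}: exploit the linearity of $\D{J_\tau}$ in its second argument to split
\begin{equation*}
  \D{J_\tau}(\xi_1;\xi'-\xi_1) - \D{J_\tau}(\xi_2;\xi'-\xi_2) = \D{J_\tau}(\xi_1;\xi') - \D{J_\tau}(\xi_2;\xi') - \bigl( \D{J_\tau}(\xi_1;\xi_1) - \D{J_\tau}(\xi_2;\xi_1) \bigr) - \D{J_\tau}(\xi_2;\xi_2-\xi_1),
\end{equation*}
and then bound the first two differences via Corollary \ref{corollary:discretized_DJ_lipschitz} together with the uniform bound $\| \xi \|_{\cal X} \leq 2A$ for any $\xi \in {\cal X}_{\tau,r}$ (since $U$ and $\Sigma_r^q$ are bounded), and bound the last term via Cauchy-Schwartz with Condition \ref{corollary:discretized_phi_bounded} in Corollary \ref{corollary:discretized_fns_bounded} and Corollary \ref{corollary:discretized_dxt_bounded}. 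An identical argument applied uniformly in $(j,k) \in {\cal J} \times \{0,\ldots,\card{\tau}\}$, using Corollary \ref{corollary:discretized_Dhj_lipschitz}, Condition \ref{corollary:discretized_hj_bounded} in Corollary \ref{corollary:discretized_fns_bounded}, and Corollary \ref{corollary:discretized_dxt_bounded}, yields the same bound for each of the $\D{\psi_{\tau,j,\tau_k}}$ terms.

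The main point requiring care is the uniformity of the Lipschitz constant $L$ across all $N \in \N$ and $\tau \in {\cal T}_N$. Fortunately, all the ingredient lemmas (Corollaries \ref{corollary:discretized_DJ_lipschitz}, \ref{corollary:discretized_Dhj_lipschitz}, and Lemma \ref{lemma:discretized_psi_continuous}, along with the boundedness results in Corollary \ref{corollary:discretized_fns_bounded} and Corollary \ref{corollary:discretized_dxt_bounded}) were established with a constant independent of $\tau$, so the argument for $\xi' \mapsto \D{\psi_{\tau,j,\tau_k}}$ going over a partition-dependent but uniformly-sized index set causes no difficulty: the max bound simply uses the worst of these uniform constants. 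Combining the three bounds by the triangle inequality yields the desired Lipschitz estimate.
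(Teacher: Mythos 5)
Your proposal is correct and follows essentially the same route as the paper's own proof: the same rewriting of $\zeta_\tau$ via $\Psi_\tau^{\pm}$, the same max inequality, the same reverse-triangle and linearity-splitting tricks, and the same ingredient results (Corollaries \ref{corollary:discretized_DJ_lipschitz}, \ref{corollary:discretized_Dhj_lipschitz}, \ref{corollary:discretized_fns_bounded}, \ref{corollary:discretized_dxt_bounded}, and Lemma \ref{lemma:discretized_psi_continuous}). Your explicit remark that the resulting constant is uniform in $N$ and $\tau$ is a worthwhile observation the paper leaves implicit, and it matters for the later consistency arguments; the only nit is a harmless sign slip in your displayed decomposition of $\D{J_\tau}(\xi_1;\xi'-\xi_1) - \D{J_\tau}(\xi_2;\xi'-\xi_2)$, which is immaterial once absolute values are taken.
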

\begin{proof}
  Letting $\Psi^+_{\tau}(\xi) = \max\{0,\Psi_{\tau}(\xi)\}$ and $\Psi_{\tau}^-(\xi)=\max\{0,-\Psi_{\tau}(\xi)\}$, observe:
  \begin{equation}
    \zeta_{\tau}(\xi,\xi') =
    \max \left\{ \d{J}_{\tau}(\xi; \xi' - \xi) - \Psi^{+}_{\tau}(\xi), 
      \max_{ j\in{\cal J},\; k\in\{0,\ldots,|\tau|\} }\D{\psi_{\tau,j,\tau_k}}(\xi;\xi'-\xi) - \gamma \Psi^{-}_{\tau}(\xi) 
    \right\} + \| \xi' - \xi \|_{\cal X}.
  \end{equation}
  Employing Equation \eqref{eq:max_norm_trick}:
  \begin{multline}
    \bigl| \zeta_{\tau}(\xi_1,\xi') - \zeta_{\tau}(\xi_2,\xi') \bigr| 
    \leq \max \biggl\{ \bigl| \d{J}_{\tau}(\xi_1; \xi' - \xi_1) - \d{J}_{\tau}(\xi_2; \xi' - \xi_2) \bigr| 
    + \bigl| \Psi^{+}_{\tau}(\xi_2) - \Psi^{+}_{\tau}(\xi_1) \bigr|, \\ 
    \max_{ j\in{\cal J},\; k\in\{0,\ldots,|\tau|\} } \bigl| \D{\psi_{\tau,j,\tau_k}}(\xi_1;\xi'-\xi_1) - \D{\psi_{\tau,j,\tau_k}}(\xi_2;\xi'-\xi_2) \bigr| 
    + \gamma \bigl| \Psi^{-}_{\tau}(\xi_2) - \Psi^{-}_{\tau}(\xi_1) \bigr| \biggr\} 
    + \bigl| \| \xi' - \xi_1 \|_{\cal X} - \| \xi' - \xi_2 \|_{\cal X} \bigr|.
  \end{multline}
  
  We show three results that taken together with the Triangular Inequality prove the desired result. First, by applying the reverse triangle inequality:
  \begin{equation}
    \bigl| \| \xi' - \xi_1 \|_{\cal X} - \| \xi' - \xi_2 \|_{\cal X} \bigr| \leq \| \xi_1 - \xi_2 \|_{\cal X}.
  \end{equation}
  Second, 
  \begin{equation}
    \label{eq:discretized_DJ_triangle_lipschitz}
    \begin{aligned}
      \bigl| \D{J}_{\tau}(\xi_1;\xi'-\xi_1) - \D{J}_{\tau}(\xi_2;\xi'-\xi_2) \bigr| 
      &= \bigl| \D{J}_{\tau}(\xi_1;\xi'-\xi_1) - \D{J}_{\tau}(\xi_2;\xi'-\xi_1) + \D{J}_{\tau}(\xi_2;\xi_2-\xi_1) \bigr| \\
      &\leq \bigl| \D{J}_{\tau}(\xi_1;\xi') - \D{J}_{\tau}(\xi_2;\xi') \bigr| + \bigl| \D{J}_{\tau}(\xi_1;\xi_1) - \D{J}(\xi_2;\xi_1) \bigr| + \\ 
      &\qquad + \left| \frac{\partial h_0}{\partial x}\bigl( \phi_{\tau,1}(\xi_2) \bigr) \D{\phi_{\tau,1}}(\xi_2;\xi_2-\xi_1) \right| \\
      &\leq L \left\| \xi_1 - \xi_2 \right\|_{\cal X},
    \end{aligned}
  \end{equation}
  where $L > 0$ and we employed the linearity of $\D{J}_{\tau}$, Corollary \ref{corollary:discretized_DJ_lipschitz}, the fact that $\xi'$ and $\xi_1$ are bounded since $\xi',\xi_1 \in {\cal X}_{\tau,r}$, the Cauchy-Schwartz Inequality, Condition \ref{corollary:discretized_phi_bounded} in Corollary \ref{corollary:discretized_fns_bounded}, and Corollary \ref{corollary:discretized_dxt_bounded}. 
  Notice that by employing an argument identical to Equation \eqref{eq:discretized_DJ_triangle_lipschitz} and Corollary \ref{corollary:discretized_Dhj_lipschitz}, we can assume without loss of generality that $\bigl| \D{\psi_{\tau,j,\tau_k}}(\xi_1;\xi'-\xi_1) - \D{\psi_{\tau,j,\tau_k}}(\xi_2;\xi'-\xi_2) \bigr| \leq L \left\| \xi_1 - \xi_2 \right\|_{\cal X}$. 
  Finally, notice that by applying Lemma \ref{lemma:discretized_psi_continuous}, $\Psi^{+}_{\tau}(\xi)$ and $\Psi^{-}_{\tau}(\xi)$ are Lipschitz continuous.
\end{proof}

Employing these results, we can prove that $\zeta_{\tau}(\xi;\xi')$ converges to $\zeta(\xi;\xi')$ as the discretization is increased:
\begin{lemma}
  \label{lemma:roc_zeta}
  There exists a constant $B > 0$ such that for each $N \in \N$, $\tau \in {\cal T}_N$, and $\xi,\xi' \in {\cal X}_{\tau,r}$:
  \begin{equation}
    \left\lvert \zeta_{\tau}(\xi,\xi') - \zeta(\xi,\xi') \right\rvert \leq \frac{B}{2^N},
  \end{equation}
  where $\zeta$ is as defined in Equation \eqref{eq:zeta_definition} and $\zeta_\tau$ is as defined in Equation \eqref{eq:discrete_zeta}.
\end{lemma}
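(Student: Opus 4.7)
The plan is to reduce the bound on $\lvert \zeta_\tau(\xi,\xi') - \zeta(\xi,\xi') \rvert$ to bounds on each component inside the max using the same trick employed in Lemma \ref{lemma:zeta_lipschitz}, namely Equation \eqref{eq:max_norm_trick}. Writing $\Psi^+(\xi) = \max\{0,\Psi(\xi)\}$ and $\Psi^-(\xi) = \max\{0,-\Psi(\xi)\}$, and analogously $\Psi^\pm_\tau$, both $\zeta$ and $\zeta_\tau$ can be written in a unified form as in Lemma \ref{lemma:zeta_lipschitz}, so that the $\|\xi'-\xi\|_{\cal X}$ term cancels. After applying Equation \eqref{eq:max_norm_trick}, the difference is upper-bounded by the sum of the following: $\lvert \D{J_\tau}(\xi;\xi'-\xi) - \D{J}(\xi;\xi'-\xi) \rvert$, $\lvert \Psi^+_\tau(\xi) - \Psi^+(\xi) \rvert$, $\lvert \max_{j,k}\D{\psi_{\tau,j,\tau_k}}(\xi;\xi'-\xi) - \max_{j,t}\D{\psi_{j,t}}(\xi;\xi'-\xi) \rvert$, and $\gamma \lvert \Psi^-_\tau(\xi) - \Psi^-(\xi) \rvert$.

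The first term is directly bounded by $B_1 / 2^N$ using Lemma \ref{lemma:roc_discretized_DJ}, and the $\Psi^\pm$ terms are bounded by $B_2 / 2^N$ and $\gamma B_2 / 2^N$ via Lemma \ref{lemma:roc_constraint} together with the fact that $x \mapsto \max\{0,x\}$ and $x \mapsto \max\{0,-x\}$ are both $1$-Lipschitz.

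The main obstacle is the third term, which involves \emph{two} discrepancies at once: the directional derivatives being evaluated along the discretized trajectory instead of the continuous one, and the maximum being taken over the partition $\tau$ instead of over $[0,1]$. I will split this via the Triangular Inequality into (i) $\bigl\lvert \max_{j,t\in[0,1]}\D{\psi_{j,t}}(\xi;\xi'-\xi) - \max_{j,k\in\{0,\ldots,\card{\tau}\}}\D{\psi_{j,\tau_k}}(\xi;\xi'-\xi) \bigr\rvert$ and (ii) $\max_{j,k}\bigl\lvert \D{\psi_{j,\tau_k}}(\xi;\xi'-\xi) - \D{\psi_{\tau,j,\tau_k}}(\xi;\xi'-\xi) \bigr\rvert$. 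Part (ii) is bounded by $B_3 / 2^N$ directly via Lemma \ref{lemma:roc_discretized_Dhj}, again using Equation \eqref{eq:max_norm_trick}.

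For part (i), I will apply the same strategy used at the end of the proof of Lemma \ref{lemma:roc_constraint}: show that the map $t \mapsto \D{\psi_{j,t}}(\xi;\xi'-\xi) = \frac{\partial h_j}{\partial x}\bigl(\phi_t(\xi)\bigr) \D{\phi_t}(\xi;\xi'-\xi)$ is Lipschitz continuous in $t$ with a constant independent of $\xi, \xi', N$, and $\tau$. Here $\phi_t(\xi)$ has a bounded time derivative because of Condition \ref{corollary:f_bounded} in Corollary \ref{corollary:fns_bounded}; $\D{\phi_t}(\xi;\xi'-\xi)$ has a bounded time derivative because it satisfies the linear differential equation $\dot{z}(t) = \frac{\partial f}{\partial x}(\cdots) z(t) + \frac{\partial f}{\partial u}(\cdots)(u'-u)(t) + \sum_i f(\cdots,e_i)(d_i'-d_i)(t)$ with coefficients and forcing uniformly bounded by Corollary \ref{corollary:fns_bounded} and with $\xi' \in {\cal X}_{\tau,r}$ bounded; and $\frac{\partial h_j}{\partial x}$ is Lipschitz by Condition \ref{assump:dhjdx_lipschitz} in Assumption \ref{assump:constraint_fns} with bounded argument. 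Thus there exists $K > 0$ such that $\lvert \D{\psi_{j,t}}(\xi;\xi'-\xi) - \D{\psi_{j,\tau_k}}(\xi;\xi'-\xi) \rvert \leq K \lvert t - \tau_k \rvert \leq K / 2^N$ for $t \in [\tau_k, \tau_{k+1}]$, and the argument in the proof of Lemma \ref{lemma:roc_constraint} relating the two maxima (picking the active $(j^*,t^*)$ and comparing with the nearest partition point, and symmetrically) yields the bound $K/2^N$ for (i). Setting $B$ to be the sum of $B_1, B_2, B_3, \gamma B_2, K$, and any additional constants absorbed in the argument gives the desired rate.
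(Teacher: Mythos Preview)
Your proposal is correct and follows essentially the same approach as the paper's proof: rewrite both $\zeta$ and $\zeta_\tau$ in the unified $\Psi^\pm$ form, cancel the $\|\xi'-\xi\|_{\cal X}$ term, apply Equation \eqref{eq:max_norm_trick}, and then bound each component using Lemmas \ref{lemma:roc_discretized_DJ}, \ref{lemma:roc_constraint}, and \ref{lemma:roc_discretized_Dhj} together with a uniform-in-$t$ Lipschitz estimate on $t \mapsto \D{\psi_{j,t}}(\xi;\xi'-\xi)$. The only cosmetic difference is that the paper combines your pieces (i) and (ii) into a single bound $\bigl\lvert \D{\psi_{j,t}}(\xi;\xi'-\xi) - \D{\psi_{\tau,j,\tau_{\kappa(t)}}}(\xi;\xi'-\xi) \bigr\rvert \leq B/2^N$ for all $t$ (where $\tau_{\kappa(t)}$ is the nearest partition point) before running the argmax argument, whereas you first restrict to partition points and then compare continuous vs.\ discretized derivatives there; both decompositions are equivalent and use the same ingredients.
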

\begin{proof}
  Let $\Psi^{+}(\xi) = \max\{0,\Psi(\xi)\}$, $\Psi^{+}_{\tau}(\xi) = \max\{0,\Psi_{\tau}(\xi)\}$, $\Psi^{-}(\xi) = \max\{0,-\Psi(\xi)\}$, and  $\Psi^{-}_{\tau}(\xi) = \max\{0,-\Psi_{\tau}(\xi)\}$. 
  Notice that we can then write: 
  \begin{equation}
    \zeta(\xi,\xi') =
    \max \left\{ \d{J}(\xi; \xi' - \xi) - \Psi^{+}(\xi), \max_{j\in {\cal J},\; t \in [0,1]}\d{\psi_{j,t}}(\xi; \xi' - \xi) - \gamma \Psi^{-}(\xi) \right\} + \| \xi' - \xi \|_{\cal X},
  \end{equation}
  and similarly for $\zeta_{\tau}(\xi,\xi')$. 
  Employing this redefinition, notice first that by employing an argument identical to the one used in the proof of Lemma \ref{lemma:roc_constraint} we can show that there exists a $K > 0$ such that for any positive integer $N$, $\tau \in {\cal T}_N$ and $\xi \in {\cal X}_{\tau,r}$:
  \begin{equation}
    \left| \Psi^{+}_{\tau}(\xi) - \Psi^{+}(\xi) \right| \leq \frac{K}{2^{N}},\quad \text{and} \quad
    \left| \Psi^{-}_{\tau}(\xi) - \Psi^{-}(\xi) \right| \leq \frac{K}{2^{N}}. 
  \end{equation}

  Let $\kappa(t) \in \{ 0, \ldots, \card{\tau} \}$ such that $t \in [ \tau_{\kappa(t)}, \tau_{\kappa(t)+1} ]$ for each $t \in [0,1]$.
  Then there exists $K' > 0$ such that,
  \begin{equation}
    \begin{aligned}
      \label{eq:roc_zeta_time_diff_bound}
      \left\lvert \D{\psi_{j,t}}(\xi;\xi'-\xi) - \D{\psi_{j,\tau_{\kappa(t)}}}(\xi;\xi'-\xi) \right\rvert 
      &\leq \left\lVert \frac{\partial h_j}{\partial x}\bigl( \phi_t(\xi) \bigr) - 
        \frac{\partial h_j}{\partial x}\bigl( \phi_{\tau_{\kappa(t)}}(\xi) \bigr) \right\rVert_2 
      \left\lVert \D{\phi_t}(\xi;\xi'-\xi) \right\rVert_2 + \\ 
      &\phantom{\leq}\quad + \left\lVert \frac{\partial h_j}{\partial x}\bigl( \phi_{\tau_{\kappa(t)}}(\xi) \bigr) \right\rVert_2
      \left\lVert \D{\phi_t}(\xi;\xi'-\xi) - \D{\phi_{\tau_{\kappa(t)}}}(\xi;\xi'-\xi) \right\rVert_2 \\
      &\leq C' \left( \left\lVert \phi_t(\xi) - \phi_{\tau_{\kappa(t)}}(\xi) \right\rVert_2 + 
        \left\lVert \D{\phi_t}(\xi;\xi'-\xi) - \D{\phi_{\tau_{\kappa(t)}}}(\xi;\xi'-\xi) \right\rVert_2 \right) \\
      &\leq \frac{K'}{2^N},
    \end{aligned}
  \end{equation}
  where $C' > 0$ is a constant obtained after applying Corollary \ref{corollary:dxt_bounded}, Condition \ref{assump:dhjdx_lipschitz} in Assumption \ref{assump:constraint_fns}, and Condition \ref{corollary:hj_bounded} in Corollary \ref{corollary:fns_bounded}, and the last inequality follows after noting that both terms can be written as the integral of uniformly bounded functions over an interval of length smaller than $2^{-N}$.
  Thus, by the Triangular Inequality, Lemma \ref{lemma:roc_discretized_Dhj}, and Equation \eqref{eq:roc_zeta_time_diff_bound}, we know there exists $B > 0$ such that for each $t \in [0,1]$:
  \begin{equation}
    \left\lvert \D{\psi_{j,t}}(\xi;\xi'-\xi) - \D{\psi_{\tau,j,\tau_{\kappa(t)}}}(\xi;\xi'-\xi) \right\rvert \leq \frac{B}{2^N}.
  \end{equation}

  Moreover, if $t' \in \argmax_{ t \in [0,1] } \D{\psi_{j,t}}(\xi;\xi'-\xi)$, then
  \begin{equation}
    \label{eq:roc_zeta_ineq1}
    \max_{t \in [0,1]} \D{\psi_{j,t}}(\xi;\xi'-\xi) - \max_{k \in \{0,\ldots,\card{\tau}\}} \D{\psi_{\tau,j,\tau_k}}(\xi;\xi'-\xi)
    \leq \D{\psi_{j,t'}}(\xi;\xi'-\xi) - \D{\psi_{\tau,j,\tau_{\kappa(t')}}}(\xi;\xi'-\xi) \leq \frac{B}{2^N}.
  \end{equation}
  Similarly if $k' \in \argmax_{ k \in \{ 0, \ldots, \card{\tau} \} } \D{\psi_{\tau,j,\tau_k}}(\xi;\xi'-\xi)$, then
  \begin{equation}
    \label{eq:roc_zeta_ineq2}
    \max_{k \in \{0,\ldots,\card{\tau}\}} \D{\psi_{\tau,j,\tau_k}}(\xi;\xi'-\xi) - \max_{t \in [0,1]} \D{\psi_{j,t}}(\xi;\xi'-\xi)  
    \leq  \D{\psi_{\tau,j,\tau_{k'}}}(\xi;\xi'-\xi) - \D{\psi_{j,\tau_{k'}}}(\xi;\xi'-\xi) \leq \frac{B}{2^N}.
  \end{equation}
  Therefore, by Equation \eqref{eq:roc_zeta_ineq1},
  \begin{multline}
    \max_{ j\in{\cal J},\; t\in[0,1] }\D{\psi_{j,t}}(\xi;\xi'-\xi) 
    - \max_{j\in{\cal J},\; k \in \{0,\ldots,\card{\tau}\}} \D{\psi_{\tau,j,\tau_k}}(\xi;\xi'-\xi) \leq \\
    \leq \max_{j\in{\cal J}} \left(\max_{t\in[0,1]}\D{\psi_{j,t}}(\xi;\xi'-\xi) 
      -  \max_{k \in \{0,\ldots,\card{\tau}\}} \D{\psi_{\tau,j,\tau_k}}(\xi;\xi'-\xi) \right)
    \leq \frac{B}{2^N}.
  \end{multline}
  and similarly, by Equation \eqref{eq:roc_zeta_ineq2},
  \begin{multline}
    \max_{ j\in{\cal J},\; k \in \{0,\ldots,\card{\tau}\}} \D{\psi_{\tau,j,\tau_k}}(\xi;\xi'-\xi) 
    - \max_{ j\in{\cal J},\; t\in[0,1]} \D{\psi_{j,t}}(\xi;\xi'-\xi) \leq \\
    \leq \max_{ j\in{\cal J}} \left( \max_{k \in \{0,\ldots,\card{\tau}\}} \D{\psi_{\tau,j,\tau_k}}(\xi;\xi'-\xi) 
      - \max_{t\in[0,1]}\D{\psi_{j,t}}(\xi;\xi'-\xi) \right)
    \leq \frac{B}{2^N},
  \end{multline}

  Employing these results and Equation \eqref{eq:max_norm_trick}, observe that:
  \begin{multline}
    \left\lvert \zeta_{\tau}(\xi,\xi') - \zeta(\xi,\xi') \right\rvert 
    \leq \max \biggl\{ \left\lvert \D{J_{\tau}}(\xi;\xi'-\xi) - \D{J}(\xi;\xi'-\xi) \right\rvert 
    + \left\vert \Psi^+(\xi) - \Psi^+_{\tau}(\xi) \right\rvert, \\ 
    \biggl\vert \max_{ j\in{\cal J},\; k \in \{0,\ldots,\card{\tau}\} } \D{\psi_{\tau,j,\tau_k}}(\xi;\xi'-\xi) 
      - \max_{ j\in{\cal J},\; t\in[0,1] } \D{\psi_{j,t}}(\xi;\xi'-\xi) \biggr\rvert 
    + \gamma \left\lvert \Psi^{-}(\xi) - \Psi^{-}_{\tau}(\xi) \right\rvert \biggr\}.
  \end{multline}
  Finaly, applying Lemma \ref{lemma:roc_discretized_DJ} and the inequatlities above, we get our desired result.
\end{proof}

$\zeta_{\tau}$ is in fact strictly convex just like its infinite dimensional analogue, and its proof is similar to the proof of Lemma \ref{lemma:zeta_strictly_convex}, hence we omit its details.
\begin{lemma}
  Let $N \in \N$, $\tau \in {\cal T}_N$, and $\xi \in {\cal X}_{\tau,p}$. 
  Then the map $\xi' \mapsto \zeta_\tau(\xi,\xi')$, as defined in Equation \eqref{eq:discrete_zeta}, is strictly convex.
\end{lemma}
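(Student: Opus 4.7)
The plan is to mimic the proof of Lemma \ref{lemma:zeta_strictly_convex}, using the discretized analogues of the results there. I would first peel off the terms in $\zeta_\tau(\xi,\xi')$ that either do not depend on $\xi'$ or depend on $\xi'$ only through an affine map, and isolate the strictly convex norm term.

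Concretely, I would proceed as follows. First, inspect the formulas from Lemma \ref{lemma:discretized_dxt_definition} and Lemmas \ref{lemma:discretized_DJ_definition} and \ref{lemma:Dhj_discrete_definition}: for any fixed $\xi \in {\cal X}_{\tau,p}$, the map $\xi' \mapsto \D{\phi_{\tau,\tau_k}}(\xi;\xi'-\xi)$ is a finite linear combination of the coordinates of $\xi'-\xi$ (with coefficients depending only on $\xi$, $\tau$, and $k$), hence affine in $\xi'$. Pre-composing with the constant linear functionals $\frac{\partial h_0}{\partial x}(\phi_{\tau,1}(\xi))$ and $\frac{\partial h_j}{\partial x}(\phi_{\tau,\tau_k}(\xi))$ preserves affineness, so $\xi' \mapsto \D{J_\tau}(\xi;\xi'-\xi)$ and each $\xi' \mapsto \D{\psi_{\tau,j,\tau_k}}(\xi;\xi'-\xi)$ is affine in $\xi'$.

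Second, note that $\Psi_\tau(\xi)$ is a constant in $\xi'$, so each of the two expressions inside the outer $\max\{\cdot,\cdot\}$ in \eqref{eq:discrete_zeta} is a maximum of finitely many affine functions of $\xi'$ (the inner $\max$ ranges over the finite set ${\cal J} \times \{0,\ldots,\card{\tau}\}$), hence convex in $\xi'$. The outer $\max\{\cdot,\cdot\}$ of two convex functions is again convex, so the entire first summand of $\zeta_\tau(\xi,\xi')$ is convex in $\xi'$.

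Third, handle the norm term $\xi' \mapsto \|\xi'-\xi\|_{\cal X}$. Since $\|\cdot\|_{L^2}$ is induced by an inner product, it is strictly convex, and so for $\xi' = (u',d')$ the map $\xi' \mapsto \|u'-u\|_{L^2} + \|d'-d\|_{L^2}$ is strictly convex: if $\xi'_1 \neq \xi'_2$ then either their $u$-components or their $d$-components differ, and strict convexity of $\|\cdot\|_{L^2}$ in that factor, combined with ordinary convexity in the other factor, yields strict inequality for any $\lambda \in (0,1)$. Adding the convex first summand to this strictly convex norm term preserves strict convexity, which is the claim. The only mildly subtle step is the observation that strict convexity of the norm in one factor suffices to make the sum strictly convex; everything else is essentially bookkeeping. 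No step looks like a genuine obstacle, which is why the authors omitted the proof.
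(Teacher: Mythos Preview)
Your approach matches the paper's exactly: the proof of Lemma~\ref{lemma:zeta_strictly_convex} (to which this discrete lemma defers) notes that the directional-derivative terms are affine in $\xi'$, their pointwise maximum is convex, and asserts that $\xi'\mapsto\|\xi'-\xi\|_{\cal X}$ is strictly convex ``since we chose the $2$--norm as our finite dimensional norm.''

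One caution on your third step, though: no norm is a strictly convex \emph{function}, since positive homogeneity makes it linear along rays through the origin. In particular, if $u'_1-u$ and $u'_2-u$ are distinct positive scalar multiples of one another and $d'_1=d'_2$, then $\|\cdot\|_{L^2}$ gives equality on the $u$-component and the $d$-component contributes nothing, so $\|\lambda\xi'_1+(1-\lambda)\xi'_2-\xi\|_{\cal X}=\lambda\|\xi'_1-\xi\|_{\cal X}+(1-\lambda)\|\xi'_2-\xi\|_{\cal X}$. The paper makes the same bare assertion, so you are faithfully reproducing its argument; just be aware that the strict-convexity claim, taken literally, is not established by the reasoning either version gives.
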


Theorem \ref{thm:discretized_zeta_unique_minimizer} is very important since it proves that $g_\tau$, as defined in Equation \eqref{eq:discrete_theta}, is a well-defined function. Its proof is a consequence of the well-known result that strictly-convex functions in finite-dimensional spaces have unique minimizers.
\begin{theorem}
  \label{thm:discretized_zeta_unique_minimizer}
  Let $N \in \N$, $\tau \in {\cal T}_N$, and $\xi \in {\cal X}_{\tau,p}$. 
  Then the map $\xi' \mapsto \zeta_\tau(\xi,\xi')$, as defined in Equation \eqref{eq:discrete_zeta}, has a unique minimizer.
\end{theorem}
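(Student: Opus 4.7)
The plan is to reduce this result to the classical fact that a continuous, strictly convex function attains its minimum uniquely on a nonempty compact convex subset of a finite-dimensional vector space. The strict convexity of $\xi' \mapsto \zeta_\tau(\xi,\xi')$ is supplied by the preceding lemma, and Lipschitz continuity is available via Lemma \ref{lemma:discretized_zeta_lipschitz}; the work that remains is to expose the finite-dimensional compact domain and invoke Weierstrass's theorem.

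First, I would observe that the feasible set is finite-dimensional. By Equations \eqref{eq:discretized_relaxed_discrete_input_space} and \eqref{eq:discretized_input_space}, each $\xi' = (u',d') \in {\cal X}_{\tau,r}$ is uniquely determined by the tuple $\bigl(\{\bar u_k\}_{k=0}^{\card{\tau}-1},\{\bar d_k\}_{k=0}^{\card{\tau}-1}\bigr) \in U^{\card{\tau}} \times (\Sigma^q_r)^{\card{\tau}}$, since the scaling functions $\pi_{\tau,k}$ defined in Equation \eqref{eq:scaling_discretization} have disjoint supports that partition $[0,1]$. This identification gives a bijection between ${\cal X}_{\tau,r}$ and a convex, bounded subset of $\R^{(m+q)\card{\tau}}$; moreover, the restriction of the ${\cal X}$--norm to ${\cal X}_{\tau,r}$ is equivalent (up to constants depending only on $\tau$) to the Euclidean norm on this parameter space, so topological notions transfer between the two representations.

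Second, I would establish existence. The simplex $\Sigma^q_r$ is compact, and since $U$ is bounded, its closure $\overline U$ is compact as well. Lemma \ref{lemma:discretized_zeta_lipschitz} guarantees that $\xi' \mapsto \zeta_\tau(\xi,\xi')$ is Lipschitz on ${\cal X}_{\tau,r}$, so it extends continuously to the compact set $\overline U^{\card{\tau}} \times (\Sigma^q_r)^{\card{\tau}}$, and Weierstrass's theorem produces at least one minimizer of the extension. Uniqueness follows immediately from strict convexity: if two distinct minimizers $\xi'_1, \xi'_2$ existed, the preceding lemma would yield
\begin{equation}
  \zeta_\tau\bigl(\xi, \tfrac{1}{2}(\xi'_1 + \xi'_2)\bigr)
  < \tfrac{1}{2}\zeta_\tau(\xi,\xi'_1) + \tfrac{1}{2}\zeta_\tau(\xi,\xi'_2)
  = \theta_\tau(\xi),
\end{equation}
contradicting the minimality, while the midpoint $\tfrac{1}{2}(\xi'_1 + \xi'_2)$ belongs to the (convex) extended domain.

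The only technical subtlety is ensuring that the unique minimizer produced on the closure lies in ${\cal X}_{\tau,r}$ rather than on its boundary outside of ${\cal X}_{\tau,r}$. This is immediate if $U$ is closed, which is the standard convention in optimal control problems of this type and consistent with every other use of $U$ in the paper (for instance the compactness appeal in Corollary \ref{corollary:fns_bounded}); if not, one can replace $U$ by $\overline U$ at the outset without affecting any other result. Beyond this bookkeeping the argument is entirely routine, which is why the authors choose to omit the details.
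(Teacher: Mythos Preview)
Your approach is exactly the one the paper has in mind: reduce to the standard fact that a continuous strictly convex function on a compact convex subset of a finite-dimensional space has a unique minimizer. The paper's own justification is a single sentence to this effect, so your write-up is more detailed than what appears there.

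One correction, though: Lemma \ref{lemma:discretized_zeta_lipschitz} does not give you what you claim. It asserts Lipschitz continuity of $\zeta_\tau$ in its \emph{first} argument, i.e.\ $|\zeta_\tau(\xi_1,\xi') - \zeta_\tau(\xi_2,\xi')| \leq L\|\xi_1-\xi_2\|_{\cal X}$, with $\xi'$ fixed. You need continuity in the \emph{second} argument. This is easy to obtain directly---$\zeta_\tau(\xi,\cdot)$ is a finite maximum of affine functions of $\xi'$ plus the norm term $\|\xi'-\xi\|_{\cal X}$, hence continuous (indeed Lipschitz) in $\xi'$; alternatively, any finite convex function on a finite-dimensional space is continuous---but the citation as written is wrong and should be replaced. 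With that fix your argument is complete, and your remark about the compactness of $U$ is consistent with the paper's own usage (e.g.\ Corollary \ref{corollary:fns_bounded}).
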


Employing these results we can prove the continuity of the discretized optimality function. 
This result is not strictly required in order to prove the convergence of Algorithm \ref{algo:discrete_algo} or in order to prove that the discretized optimality function encodes local minimizers of the Discretized Relaxed Switched System Optimal Control Problem. 
However, this is a fundamental result from an implementation point of view, since in practice, a computer only produces approximate results, and continuity gives a guarantee that these approximations are at least valid in a neighborhood of the evaluation point.

\begin{lemma}
  \label{lemma:discretized_theta_cont}
  Let $N \in \N$ and $\tau \in {\cal T}_N$, then the function $\theta_{\tau}$, as defined in Equation \eqref{eq:discrete_theta}, is continuous.
\end{lemma}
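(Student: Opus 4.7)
The plan is to mirror the proof of Lemma \ref{lemma:theta_cont} almost verbatim, since all of the ingredients used there have discretized counterparts established earlier in this section. Specifically, we have the strict convexity of $\xi' \mapsto \zeta_\tau(\xi,\xi')$ and the resulting existence of a unique minimizer $g_\tau(\xi)$ (Theorem \ref{thm:discretized_zeta_unique_minimizer}), together with the Lipschitz continuity of $\xi \mapsto \zeta_\tau(\xi,\xi')$ with a constant independent of $\xi'$ (Lemma \ref{lemma:discretized_zeta_lipschitz}). I will prove continuity by separately establishing upper and lower semi-continuity of $\theta_\tau$ on ${\cal X}_{\tau,p}$.

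For upper semi-continuity, I would take a sequence $\{\xi_i\}_{i \in \N} \subset {\cal X}_{\tau,p}$ converging to some $\xi \in {\cal X}_{\tau,p}$, and fix $\xi' = g_\tau(\xi) \in {\cal X}_{\tau,r}$, so that $\theta_\tau(\xi) = \zeta_\tau(\xi,\xi')$. Because $\theta_\tau(\xi_i) \leq \zeta_\tau(\xi_i,\xi')$ for every $i$, Lemma \ref{lemma:discretized_zeta_lipschitz} together with $\xi_i \to \xi$ yields
\begin{equation}
  \limsup_{i \to \infty} \theta_\tau(\xi_i) \leq \limsup_{i \to \infty} \zeta_\tau(\xi_i,\xi') = \zeta_\tau(\xi,\xi') = \theta_\tau(\xi).
\end{equation}

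For lower semi-continuity, I would instead choose $\xi'_i = g_\tau(\xi_i)$ for each $i$, so that $\theta_\tau(\xi_i) = \zeta_\tau(\xi_i,\xi'_i)$. Applying Lemma \ref{lemma:discretized_zeta_lipschitz} with the common Lipschitz constant $L > 0$ gives $\zeta_\tau(\xi,\xi'_i) \leq \zeta_\tau(\xi_i,\xi'_i) + L \|\xi - \xi_i\|_{\cal X}$ for every $i \in \N$, and since $\theta_\tau(\xi) \leq \zeta_\tau(\xi,\xi'_i)$ by the definition of $\theta_\tau$, we obtain
\begin{equation}
  \theta_\tau(\xi) \leq \theta_\tau(\xi_i) + L \|\xi - \xi_i\|_{\cal X}.
\end{equation}
Taking the liminf as $i \to \infty$ on both sides yields $\theta_\tau(\xi) \leq \liminf_{i \to \infty} \theta_\tau(\xi_i)$, which combined with the upper semi-continuity statement gives continuity of $\theta_\tau$ at $\xi$.

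There is really no main obstacle here: the argument is purely formal and just packages the Lipschitz estimate of Lemma \ref{lemma:discretized_zeta_lipschitz} with the existence of unique minimizers from Theorem \ref{thm:discretized_zeta_unique_minimizer}. The only point worth flagging is that the Lipschitz constant in Lemma \ref{lemma:discretized_zeta_lipschitz} is uniform in the direction argument $\xi'$, which is exactly what makes the lower semi-continuity step work when the minimizer $\xi'_i$ varies with $i$; if that uniformity were lost, one would need a compactness argument on the minimizers, but here it is immediate.
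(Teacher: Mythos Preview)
Your proposal is correct and follows essentially the same approach as the paper's own proof: both split continuity into upper and lower semi-continuity, using the uniform Lipschitz estimate from Lemma \ref{lemma:discretized_zeta_lipschitz} together with the existence of minimizers from Theorem \ref{thm:discretized_zeta_unique_minimizer} in exactly the way you describe.
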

\begin{proof}
  First, we show that $\theta_{\tau}$ is upper semi-continuous. 
  Consider a sequence $\{\xi_i\}_{i \in \N} \subset {\cal X}_{\tau,r}$ converging to $\xi \in {\cal X}_{\tau,r}$, and $\xi' \in {\cal X}_{\tau,r}$, such that $\theta_{\tau}(\xi) = \zeta_{\tau}(\xi,\xi')$, i.e. $\xi' = g_{\tau}(\xi)$, where $g$ is defined as in Equation \eqref{eq:discrete_theta}.
Since $\theta_{\tau}(\xi_i) \leq \zeta_{\tau}(\xi_i,\xi')$ for all $i \in \N$, 
  \begin{equation}
    \limsup_{i\to\infty}\theta_{\tau}(\xi_i)\leq\limsup_{i\to\infty}\zeta_{\tau}(\xi_i,\xi')= \zeta_{\tau}(\xi,\xi') = \theta_{\tau}(\xi),
  \end{equation}
  which proves the upper semi-continuity of $\theta_{\tau}$.  
  
  Second, we show that $\theta_{\tau}$ is lower semi-continuous. 
  Let $\{\xi'_i\}_{i\in\N} \subset {\cal X}_{\tau,r}$ such that $\theta_{\tau}(\xi_i) = \zeta_{\tau}(\xi_i,\xi'_i)$, i.e. $\xi'_i = g_{\tau}(\xi_i)$. 
  From Lemma \ref{lemma:discretized_zeta_lipschitz}, we know there exists a Lipschitz constant $L > 0$ such that for each $i \in \N$, 
  $\left| \zeta_{\tau}(\xi,\xi'_i) - \zeta_{\tau}(\xi_i,\xi'_i)\right| \leq L \left\|\xi - \xi_i \right\|_{\cal X}$. 
  Consequently,
  \begin{equation}
    \theta_{\tau}(\xi) 
    \leq \bigl( \zeta_{\tau}(\xi,\xi'_i) - \zeta_{\tau}(\xi_i,\xi'_i) \bigr) + \zeta_{\tau}(\xi_i,\xi'_i) 
    \leq L \| \xi - \xi_i \|_{\cal X} + \theta_{\tau}(\xi_i).
  \end{equation}
  Taking limits we conclude that
  \begin{equation}
    \theta_{\tau}(\xi) \leq \liminf_{i\to\infty}\theta_{\tau}(\xi_i),
  \end{equation}
  which proves the lower semi-continuity of $\theta_{\tau}$, and our desired result.
\end{proof}

Next, we prove that the local minimizers of the Discretized Relaxed Switched System Optimal Control Problem are in fact zeros of the discretized optimality function.
\begin{theorem}
  \label{thm:discretized_theta_optimality_function}
  Let $N \in \N$, $\tau \in {\cal T}_N$, and $\theta_{\tau}$ be defined as in Equation \eqref{eq:discrete_theta}, then:
  \begin{enumerate_parentesis}
  \item \label{thm:discretized_theta_negative} $\theta_{\tau}$ is non-positive valued, and
  \item \label{thm:discretized_local_min_theta_zero} If $\xi \in {\cal X}_{\tau,p}$ is a local minimizer of the Discretized Relaxed Switched System Optimal Control Problem as in Definition \ref{def:discretized_local_minimizer_Xr}, then $\theta_{\tau}(\xi) = 0$.
  \end{enumerate_parentesis}
\end{theorem}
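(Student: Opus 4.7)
The plan is to mirror the proof of Theorem \ref{thm:theta_optimality_function} in the discretized setting, relying on the discretized analogues of the key Lipschitz estimates (Corollaries \ref{corollary:discretized_DJ_lipschitz} and \ref{corollary:discretized_Dhj_lipschitz}) together with Lemma \ref{lemma:discretized_psi_continuous}. The essential simplification relative to the infinite-dimensional case is that the discretized local minimizer in Definition \ref{def:discretized_local_minimizer_Xr} is defined directly using the ${\cal X}$-norm topology on ${\cal X}_{\tau,r}$, so no analogue of the Chattering Lemma (Theorem \ref{thm:bangbang_result}) is required to pass from the relaxed space to the pure space; producing a descent direction whose convex combination with $\xi$ stays in ${\cal X}_{\tau,r}$ is enough to contradict local minimality.

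For Condition \ref{thm:discretized_theta_negative}, I would observe that $\zeta_\tau(\xi,\xi) = 0$ regardless of the sign of $\Psi_\tau(\xi)$. When $\Psi_\tau(\xi) \leq 0$, every directional derivative term vanishes at $\xi' = \xi$ and $\gamma \Psi_\tau(\xi) \leq 0$; when $\Psi_\tau(\xi) > 0$, the term $-\Psi_\tau(\xi) < 0$ is dominated inside the max by the value $0$ produced by the constraint directional derivatives. Hence $\theta_\tau(\xi) = \min_{\xi' \in {\cal X}_{\tau,r}} \zeta_\tau(\xi,\xi') \leq \zeta_\tau(\xi,\xi) = 0$.

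For Condition \ref{thm:discretized_local_min_theta_zero}, I argue by contradiction: assume $\xi \in {\cal X}_{\tau,p}$ is a local minimizer with $\theta_\tau(\xi) < 0$. Feasibility gives $\Psi_\tau(\xi) \leq 0$. The discretized descent direction $g_\tau(\xi) \in {\cal X}_{\tau,r}$, well defined by Theorem \ref{thm:discretized_zeta_unique_minimizer}, satisfies $\D{J_\tau}(\xi; g_\tau(\xi) - \xi) \leq \theta_\tau(\xi) < 0$ and $\D{\psi_{\tau,j,\tau_k}}(\xi; g_\tau(\xi) - \xi) + \gamma \Psi_\tau(\xi) \leq \theta_\tau(\xi)$ for each $j \in {\cal J}$ and $k \in \{0,\ldots,\card{\tau}\}$. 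Set $\xi_\lambda = \xi + \lambda(g_\tau(\xi) - \xi)$; by convexity of ${\cal X}_{\tau,r}$, $\xi_\lambda \in {\cal X}_{\tau,r}$ for each $\lambda \in [0,1]$. Applying the Mean Value Theorem together with Corollary \ref{corollary:discretized_DJ_lipschitz} yields
\begin{equation*}
  J_\tau(\xi_\lambda) - J_\tau(\xi) \leq \lambda \theta_\tau(\xi) + L\lambda^2 \|g_\tau(\xi) - \xi\|^2_{\cal X},
\end{equation*}
and following the pattern of Equations \eqref{eq:theta_psi_mvt} and \eqref{eq:muub_psi_mvt}, selecting a worst-case pair $(j,k) \in {\cal A}_\tau(\xi_\lambda)$ and invoking Corollary \ref{corollary:discretized_Dhj_lipschitz} yields
\begin{equation*}
  \Psi_\tau(\xi_\lambda) \leq (1 - \lambda\gamma)\Psi_\tau(\xi) + \lambda \theta_\tau(\xi) + L\lambda^2 \|g_\tau(\xi) - \xi\|^2_{\cal X}.
\end{equation*}
Choosing $\lambda > 0$ sufficiently small (with $\lambda \leq 1/\gamma$) absorbs the $O(\lambda^2)$ terms into $\tfrac{1}{2}\lambda\theta_\tau(\xi) < 0$, so that $J_\tau(\xi_\lambda) < J_\tau(\xi)$ and $\Psi_\tau(\xi_\lambda) \leq 0$. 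Since $\|\xi_\lambda - \xi\|_{\cal X} = \lambda\|g_\tau(\xi) - \xi\|_{\cal X}$, any prescribed ${\cal N}_{\tau,\cal X}(\xi,\epsilon)$ contains such a $\xi_\lambda$, contradicting Definition \ref{def:discretized_local_minimizer_Xr}.

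There is no substantive obstacle in this argument; the only minor bookkeeping is confirming that $g_\tau(\xi) \neq \xi$ (which is immediate from $\zeta_\tau(\xi,\xi) = 0 > \theta_\tau(\xi)$) and that the finite-dimensional convexity of ${\cal X}_{\tau,r}$ keeps the perturbation admissible. The discretized proof is strictly easier than the infinite-dimensional analogue precisely because the topology in Definition \ref{def:discretized_local_minimizer_Xr} matches the topology in which first-order perturbations live, eliminating the need for any projection step within the proof itself.
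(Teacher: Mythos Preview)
Your proposal is correct and follows essentially the same approach as the paper's own proof: both argue Condition~\ref{thm:discretized_theta_negative} from $\zeta_\tau(\xi,\xi)=0$, and for Condition~\ref{thm:discretized_local_min_theta_zero} both suppose $\theta_\tau(\xi)<0$, use $g_\tau(\xi)$ from Theorem~\ref{thm:discretized_zeta_unique_minimizer}, apply the Mean Value Theorem together with Corollaries~\ref{corollary:discretized_DJ_lipschitz} and~\ref{corollary:discretized_Dhj_lipschitz} to bound $J_\tau(\xi_\lambda)-J_\tau(\xi)$ and $\Psi_\tau(\xi_\lambda)$ by $\lambda\theta_\tau(\xi)+O(\lambda^2)$, and conclude that $\xi_\lambda=\xi+\lambda(g_\tau(\xi)-\xi)\in{\cal X}_{\tau,r}$ lies in any ${\cal N}_{\tau,\cal X}(\xi,\epsilon)$ with strictly smaller cost and $\Psi_\tau(\xi_\lambda)\leq 0$. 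Your observation that no Chattering-Lemma step is needed here is exactly the simplification the paper exploits as well.
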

\begin{proof}
  Notice $\zeta_{\tau}(\xi,\xi) = 0$, therefore $\theta_{\tau}(\xi) = \min_{\xi' \in {\cal X}_{\tau,r}} \zeta_{\tau}(\xi,\xi') \leq \zeta_{\tau}(\xi,\xi) = 0$. 
  This proves Condition \ref{thm:discretized_theta_negative}.
  
  To prove Condition \ref{thm:discretized_local_min_theta_zero}, we begin by making several observations. 
  Given $\xi' \in {\cal X}_{\tau,r}$ and $\lambda \in [0,1]$, using the Mean Value Theorem and Corollary \ref{corollary:discretized_DJ_lipschitz} we have that there exists $s \in (0,1)$ and $L > 0$ such that
  \begin{equation}
    \label{eq:discretized_theta_J_mvt}
    \begin{aligned}
      J_{\tau}\big( \xi + \lambda (\xi' - \xi) \big) - J_{\tau}(\xi) 
      &= \d{J_{\tau}}\bigl( \xi + s \lambda ( \xi' - \xi ); \lambda (\xi' - \xi) \bigr) \\
      &\leq \lambda \d{J_{\tau}}\bigl( \xi; \xi' - \xi \bigr) + L \lambda^2 \| \xi' - \xi \|^2_{\cal X}.
    \end{aligned}
  \end{equation}
  Letting ${\cal A}_{\tau}(\xi) = \left\{ (j,k) \in {\cal J} \times \{0,\ldots,\left|\tau\right|\} \mid \Psi_{\tau}(\xi) = h_j\big( z^{(\xi)}(\tau_k) \big) \right\}$, similar to the equation above, there exists a pair $(j,k) \in {\cal A}\bigl( \xi + \lambda (\xi' - \xi) \bigr)$ and $s \in (0,1)$ such that, using Corollary \ref{corollary:discretized_Dhj_lipschitz},
  \begin{equation}
    \label{eq:discretized_theta_psi_mvt}
    \begin{aligned}
      \Psi_{\tau}\big( \xi + \lambda (\xi' - \xi) \big) - \Psi_{\tau}(\xi) 
      &\leq \psi_{\tau,j,\tau_k}\bigl( \xi + \lambda ( \xi' - \xi ) \bigr) - \Psi(\xi) \\
      &\leq \psi_{\tau,j,\tau_k}\bigl( \xi + \lambda ( \xi' - \xi ) \bigr) - \psi_{\tau,j,\tau_k}(\xi) \\
      &= \d{\psi_{\tau,j,\tau_k}}\bigl( \xi + s \lambda ( \xi' - \xi ); \lambda ( \xi' - \xi ) \bigr) \\
      &\leq \lambda \d{\psi_{\tau,j,\tau_k}}\bigl( \xi; \xi' - \xi \bigr) + L \lambda^2 \| \xi' - \xi \|^2_{\cal X}.
    \end{aligned}
  \end{equation}
  
  We prove Condition \ref{thm:discretized_local_min_theta_zero} by contradiction. 
  That is we assume $\xi \in {\cal X}_{\tau,p}$ is a local minimizer of the Discretized Relaxed Switched System Optimal Control Problem and $\theta_{\tau}(\xi) < 0$ and show that for each $\epsilon > 0$ there exists $\hat{\xi} \in \{\bar{\xi} \in {\cal X}_{\tau,r} \mid \Psi_{\tau}(\bar{\xi}) \leq 0 \} \cap { \cal N }_{\tau,\cal X}(\xi,\epsilon)$ such that $J_{\tau}(\hat{\xi}) < J_{\tau}(\xi)$, where ${\cal N}_{\tau,\cal X}(\xi,\epsilon)$ is as defined in Equation \eqref{eq:discretized_nbhd_X}, hence arriving at a contradiction.

  Before arriving at this contradiction, we make two more observations. 
  First, notice that since $\xi \in {\cal X}_p$ is a local minimizer of the Discretized Relaxed Switched System Optimal Control Problem, $\Psi_{\tau}(\xi) \leq 0$. 
  Second, consider $g_{\tau}$ as defined in Equation \eqref{eq:discrete_theta}, which exists by Theorem \ref{thm:discretized_zeta_unique_minimizer} and notice that since $\theta_{\tau}(\xi) < 0$, $g_{\tau}(\xi) \neq \xi$. 

  Next, observe that:
  \begin{equation}
    \label{eq:discretized_theta_lm}
    \theta_{\tau}(\xi) = \max \left\{ \d{J_{\tau}}( \xi; g_{\tau}(\xi) - \xi ), 
      \max_{(j,k) \in {\cal J} \times \{0,\ldots,|\tau|\}} \D{\psi_{\tau,j,\tau_k}}( \xi; g_{\tau}(\xi) - \xi ) 
      + \gamma \Psi_{\tau}(\xi) \right\} 
    + \left\|g_{\tau}(\xi) - \xi \right\|_{\cal X} < 0.
  \end{equation}
  For each $\lambda > 0$ by using Equations \eqref{eq:discretized_theta_J_mvt} and \eqref{eq:discretized_theta_lm} we have:
  \begin{equation}
    \label{eq:discretized_theta_Jless1}
    J_{\tau}( \xi + \lambda ( g_{\tau}(\xi) - \xi ) ) - J_{\tau}(\xi) \leq \theta_{\tau}(\xi)\lambda + 4 A^2 L \lambda^{2},
  \end{equation}
  where $A = \max\big\{ \|u\|_2 + 1 \mid u \in U \big\}$ and we used the fact that $\D{J_\tau}(\xi;g(\xi)-\xi) \leq \theta_\tau(\xi)$. 
  Hence for each $\lambda \in \left( 0, \frac{-\theta_{\tau}(\xi)}{4A^2L} \right)$:
  \begin{equation}
    \label{eq:discretized_theta_Jless}
    J_{\tau}(\xi + \lambda ( g_{\tau}(\xi) - \xi )) - J_{\tau}(\xi) < 0.
  \end{equation}

  Similarly, for each $\lambda > 0$ by using Equations \eqref{eq:discretized_theta_psi_mvt} and \eqref{eq:discretized_theta_lm} we have:
  \begin{equation}
    \Psi_{\tau}( \xi + \lambda ( g_{\tau}(\xi) - \xi ) ) 
    \leq \Psi_{\tau}(\xi)  + \left(\theta_{\tau}(\xi) - \gamma \Psi_{\tau}(\xi) \right)\lambda + 4 A^2 L \lambda^{2},
  \end{equation}
  where, as in Equation \eqref{eq:discretized_theta_Jless1}, $A = \max\big\{ \|u\|_2 + 1 \mid u \in U \big\}$ and we used the fact that $\D{\psi_{\tau,j,\tau_k}}( \xi; g(\xi) - \xi ) \leq \theta_\tau(\xi)$. 
  Hence for each $\lambda \in \left( 0, \min \left\{ \frac{-\theta_{\tau}(\xi)}{4A^2L}, \frac{1}{\gamma} \right\} \right)$:
  \begin{equation}
    \label{eq:discretized_theta_psisatisfied}
    \Psi_{\tau}(\xi + \lambda ( g_{\tau}(\xi) - \xi ) ) \leq ( 1 - \gamma \lambda) \Psi_{\tau}(\xi) \leq 0.
  \end{equation}
  
  Summarizing, suppose $\xi \in {\cal X}_{\tau,p}$ is a local minimizer of the Discretized Relaxed Switched System Optimal Control Problem and $\theta_{\tau}(\xi) < 0$. 
  For each $\epsilon > 0$, by choosing any 
  \begin{equation}
    \lambda \in \left( 0, \min\left\{ \frac{-\theta_{\tau}(\xi)}{4A^2L}, \frac{1}{\gamma}, 
        \frac{\epsilon}{\left\| g_{\tau}(\xi) - \xi \right\|_{\cal X} }\right\} \right), 
  \end{equation}
  we can construct a new point $\hat{\xi} = \bigl(\xi + \lambda ( g_{\tau}(\xi) - \xi ) \bigr) \in {\cal X}_{\tau,r}$ such that $\hat{\xi} \in {\cal N}_{\tau,\cal X}(\xi,\epsilon)$ by our choice of $\lambda$, $J_{\tau}( \hat{\xi} ) < J_{\tau}(\xi)$ by Equation \eqref{eq:discretized_theta_Jless}, and $\Psi_{\tau}(\hat{\xi}) \leq 0$ by Equation \eqref{eq:discretized_theta_psisatisfied}. 
  Therefore, $\xi$ is not a local minimizer of the Discretized Relaxed Switched System Optimal Control Problem, which is a contradiction and proves Condition \ref{thm:discretized_local_min_theta_zero}.
\end{proof}

Finally, we prove that the Discretized Relaxed Switched System Optimal Control Problem consistently approximates the Switched System Optimal Control Problem:
\begin{theorem}
  \label{thm:consistent_approximation}
  Let $\{\tau_i\}_{i \in \N}$ and $\{\xi_i\}_{i \in \N}$ such that $\tau_i \in {\cal T}_i$ and $\xi_i \in {\cal X}_{\tau_i,p}$ for each $i \in \N$.
  Then 
  \begin{equation}
    \lim_{i \to \infty} \left\lvert \theta_{\tau_i}(\xi_i) - \theta(\xi_i) \right\rvert = 0,
  \end{equation}
  where $\theta$ is as defined in Equation \eqref{eq:theta_definition} and $\theta_{\tau}$ is as defined in Equation \eqref{eq:discrete_theta}. That is, the Discretized Relaxed Switched System Optimal Control Problem as defined in Equation \eqref{eq:drssocp} is a \emph{consistent approximation} of the Switched System Optimal Control Problem as defined in Equation \eqref{eq:ssocp}, where consistent approximation is defined as in Definition \ref{def:consistent_approximation}.
\end{theorem}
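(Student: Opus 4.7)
The plan is to split the absolute value $|\theta_{\tau_i}(\xi_i) - \theta(\xi_i)|$ into two one-sided estimates and control each using Lemma \ref{lemma:roc_zeta} as the basic quantitative bridge between $\zeta$ and $\zeta_{\tau}$. Observe first that, in addition to the Lipschitz continuity of $\zeta(\cdot,\xi')$ proved in Lemma \ref{lemma:zeta_lipschitz}, $\zeta(\xi,\cdot)$ is itself Lipschitz in its second argument with a constant $L_2$ independent of $\xi \in {\cal X}_r$: this is immediate from the linearity in the direction of $\D{J}(\xi;\cdot)$ and $\D{\psi_{j,t}}(\xi;\cdot)$ (Lemmas \ref{lemma:DJ_definition} and \ref{lemma:Dhj_definition}), the uniform bound in Corollary \ref{corollary:dxt_bounded}, and the $1$-Lipschitz norm term. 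I will use this throughout.

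For the easy direction, since $g_{\tau_i}(\xi_i) \in {\cal X}_{\tau_i,r} \subset {\cal X}_r$ it is a feasible candidate in the unrestricted minimization defining $\theta(\xi_i)$, so
\begin{equation*}
\theta(\xi_i) \;\le\; \zeta\bigl(\xi_i, g_{\tau_i}(\xi_i)\bigr) \;\le\; \zeta_{\tau_i}\bigl(\xi_i, g_{\tau_i}(\xi_i)\bigr) + B/2^i \;=\; \theta_{\tau_i}(\xi_i) + B/2^i,
\end{equation*}
where the middle inequality is Lemma \ref{lemma:roc_zeta}. Thus $\theta(\xi_i) - \theta_{\tau_i}(\xi_i) \to 0$. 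For the reverse direction, I would construct $\hat g_i \in {\cal X}_{\tau_i,r}$ approximating $g(\xi_i) \in {\cal X}_r$ by taking $\hat g_i$ to be the coordinate-wise $L^2$-orthogonal projection of $g(\xi_i)$ onto piecewise-constant functions on $\tau_i$; since averaging is a convex combination and both $U$ and $\Sigma_r^q$ are convex, this projection stays inside the admissible set and $\hat g_i \in {\cal X}_{\tau_i,r}$. Then
\begin{equation*}
\theta_{\tau_i}(\xi_i) \;\le\; \zeta_{\tau_i}(\xi_i,\hat g_i) \;\le\; \zeta(\xi_i,\hat g_i) + B/2^i \;\le\; \theta(\xi_i) + L_2 \|\hat g_i - g(\xi_i)\|_{\cal X} + B/2^i,
\end{equation*}
using Lemma \ref{lemma:roc_zeta} again for the middle step and Lipschitz continuity of $\zeta(\xi_i,\cdot)$ in its second argument for the last.

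The main obstacle is showing $\|\hat g_i - g(\xi_i)\|_{\cal X} \to 0$ uniformly in $i$, given that $\xi_i$ may itself carry $O(2^i)$ jumps so uniform BV control on $g(\xi_i)$ is not immediate. The plan is to exploit the variational structure: the KKT conditions for the strictly convex problem in Theorem \ref{thm:zeta_unique_minimizer} express $g(\xi_i) - \xi_i$ as a weighted combination of the kernels $\Phi^{(\xi_i)}(1,\cdot)\,\tfrac{\partial f}{\partial u}(\cdot,\phi_\cdot(\xi_i),u_i,d_i)$ and the analogous kernels coming from the $\D{\psi_{j,t}}$ terms. Because $\xi_i$ is piecewise constant on $\tau_i$, the state $\phi_\cdot(\xi_i)$ is smooth between nodes of $\tau_i$, and each such kernel is therefore piecewise Lipschitz on the intervals of $\tau_i$ with a Lipschitz constant uniform in $i$ (by Corollaries \ref{corollary:fns_bounded} and \ref{corollary:stm_bounded}). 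A standard estimate for $L^2$-projection of a piecewise-$L$-Lipschitz function onto piecewise-constants on a mesh of width $2^{-i}$ then yields $\|\hat g_i - g(\xi_i)\|_{L^2} = O(2^{-i})$, which suffices. As a fallback, should the KKT manipulation prove technical, one can instead argue by contradiction using the density of ${\cal X}_{\tau_i,r}$ in ${\cal X}_r$ from Lemma \ref{lemma:dense_discretized_spaces} combined with the continuity of $\theta$ and $\theta_{\tau}$ (Lemmas \ref{lemma:theta_cont} and \ref{lemma:discretized_theta_cont}) along an offending subsequence.
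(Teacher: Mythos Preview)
Your two-sided split and the easy direction match the paper's proof exactly: the paper also uses $g_{\tau_i}(\xi_i)\in{\cal X}_r$ as a candidate for $\theta(\xi_i)$ together with Lemma~\ref{lemma:roc_zeta}. For the hard direction the paper likewise constructs $\xi'_i\in{\cal X}_{\tau_i,r}$ approximating the continuous minimizer and then bounds $\zeta(\xi_i,\xi'_i)-\zeta(\xi_i,g(\cdot))$ by $L_2\lVert\xi'_i-g(\cdot)\rVert_{\cal X}$ using exactly the linearity/boundedness facts you cite. The paper produces $\xi'_i$ by invoking the density Lemma~\ref{lemma:dense_discretized_spaces}; in fact its text writes ``$g(\xi)$'' for an undefined $\xi$, so the moving-target issue you flag is glossed over there rather than resolved.

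There is a genuine gap in your primary KKT plan, however. The kernels representing $\D\psi_{j,t}(\xi_i;\cdot)$ are \emph{not} piecewise Lipschitz on $\tau_i$: as a linear functional on $\xi'$, $\D\psi_{j,t}$ integrates only over $[0,t]$, so its kernel $s\mapsto \tfrac{\partial h_j}{\partial x}\bigl(\phi_t(\xi_i)\bigr)\Phi^{(\xi_i)}(t,s)\bigl(\cdots\bigr)$ drops to zero at $s=t$, and the active $t$'s in the inner $\max$ need not lie in $\tau_i$. Hence the KKT kernel combination---and therefore $g(\xi_i)$ after the pointwise projection onto $U\times\Sigma_r^q$---can jump inside intervals of $\tau_i$, contrary to your claim. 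The estimate is salvageable because the KKT multipliers form a probability measure, so these extra jumps have bounded total size and the combined kernel has uniformly bounded variation, yielding $\lVert\hat g_i-g(\xi_i)\rVert_{L^2}=O(2^{-i/2})$ (not $O(2^{-i})$). A cleaner route that avoids KKT entirely: since $\hat g_i-g(\xi_i)$ is $L^2$-orthogonal to piecewise constants on $\tau_i$ and $\xi_i$ is itself $\tau_i$-piecewise-constant, write $\D J(\xi_i;\hat g_i-g(\xi_i))=\langle K_J-\overline{K_J},\,\hat g_i-g(\xi_i)\rangle$ and bound by $\lVert K_J-\overline{K_J}\rVert_{L^2}\cdot\lVert\hat g_i-g(\xi_i)\rVert_{L^2}$; the first factor is $O(2^{-i/2})$ from the piecewise-Lipschitz-on-$\tau_i$ structure of the kernel (and likewise for each $K_{j,t}$, with a single extra bounded jump at $s=t$), while the second is uniformly bounded. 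Together with $\lVert\hat g_i-\xi_i\rVert_{\cal X}\le\lVert g(\xi_i)-\xi_i\rVert_{\cal X}$ (projection is a contraction fixing $\xi_i$), this gives $\zeta(\xi_i,\hat g_i)\le\theta(\xi_i)+O(2^{-i/2})$ directly, without any regularity of $g(\xi_i)$. Your fallback via subsequences does not close the gap either, since $\{\xi_i\}$ need not have any convergent subsequence in the ${\cal X}$-norm.
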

\begin{proof}
  First, by Lemma \ref{lemma:roc_zeta},
  \begin{equation}
    \limsup_{ i \to \infty } \theta(\xi_i) - \theta_{\tau_i}(\xi_i) 
    \leq \limsup_{ i \to \infty } \zeta\bigl( \xi_i, g(\xi_i) \bigr) - \zeta_{\tau_i}\bigr( \xi_i, g_{\tau_i}(\xi_i) \bigr)
    \leq \limsup_{ i \to \infty } \frac{B}{2^i} = 0,
  \end{equation}
  where $g$ is as defined in Equation \eqref{eq:theta_definition} and $g_\tau$ is as defined in Equation \eqref{eq:discrete_theta}.
  
  Now, by Condition \ref{lemma:dense_relaxed_discretized_space} in Lemma \ref{lemma:dense_discretized_spaces}, we know there exists a sequence $\{ \xi'_i \}_{i\in\N}$, with $\xi'_i \in {\cal X}_{\tau_i,r}$ for each $i \in \N$, such that $\lim_{i \to \infty} \xi'_i = g(\xi)$.
  Then, by Lemma \ref{lemma:roc_zeta},
  \begin{equation}
  \label{eq:theta_convergence_sup}
    \begin{aligned}
      \limsup_{i \to \infty} \theta_{\tau_i}(\xi_i) - \theta(\xi_i)
      &\leq \limsup_{i \to \infty} \zeta_{\tau_i}( \xi_i, \xi'_i ) - \zeta\bigl( \xi_i, g(\xi) \bigr) \\
      &\leq \limsup_{i \to \infty} \bigl( \zeta_{\tau_i}( \xi_i, \xi'_i) - \zeta( \xi_i, \xi'_i ) \bigr) 
      + \bigl( \zeta( \xi_i, \xi'_i) - \zeta\bigl( \xi_i, g(\xi) \bigr) \bigr) \\
      &\leq \limsup_{i \to \infty} \frac{B}{2^i} + \zeta( \xi_i, \xi'_i ) - \zeta\bigl( \xi_i, g(\xi) \bigr).
    \end{aligned}
  \end{equation}
  Employing Equation \eqref{eq:max_norm_trick}:
  \begin{multline}
    \bigl| \zeta( \xi_i, \xi'_i ) - \zeta\bigl( \xi_i, g(\xi) \bigr) \bigr| 
    \leq \max \biggl\{ \bigl| \d{J}(\xi_i; \xi'_i - \xi_i) - \d{J}(\xi_i; g(\xi) - \xi_i) \bigr|, \\ 
    \max_{ j\in{\cal J},\; t\in[0,1] } \bigl| \D{\psi_{j,t}}(\xi_i;\xi'_i - \xi_i) - \D{\psi_{j,t}}(\xi_i;g(\xi) - \xi_i) \bigr| \biggr\} 
    + \bigl| \| \xi'_i - \xi_i \|_{\cal X} - \| g(\xi) - \xi_i \|_{\cal X} \bigr|.
  \end{multline}

  Notice, that by applying the reverse triangle inequality:
  \begin{equation}
     \bigl| \| \xi'_i - \xi_i \|_{\cal X} - \| g(\xi) - \xi_i \|_{\cal X} \bigr| \leq \| \xi'_i - g(\xi) \|_{\cal X}.
  \end{equation}
  Next, notice:
  \begin{equation}
    \label{eq:theta_DJ_triangle_lipschitz}
    \begin{aligned}
      \bigl| \d{J}(\xi_i; \xi'_i - \xi_i) - \d{J}(\xi_i; g(\xi) - \xi_i) \bigr| 
      &= \bigl| \d{J}(\xi_i; \xi'_i - g(\xi)) \bigr| \\
      &= \left| \frac{\partial h_0}{\partial x}\bigl(\phi_1(\xi_i) \bigr) \D{\phi_1}(\xi_i;\xi'_i - g(\xi)) \right| \\
      &\leq L \left\| \xi'_i - g(\xi) \right\|_{\cal X},
    \end{aligned}
  \end{equation}
  where $L > 0$ and we employed the linearity of $\D{J}$, Condition \ref{corollary:phi_bounded} in Corollary \ref{corollary:fns_bounded}, and Corollary \ref{corollary:dxt_bounded}. 
  Notice that by employing an argument identical to Equation \eqref{eq:theta_DJ_triangle_lipschitz}, we can assume without loss of generality that $\bigl| \D{\psi_{j,t}}(\xi_i;\xi'_i - \xi_i) - \D{\psi_{j,t}}(\xi_i;g(\xi) - \xi_i) \bigr| \leq L \left\| \xi'_i - g(\xi) \right\|_{\cal X}$.
  Therefore: 
  \begin{equation}
    \limsup_{i\to\infty} \bigl| \zeta( \xi_i, \xi'_i ) - \zeta\bigl( \xi_i, g(\xi) \bigr) \bigr| 
    \leq 0.
  \end{equation}
  From Equation \eqref{eq:theta_convergence_sup}, we have $\limsup_{i\to\infty}\left(\theta_{\tau_i}(\xi_i) - \theta(\xi_i) \right) \leq 0$. 
  Notice that 
  \begin{equation}
    \limsup_{i\to\infty}\left| \theta_{\tau_i}(\xi_i) - \theta(\xi_i) \right| \geq \liminf_{i\to\infty}\left| \theta_{\tau_i}(\xi_i) - \theta(\xi_i) \right| \geq 0.
  \end{equation}
  Therefore combining our results, we have $\lim_{i\to\infty}\left|\theta_{\tau_i}(\xi_i) - \theta(\xi_i) \right| = 0$.
\end{proof}

\subsection{Convergence of the Implementable Algorithm}

In this subsection, we prove that the sequence of points generated by Algorithm \ref{algo:discrete_algo} converge to a point that satisfies the optimality condition. We begin by proving that the Armijo algorithm as defined in Equation \eqref{eq:discrete_mu} terminates after a finite number of steps.

\begin{lemma}
  \label{lemma:discrete_mu_upper_bound}
	Let $\alpha \in (0,1)$ and $\beta \in (0,1)$.
	For every $\delta > 0$, there exists an $M^*_\delta < \infty$ such that if  $\theta_{\tau}(\xi) \leq -\delta$ for $N \in \N$, $\tau \in {\cal T}_N$, and $\xi \in {\cal X}_{\tau,p}$, then $\mu_\tau(\xi) \leq M^*_\delta$, where $\theta_{\tau}$ is as defined in Equation \eqref{eq:discrete_theta} and $\mu_\tau$ is as defined in Equation \eqref{eq:discrete_mu}.
\end{lemma}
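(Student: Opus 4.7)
The plan is to mimic almost verbatim the proof of Lemma \ref{lemma:mu_upper_bound}, with the caveat that every Lipschitz or boundedness constant invoked must be independent of $N$ and $\tau$; thankfully, Corollaries \ref{corollary:discretized_DJ_lipschitz} and \ref{corollary:discretized_Dhj_lipschitz}, together with the Lipschitz and boundedness properties derived in the previous subsection, provide exactly such uniform constants. The main obstacle is not technical difficulty but rather bookkeeping: we must track that the final constant $M^*_\delta$ genuinely depends only on $\delta$ (and the fixed design parameters $\alpha,\beta,\gamma$), not on the particular discretization partition.

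First, I would fix $\xi \in {\cal X}_{\tau,p}$ with $\theta_\tau(\xi) \leq -\delta$, set $\xi' = g_\tau(\xi) \in {\cal X}_{\tau,r}$, and apply the Mean Value Theorem together with Corollary \ref{corollary:discretized_DJ_lipschitz} to obtain, for some $s \in (0,1)$ and a uniform $L > 0$,
\begin{equation}
  J_\tau\bigl( \xi + \lambda ( \xi' - \xi ) \bigr) - J_\tau(\xi)
  = \D{J_\tau}\bigl( \xi + s \lambda (\xi'-\xi);\, \lambda(\xi'-\xi) \bigr)
  \leq \lambda \D{J_\tau}(\xi;\xi'-\xi) + L \lambda^2 \|\xi'-\xi\|_{\cal X}^2.
\end{equation}
A parallel computation using Corollary \ref{corollary:discretized_Dhj_lipschitz} and choosing an active index $(j,k) \in {\cal A}_\tau(\xi+\lambda(\xi'-\xi)) = \{(j,k) \in {\cal J}\times\{0,\ldots,|\tau|\} \mid \Psi_\tau(\xi+\lambda(\xi'-\xi)) = \psi_{\tau,j,\tau_k}(\xi+\lambda(\xi'-\xi))\}$ yields
\begin{equation}
  \Psi_\tau\bigl( \xi + \lambda ( \xi' - \xi ) \bigr) - \Psi_\tau(\xi)
  \leq \lambda \D{\psi_{\tau,j,\tau_k}}(\xi;\xi'-\xi) + L \lambda^2 \|\xi'-\xi\|_{\cal X}^2.
\end{equation}
Since $\xi,\xi' \in {\cal X}_{\tau,r}$ take values in the bounded set $U \times \Sigma_r^q$, we have $\|\xi'-\xi\|_{\cal X}^2 \leq 4A^2$ where $A = \max\{\|u\|_2+1 \mid u \in U\}$, and this bound is independent of $\tau$.

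Next, I would split according to the sign of $\Psi_\tau(\xi)$ and exploit the definition of $\theta_\tau$ in Equation \eqref{eq:discrete_zeta}. In the feasible case $\Psi_\tau(\xi) \leq 0$, we have $\D{J_\tau}(\xi;\xi'-\xi) \leq \theta_\tau(\xi) \leq -\delta$ and $\D{\psi_{\tau,j,\tau_k}}(\xi;\xi'-\xi) + \gamma\Psi_\tau(\xi) \leq \theta_\tau(\xi)$, so the two Armijo tests in Equation \eqref{eq:discrete_mu} rearrange to
\begin{equation}
  J_\tau\bigl(\xi+\beta^k(\xi'-\xi)\bigr) - J_\tau(\xi) - \alpha\beta^k\theta_\tau(\xi) \leq -(1-\alpha)\delta\beta^k + 4A^2 L \beta^{2k},
\end{equation}
\begin{equation}
  \Psi_\tau\bigl(\xi+\beta^k(\xi'-\xi)\bigr) - \alpha\beta^k\theta_\tau(\xi) \leq (1-\gamma\beta^k)\Psi_\tau(\xi) - \delta\beta^k + 4A^2 L \beta^{2k},
\end{equation}
both of which become nonpositive as soon as $\beta^k \leq \min\{(1-\alpha)\delta/(4A^2L),\,1/\gamma\}$. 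In the infeasible case $\Psi_\tau(\xi) > 0$, the bound $\D{\psi_{\tau,j,\tau_k}}(\xi;\xi'-\xi) \leq \theta_\tau(\xi) \leq -\delta$ gives the analogous estimate $\Psi_\tau(\xi + \beta^k(\xi'-\xi)) - \Psi_\tau(\xi) - \alpha\beta^k\theta_\tau(\xi) \leq -(1-\alpha)\delta\beta^k + 4A^2 L\beta^{2k}$, so the single Armijo condition is satisfied whenever $\beta^k \leq (1-\alpha)\delta/(4A^2L)$.

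Setting
\begin{equation}
  M^*_\delta = 1 + \max\left\{ \log_\beta\!\left( \frac{(1-\alpha)\delta}{4A^2 L} \right),\, \log_\beta\!\left( \frac{1}{\gamma} \right) \right\},
\end{equation}
any integer $k \geq M^*_\delta$ satisfies the appropriate Armijo condition, and therefore $\mu_\tau(\xi) \leq M^*_\delta$. Because $L$, $A$, $\alpha$, $\beta$, and $\gamma$ are all independent of $N$, $\tau$, and $\xi$, the bound $M^*_\delta$ depends only on $\delta$ as required.
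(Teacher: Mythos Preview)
Your proof is correct and follows essentially the same approach as the paper's own proof: both mimic Lemma~\ref{lemma:mu_upper_bound} by applying the Mean Value Theorem with the discretized Lipschitz estimates (Corollaries~\ref{corollary:discretized_DJ_lipschitz} and~\ref{corollary:discretized_Dhj_lipschitz}), splitting on the sign of $\Psi_\tau(\xi)$, and arriving at the identical bound $M^*_\delta = 1 + \max\bigl\{ \log_\beta\!\bigl( \tfrac{(1-\alpha)\delta}{4A^2L} \bigr),\, \log_\beta\!\bigl( \tfrac{1}{\gamma} \bigr) \bigr\}$. Your explicit emphasis on the uniformity of the constants in $N$ and $\tau$ is a welcome clarification.
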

\begin{proof}
  Given $\xi' \in {\cal X}$ and $\lambda \in [0,1]$, using the Mean Value Theorem and Corollary \ref{corollary:discretized_DJ_lipschitz} we have that there exists $s \in (0,1)$ such that
  \begin{equation}
    \label{eq:dmuub_J_mvt}
    \begin{aligned}
      J_\tau \bigl( \xi + \lambda (\xi' - \xi) \bigr) - J_\tau(\xi) 
      &= \d{J_\tau}\bigl( \xi + s \lambda ( \xi' - \xi ); \lambda (\xi' - \xi) \bigr) \\
      &\leq \lambda \d{J_\tau}\bigl( \xi; \xi' - \xi \bigr) + L \lambda^2 \| \xi' - \xi \|^2_{\cal X}.
    \end{aligned}
  \end{equation}
  Letting ${\cal A}_\tau(\xi) = \left\{ (j,i) \in {\cal J} \times \{ 0, \ldots, \card{\tau} \} \mid \Psi_\tau(\xi) = \psi_{\tau,j,\tau_i}(\xi) \right\}$, then there exists a pair $(j,i) \in {\cal A}_\tau\bigl( \xi + \lambda (\xi' - \xi) \bigr)$ and $s \in (0,1)$ such that, using Corollary \ref{corollary:Dhj_lipschitz},
  \begin{equation}
    \label{eq:dmuub_psi_mvt}
    \begin{aligned}
      \Psi_\tau\bigl( \xi + \lambda (\xi' - \xi) \bigr) - \Psi_\tau(\xi) 
      &\leq \psi_{\tau,j,\tau_i}\bigl( \xi + \lambda ( \xi' - \xi ) \bigr) - \Psi_\tau(\xi) \\
      &\leq \psi_{\tau,j,\tau_i}\bigl( \xi + \lambda ( \xi' - \xi ) \bigr) - \psi_{\tau,j,\tau_k}(\xi) \\
      &= \d{\psi_{\tau,j,\tau_i}}\bigl( \xi + s \lambda ( \xi' - \xi ); \lambda ( \xi' - \xi ) \bigr) \\
      &\leq \lambda \d{\psi_{\tau,j,\tau_i}}\bigl( \xi; \xi' - \xi \bigr) + L \lambda^2 \| \xi' - \xi \|^2_{\cal X}.
    \end{aligned}
  \end{equation}

  Now let us assume that $\Psi_\tau(\xi) \leq 0$, and consider $g_\tau$ as defined in Equation \eqref{eq:discrete_theta}. Then
  \begin{equation}
    \label{eq:dmuub_theta}
    \theta_\tau(\xi) = \max \left\{ \d{J_\tau}( \xi; g_\tau(\xi) - \xi ), 
    \max_{(j,i) \in {\cal J} \times \{ 0, \ldots, \card{\tau} \}} \D{\psi_{\tau,j,\tau_i}}( \xi; g_\tau(\xi) - \xi ) 
    + \gamma \Psi_\tau(\xi) \right\} 
    \leq -\delta,
  \end{equation}
  and using Equation \eqref{eq:dmuub_J_mvt},
  \begin{equation}
    J_\tau\bigl( \xi + \beta^k ( g_\tau(\xi) - \xi ) \bigr) - J_\tau(\xi) - \alpha \beta^k \theta_\tau(\xi) 
    \leq - ( 1 - \alpha ) \delta \beta^k + 4 A^2 L \beta^{2k},
  \end{equation}
  where $A = \max\big\{ \|u\|_2 + 1 \mid u \in U \big\}$. 
  Hence, for each $k \in \N$ such that $\beta^k \leq \frac{ (1-\alpha) \delta }{ 4 A^2 L}$ we have that
  \begin{equation}
    \label{eq:dmuub_res_1}
    J_\tau\bigl( \xi + \beta^k ( g(\xi) - \xi ) \bigr) - J_\tau(\xi) \leq \alpha \beta^k \theta_\tau(\xi).
  \end{equation}
  Similarly, using Equations \eqref{eq:dmuub_psi_mvt} and \eqref{eq:dmuub_theta},
  \begin{equation}
    \Psi_\tau\bigl( \xi + \beta^k ( g(\xi) - \xi ) \bigr) - \Psi_\tau(\xi) 
    + \beta^k \bigl( \gamma \Psi_\tau(\xi) - \alpha \theta_\tau(\xi) \bigr)
    \leq - \delta \beta^k + 4 A^2 L \beta^{2k},
  \end{equation}
  hence for each $k \in \N$ such that $\beta^k \leq \min\left\{\frac{ (1-\alpha) \delta}{4 A^2 L}, \frac{1}{\gamma} \right\}$ we have that
  \begin{equation}
    \label{eq:dmuub_res_2}
    \Psi_\tau\bigl( \xi + \beta^k ( g(\xi) - \xi ) \bigr) - \alpha \beta^k \theta_\tau(\xi) 
    \leq \left( 1 - \beta^k \gamma \right) \Psi_\tau(\xi) \leq 0.
  \end{equation}

  If $\Psi_\tau(\xi) > 0$ then 
  \begin{equation}
    \max_{(j,i) \in {\cal J} \times \{ 0, \ldots, \card{\tau}\} } \D{\psi_{\tau,j,\tau_i}}( \xi; g_\tau(\xi) - \xi ) 
    \leq \theta_\tau(\xi) \leq -\delta,
  \end{equation}
  thus, from Equation \eqref{eq:dmuub_psi_mvt},
  \begin{equation}
    \Psi_\tau\big( \xi + \beta^k ( g_\tau(\xi) - \xi ) \big) - \Psi_\tau(\xi) - \alpha \beta^k \theta_\tau(\xi) 
    \leq - ( 1 - \alpha ) \delta \beta^k + 4 A^2 L \beta^{2k}.
  \end{equation}
  Hence, for each $k \in \N$ such that $\beta^k \leq \frac{ (1-\alpha) \delta}{4 A^2 L}$ we have that
  \begin{equation}
    \label{eq:dmuub_res_3}
    \Psi_\tau\bigl( \xi + \beta^k ( g_\tau(\xi) - \xi ) \bigr) - \Psi_\tau(\xi) \leq \alpha \beta^k \theta_\tau(\xi).    
  \end{equation}

  Finally, let 
  \begin{equation}
    M^*_\delta = 1 + \max \left\{ \log_{\beta} \left( \frac{ (1-\alpha) \delta}{4 A^2 L} \right), \log_{\beta} \left( \frac{1}{\gamma} \right) \right\},
  \end{equation}
  then from Equations \eqref{eq:dmuub_res_1}, \eqref{eq:dmuub_res_2}, and \eqref{eq:dmuub_res_3}, we get that $\mu_\tau(\xi) \leq M^*_\delta$ as desired.
\end{proof}

The proof of the following corollary follows directly from the estimates of $M^*_\delta$ in the proof of Lemma \ref{lemma:discrete_mu_upper_bound}.
\begin{corollary}
  \label{corollary:discrete_mu_upper_bound}
	Let $\alpha \in (0,1)$ and $\beta \in (0,1)$.
	There exists a $\delta_0 > 0$ and $C > 0$ such that if $\delta \in (0,\delta_0)$ and $\theta_\tau(\xi) \leq  -\delta$ for $N \in \N$, $\tau \in {\cal T}_N$, and $\xi \in {\cal X}_{\tau,p}$, then $\mu_\tau(\xi) \leq 1 + \log_\beta( C \delta )$, where $\theta_{\tau}$ is as defined in Equation \eqref{eq:discrete_theta} and $\mu_\tau$ is as defined in Equation \eqref{eq:discrete_mu}.
\end{corollary}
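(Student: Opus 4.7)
My plan is to simply extract the explicit expression for $M^*_\delta$ derived inside the proof of Lemma \ref{lemma:discrete_mu_upper_bound} and observe that, for sufficiently small $\delta$, one of the two terms in the maximum dominates. Recall the key bound obtained in that proof:
\begin{equation*}
  M^*_\delta = 1 + \max \left\{ \log_{\beta} \left( \frac{ (1-\alpha) \delta}{4 A^2 L} \right), \log_{\beta} \left( \frac{1}{\gamma} \right) \right\},
\end{equation*}
where $A = \max\{ \|u\|_2 + 1 \mid u \in U \}$ and $L > 0$ is the Lipschitz constant from Corollaries \ref{corollary:discretized_DJ_lipschitz} and \ref{corollary:discretized_Dhj_lipschitz}.

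Since $\beta \in (0,1)$, the function $s \mapsto \log_\beta(s)$ is strictly decreasing, so the first argument of the maximum dominates exactly when $\frac{(1-\alpha)\delta}{4A^2L} \leq \frac{1}{\gamma}$, i.e. when $\delta \leq \frac{4A^2L}{\gamma(1-\alpha)}$. Accordingly, I would set
\begin{equation*}
  \delta_0 = \frac{4 A^2 L}{\gamma ( 1 - \alpha )}, \qquad C = \frac{1-\alpha}{4A^2L},
\end{equation*}
so that for every $\delta \in (0, \delta_0)$ we have $\log_\beta\bigl((1-\alpha)\delta/(4A^2L)\bigr) \geq \log_\beta(1/\gamma)$, and therefore $M^*_\delta = 1 + \log_\beta(C\delta)$. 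Invoking Lemma \ref{lemma:discrete_mu_upper_bound} then yields $\mu_\tau(\xi) \leq 1 + \log_\beta(C\delta)$ whenever $\theta_\tau(\xi) \leq -\delta$, as required.

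There is no real obstacle here; the result is a bookkeeping refinement of Lemma \ref{lemma:discrete_mu_upper_bound} that isolates the asymptotic $\delta$--dependence of the step-size bound. The only care needed is to remember that $\log_\beta$ reverses inequalities, which fixes the direction of the threshold $\delta_0$ and the sign of the constant $C$. This logarithmic dependence is what will later be needed in Theorem \ref{thm:discrete_convergence} to balance $\beta^{\mu_\tau(\xi)}$ against the discretization-precision tolerance $\Lambda 2^{-\chi N}$ appearing in Line \ref{algo:discrete_step3} of Algorithm \ref{algo:discrete_algo}.
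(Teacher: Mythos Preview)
Your proposal is correct and matches the paper's intent exactly: the paper omits the proof entirely, stating only that it ``follows directly from the estimates of $M^*_\delta$ in the proof of Lemma \ref{lemma:discrete_mu_upper_bound},'' which is precisely the extraction you carry out. Your identification of $\delta_0 = \frac{4A^2L}{\gamma(1-\alpha)}$ and $C = \frac{1-\alpha}{4A^2L}$, together with the observation that $\log_\beta$ reverses inequalities, is the complete argument.
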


Next, we prove a bound between the discretized trajectory for a point in the discretized relaxed optimization space and the discretized trajectory for the same point after projection by $\rho_N$ that we use in a later argument.
\begin{lemma}
  \label{lemma:discrete_state_approx}
  Consider $\rho_N$ defined as in Equation \eqref{eq:rho} and $\sigma_N$ defined as in Equation \eqref{eq:sigmaN_def}.
  There exists $K > 0$ such that for each $N_0,N \in \N$, $\tau \in {\cal T}_{N_0}$, $\xi = (u,d) \in {\cal X}_{r,\tau}$, and $t \in [0,1]$:
  \begin{equation}
    \bigl\| \phi_{\sigma_N(\xi),t}\bigl( \rho_N(\xi) \bigr) - \phi_{\tau,t}(\xi) \bigr\|_2
    \leq K \left( \left( \frac{1}{\sqrt{2}} \right)^N \bigl( \|\xi\|_{BV} + 1 \bigr) + \left( \frac{1}{2} \right)^{N_0} \right),
  \end{equation}
where $\phi_{\tau,t}$ is as defined in Equation \eqref{eq:discretized_flow_xi}.
\end{lemma}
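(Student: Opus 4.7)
The plan is to reduce to Theorem \ref{thm:quality_of_approximation} and Lemma \ref{lemma:convergence_discretized_traj_xi} via a three-term triangle inequality that bridges each discretized flow with its continuous-time counterpart. Specifically, for each $t \in [0,1]$, I would insert $\phi_t\bigl(\rho_N(\xi)\bigr)$ and $\phi_t(\xi)$ as intermediate points and write
\begin{equation}
  \bigl\|\phi_{\sigma_N(\xi),t}\bigl(\rho_N(\xi)\bigr) - \phi_{\tau,t}(\xi)\bigr\|_2
  \leq T_1 + T_2 + T_3,
\end{equation}
where $T_1 = \bigl\|\phi_{\sigma_N(\xi),t}\bigl(\rho_N(\xi)\bigr) - \phi_t\bigl(\rho_N(\xi)\bigr)\bigr\|_2$, $T_2 = \bigl\|\phi_t\bigl(\rho_N(\xi)\bigr) - \phi_t(\xi)\bigr\|_2$, and $T_3 = \bigl\|\phi_t(\xi) - \phi_{\tau,t}(\xi)\bigr\|_2$.

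For $T_2$, since $\rho_N(\xi), \xi \in {\cal X}_r$, Theorem \ref{thm:quality_of_approximation} directly yields a constant $K_1 > 0$ such that $T_2 \leq K_1 (1/\sqrt{2})^N (\|\xi\|_{BV} + 1)$. For $T_3$, note that by hypothesis $\xi \in {\cal X}_{\tau,r}$ with $\tau \in {\cal T}_{N_0}$, so Lemma \ref{lemma:convergence_discretized_traj_xi} gives a constant $B > 0$ with $T_3 \leq B/2^{N_0}$. For $T_1$, the key observation is that by construction $\rho_N(\xi) \in {\cal X}_{\sigma_N(\xi),p} \subset {\cal X}_{\sigma_N(\xi),r}$ and $\sigma_N(\xi) \in {\cal T}_N$, so applying Lemma \ref{lemma:convergence_discretized_traj_xi} a second time (now with switching vector $\sigma_N(\xi)$ and input $\rho_N(\xi)$) yields $T_1 \leq B/2^N$.

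Combining these three bounds and absorbing the $B/2^N$ term using the trivial inequality $1/2^N \leq (1/\sqrt{2})^N$ together with $\|\xi\|_{BV} + 1 \geq 1$, I obtain
\begin{equation}
  \bigl\|\phi_{\sigma_N(\xi),t}\bigl(\rho_N(\xi)\bigr) - \phi_{\tau,t}(\xi)\bigr\|_2
  \leq (K_1 + B)\left(\frac{1}{\sqrt{2}}\right)^N \bigl(\|\xi\|_{BV} + 1\bigr) + B \left(\frac{1}{2}\right)^{N_0},
\end{equation}
and choosing $K = \max\{K_1 + B, B\}$ produces the stated inequality.

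The argument is essentially a routine triangle-inequality reduction, so there is no genuine technical obstacle beyond verifying that the hypotheses of Theorem \ref{thm:quality_of_approximation} and Lemma \ref{lemma:convergence_discretized_traj_xi} are actually satisfied in each of the three pieces. The only point requiring slight care is that $\sigma_N(\xi)$ indeed belongs to ${\cal T}_N$ (which follows from the definition in Equation \eqref{eq:sigmaN_def}, since every pair of consecutive sample times is separated by at most $2^{-N}$) and that $\rho_N(\xi)$ is an admissible element of ${\cal X}_{\sigma_N(\xi),r}$ (which is exactly Lemma \ref{lemma:proj_reasonable}). With these observations in hand the proof is immediate.
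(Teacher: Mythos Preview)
Your proposal is correct and essentially identical to the paper's own proof: the paper uses the same three-term triangle inequality with intermediate points $\phi_t(\rho_N(\xi))$ and $\phi_t(\xi)$, invokes Theorem~\ref{thm:quality_of_approximation} and Lemma~\ref{lemma:convergence_discretized_traj_xi} on the respective pieces, and then absorbs the $2^{-N}$ term into the $(1/\sqrt{2})^N$ term via $2^{N/2}\le 2^N$. Your additional remarks verifying $\sigma_N(\xi)\in{\cal T}_N$ and $\rho_N(\xi)\in{\cal X}_{\sigma_N(\xi),r}$ are points the paper leaves implicit.
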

\begin{proof}
  We prove this argument for $t = 1$, but the argument follows identically for all $t \in [0,1]$. Using the triangular inequality we have that
  \begin{multline}
    \bigl\| \phi_{\sigma_N(\xi),1}\bigl( \rho_N(\xi) \bigr) - \phi_{\tau,1}(\xi) \bigr\|_2 
    \leq \bigl\| \phi_{\sigma_N(\xi),1} \bigl( \rho_N(\xi) \bigr) - \phi_1 \bigl( \rho_N(\xi) \bigr) \bigr\|_2 +
      \bigl\| \phi_1 \bigl( \rho_N(\xi) \bigr) - \phi_1(\xi) \bigr\|_2 + \\
    + \bigl\| \phi_1 (\xi) - \phi_{\tau,1}(\xi) \bigr\|_2.
  \end{multline}
  Thus, by Theorem \ref{thm:quality_of_approximation} and Lemma \ref{lemma:convergence_discretized_traj_xi} there exists $K_1$, $K_2$, and $K_3$ such that
  \begin{equation}
    \bigl\| \phi_{\sigma_N(\xi),1}\bigl( \rho_N(\xi) \bigr) - \phi_{\tau,1}(\xi) \bigr\|_2 
    \leq K_1 \left( \frac{1}{\sqrt{2}} \right)^N \bigl( \|\xi\|_{BV} + 1 \bigr) + \frac{K_2}{2^N} + \frac{K_3}{2^{N_0}},
  \end{equation}
  hence the result follows after organizing the constants and noting that $2^{\frac{N}{2}} \leq 2^N$ for each $N \in \N$.
\end{proof}

Using this previous lemma, we can prove that $\nu_{\tau}$ is eventually finite for all $\xi$ such that $\theta(\xi) < 0$.
\begin{lemma}
  \label{lemma:discrete_nu_upper_bound}
  Let $N_0 \in \N$, $\tau_0 \in {\cal T}_{N_0}$, and $\xi \in {\cal X}_{\tau,r}$. If $\theta(\xi) < 0$ then for each $\eta \in \N$ there exists a finite $N \geq N_0$ such that $\nu_{\sigma_N(\xi)}(\xi,N+\eta)$ is finite.
\end{lemma}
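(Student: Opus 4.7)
The plan is to adapt the argument of Lemma \ref{lemma:nu_upper_bound} to the discretized setting, relying on Theorem \ref{thm:consistent_approximation} and Lemma \ref{lemma:discrete_state_approx}. Throughout, I will write $\tau_N = \sigma_N(\xi)$ and $\xi'_N = \xi + \beta^{\mu_{\tau_N}(\xi)}(g_{\tau_N}(\xi) - \xi)$. First, since $\theta(\xi) < 0$, Theorem \ref{thm:consistent_approximation} gives $\theta_{\tau_N}(\xi) \to \theta(\xi)$ as $N \to \infty$, so there exists $N_1 \geq N_0$ with $\theta_{\tau_N}(\xi) \leq \theta(\xi)/2 < 0$ for all $N \geq N_1$. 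By Lemma \ref{lemma:discrete_mu_upper_bound}, this implies $\mu_{\tau_N}(\xi) \leq M^*$ uniformly for $N \geq N_1$, for some finite $M^*$. The definition of $\mu_{\tau_N}$ in Equation \eqref{eq:discrete_mu} already ensures $J_{\tau_N}(\xi'_N) - J_{\tau_N}(\xi) \leq \alpha\beta^{\mu_{\tau_N}(\xi)}\theta_{\tau_N}(\xi)$ (with the analogue for $\Psi_{\tau_N}$), so verifying the $\nu$-condition at some $k \leq N+\eta$ reduces to showing that the projection-induced error satisfies $|J_{\sigma_k(\xi'_N)}(\rho_k(\xi'_N)) - J_{\tau_N}(\xi'_N)| \leq -\bar\alpha\bar\beta^k \theta_{\tau_N}(\xi)$ and analogously for $\Psi_{\tau_N}$.

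Second, I would combine Lemma \ref{lemma:discrete_state_approx} with the Lipschitz continuity of $h_0$ and $h_j$ from Assumption \ref{assump:constraint_fns} to obtain
\[
\bigl|J_{\sigma_k(\xi'_N)}\bigl(\rho_k(\xi'_N)\bigr) - J_{\tau_N}(\xi'_N)\bigr| \leq K\left[\left(\tfrac{1}{\sqrt{2}}\right)^k\bigl(\|\xi'_N\|_{BV}+1\bigr) + \left(\tfrac{1}{2}\right)^N\right],
\]
and similarly for $\Psi$. Since $\bar\beta > 1/\sqrt{2}$, the ratio $(1/\sqrt{2})^k/\bar\beta^k = (\sqrt{2}\bar\beta)^{-k}$ decays geometrically in $k$, and $(1/2)^N/\bar\beta^k$ decays provided $N$ is large relative to $k$. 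Hence for $N$ sufficiently large and $k$ sufficiently large relative to $\log(\|\xi'_N\|_{BV}+1)$, the projection error becomes bounded by $\bar\alpha\bar\beta^k|\theta(\xi)|/2 \leq -\bar\alpha\bar\beta^k\theta_{\tau_N}(\xi)$, as required. The auxiliary constraint $\bar\alpha\bar\beta^k \leq (1-\omega)\alpha\beta^{M^*}$ merely forces $k$ to exceed a fixed constant, which can be absorbed into the previous requirements.

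The hard part will be showing that these conditions on $k$ and $N$ can be simultaneously satisfied with $k \leq N+\eta$ and $N \geq N_0$. Since $g_{\tau_N}(\xi) \in {\cal X}_{\tau_N, r}$ and $|\tau_N|$ can be as large as $O(2^N)$, a direct bound gives $\|\xi'_N\|_{BV} = O(2^N)$, so the required $k$ is of order $N/\log_2(\sqrt{2}\bar\beta) + O(1)$. Exploiting the fact that $\bar\beta > 1/\sqrt{2}$ (so $\sqrt{2}\bar\beta > 1$) and choosing $N$ only moderately larger than $N_0$ should allow a valid pair $(N, k)$ with $k \leq N+\eta$ to be extracted for each $\eta \in \N$; a careful quantitative balancing of the three decay rates (in $k$, in $N$, and of $\bar\beta^k$) against the BV growth of $\xi'_N$ is the key technical task that closes the argument.
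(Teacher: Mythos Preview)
Your overall strategy is exactly the paper's: invoke Theorem \ref{thm:consistent_approximation} to get $\theta_{\tau_N}(\xi)\le\tfrac12\theta(\xi)$ for large $N$, use Lemma \ref{lemma:discrete_mu_upper_bound} to bound $\mu_{\tau_N}(\xi)$ uniformly, apply Lemma \ref{lemma:discrete_state_approx} together with the Lipschitz bounds on $h_0,h_j$ to control the projection error, and then check that all three conditions in $\nu_\tau$ become feasible. The paper does not try to track the dependence of $\|\xi'\|_{BV}$ on $N$; it simply writes the bound $LK(1/\sqrt{2})^N(\|\xi'\|_{BV}+2)$ and asserts that for $N$ large enough this is dominated by $-\bar\alpha\bar\beta^N\theta_{\sigma_N(\xi)}(\xi)$, treating $\|\xi'\|_{BV}$ as fixed.

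Where your proposal departs from the paper is in the final paragraph, and there the argument breaks. You correctly observe that $|\sigma_N(\xi)|=O(2^N)$, so a priori $\|\xi'_N\|_{BV}=O(2^N)$; you then compute that the dominant term $(1/\sqrt{2})^k\|\xi'_N\|_{BV}$ forces $k\gtrsim N/\log_2(\sqrt{2}\,\bar\beta)$. But since $\bar\beta<1$ we have $\log_2(\sqrt{2}\,\bar\beta)<\tfrac12$, so this lower bound on $k$ exceeds $2N$. The constraint $k\le N+\eta$ therefore cannot be met for large $N$ (indeed, it would force $N\le\eta$), and your ``careful quantitative balancing'' cannot close the argument as stated. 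If you wish to make the $\|\xi'_N\|_{BV}$ dependence rigorous you need a genuinely different mechanism to control it (or to argue, as the paper implicitly does, that for the purpose of this lemma the BV norm may be treated as a constant independent of $N$); the balancing you sketch does not suffice.
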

\begin{proof}
  Recall $\nu_\tau$, as defined in Equation \eqref{eq:discrete_nu}, is infinity only when the optimization problem it solves is not feasible. 
  To simplify our notation, let $\xi' \in {\cal X}_{\sigma_N(\xi),r}$ defined by $\xi' = \xi + \beta^{\mu_{\sigma_N(\xi)}(\xi)} \bigl( g_{\sigma_N(\xi)}(\xi) - \xi \bigr)$. 
  Then, using Lemma \ref{lemma:discrete_state_approx}, for $k \in \N$, $k \in [N,N+\eta]$, 
  \begin{equation}
    \begin{aligned}
      J_{\sigma_k(\xi')}\bigl( \rho_k(\xi') \bigr) - J_{\sigma_N(\xi)}(\xi') 
      &\leq L K \left( \left( \frac{1}{\sqrt{2}} \right)^k \left( \|\xi'\|_{BV} + 1 \right) + \left( \frac{1}{2} \right)^N \right) \\
      &\leq L K \left( \frac{1}{\sqrt{2}} \right)^N \left( \|\xi'\|_{BV} + 2 \right) \\
    \end{aligned}
  \end{equation}
  
  Also, from Theorem \ref{thm:consistent_approximation} we know that for $N$ large enough,
  \begin{equation}
    \frac{1}{2} \theta(\xi) \geq \theta_{\sigma_N(\xi)}(\xi).
  \end{equation}
  Thus, given $\delta > \frac{1}{2} \theta(\xi)$, there exists $N^* \in \N$ such that, for each $N \geq N^*$ and $k \in [N,N+\eta]$,
  \begin{equation}
    \begin{aligned}
      \label{eq:dnub_J}
      J_{\sigma_k(\xi')} \bigl( \rho_k(\xi') \bigr) - J_{\sigma_N(\xi)}(\xi') 
      &\leq -\bar{\alpha} \bar{\beta}^N \frac{1}{2} \theta(\xi) \\
      &\leq -\bar{\alpha} \bar{\beta}^N \theta_{\sigma_N(\xi)}(\xi). \\
    \end{aligned}
  \end{equation}
  and at the same time
  \begin{equation}
    \label{eq:dnub_betabar}
    \bar{\alpha} \bar{\beta}^N 
    \leq ( 1 - \omega ) \alpha \beta^{M_\delta^*} 
    \leq ( 1 - \omega ) \alpha \beta^{\mu_{\sigma_N}(\xi)},
  \end{equation}
  where $M_\delta^*$ is as in Lemma \ref{lemma:discrete_mu_upper_bound}.

  Similarly, given ${\cal A}_\tau(\xi) = \left\{ (j,t) \in {\cal J} \times [0,1] \mid \Psi_\tau(\xi) = \psi_{\tau,j,t}(\xi) \right\}$, let $(j,t) \in {\cal A}_{\sigma_N(\xi')}(\xi')$. Thus, for $N \geq N^*$, $k \in [N,N+\eta]$, and using Lemma \ref{lemma:discrete_state_approx},
  \begin{equation}
    \begin{aligned}
      \label{eq:dnub_psi}
      \Psi_{\sigma_k(\xi')} \bigl( \rho_k(\xi') \bigr) - \Psi_{\sigma_N(\xi)}(\xi') 
      &= \psi_{\sigma_k(\xi'),j,t} \bigl( \rho_k(\xi') \bigr) - \Psi_{\sigma_N(\xi)}(\xi') \\
      &\leq \psi_{\sigma_k(\xi'),j,t} \bigl( \rho_k(\xi') \bigr) - \psi_{\sigma_N(\xi),j,t}(\xi') \\
      &\leq L K \left( \frac{1}{\sqrt{2}} \right)^N \left( \| \xi' \|_{BV} + 2 \right) \\
      &\leq - \bar{\alpha} \bar{\beta}^N \theta_{\sigma_N(\xi)}(\xi).
    \end{aligned}
  \end{equation}

  Therefore, for $N \geq N^*$, if $\Psi_{\sigma_N(\xi)}(\xi) \leq 0$, then by Equations \eqref{eq:dnub_J}, \eqref{eq:dnub_psi}, and the inequalities from the computation of $\mu_\tau(\xi)$,
  \begin{gather}
    J_{\sigma_k(\xi')} \bigl( \rho_k(\xi') \bigr) - J_{\sigma_N(\xi)}(\xi) 
    \leq \bigl( \alpha \beta^{\mu_{\sigma_N}(\xi)} - \bar{\alpha} \bar{\beta}^N \bigr) \theta_{\sigma_N(\xi)}(\xi), \\
    \Psi_{\sigma_k(\xi')} \bigl( \rho_k(\xi') \bigr) 
    \leq \bigl( \alpha \beta^{\mu_{\sigma_N}(\xi)} - \bar{\alpha} \bar{\beta}^N \bigr) \theta_{\sigma_N(\xi)}(\xi) \leq 0,
  \end{gather}
  which together with Equation \eqref{eq:dnub_betabar} implies that the feasible set is not empty. 
  Similarly, if $\Psi_{\sigma_N(\xi)}(\xi) > 0$, by Equation \eqref{eq:dnub_psi},
  \begin{equation}
    \Psi_{\sigma_k(\xi')} \bigl( \rho_k(\xi') \bigr) - \Psi_{\sigma_N(\xi)}(\xi) 
    \leq \bigl( \alpha \beta^{\mu_{\sigma_N}(\xi)} - \bar{\alpha} \bar{\beta}^N \bigr) \theta_{\sigma_N(\xi)}(\xi),
  \end{equation}
  as desired.

  Hence for all $N \geq N^*$ the feasible sets of the optimization problems associated with $\nu_{\sigma_N(\xi)}$ are not empty, and therefore $\nu_{\sigma_N(\xi)}( \xi, N + \eta ) < \infty$.
\end{proof}

In fact, the discretization precision constructed by Algorithm \ref{algo:discrete_algo} increases arbitrarily.
\begin{lemma}
  \label{lemma:discrete_N_diverges}
  Let $\{ N_i \}_{i \in \N}$, $\{ \tau_i \}_{i \in \N}$, and $\{ \xi_i \}_{i \in \N}$ be the sequences generated by Algorithm \ref{algo:discrete_algo}.
  Then $N_i \to \infty$ as $i \to \infty$.
\end{lemma}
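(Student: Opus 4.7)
The plan is to argue by contradiction. Suppose the conclusion fails; since $\{N_i\}_{i \in \N}$ is non-decreasing by construction, there exist $N^* < \infty$ and $i_0 \in \N$ such that $N_i = N^*$ for every $i \geq i_0$. For each such $i$, neither of the two branches that increment $N$ in Algorithm \ref{algo:discrete_algo} (Lines \ref{algo:discrete_step3} and \ref{algo:discrete_step10}) can fire, and the update $N_{i+1} = \max\{N_i, \nu_{\tau_i}(\xi_i, N_i+\eta)\} = N^*$ forces $k_i := \nu_{\tau_i}(\xi_i, N^*+\eta) \leq N^*$. Writing $\delta := \Lambda 2^{-\chi N^*} > 0$, we therefore have $\theta_{\tau_i}(\xi_i) \leq -\delta$ and $k_i$ finite with $k_i \leq N^*$ for every $i \geq i_0$.

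Setting $\xi'_i := \xi_i + \beta^{\mu_{\tau_i}(\xi_i)}(g_{\tau_i}(\xi_i) - \xi_i)$, the defining inequality of $\mu_{\tau}$ in \eqref{eq:discrete_mu}, the cost descent inequality in \eqref{eq:discrete_nu}, and the built-in $\omega$-condition $\bar\alpha\bar\beta^{k_i} \leq (1-\omega)\alpha\beta^{\mu_{\tau_i}(\xi_i)}$ combine to give, in the feasible case, $J_{\sigma_{k_i}(\xi'_i)}(\xi_{i+1}) - J_{\tau_i}(\xi_i) \leq \omega\alpha\beta^{\mu_{\tau_i}(\xi_i)}\theta_{\tau_i}(\xi_i) \leq -\omega\alpha\beta^{M^*_\delta}\delta =: -C_0 < 0$, where Lemma \ref{lemma:discrete_mu_upper_bound} furnishes the uniform bound $\mu_{\tau_i}(\xi_i) \leq M^*_\delta$. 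By Lemma \ref{lemma:discrete_phase12} only one of the feasible/infeasible regimes can prevail for all sufficiently large $i$, and in the infeasible case the argument proceeds identically with $\Psi$ in place of $J$, using Lemma \ref{lemma:roc_constraint} and the lower bound on $\Psi$ from Corollary \ref{corollary:fns_bounded}.

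To reach a contradiction I would transfer this discretized descent to the continuous cost $J$, which is bounded below by Corollary \ref{corollary:fns_bounded} and the Lipschitz continuity of $h_0$. Two applications of Lemma \ref{lemma:roc_cost}, using $\tau_i \in {\cal T}_{N^*}$ and $\sigma_{k_i}(\xi'_i) \in {\cal T}_{k_i}$, yield $J(\xi_{i+1}) - J(\xi_i) \leq -C_0 + B 2^{-k_i} + B 2^{-N^*}$. The main obstacle is the control of $B 2^{-k_i}$, since in principle $k_i$ might be small. The plan is to exploit the specific parameter hypotheses of the algorithm: the condition $\bar\beta > 1/\sqrt{2}$ yields $-\log 2 / \log \bar\beta > 2$, which combined with $\bar\beta^{k_i} \leq (1-\omega)\alpha\beta^{\mu_{\tau_i}(\xi_i)}/\bar\alpha$ and the sharper estimate $\beta^{\mu_{\tau_i}(\xi_i)} \gtrsim \delta$ of Corollary \ref{corollary:discrete_mu_upper_bound} produces $2^{-k_i} \lesssim \delta^{p}$ for some $p > 2$; meanwhile $\chi < 1/2$ gives $2^{-N^*} \lesssim \delta^{1/\chi}$ with $1/\chi > 2$. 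Together these force $B 2^{-k_i} + B 2^{-N^*} \leq C_0/2$ once $N^*$ exceeds a fixed threshold; for $N^*$ below that threshold the alternative contradiction is immediate, because $\delta = \Lambda 2^{-\chi N^*}$ would exceed the uniform lower bound on $\theta_\tau$ (which arises from the Cauchy--Schwarz bounds on the directional derivatives in $\zeta_\tau$ together with the compactness of $\Sigma^q_r$ and $U$), making $\theta_{\tau_i}(\xi_i) \leq -\delta$ impossible.

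With this estimate in hand, $J(\xi_{i+1}) - J(\xi_i) \leq -C_0/2$ holds for every $i \geq i_0$. Summing telescopes to $J(\xi_M) \leq J(\xi_{i_0}) - (M-i_0)C_0/2 \to -\infty$ as $M \to \infty$, contradicting the lower bound on $J$. Hence the assumption $N^* < \infty$ is untenable, and $N_i \to \infty$, as claimed.
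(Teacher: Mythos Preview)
Your contradiction setup and the descent inequality
\[
J_{\sigma_{k_i}(\xi'_i)}(\xi_{i+1}) - J_{\tau_i}(\xi_i) \leq \omega\alpha\beta^{\mu_{\tau_i}(\xi_i)}\theta_{\tau_i}(\xi_i) \leq -\omega\alpha\beta^{M^*_\delta}\delta
\]
are exactly right, and they match the paper's argument. The gap is in your third paragraph, where you try to transfer this to the continuous cost $J$ and then control the leftover term $B\,2^{-k_i}$. That control fails: the $\omega$--condition $\bar\alpha\bar\beta^{k_i}\le(1-\omega)\alpha\beta^{\mu_{\tau_i}(\xi_i)}$ gives a \emph{lower} bound on $k_i$ that grows with $\mu_{\tau_i}(\xi_i)$, but nothing forces $\mu_{\tau_i}(\xi_i)$ to be large---it can be $0$ if the very first Armijo test succeeds. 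Corollary~\ref{corollary:discrete_mu_upper_bound} supplies only an \emph{upper} bound $\mu_{\tau_i}(\xi_i)\le 1+\log_\beta(C\delta)$, i.e.\ a \emph{lower} bound $\beta^{\mu_{\tau_i}(\xi_i)}\gtrsim\delta$, which is the wrong direction for what you need. Consequently $k_i$ may stay bounded by a constant independent of $\delta$, and $B\,2^{-k_i}$ can swamp $C_0$.

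The paper avoids this detour entirely: it telescopes the \emph{discretized} costs $J_{\tau_i}(\xi_i)$ directly. The descent inequality coming out of $\nu_\tau$ already reads $J_{\tau_{i+1}}(\xi_{i+1}) - J_{\tau_i}(\xi_i)\le -\omega\alpha\beta^{M^*_{\delta}}\delta$, and since $J_\tau$ is uniformly bounded below (Corollary~\ref{corollary:discretized_fns_bounded}), the telescoped sum diverges to $-\infty$, which is the contradiction. The infeasible case is handled identically with $\Psi_{\tau_i}$ in place of $J_{\tau_i}$. No passage through the continuous $J$ or $\Psi$ is needed, and the troublesome $2^{-k_i}$ term never appears.
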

\begin{proof}
  Suppose that $N_i \leq N^*$ for all $i \in \N$. 
  Then, by definition of Algorithm \ref{algo:discrete_algo}, there exists $i_0 \in \N$ such that $\theta(\xi_i) \leq - \Lambda 2^{-\chi N_i} \leq - \Lambda 2^{-\chi N^*}$ and $\xi_{i+1} = \Gamma_{\tau_i}(\xi_i)$ for each $i \geq i_0$, where $\Gamma_\tau$ is defined in Equation \eqref{eq:discrete_gamma}. 
  
  Moreover, by definition of $\nu_\tau$ we have that if there exists $i_1 \geq i_0$ such that $\Psi_{\tau_{i_1}} ( \xi_{i_1} ) \leq 0$, then $\Psi_{\tau_i}( \xi_i ) \leq 0$ for each $i \geq i_1$.
  Let us assume that there exists $i_1 \geq i_0$ such that $\Psi_{\tau_{i_1}} ( \xi_{i_1} ) \leq 0$, then, using Lemma \ref{lemma:discrete_mu_upper_bound},
  \begin{equation}
    \begin{aligned}
      J_{\tau_{i+1}} ( \xi_{i+1} ) - J_{\tau_i} ( \xi_i ) 
      &\leq \bigl( \alpha \beta^{\mu_{\tau_i}(\xi_i)} - \bar{\alpha} \bar{\beta}^{\nu_{\tau_i}(\xi_i,N_i+\eta)} \bigr) \theta(\xi_i) \\
      &\leq - \omega \alpha \beta^{M^*_{\delta'}} \delta',
    \end{aligned}
  \end{equation}
  for each $i \geq i_1$, where $\delta' = \Lambda 2^{-\chi N^*}$. 
  But this implies that $J_{\tau_i}(\xi_i) \to -\infty$ as $i \to \infty$, which is a contradiction since $h_0$, and therefore $J_{\tau_i}$, is lower bounded.
  
  The argument is completely analogous in the case where the sequence is perpetually infeasible. Indeed, suppose that $\Psi_{\tau_i}( \xi_i ) > 0$ for each $i \geq i_0$, then by Lemma \ref{lemma:discrete_mu_upper_bound},
  \begin{equation}
    \begin{aligned}
      \Psi_{\tau_{i+1}} ( \xi_{i+1} ) - \Psi_{\tau_i} ( \xi_i ) 
      &\leq \bigl( \alpha \beta^{\mu_{\tau_i}(\xi_i)} - \bar{\alpha} \bar{\beta}^{\nu_{\tau_i}(\xi_i,N_i+\eta)} \bigr) \theta(\xi_i) \\
      &\leq - \omega \alpha \beta^{M^*_{\delta'}} \delta',      
    \end{aligned}
  \end{equation}
  for each $i \geq i_0$, where $\delta' = \Lambda 2^{-\chi N^*}$.
  But again this implies that $\Psi_{\tau_i}(\xi_i) \to -\infty$ as $i \to \infty$, which is a contradiction since we had assumed that $\Psi_{\tau_i}( \xi_i ) > 0$.
\end{proof}

Next, we prove that if Algorithm \ref{algo:discrete_algo} find a feasible point, then every point generated afterwards remains feasible.
\begin{lemma}
  \label{lemma:discrete_phase12}
  Let $\{ N_i \}_{i \in \N}$, $\{ \tau_i \}_{i \in \N}$, and $\{ \xi_i \}_{i \in \N}$ be the sequences generated by Algorithm \ref{algo:discrete_algo}.
  Then there exists $i_0 \in \N$ such that, if $\Psi_{\tau_{i_0}}( \xi_{i_0} ) \leq 0$, then $\Psi( \xi_i ) \leq 0$ and $\Psi_{\tau_i}( \xi_i ) \leq 0$ for each $i \geq i_0$, where $\Psi_{\tau}$ is as defined in Equation \eqref{eq:discretized_constraint}.
\end{lemma}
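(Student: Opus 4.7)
The plan is to induct on $i \geq i_0$, with $i_0$ chosen large enough that $N_{i_0}$ exceeds a threshold $N^*$ (determined below). Such an $i_0$ exists by Lemma \ref{lemma:discrete_N_diverges}. The inductive hypothesis I would maintain is stronger than the stated conclusion: namely, that $\Psi_{\tau_i}(\xi_i) \leq 0$ and $\Psi(\xi_i) \leq -c \cdot 2^{-\zeta N_i}$ for appropriate constants $c > 0$ and $\zeta \in (2\chi,1)$. This strengthened invariant is what allows the discretization error bound from Lemma \ref{lemma:roc_constraint} to be absorbed by the feasibility margin as the discretization is refined.

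For the inductive step at iterate $j \geq i_0$, I would split into three cases mirroring the branches of Algorithm \ref{algo:discrete_algo}. In Cases A and B (Lines 3--5 and 10--12), we have $\xi_{j+1} = \xi_j$ and $N_{j+1} = N_j + 1$, so $\Psi(\xi_{j+1}) = \Psi(\xi_j)$ is unchanged and the inductive margin propagates since $2^{-\zeta N_{j+1}} \leq 2^{-\zeta N_j}$. For $\Psi_{\tau_{j+1}}(\xi_{j+1})$, Lemma \ref{lemma:roc_constraint} gives $\Psi_{\tau_{j+1}}(\xi_{j+1}) \leq \Psi(\xi_j) + B/2^{N_{j+1}} \leq -c\cdot 2^{-\zeta N_j} + B/2^{N_{j+1}}$, which is non-positive once $N_j$ is large enough since $\zeta < 1$. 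In Case C (Lines 14--15), the defining inequality of $\nu_{\tau_j}$ from Equation \eqref{eq:discrete_nu} in the feasible branch directly yields
\begin{equation*}
\Psi_{\tau_{j+1}}(\xi_{j+1}) \leq \bigl(\alpha\beta^{\mu_{\tau_j}(\xi_j)} - \bar\alpha\bar\beta^{\nu_{\tau_j}(\xi_j, N_j+\eta)}\bigr)\theta_{\tau_j}(\xi_j) \leq \omega\alpha\beta^{\mu_{\tau_j}(\xi_j)} \theta_{\tau_j}(\xi_j).
\end{equation*}
Since Line 3 was not triggered, $\theta_{\tau_j}(\xi_j) \leq -\Lambda 2^{-\chi N_j}$, and Corollary \ref{corollary:discrete_mu_upper_bound} gives $\beta^{\mu_{\tau_j}(\xi_j)} \geq \beta \cdot C \Lambda 2^{-\chi N_j}$ for a constant $C > 0$ once $N_j$ is large. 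Combining these produces $\Psi_{\tau_{j+1}}(\xi_{j+1}) \leq -D \cdot 2^{-2\chi N_j}$ for some $D > 0$, and one more application of Lemma \ref{lemma:roc_constraint} gives $\Psi(\xi_{j+1}) \leq -D \cdot 2^{-2\chi N_j} + B/2^{N_{j+1}}$, which re-establishes the invariant since $\zeta > 2\chi$ and $N_{j+1} \geq N_j$.

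The principal obstacle, and the reason the restriction $\chi \in (0,\tfrac{1}{2})$ matters, is balancing two competing error scales: Case C generates feasibility slack of order $2^{-2\chi N_j}$, while the worst-case gap between $\Psi_\tau$ and $\Psi$ across varying partitions is of order $2^{-N_{j+1}}$. Only because $2\chi < 1$ does the slack dominate the gap for large $N$, making both inequalities propagate simultaneously through Cases A, B, and C. The base case at $i = i_0$ is the other delicate point: I would handle it by choosing $i_0$ large enough that $\xi_{i_0}$, if discretely feasible, must either have been produced at some earlier Case C step with the required margin (which we trace back via the preceding Case A/B steps that only refine the discretization without moving $\xi$) or must have been produced by an infeasible-branch Case C step that crossed into feasibility by more than $B/2^{N_{i_0}}$, again using Corollary \ref{corollary:discrete_mu_upper_bound} and $\chi < \tfrac{1}{2}$. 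In either case, the continuous constraint is also satisfied at $i_0$, and the induction carries through for all $i \geq i_0$.
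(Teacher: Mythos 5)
Your inductive step is essentially the paper's own argument: the paper likewise splits on whether an iteration is ``productive'' (its index set ${\cal I}$ of iterations where Line \ref{algo:discrete_step3} is not triggered and $\nu_{\tau_i}$ is finite, versus your Cases A/B and C), uses the feasible branch of Equation \eqref{eq:discrete_nu} together with $\theta_{\tau_i}(\xi_i) \leq -\Lambda 2^{-\chi N_i}$ and Corollary \ref{corollary:discrete_mu_upper_bound} to obtain a feasibility margin of order $2^{-2\chi N_i}$, and then absorbs the $B/2^{N}$ error from Lemma \ref{lemma:roc_constraint} using $2\chi < 1$. Your explicit invariant $\Psi(\xi_i) \leq -c\,2^{-\zeta N_i}$ with $\zeta \in (2\chi,1)$ is a cleaner packaging of the paper's bound $2^{-2\chi N_i}\bigl( B\,2^{-(1-2\chi)N_i} - \omega\alpha\beta C \Lambda^2 \bigr)$, and the propagation through Cases A, B, and C matches the paper step for step.

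The one claim that does not hold as stated is your base-case dichotomy. An infeasible-branch Case C step only guarantees $\Psi_{\tau_{i_0}}(\xi_{i_0}) \leq \Psi_{\tau_{i_0-1}}(\xi_{i_0-1}) - D\,2^{-2\chi N}$ with $\Psi_{\tau_{i_0-1}}(\xi_{i_0-1}) > 0$ otherwise unconstrained near the crossing, so such a step can land at $\Psi_{\tau_{i_0}}(\xi_{i_0}) = -\epsilon$ for $\epsilon$ arbitrarily small; it need not ``cross into feasibility by more than $B/2^{N_{i_0}}$,'' in which case $\Psi(\xi_{i_0})$ may be positive and your strengthened invariant fails at the base index. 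The paper does not resolve this point either --- it derives the quantitative margin only for iterates produced by feasible productive steps and otherwise takes $\Psi_{\tau_{i_0}}(\xi_{i_0}) \leq 0$ as a hypothesis --- so your instinct that the base case is the delicate spot is correct, but the specific repair you propose is not sound as written; the invariant can only be safely initialized at the first feasible productive step at or after $i_0$, not at an arbitrary index where discrete feasibility happens to hold.
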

\begin{proof}
  Let ${\cal I} \subset \N$ be a subsequence defined by
  \begin{equation}
    \label{eq:dp12_Idef}
    {\cal I} = \left\{ i \in \N \mid 
      \theta_{\tau_i}(\xi_i) \leq - \frac{\Lambda}{2^{\chi N_i}}\ \text{and}\ 
      \nu_{\tau_i}(\xi_i,N_i + \eta) < \infty \right\}.    
  \end{equation}
  Note that, by definition of Algorithm \ref{algo:discrete_algo}, $\Psi(\xi_{i+1}) = \Psi(\xi_i)$ for each $i \notin {\cal I}$. 
  Now, for each $i \in {\cal I}$ such that $\Psi_{\tau_i}(\xi_i) \leq 0$, by definition of $\nu_\tau$ in Equation \eqref{eq:discrete_nu} together with Corollary \ref{corollary:discrete_mu_upper_bound},
  \begin{equation}
    \begin{aligned}
      \Psi_{\tau_{i+1}}(\xi_{i+1}) 
      &\leq \left( \alpha \beta^{\mu_{\tau_i}(\xi_i)} - \bar{\alpha} \bar{\beta}^{\nu_{\tau_i}(\xi_i,N_i+\eta)} \right) \theta_{\tau_i}(\xi_i) \\
      &\leq - \omega \alpha \beta^{\mu_{\tau_i}(\xi_i)} \frac{\Lambda}{ 2^{\chi N_i} } \\
      &\leq - \omega \alpha \beta C \left( \frac{\Lambda}{ 2^{\chi N_i} } \right)^2,
    \end{aligned}
  \end{equation}
  where $C > 0$.
  By Lemma \ref{lemma:roc_constraint} and the fact that $N_{i+1} \geq N_i$, we have that
  \begin{equation}
    \label{eq:dp12_psi_bound}
    \begin{aligned}
      \Psi( \xi_{i+1} ) 
      &\leq \frac{B}{2^{N_i}} - \omega \alpha \beta C \left( \frac{\Lambda}{ 2^{\chi N_i} } \right)^2 \\
      &\leq \frac{1}{ 2^{2 \chi N_i} } \left( \frac{B}{ 2^{ ( 1 - 2 \chi ) N_i } } - \omega \alpha \beta C \Lambda^2 \right).
    \end{aligned}
  \end{equation}
  Hence, if $\Psi_{\tau_{i_1}}( \xi_{i_1} ) \leq 0$ for $i_1 \in \N$ such that $N_{i_1}$ is large enough, then $\Psi( \xi_i ) \leq 0$ for each $i \geq i_1$.

  Moreover, from Equation \eqref{eq:dp12_psi_bound} we get that for each $N \geq N_i$ and each $\tau \in {\cal T}_N$,
  \begin{equation}
    \label{eq:dp12_psitau_bound}
    \begin{aligned}
      \Psi_\tau( \xi_{i+1} ) 
      &\leq \frac{1}{ 2^{2 \chi N_i} } \left( \frac{B}{ 2^{ ( 1 - 2 \chi ) N_i } } - \omega \alpha \beta C \Lambda^2 \right) + \frac{B}{2^N} \\
      &\leq \frac{1}{ 2^{2 \chi N_i} } \left( \frac{2B}{ 2^{ ( 1 - 2 \chi ) N_i } } - \omega \alpha \beta C \Lambda^2 \right).
    \end{aligned}
  \end{equation}
  Thus, if $\Psi_{\tau_{i_2}}( \xi_{i_2} ) \leq 0$ for $i_2 \in \N$ such that $N_{i_2}$ is large enough, then $\Psi_\tau( \xi_{i_2} ) \leq 0$ for each $\tau \in {\cal T}_N$ such that $N \geq N_i$. 
  But note that this is exactly the case when $i_2+k \notin {\cal I}$ for $k \in \{1,\ldots,n\}$, thus we can conclude that $\Psi_{\tau_{i_2+k}}( \xi_{i_2+k} ) \leq 0$. 
  Also note that the case of $i \in {\cal I}$ is trivially satisfied by the definition of $\nu_\tau$.

  Finally, by setting $i_0 = \max\{ i_1, i_2 \}$ we get the desired result.
\end{proof}

Next, we prove $\theta_{\tau}$ converges to zero. 
\begin{lemma}
	\label{lemma:discretized_theta_to_zero}
  Let $\{ N_i \}_{i \in \N}$, $\{ \tau_i \}_{i \in \N}$, and $\{ \xi_i \}_{i \in \N}$ be the sequences generated by Algorithm \ref{algo:discrete_algo}. 
  Then $\theta_{\tau_i}(\xi_i) \to 0$ as $i \to \infty$, where $\theta_{\tau}$ is as defined in Equation \eqref{eq:discrete_theta}.
\end{lemma}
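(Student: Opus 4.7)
The plan is to argue by contradiction along the lines of Theorem \ref{thm:convergence}, with extra bookkeeping to accommodate the two ``skip'' cases in Algorithm \ref{algo:discrete_algo}. Suppose $\liminf_{i \to \infty} \theta_{\tau_i}(\xi_i) = -2\delta < 0$; then there is a subsequence $\{i_k\}$ with $\theta_{\tau_{i_k}}(\xi_{i_k}) \leq -\delta$ for all $k$. Since $N_i \to \infty$ by Lemma \ref{lemma:discrete_N_diverges}, and since the proof of Theorem \ref{thm:consistent_approximation} only requires the precision of $\tau_i$ to go to infinity (not strictly that $\tau_i \in {\cal T}_i$), we obtain $|\theta_{\tau_{i_k}}(\xi_{i_k}) - \theta(\xi_{i_k})| \to 0$, and therefore $\theta(\xi_{i_k}) \leq -\delta/2$ for all $k$ sufficiently large.

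Next I would show that for $k$ large each $i_k$ is followed within finitely many iterations by a genuine descent step (the third branch of Algorithm \ref{algo:discrete_algo}). The test on Line \ref{algo:discrete_step3} cannot succeed at $i_k$ once $\Lambda 2^{-\chi N_{i_k}} < \delta$, because $\theta_{\tau_{i_k}}(\xi_{i_k}) \leq -\delta$. If the test on Line \ref{algo:discrete_step10} succeeds, the iterate is held fixed at $\xi_{i_k}$ while $N$ is incremented; Lemma \ref{lemma:discrete_nu_upper_bound} applied to $\xi_{i_k}$ (using $\theta(\xi_{i_k}) < 0$) guarantees that this can happen only for finitely many consecutive iterations before the feasibility of the optimization defining $\nu_\tau$ is restored. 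Let $j_k \geq i_k$ be the first index at which the descent branch executes; then $\xi_{j_k} = \xi_{i_k}$, and a second application of consistent approximation yields $\theta_{\tau_{j_k}}(\xi_{j_k}) \leq -\delta/4$ for all large $k$.

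At each such $j_k$, Lemma \ref{lemma:discrete_mu_upper_bound} bounds $\mu_{\tau_{j_k}}(\xi_{j_k}) \leq M^*_{\delta/4}$, and the defining constraint on $\nu$ in Equation \eqref{eq:discrete_nu} forces $\alpha\beta^{\mu} - \bar{\alpha}\bar{\beta}^{\nu} \geq \omega \alpha \beta^{\mu}$. Substituting into the Armijo inequality gives the uniform decrement
\begin{equation*}
  J_{\tau_{j_k+1}}(\xi_{j_k+1}) - J_{\tau_{j_k}}(\xi_{j_k}) \leq -\omega \alpha \beta^{M^*_{\delta/4}}\,\delta/4
\end{equation*}
in the eventually-feasible regime (which, by Lemma \ref{lemma:discrete_phase12}, is either entered from some index onward or never visited), and the analogous estimate with $\Psi_\tau$ in place of $J_\tau$ in the perpetually-infeasible regime. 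Because $\xi_i$ is unchanged by iterations of cases 1 and 2, Lemmas \ref{lemma:roc_cost} and \ref{lemma:roc_constraint} let me translate the above into a bound on $J(\xi_{j_k+1}) - J(\xi_{j_k})$ (respectively $\Psi(\xi_{j_k+1}) - \Psi(\xi_{j_k})$), incurring only an additive drift of order $2^{-N_{j_k}}$ that is eventually dominated by the fixed descent magnitude. Summing over the (necessarily infinite) sequence of descent indices would force $J(\xi_{j_k}) \to -\infty$ or $\Psi(\xi_{j_k}) \to -\infty$, contradicting the uniform boundedness supplied by Corollary \ref{corollary:fns_bounded}.

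The principal obstacle is that a small value of $\theta_\tau(\xi)$ may reflect the coarseness of the grid rather than near-stationarity of $\xi$, and every discretization refinement perturbs $J_\tau$ and $\Psi_\tau$ by an amount of order $2^{-N}$. The combined use of Lemma \ref{lemma:discrete_N_diverges} (to force refinement) and Theorem \ref{thm:consistent_approximation} (to link the discrete and continuous optimality functions) is precisely what converts ``$\theta_\tau$ is bounded away from zero'' into ``$\theta$ is bounded away from zero,'' activating Lemma \ref{lemma:discrete_nu_upper_bound} so that only finitely many refinement iterations can separate any $i_k$ from a true descent step, and making the per-descent decrement above dominate the per-refinement drift once $N_{j_k}$ is large enough.
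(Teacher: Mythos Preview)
Your contradiction setup and the use of Lemma \ref{lemma:discrete_nu_upper_bound} to show that each $i_k$ is eventually followed by a genuine descent index $j_k$ with $\xi_{j_k}=\xi_{i_k}$ are fine, and the uniform decrement at $j_k$ is correct. The gap is in the telescoping step: to conclude $J(\xi_{j_k})\to-\infty$ you need $J(\xi_{j_{k+1}})\leq J(\xi_{j_k+1})$, i.e.\ that $J$ does not increase between consecutive descent indices in your subsequence. You argue this only for the two skip branches, where $\xi$ is held fixed. But between $j_k+1$ and $j_{k+1}$ there can be \emph{other} descent iterations $i\in{\cal I}$ at which $\theta_{\tau_i}(\xi_i)$ is merely in $\bigl(-\delta,\,-\Lambda 2^{-\chi N_i}\bigr]$; at those iterations $\xi_i$ changes and your argument says nothing about the sign of $J(\xi_{i+1})-J(\xi_i)$.

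This is exactly where the parameter $\chi\in\bigl(0,\tfrac12\bigr)$ and Corollary \ref{corollary:discrete_mu_upper_bound} enter, and the paper's proof treats it as a separate case. For any $i\in{\cal I}$ one has $-\theta_{\tau_i}(\xi_i)\geq \Lambda 2^{-\chi N_i}$, and Corollary \ref{corollary:discrete_mu_upper_bound} then gives $\beta^{\mu_{\tau_i}(\xi_i)}\geq \beta C\,\Lambda 2^{-\chi N_i}$, so the Armijo decrement in $J_{\tau}$ is at least $\omega\alpha\beta C\bigl(\Lambda 2^{-\chi N_i}\bigr)^2$. Combining with the drift $2B\,2^{-N_i}$ from Lemma \ref{lemma:roc_cost} yields
\[
  J(\xi_{i+1})-J(\xi_i)\;\leq\;\frac{1}{2^{2\chi N_i}}\Bigl(\frac{2B}{2^{(1-2\chi)N_i}}-\omega\alpha\beta C\Lambda^2\Bigr),
\]
which is nonpositive once $N_i$ is large, since $1-2\chi>0$. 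Only with this monotonicity at \emph{every} descent step can the uniform decrements at the $j_k$ be summed without cancellation. Your sketch never invokes the $2^{-\chi N}$ threshold or the hypothesis $\chi<\tfrac12$, and without them the argument does not close.
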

\begin{proof}
  Let us suppose that $\lim_{i \to \infty} \theta_{\tau_i}( \xi_i ) \neq 0$. Then there exists $\delta > 0$ such that
  \begin{equation}
    \liminf_{i \to \infty} \theta_{\tau_i}( \xi_i ) < -4 \delta,
  \end{equation}
  and hence, using Theorem \ref{thm:consistent_approximation} and Lemma \ref{lemma:discrete_N_diverges}, there exists an infinite subsequence ${\cal K} \subset \N$ defined by
  \begin{equation}
    \label{eq:dconv_Kdef}
    {\cal K} = \bigl\{ i \in \N \mid 
      \theta_{\tau_i}(\xi_i) < -2 \delta \ \text{and}\ \theta(\xi_i) < - \delta \bigr\}.
  \end{equation}

  Let us define a second subsequence ${\cal I} \subset \N$ by
  \begin{equation}
    \label{eq:dconv_Idef}
    {\cal I} = \left\{ i \in \N \mid 
      \theta_{\tau_i}(\xi_i) \leq - \frac{\Lambda}{2^{\chi N_i}}\ \text{and}\ 
      \nu_{\tau_i}(\xi_i,N_i + \eta) < \infty \right\}.
  \end{equation}
  Note that by the construction of the subsequence ${\cal K}$, together with Lemma \ref{lemma:discrete_nu_upper_bound}, we get that ${\cal K} \cap {\cal I}$ is an infinite set.

  Now we analyze Algorithm \ref{algo:discrete_algo} by considering the behavior of each step as a function of its membership to each subsequence.
  First, for each $i \notin {\cal I}$, $\xi_{i+1} = \xi_i$, thus $J( \xi_{i+1} ) = J( \xi_i )$ and $\Psi( \xi_{i+1} ) = \Psi( \xi_i )$.
  Second, let $i \in {\cal I}$ such that $\Psi_{\tau_i}(\xi_i) \leq 0$, then
  \begin{equation}
    \begin{aligned}
      J_{\tau_{i+1}}(\xi_{i+1}) - J_{\tau_i}(\xi_i) 
      &\leq \left( \alpha \beta^{ \mu_{\tau_i}(\xi_i) } - \bar{\alpha} \bar{\beta}^{ \nu_{\tau_i}(\xi_i,N_i+\eta) } \right) 
            \theta_{\tau_i}( \xi_i ) \\
      &\leq - \omega \alpha \beta^{ \mu_{\tau_i}(\xi_i) } \frac{\Lambda}{2^{\chi N_i}} \\
      &\leq - \omega \alpha \beta C \left( \frac{\Lambda}{2^{\chi N_i}} \right)^2,
    \end{aligned}
  \end{equation}
  where $C > 0$ and the last inequality follows from Corollary \ref{corollary:discrete_mu_upper_bound}. 
  Recall that $N_{i+1} \geq N_i$, thus using Lemmas \ref{lemma:roc_cost} and \ref{lemma:discrete_N_diverges} we have that 
  \begin{equation}
    \begin{aligned}
      J(\xi_{i+1}) - J(\xi_i)
      &\leq \frac{2B}{2^{N_i}} - \omega \alpha \beta C \left( \frac{\Lambda}{2^{\chi N_i}} \right)^2 \\
      &\leq \frac{1}{ 2^{2 \chi N_i} } \left( \frac{2B}{ 2^{ (1 - 2 \chi) N_i } } - \omega \alpha \beta C \Lambda^2 \right),
    \end{aligned}
  \end{equation}
  and since $\chi \in \left( 0, \frac{1}{2} \right)$, we get that for $N_i$ large enough $J(\xi_{i+1}) \leq J(\xi_i)$.
  Similarly, if $\Psi_{\tau_i}(\xi_i) > 0$ then
  \begin{equation}
    \Psi(\xi_{i+1}) - \Psi(\xi_i)
    \leq \frac{1}{ 2^{2 \chi N_i} } \left( \frac{2B}{ 2^{ (1 - 2 \chi) N_i } } - \omega \alpha \beta C \Lambda^2 \right),    
  \end{equation}
  thus for $N_i$ large enough, $\Psi(\xi_{i+1}) \leq \Psi(\xi_i)$.
  Third, let $i \in {\cal K} \cap {\cal I}$ such that $\Psi_{\tau_i}(\xi_i) \leq 0$, then, by Lemma \ref{lemma:discrete_mu_upper_bound},
  \begin{equation}
    \begin{aligned}
      J_{\tau_{i+1}}(\xi_{i+1}) - J_{\tau_i}(\xi_i) 
      &\leq \left( \alpha \beta^{ \mu_{\tau_i}(\xi_i) } - \bar{\alpha} \bar{\beta}^{ \nu_{\tau_i}(\xi_i,N_i+\eta) } \right) 
            \theta_{\tau_i}( \xi_i ) \\
      &\leq - 2 \omega \alpha \beta^{ M^*_{2 \delta} } \delta,
    \end{aligned}
  \end{equation}
  thus, by Lemmas \ref{lemma:roc_cost} and \ref{lemma:discrete_N_diverges}, for $N_i$ large enough,
  \begin{equation}
    \label{eq:dconv_J_KI}
    J(\xi_{i+1}) - J(\xi_i) 
    \leq - \omega \alpha \beta^{ M^*_{2 \delta} } \delta.
  \end{equation}
  Similarly, if $\Psi_{\tau_i}(\xi_i) > 0$, using the same argument and Lemma \ref{lemma:roc_constraint}, for $N_i$ large enough,
  \begin{equation}
    \label{eq:dconv_Psi_KI}
    \Psi(\xi_{i+1}) - \Psi(\xi_i) 
    \leq - \omega \alpha \beta^{ M^*_{2 \delta} } \delta.
  \end{equation}

  Now let us assume that there exists $i_0 \in \N$ such that $N_{i_0}$ is large enough and $\Psi_{\tau_{i_0}}(\xi_{i_0}) \leq 0$. 
  Then by Lemma \ref{lemma:discrete_phase12} we get that $\Psi_{\tau_i}( \xi_i ) \leq 0$ for each $i \geq i_0$.
  But as shown above, either $i \notin {\cal K} \cap {\cal I}$ and $J(\xi_{i+1}) \leq J(\xi_i)$ or $i \in {\cal K} \cap {\cal I}$ and Equation \eqref{eq:dconv_J_KI} is satisfied, and since ${\cal K} \cap {\cal I}$ is an infinite set we get that $J(\xi_i) \to - \infty$ as $i \to \infty$, which is a contradiction as $J$ is lower bounded.

  On the other hand, if we assume that $\Psi_{\tau_i}(\xi_i) > 0$ for each $i \in \N$, then either $i \notin {\cal K} \cap {\cal I}$ and $\Psi(\xi_{i+1}) \leq \Psi(\xi_i)$ or $i \in {\cal K} \cap {\cal I}$ and Equation \eqref{eq:dconv_Psi_KI} is satisfied, thus implying that $\Psi(\xi_i) \to -\infty$ as $i \to \infty$.
  But this is a contradiction since, by Lemma \ref{lemma:roc_constraint}, this would imply that $\Psi_{\tau_i}(\xi_i) \to - \infty$ as $i \to \infty$.

  Finally, both contradictions imply that $\theta_{\tau_i}(\xi_i) \to 0$ as $i \to \infty$ as desired.
\end{proof}

In conclusion, we can prove that the sequence of points generated by Algorithm \ref{algo:discrete_algo} converges to a point that is a zero of $\theta$ or a point that satisfies our optimality condition. 
\begin{theorem}
	\label{thm:discrete_convergence}
  Let $\{ N_i \}_{i \in \N}$, $\{ \tau_i \}_{i \in \N}$, and $\{ \xi_i \}_{i \in \N}$ be the sequences generated by Algorithm \ref{algo:discrete_algo}, then 
  \begin{equation}
    \lim_{i \to \infty} \theta(\xi_i) = 0,
  \end{equation}
  where $\theta$ is as defined in Equation \eqref{eq:theta_definition}.
\end{theorem}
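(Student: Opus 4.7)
The proof will be essentially a one-line combination of three previously established results, so the plan is not to do any fresh analysis but to assemble the pieces in the correct order. First, I would invoke Lemma \ref{lemma:discretized_theta_to_zero}, which already establishes that the sequence of \emph{discretized} optimality values converges to zero, i.e. $\lim_{i \to \infty} \theta_{\tau_i}(\xi_i) = 0$. That lemma is the real workhorse: it is where the line-search estimates for $\mu_{\tau_i}$ and $\nu_{\tau_i}$, the divergence of $N_i$ (Lemma \ref{lemma:discrete_N_diverges}), and the feasibility-preservation property (Lemma \ref{lemma:discrete_phase12}) are combined to produce a uniform descent-style contradiction whenever $\theta_{\tau_i}(\xi_i)$ fails to vanish. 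I would simply cite it.

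Second, I would observe that Theorem \ref{thm:consistent_approximation} provides exactly the hypothesis needed to cross from the discretized optimality function to the original one, namely that the Discretized Relaxed Switched System Optimal Control Problem is a consistent approximation of the Switched System Optimal Control Problem in the sense of Definition \ref{def:consistent_approximation}. Since the sequences $\{\tau_i\}_{i \in \N}$ and $\{\xi_i\}_{i \in \N}$ produced by Algorithm \ref{algo:discrete_algo} satisfy $\tau_i \in {\cal T}_{N_i}$ with $N_i \to \infty$ by Lemma \ref{lemma:discrete_N_diverges}, we can pass, after a harmless re-indexing (or by monotonicity of the ${\cal T}_N$ nesting, since ${\cal T}_{N+1} \subset {\cal T}_N$), to the setting required by Definition \ref{def:consistent_approximation}, obtaining $\lim_{i \to \infty} | \theta_{\tau_i}(\xi_i) - \theta(\xi_i) | = 0$.

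Third, I would apply Theorem \ref{thm:consistent_approximation_means_convergence} directly: with consistent approximation in hand and $\theta_{\tau_i}(\xi_i) \to 0$ from Lemma \ref{lemma:discretized_theta_to_zero}, that theorem yields $\theta(\xi_i) \to 0$, which is exactly the claim. Alternatively, and equivalently, I would write the one-line estimate
\begin{equation}
  | \theta(\xi_i) | \leq | \theta(\xi_i) - \theta_{\tau_i}(\xi_i) | + | \theta_{\tau_i}(\xi_i) |,
\end{equation}
and note that both terms on the right tend to zero (the first by Theorem \ref{thm:consistent_approximation}, the second by Lemma \ref{lemma:discretized_theta_to_zero}), using the fact that $\theta \leq 0$ from Condition \ref{thm:theta_negative} in Theorem \ref{thm:theta_optimality_function} to conclude $\theta(\xi_i) \to 0$ without ambiguity about signs.

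I do not expect any genuine obstacle here: all the hard work has been done in assembling the consistent-approximation framework and in Lemma \ref{lemma:discretized_theta_to_zero}. The only minor subtlety worth flagging is the indexing alignment between Definition \ref{def:consistent_approximation} (which literally asks for $\tau_i \in {\cal T}_i$) and Algorithm \ref{algo:discrete_algo} (which produces $\tau_i \in {\cal T}_{N_i}$ with $N_i$ nondecreasing and $N_i \to \infty$); this is resolved either by the nesting of the partition spaces or by extracting a subsequence, and does not affect the limit.
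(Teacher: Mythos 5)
Your proposal is correct and follows exactly the paper's own argument: cite Lemma \ref{lemma:discretized_theta_to_zero} for $\theta_{\tau_i}(\xi_i) \to 0$, invoke Theorem \ref{thm:consistent_approximation} for consistency, and conclude via Theorem \ref{thm:consistent_approximation_means_convergence}. The indexing subtlety you flag is real but harmless, as you note, and the paper glosses over it in the same way.
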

\begin{proof}
	This result follows immediately from Lemma \ref{lemma:discretized_theta_to_zero} after noticing that the Discretized Relaxed Switched System Optimal Control Problem is a consistent approximation of the Switched System Optimal Control Problem, as is proven in Theorem \ref{thm:consistent_approximation}, and applying Theorem \ref{thm:consistent_approximation_means_convergence}.
\end{proof}

\section{Examples}
\label{sec:examples}

\begin{table}[tbh]
	\resizebox{\textwidth}{!}{
  \begin{tabular}{|c|c|c|c|} \hline
	\parbox[c][0.75cm][c]{0cm}{} Example & Mode $1$ & Mode $2$ & Mode $3$ \\ \hline
 	\parbox[c][1.5cm][c]{0cm}{} LQR & 
	$\dot{x}(t) = Ax(t) + \begin{bmatrix} 0.9801 \\ -0.1987 \\ 0 \\ \end{bmatrix} u(t)$ &
	$\dot{x}(t) = Ax(t) + \begin{bmatrix} 0.1743 \\ 0.8601 \\ -0.4794 \\ \end{bmatrix} u(t)$ & 
	$\dot{x}(t) = Ax(t) + \begin{bmatrix} 0.0952 \\ 0.4699 \\ 0.8776 \\ \end{bmatrix} u(t)$\\ \hline
	\parbox[c][1.1cm][c]{0cm}{} Tank & 
	$\dot{x}(t) = \begin{bmatrix} 1 - \sqrt{x_1(t)} \\ \sqrt{x_1(t)} - \sqrt{x_2(t)} \end{bmatrix}$ &  
	$\dot{x}(t) = \begin{bmatrix} 2 - \sqrt{x_1(t)} \\ \sqrt{x_1(t)} - \sqrt{x_2(t)} \end{bmatrix}$	& 
	N/A \\ \hline
	\parbox[c][1.5cm][c]{0cm}{} Quadrotor & 
	$\ddot{x}(t) = \begin{bmatrix} \frac{\sin x_3(t)}{M}\left( u(t) + Mg \right) \\	\frac{\cos x_3(t)}{M}\left( u(t) + Mg \right) - g \\ 0 \\ \end{bmatrix}$ & 
	$\ddot{x}(t) = \begin{bmatrix} g\sin x_3(t) \\ g\cos x_3(t) - g \\ \frac{-L u(t)}{I} \\ \end{bmatrix}$ &
	$\ddot{x}(t) = \begin{bmatrix} g\sin x_3(t) \\	g\cos x_3(t) - g \\ \frac{L u(t)}{I} \\ \end{bmatrix}$ \\ \hline
	\parbox[c][2.75cm][c]{0cm}{} Needle & 
	$\dot{x}(t) = \begin{bmatrix}  \sin\big(x_5(t)\big) u_1(t) \\ -\cos\big(x_5(t)\big)\sin\big(x_4(t)\big)  u_1(t) \\  \cos\big(x_4(t)\big) \cos\big(x_5(t)\big)  u_1(t) \\ \kappa \cos\big(x_6(t)\big) \sec\big(x_5(t)\big) u_1(t) \\ \kappa \sin\big(x_6(t)\big)  u_1(t) \\ -\kappa \cos\big(x_6(t)\big) \tan\big(x_5(t)\big) u_1(t) \\ \end{bmatrix}$ & 
	$\dot{x}(t) = \begin{bmatrix} 0 \\ 0 \\ 0 \\ 0 \\ 0 \\ u_2(t) \\ \end{bmatrix}$ &
	 N/A \\ \hline
  \end{tabular}
}
 \caption{The dynamics of each of the modes of the switched system examples considered in Section \ref{sec:examples}. The parameters employed during the application of Algorithm \ref{algo:discrete_algo} are defined explicitly in Section \ref{sec:examples}.}
 \label{tab:dynamics} 
\end{table}

\begin{table}[t]
\resizebox{\textwidth}{!}{
  \begin{tabular}{|c|c|c|c|c|c|c|c|c|c|c|c|c|c|} \hline
 \parbox[c][0.75cm][c]{0cm}{} Example & $L(x(t),u(t),t)$ & $\phi\left(x^{(\xi)}(t_f) \right)$ & $U$ & $\gamma$ & $\alpha$ & $\beta$ & $\bar{\alpha}$ & $\bar{\beta}$ & $\Lambda$ & $\chi$ & $\omega$ & $t_0$ & $t_f$   \\ \hline
	\parbox[c][1.65cm][c]{0cm}{} LQR & $0.01 \cdot (u(t))^2$ & $\left\| \begin{bmatrix} x_1(t_f) - 1 \\ x_2(t_f) - 1 \\ x_3(t_f) - 1   \end{bmatrix} \right\|_2^2$ & $u(t) \in [-20,20] $ & $1$ & $0.1$ & $0.87$ & $0.005$ & $0.72$ & $10^{-4}$ & $\frac{1}{4}$ & $10^{-6}$ & $0$ & $2$ \\ \hline
	\parbox[c][0.9cm][c]{0cm}{} Tank & $2 \cdot \left(x_2(t) - 3\right)^2$ & $0$ & N/A & $100$ & $0.01$ & $0.75$ & $0.005$ & $0.72$ & $10^{-4}$ & $\frac{1}{4}$ &  $10^{-6}$ & $0$ & $10$ \\ \hline
	\parbox[c][1.9cm][c]{0cm}{} Quadrotor & $5\cdot(u(t))^2$ & $\left\| \begin{bmatrix} \sqrt{5} \cdot (x_1(t_f) - 6) \\ \sqrt{5}\cdot(x_2(t_f) - 1) \\ \sin\left( \frac{x_3(t_f)}{2} \right)   \end{bmatrix} \right\|_2^2 $ & $u(t) \in [0,10^{-3}]$ & $10$ & $0.01$ & $0.80$ & $5 \times 10^{-4}$ & $0.72$ & $10^{-4}$ & $\frac{1}{4}$ & $10^{-6}$ & $0$ & $7.5$ \\ \hline
	\parbox[c][1.65cm][c]{0cm}{} Needle & $0.01 \cdot \left\| \begin{bmatrix} u_1(t) \\ u_2(t) \end{bmatrix} \right\|_2^2$ & $\left\| \begin{bmatrix} x_1(t_f) + 2 \\ x_2(t_f) - 3.5 \\ x_3(t_f) - 10 \end{bmatrix} \right\|_2^2$ & $\begin{matrix}  u_1(t) \in [0,5] \\ u_2(t) \in [\frac{-\pi}{2},\frac{\pi}{2}] \end{matrix}$ & $100$ & $0.002$ & $0.72$ & $0.001$ & $0.71$ & $10^{-4}$ & $\frac{1}{4}$ & $0.05$ & $0$ & $8$  \\ \hline
  \end{tabular}
}
  \caption{The algorithmic parameters and cost function used for each of the examples during the implementation of Algorithm \ref{algo:discrete_algo}.}
  \label{tab:alg_parameters}
\end{table}

\begin{table}[t]
	\resizebox{\textwidth}{!}{
  \begin{tabular}{|c|c|c|c|c|c|c|} \hline
 \multirow{2}{*}{Example}  & Initial Continuous & Initial Discrete & Algorithm \ref{algo:discrete_algo}& Algorithm \ref{algo:discrete_algo} & MIP & MIP \\ \parbox[c][0.3cm][c]{0cm}{} 
& Input, $\forall t \in [t_0,t_f]$ & Input, $\forall t \in [t_0,t_f]$ & Computation Time & Final Cost & Computation Time & Final Cost   \\ \hline
    \parbox[c][1.5cm][c]{0cm}{} LQR	 & $u(t) = 0$ & $d(t) = \begin{bmatrix} 1 \\ 0 \\ 0 \end{bmatrix}$  & $9.827$[s] & $1.23 \times 10^{-3}$ & $753.0$[s] & $1.89 \times 10^{-3}$ \\ \hline
	\parbox[c][1.15cm][c]{0cm}{} Tank & N/A  & $d(t) = \begin{bmatrix} 1 \\ 0 \end{bmatrix}$ & $32.38[s]$ & $4.829$ & $119700[s]$ & $4.828$ \\ \hline
	\parbox[c][1.5cm][c]{0cm}{} Quadrotor & $u(t) = 5 \times 10^{-4}$ & $d(t) = \begin{bmatrix} 0.33 \\ 0.34 \\ 0.33 \end{bmatrix}$ & $8.350$[s] & $0.128$ & $2783$[s] & $0.165$ \\ \hline
	\parbox[c][1.15cm][c]{0cm}{} Needle & $u(t) = \begin{bmatrix} 0 \\ 0 \end{bmatrix}$ & $d(t) = \begin{bmatrix} 0.5 \\ 0.5 \end{bmatrix}$ & $62.76$[s] & $0.302$ & did not converge & did not converge \\ \hline
  \end{tabular}
}
  \caption{The initialization parameters used for each of the examples during the implementation of Algorithm \ref{algo:discrete_algo} and the MIP described in \cite{fletcher1994solving}, and the computation time and the result for each of the examples as a result of the application of Algorithm \ref{algo:discrete_algo} and the MIP described in \cite{fletcher1994solving}.}
  \label{tab:alg_implement}
\end{table}

In this section, we apply Algorithm \ref{algo:discrete_algo} to calculate an optimal control for four examples. Before describing each example, we begin by describing the numerical implementation of Algorithm \ref{algo:discrete_algo}. First, observe that the analysis presented thus far does not require that the initial and final times of the trajectory of switched system be fixed to $0$ and $1$, respectively. Instead, the initial and final times of the trajectory of the switched system are treated as fixed parameters $t_0$ and $t_f$, respectively. Second, we employ a MATLAB implementation of LSSOL from TOMLAB in order to compute the optimality function at each iteration of the algorithm since it is a quadratic program \cite{holmstrom1999tomlab}. Third, for each example we employ a stopping criterion that terminates Algorithm \ref{algo:discrete_algo}, if $\theta_{\tau}$ becomes too large. Each of these stopping criteria is described when we describe each example. Next, for the sake of comparison we compare the performance of Algorithm \ref{algo:discrete_algo} on each of the examples to a traditional Mixed Integer Program (MIP). To perform this comparison, we employ a TOMLAB implementation of a MIP described in \cite{fletcher1994solving} which mixes branch and bound steps with sequential quadratic programming steps. Finally, all of our comparisons are performed on an Intel Xeon, $6$ core, $3.47$ GHz, $100$ GB RAM machine.

\subsection{Constrained Switched Linear Quadratic Regulator (LQR)}

\begin{figure}[tp]
	\centering
	\begin{minipage}{\columnwidth}
		\subfloat[MIP Final Result]{\includegraphics[clip, trim = 3cm 0.5cm 3cm 0.5cm, width = .5\textwidth, keepaspectratio = true]{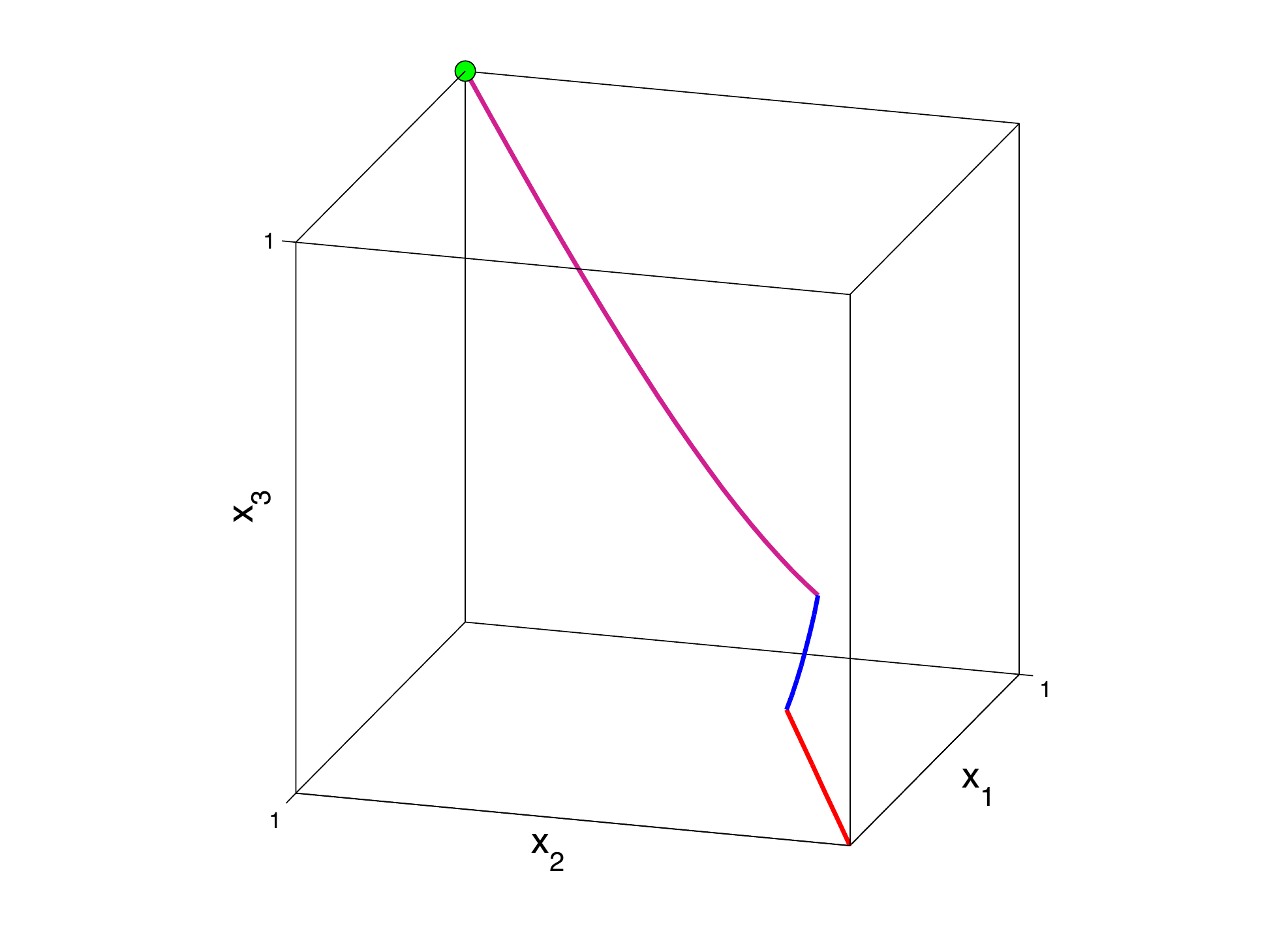}}
		\subfloat[Algorithm \ref{algo:discrete_algo} Final Result]{\includegraphics[clip, trim = 3cm 0.5cm 3cm 0.5cm, width = .5\textwidth, keepaspectratio = true]{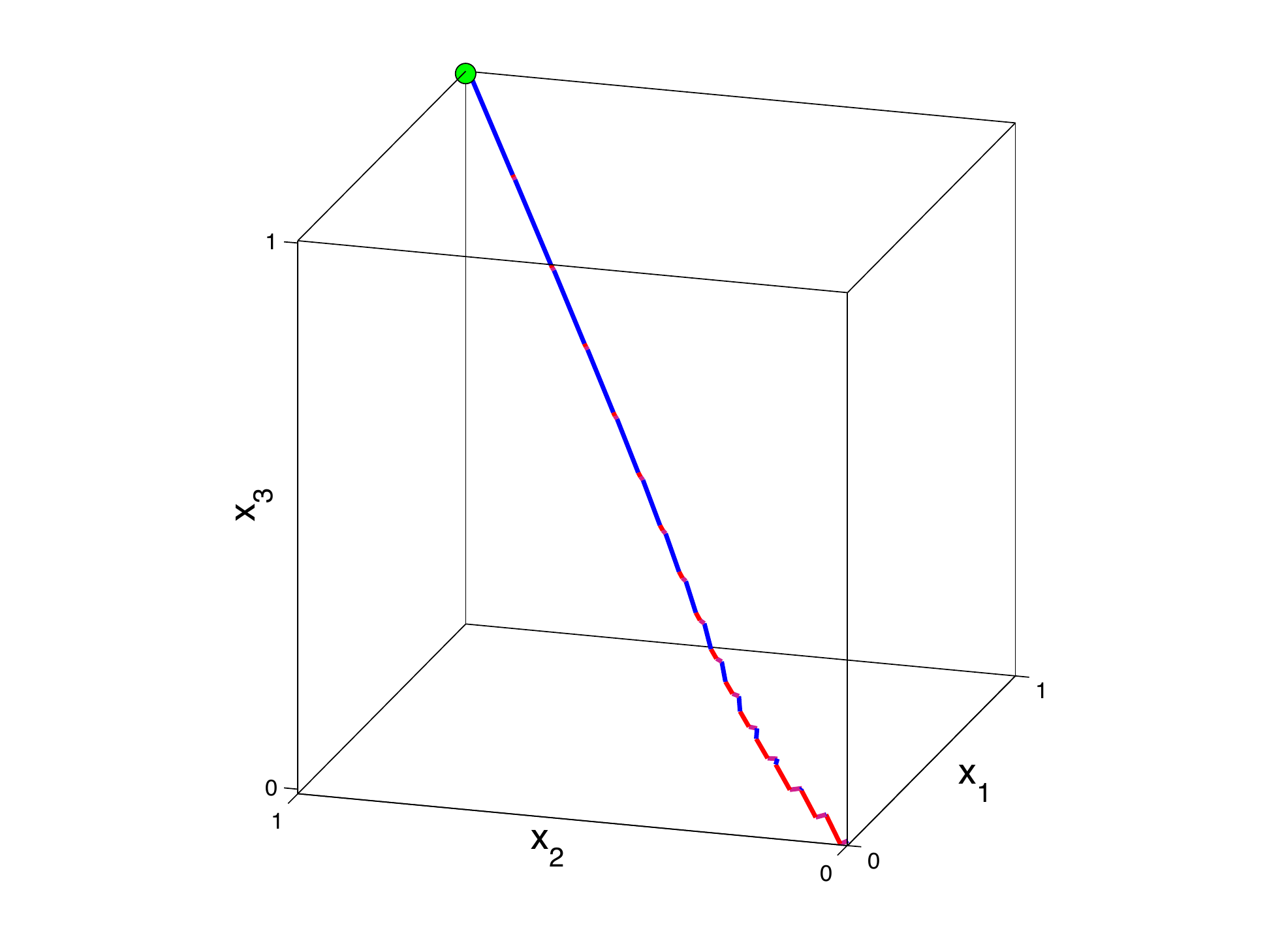}}
	\end{minipage}
	\caption{Optimal trajectories for each of the considered optimization algorithms where the point $(1,1,1)$ is drawn in green, and where the trajectory is drawn in blue when in mode $1$, in purple when in mode $2$, and in red when in mode $3$.}
	\label{fig:linear3dexample}	
\end{figure}

Switched Linear Quadratic Regulator (LQR) examples have been used to illustrate the utility of a variety of proposed optimal control algorithms \cite{Egerstedt2006,xu2002}. We consider an LQR system in three dimensions, with three discrete modes, and a single continuous input. The dynamics in each mode are as described in Table \ref{tab:dynamics} where: 
\begin{equation}
	A  = \begin{bmatrix} 1.0979   & -0.0105 & 0.0167  \\ -0.0105  & 1.0481 & 0.0825 \\ 0.0167 & 0.0825 & 1.1540 \\ \end{bmatrix}.
\end{equation}
The system matrix is purposefully chosen to have three unstable eigenvalues and the control matrix in each mode is only able to control along single dimension. Hence, while the system and control matrix in each mode is not a stabilizable pair, the system and all the control matrices taken together simultaneously is stabilizable and is expected to appropriately switch between the modes to reduce the cost. The objective of the optimization is to have the trajectory of the system at time $t_f$ be at $(1,1,1)$ while minimizing the input required to achieve this task. This objective is reflected in the chosen cost function which is described in Table \ref{tab:alg_parameters}.

Algorithm \ref{algo:discrete_algo} and the MIP are initialized at $x_0 = (0,0,0)$ with continuous and discrete inputs as described in Table \ref{tab:alg_implement} with $16$ equally spaced samples in time. Algorithm \ref{algo:discrete_algo} took $11$ iterations, ended with $48$ time samples, and terminated after the optimality condition was bigger than $-10^{-2}$. The result of both optimization procedures is illustrated in Figure \ref{fig:linear3dexample}. The computation time and final cost of both algorithms can be found in Table \ref{tab:alg_implement}. Notice that Algorithm \ref{algo:discrete_algo} is able to compute a lower cost continuous and discrete input when compared to the MIP and is able to do it more than $75$ times faster.


\subsection{Double Tank System}

\begin{figure}[tp]
	\centering
	\begin{minipage}{\columnwidth}
		\subfloat[MIP Final Result]{\includegraphics[clip, trim = 1cm 0cm 1cm 0cm, width = .5\textwidth, keepaspectratio = true]{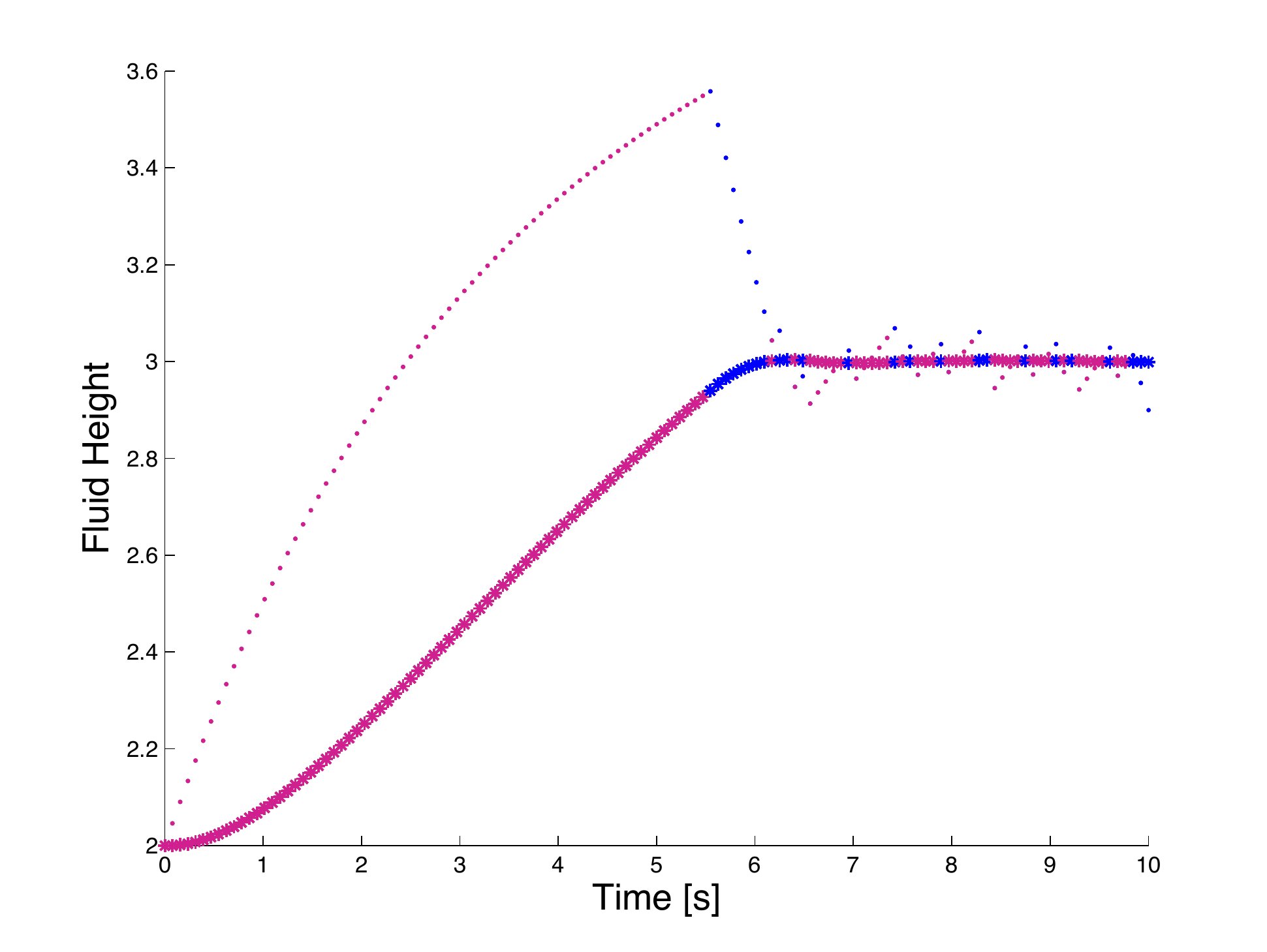}}
		\subfloat[Algorithm \ref{algo:discrete_algo} Final Result]{\includegraphics[clip, trim = 1cm 0cm 1cm 0cm, width = .5\textwidth, keepaspectratio = true]{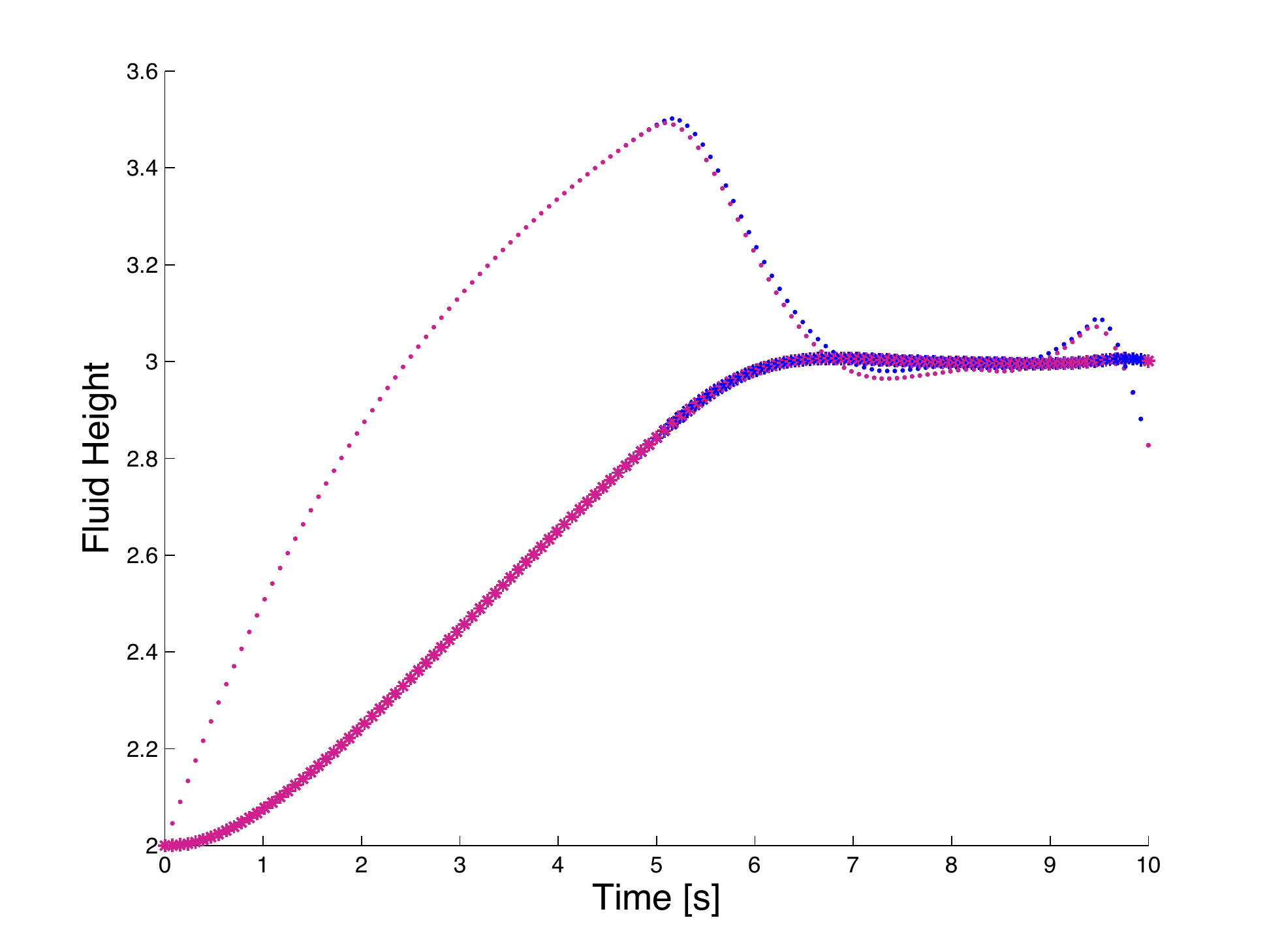}}
	\end{minipage}
	\caption{Optimal trajectories for each of the considered optimization algorithms where $x_1(t)$ is drawn using points and $x_2(t)$ is drawn using stars and where each state trajectory is drawn in blue when in mode $1$ and in purple when in mode $2$.}
	\label{fig:tankexample}	
\end{figure}

To illustrate the performance of Algorithm \ref{algo:discrete_algo} when there is no continuous input present, we consider a double-tank example. The two states of the system correspond to the fluid levels of an upper and lower tank. The output of the upper tank flows into the lower tank, the output of the lower tank exits the system, and the flow into the upper tank is restricted to either $1$ or $2$. The dynamics in each mode are then derived using Toricelli's Law and are describe in Table \ref{tab:dynamics}. The objective of the optimization is to have the fluid level in the lower tank track $3$ and this is reflected in the chosen cost function described in Table \ref{tab:alg_parameters}.

Algorithm \ref{algo:discrete_algo} and the MIP are initialized at $x_0 = (0,0)$ with a discrete input described in Table \ref{tab:alg_implement} with $128$ equally spaced samples in time. Algorithm \ref{algo:discrete_algo} took $67$ iterations, ended with $256$ time samples, and terminated after the optimality condition was bigger than $-10^{-2}$. The result of both optimization procedures is illustrated in Figure \ref{fig:tankexample}. The computation time and final cost of both algorithms can be found in Table \ref{tab:alg_implement}. Notice that Algorithm \ref{algo:discrete_algo} is able to compute a comparable cost discrete input compared to the MIP and is able to do it nearly $3700$ times faster.


\subsection{Quadrotor Helicopter Control}

\begin{figure}[tp]
	\centering
	\begin{minipage}{\columnwidth}
		\subfloat[MIP Final Result]{\includegraphics[clip, trim = 1.75cm 3cm 1.75cm 3cm, width = .49\textwidth, keepaspectratio = true]{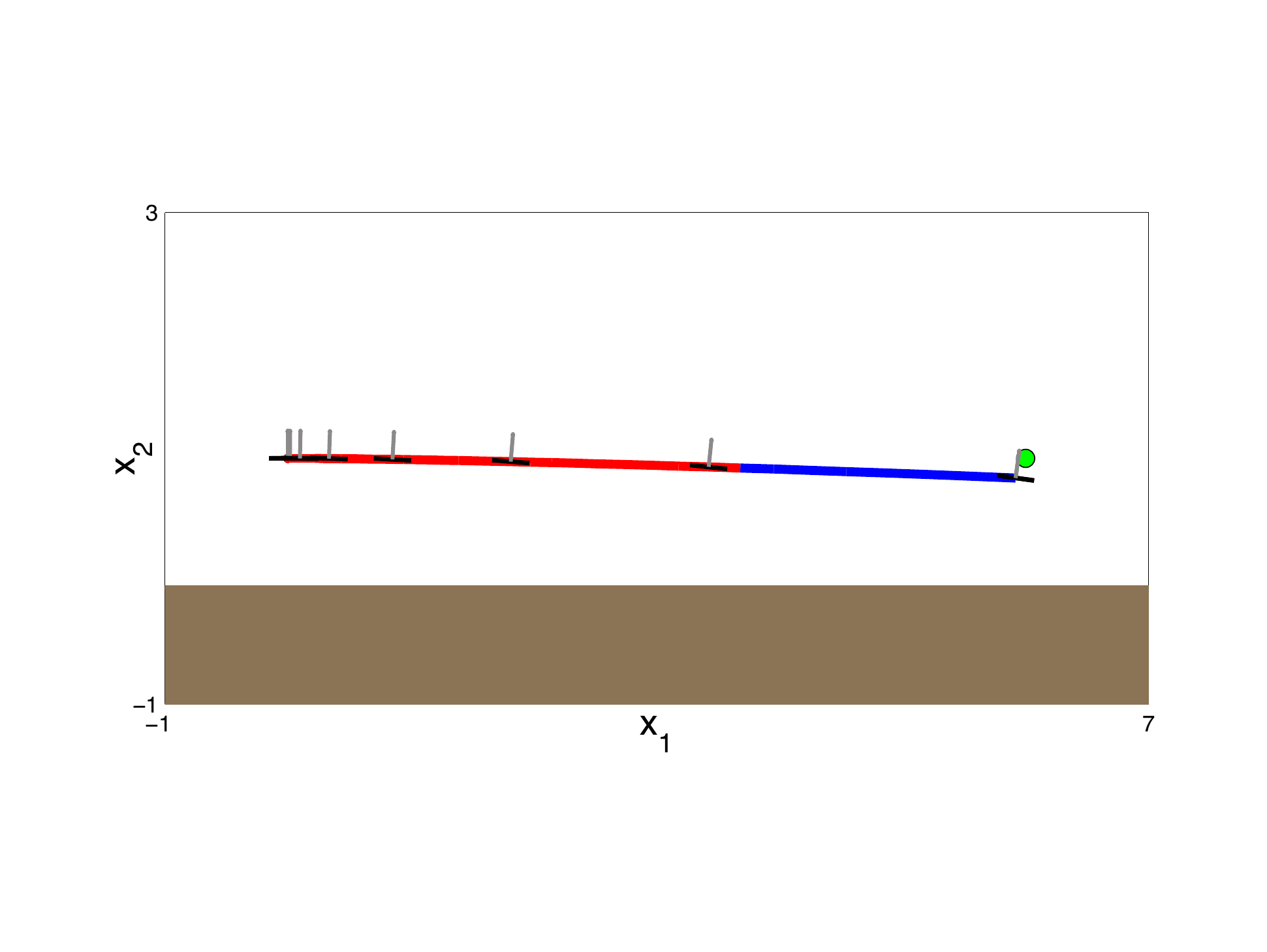}}
		\hspace*{0.05cm}
		\subfloat[Algorithm \ref{algo:discrete_algo} Final Result]{\includegraphics[clip, trim = 1.75cm 3cm 1.75cm 3cm, width = .49\textwidth, keepaspectratio = true]{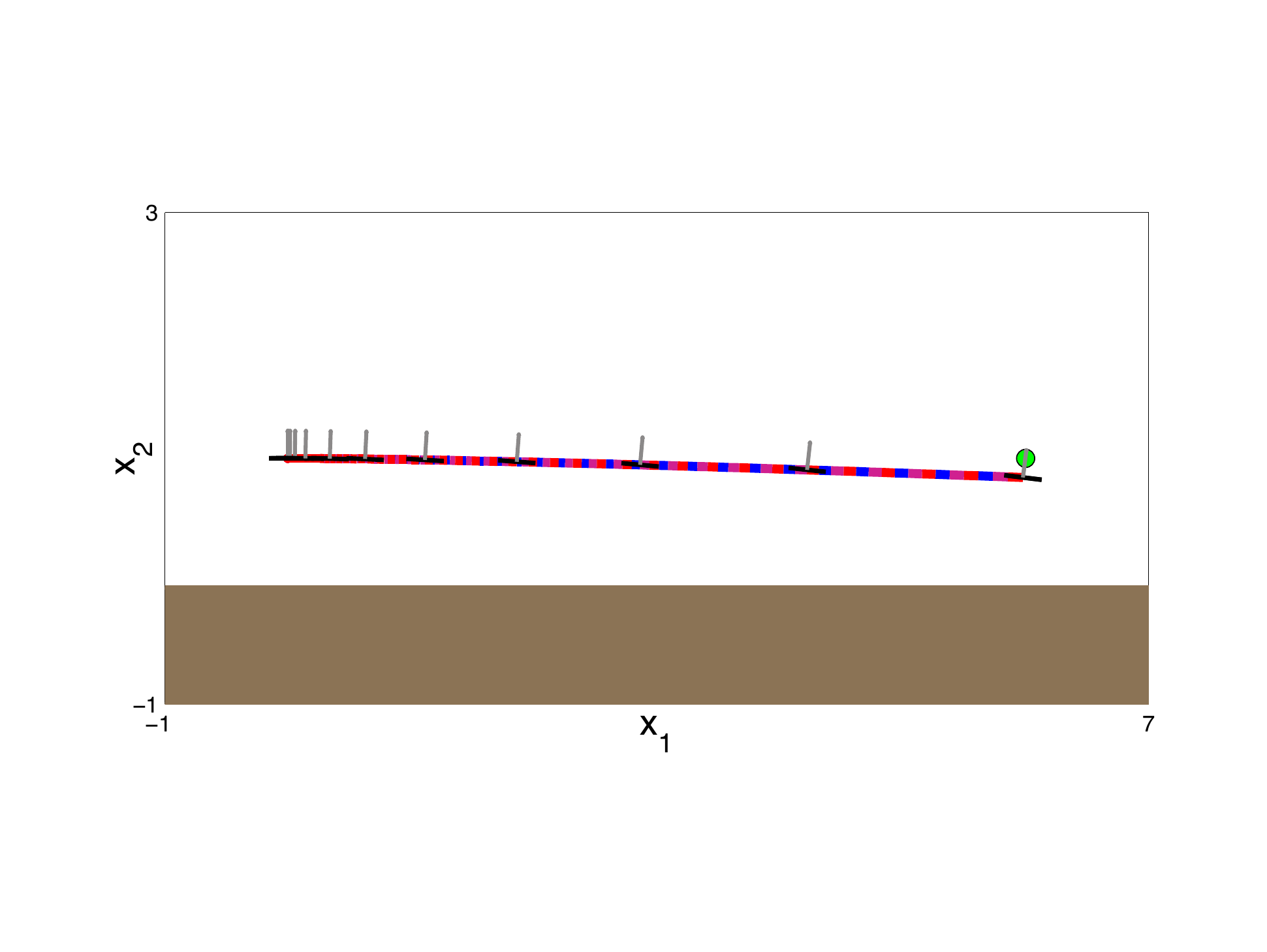}}
	\end{minipage}
	\caption{Optimal trajectories for each of the considered optimization algorithms where the point $(6,1)$ is drawn in green, where the trajectory is drawn in blue when in mode $1$, in purple when in mode $2$, and in red when in mode $3$, and where the quadrotor is drawn in black and the normal direction to the frame is drawn in gray.}
	\label{fig:quadrotorexample}	
\end{figure}

Next, we consider the optimal control of a quadrotor helicopter in 2D using a model described in \cite{gillula2011applications}. The evolution of the quadrotor can be defined with respect to a fixed 2D reference frame using six dimensions where the first three dimensions represent the position along a horizontal axis, the position along the vertical axis and the roll angle of the helicopter, respectively, and the last three dimensions represent the time derivative of the first three dimensions. We model the dynamics as a three mode switched system (the first mode describes the dynamics of going up, the second mode describes the dynamics of moving to the left, and the third mode describes the dynamics of moving to the right) with a single input as described in Table \ref{tab:dynamics} where $L = 0.3050$ meters, $M = 1.3000$ kilograms, $I = 0.0605$ kilogram meters squared, and $g = 9.8000$ meters per second squared. The objective of the optimization is to have the trajectory of the system at time $t_f$ be at position $(6,1)$ with a zero roll angle while minimizing the input required to achieve this task. This objective is reflected in the chosen cost function which is described in Table \ref{tab:alg_parameters}. A state constraint is added to the optimization to ensure that the quadrotor remains above ground.

Algorithm \ref{algo:discrete_algo} and the MIP are initialized at position $(0,1)$ with a zero roll angle, with zero velocity, with continuous and discrete inputs as described in Table \ref{tab:alg_implement}, and with $64$ equally spaced samples in time. Algorithm \ref{algo:discrete_algo} took $31$ iterations, ended with $192$ time samples, and terminated after the optimality condition was bigger than $-10^{-4}$. The result of both optimization procedures is illustrated in Figure \ref{fig:quadrotorexample}. The computation time and final cost of both algorithms can be found in Table \ref{tab:alg_implement}. Notice that Algorithm \ref{algo:discrete_algo} is able to compute a lower cost continuous and discrete input when compared to the MIP and is able to do it more than $333$ times faster.

\subsection{Bevel-Tip Flexible Needle}

\begin{figure}[tp]
	\centering
	\begin{minipage}{\columnwidth}
		\subfloat[Algorithm \ref{algo:discrete_algo} Final Result]{\includegraphics[clip, trim = 5cm 1cm 5cm 1cm, width = .38\textwidth, keepaspectratio = true]{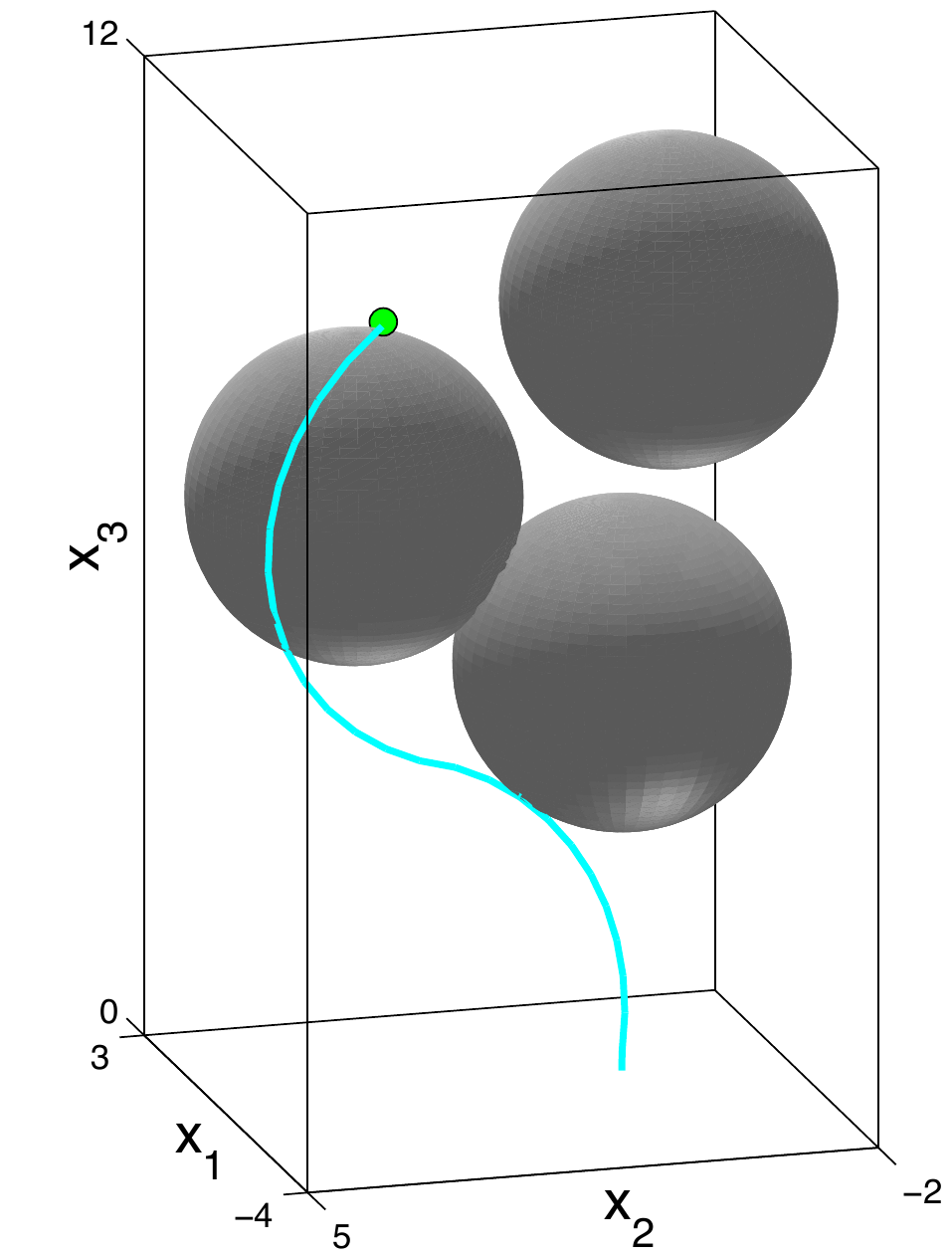}}
		\subfloat[Algorithm \ref{algo:discrete_algo} Final Discrete Input]{\includegraphics[clip, trim = 1cm 0cm 1cm 0cm, width = .62\textwidth, keepaspectratio = true]{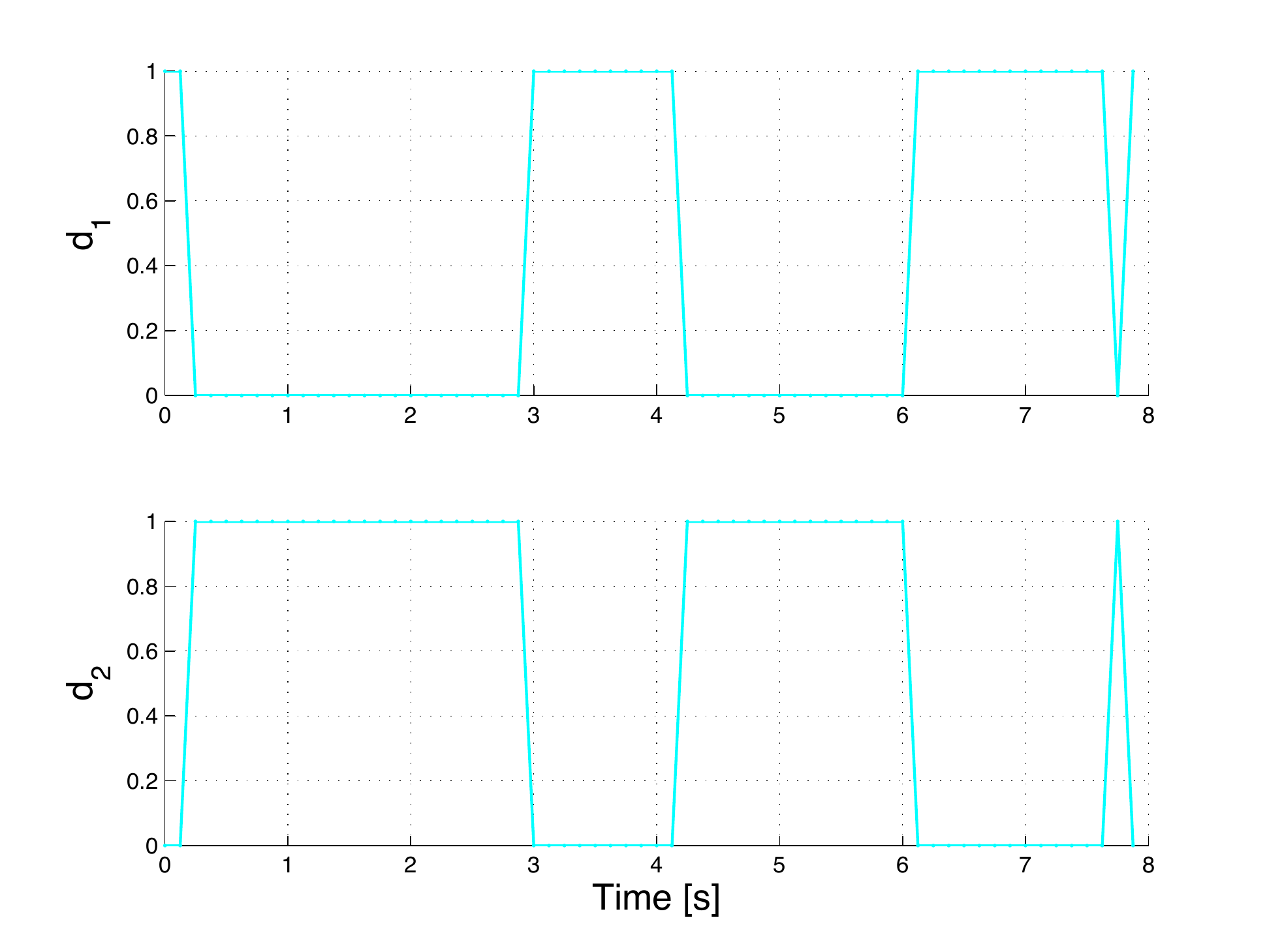}}
	\end{minipage}
	\caption{The optimal trajectory and discrete inputs drawn in cyan generated by Algorithm \ref{algo:discrete_algo} where the point $(-2,3.5,10)$ is drawn in green and obstacles are drawn in grey.}
	\label{fig:needleexample}	
\end{figure}

Bevel-tip flexible needles are asymmetric needles that move along curved trajectories when a forward pushing force is applied. The 3D dynamics of such needles has been described in \cite{Kallem2007} and the path planning in the presence of obstacles has been heuristically considered in \cite{Duindam2008}. The evolution of the needle can be defined using six dimensions where the first three dimensions represent the position of the needle relative to the point of entry and the last three dimensions represent the yaw, pitch and roll of the needle relative to the plane, respectively. As suggested by \cite{Duindam2008}, the dynamics of the needle are naturally modeled as a two mode (the first mode describes the dynamics of going forward while the second mode describes the dynamics of the needle turning) switched system as described in Table \ref{tab:dynamics} with two continuous inputs: $u_1$ representing the insertion speed and $u_2$ representing the rotation speed of the needle and where $\kappa$ is the curvature of the needle and is equal to $.22$ inverse centimeters. The objective of the optimization is to have the trajectory of the system at time $t_f$ be at position $(-2,3.5,10)$ while minimizing the input required to achieve this task. This objective is reflected in the chosen cost function which is described in Table \ref{tab:alg_parameters}. A state constraint is added to the optimization to ensure that the needle remains outside of three spherical obstacles centered at $(0\,,\,0\,,\,5)$, $(1\,,\,3\,,\,7)$, and $(-2\,,\,0\,,\,10)$ all with radius $2$.

Algorithm \ref{algo:discrete_algo} and the MIP are initialized at position $(0,0,0)$ with continuous and discrete input described in Table \ref{tab:alg_implement} with $64$ equally spaced samples in time. Algorithm \ref{algo:discrete_algo} took $103$ iterations, ended with $64$ time samples, and terminated after the optimality condition was bigger than $-10^{-3}$. The computation time and final cost of both algorithms can be found in Table \ref{tab:alg_implement}. The MIP was unable to find any solution. The result of Algorithm \ref{algo:discrete_algo} is illustrated in Figure \ref{fig:needleexample}.


\section{Conclusion}
\label{sec:conclusion}

In this paper, we devise a first order numerical optimal control algorithm for the optimal control of constrained nonlinear switched systems. The algorithm works by first relaxing the discrete-valued input, performing traditional optimal control, and projecting the computed relaxed discrete-valued input by employing a projection constructed by an extension to the classical Chattering Lemma. We prove that the sequence of points constructed by recursive application of our algorithm converge to a point that satisfies a necessary condition for optimality of the Switched System Optimal Control Problem. We then devise an implementable algorithm that operates over finite dimensional subspaces of the optimization spaces. We prove the convergence of the sequence of points constructed by recursive application of our computationally tractable algorithm to a point that satisfies a necessary condition for optimality of the Switched System Optimal Control Problem.

\section*{Acknowledgements}
We are grateful to Maryam Kamgarpour and Claire Tomlin whose help was critical in writing our first set of papers on switched system optimal control and realizing the deficiencies of our original approach. 
We are also grateful to Magnus Egerstedt and Yorai Wardi whose comments on our original papers were critical to us realizing the deficiencies of the sufficient descent argument in infinite dimensional spaces.
The Chattering Lemma argument owes its origin to a series of tremendously helpful discussions with Ray DeCarlo.
The entire contents of this article reflect a longstanding collaboration with Elijah Polak.
The preparation of this article required a great deal of research effort which would not have been possible without the support of the National Science Foundation.

\phantomsection
\addcontentsline{toc}{chapter}{References}
\bibliographystyle{plain}
\bibliography{library}

\end{document}